\newtheorem{Th}{Theorem}[section]
\newtheorem{Prop}[Th]{Proposition}
\newtheorem{Lem}[Th]{Lemma}
\newtheorem{Cor}[Th]{Corollary}
\newtheorem{Rem}[Th]{Remark}
\newcommand{\N}{\mathbb{N}}
\newcommand{\R}{\mathbb{R}}
\newcommand{\Rn}{\mathbb{R}^{n}}
\newcommand{\Nn}{\mathbb{N}^{n}}
\newcommand{\G}{\Gamma}
\newcommand{\g}{\gamma}
\newcommand{\DD}{\mathcal{D}}
\newcommand{\FF}{\mathcal{F}}
\newcommand{\KK}{\mathcal{K}}
\newcommand{\LL}{\mathcal{L}}
\newcommand{\MM}{\mathcal{M}}
\newcommand{\Ss}{\mathcal{S}}
\newcommand{\ZZ}{\mathcal{Z}}
\newcommand{\abs}[1]{\left|{#1}\right|}
\newcommand{\brkt}[1]{\left({#1}\right)}
\newcommand{\set}[1]{\left\{{#1}\right\}}
\newcommand{\eps}{\varepsilon}
\title[Solvability of DNR problems]{Solvability of the Dirichlet, Neumann and the regularity problems for parabolic equations with H\"older continuous coefficients}
\author[]{Alejandro J. Castro}
\author[]{Salvador Rodr\'iguez-L\'opez}
\author[]{Wolfgang Staubach}
\address{\newline
        Alejandro J. Castro, Wolfgang Staubach \newline
        Department of  Mathematics, Uppsala University, \newline
        S-751 06 Uppsala, Sweden}
\email{alejandro.castro@math.uu.se, wolfgang.staubach@math.uu.se}
\address{\newline
        Salvador Rodr\'iguez-L\'opez \newline
        Department of Mathematics, Stockholm University,\newline
        SE - 106 91 Stockholm, Sweden}
        \email{s.rodriguez-lopez@math.su.se }
\thanks{
The first author is partially supported by Swedish Research Council Grant 621-2011-3629.
The second author is partially supported by the Spanish Government grant
 MTM2013-40985-P. The third author is partially supported by a grant from the Crafoord foundation}
\keywords{Boundary value problems, Parabolic equations, Lipschitz domains, Layer potentials, Rellich estimates}
\subjclass[2010]{35K20, 42B20}
\begin{document}

\maketitle

\begin{abstract}
We establish the $L^2$-solvability of Dirichlet, Neumann and regularity problems for divergence-form heat (or diffusion) equations with time-independent H\"older-continuous diffusion coefficients, on bounded Lipschitz domains in $\mathbb{R}^n$. This is achieved through the demonstration of invertibility of the relevant layer-potentials which is in turn based on Fredholm theory and a systematic transference scheme which yields suitable parabolic Rellich-type estimates.
\end{abstract}

 \tableofcontents

\section{Introduction}
In this paper, we prove the $L^2$ solvability of the Dirichlet, Neumann and Regularity problems (DNR problems for short) for divergence-form parabolic equations of the form $$\partial_t u(X,t)-\nabla_X\cdot \big(A(X) \nabla_X u(X,t)\big)= 0$$ on bounded Lipschitz domains in $\mathbb{R}^n$ ($n\geq 3$), under the assumptions that $A(X)$ is uniformly elliptic, symmetric and H\"older-continuous.\\

Let us very briefly recall some of the basic results in the field of second-order parabolic boundary value problems with time-independent diffusion coefficients on low regularity domains, which are the predecessors of the present paper. For the sake of brevity, we confine ourselves to only mention those investigations for parabolic equations which have dealt with the solvability of the boundary value problems mentioned above. In \cite{FaRi}, E. Fabes and N. Rivi\`ere proved the solvability of the $L^2$ Dirichlet and Neumann problems on bounded $C^1$ domains. This paper paved the way for subsequent developments in the field. Since the appearance of \cite{FaRi}, the investigations of the parabolic boundary value problems have been concerned with either lowering the regularity of the boundary of the domain, or lowering the regularity of the matrix $A$ appearing in the equation, or both. Another goal is to consider $L^p$ boundary value problems for various values of $p$ (i.e. $p$ other than 2). \\

But in this paper, it is the boundary value problems for time-independent domains that concern us. In \cite{FaSa} E. Fabes and S. Salsa investigated the caloric measure and the $L^p$ ($p\geq 2$) solvability of the initial-Dirichlet problem for the usual heat equation in Lipschitz cylinders. In paper \cite{BrownPhD}, R. Brown studied the $L^2$ boundary value problems and the layer potentials for the heat equation on bounded Lipschitz domains in $\mathbb{R}^n$. Next step was taken by M. Mitrea in \cite{Mi} where he proved the $L^p$ solvability (for suitable values of $p$), of DNR problems for divergence-type heat equations with smooth diffusion coefficients on compact manifolds with Lipschitz boundary. Our investigation in this paper is the continuation of these lines of studies by further pushing down the regularity of the diffusion coefficients and assuming only H\"older continuity, which together with the assumptions of ellipticity and symmetry, will yield the $L^2$ solvability of DNR problems.\\

In \cite{MiTa}, M. Mitrea and M. Taylor proved the solvability of the DNR problems for elliptic equations involving the Laplace-Beltarmi operator with H\"older-continuous metrics on Riemannian manifolds with Lipschitz boundary. Later in \cite{KeSh2}, C. Kenig and Z. Shen used the method of layer potentials to study $L^2$ boundary value problems in a bounded Lipschitz domain in $\mathbb{R}^n$, with $n\geq 3$, for a family of second-order elliptic systems with rapidly oscillating periodic coefficients. As a consequence, they also established the solvability of the DNR problems for divergence-form elliptic equations $\nabla_X\cdot \big(A(X) \nabla_X u(X)\big)= 0$ on the aforementioned domains, under the assumptions that $A(X)$ is uniformly elliptic, symmetric, periodic and H\"older-continuous. Our paper could be considered as a parabolic counterpart of \cite{KeSh2} and \cite{MiTa}.\\

We shall now briefly describe the main results of the paper and the structure of this manuscript.
We recall that a bounded domain $\Omega \subset \R^n$ is called a \emph{Lipschitz domain} (with Lipschitz constant $M>0$)
if $\partial \Omega$ can be covered by finitely many open circular cylinders whose bases have positive
distance from $\partial \Omega ,$ and corresponding to each cylinder $\ZZ \subset \Rn$ there exists:
\begin{itemize}
  \item a coordinate system $(x',x_n)$, with $x' \in \R^{n-1}$ and $x_n \in \R$,
	such that the $x_n$-axis is parallel to the axis of $\ZZ$;
  \item a function $\varphi : \R^{n-1} \longrightarrow \R$ satisfying the Lipschitz condition
	$$|\varphi(x')-\varphi(y')| \leq M |x'-y'|, \quad x',y' \in \R^{n-1},$$
	such that
\begin{equation}\label{defn: The lipshitz domain Omega}
  \qquad \qquad \Omega \cap \ZZ = \{(x',x_n) \in \ZZ : x_n > \varphi(x')\} \quad \text{and} \quad
	   \partial \Omega \cap \ZZ = \{(x',x_n) \in \ZZ : x_n = \varphi(x')\}.
\end{equation}
\end{itemize}

As mentioned earlier, we consider the \emph{parabolic divergence-type} equation
\begin{equation}\label{eq:parabolic}
	\LL_A u(X,t)
		:= \partial_t u(X,t) - \nabla_X\cdot \big(A(X) \nabla_X u(X,t)\big)
		= 0 \quad \text{in} \quad \Omega  \times (0,T),
\end{equation}
where $0<T<\infty$ and $\Omega$ is an open bounded Lipschitz domain in $\R^n$,
$n \geq 3.$ We assume that the real matrix $A(X)=(a_{ij}(X))$  verifies the following properties:
\begin{itemize}
	\item[\label{A1} \small{$(A1)$}] \textit{Independence of the time--variable}: $A=A(X)$;
	\item[\label{A2} \small{$(A2)$}] \textit{Symmetry}: $a_{ij}=a_{ji}$, \ $i,j=1, \dots, n$;
	\item[\label{A3} \small{$(A3)$}] \textit{Uniform ellipticity}:  for certain $\mu>0$,
				$$\mu |\xi|^2 \leq \sum_{i,j=1}^n a_{ij}(X) \xi_i \xi_j \leq \frac{1}{\mu} |\xi|^2, \quad X, \xi \in\R^n;$$
	\item[\label{A4} \small{$(A4)$}] \textit{H\"older regularity}: for some $\kappa>0$ and $0<\alpha \leq 1$,
		      $$|a_{ij}(X)-a_{ij}(Y)| \leq \kappa |X-Y|^\alpha, \quad X,Y \in \Rn, \quad i,j=1, \dots, n.$$
\end{itemize}

To state the aforementioned DNR problems, one defines the \emph{lateral boundary} of $\Omega \times (0,T)$ as  $S_T:=\partial \Omega \times (0,T).$ Moreover the \emph{conormal derivative} $\partial_\nu$ associated with the operator $\LL_A$ will be defined as
\begin{equation}\label{defn:normal derivative}
\partial_\nu u (Q,t)
  := \partial_{\nu_A} u (Q,t)
  := \big \langle  \nabla_Y u(Y,t)_{|_{Y=Q}}, A(Q)\, N_Q \big \rangle, \quad (Q,t) \in S_T,
\end{equation}
where $N_Q=(n_1(Q), \dots, n_n(Q))$ denotes the unit inner normal to $\partial \Omega$ at $Q$,
which is defined a.e. on $\partial \Omega$. The conormal derivative is sometimes denoted by $\partial_{\nu_{A}}$ to emphasise its dependence on the matrix $A.$ One can also define the \emph{tangential derivative} $\nabla_T$ of a function $u$ by

\begin{equation}\label{defn:tangential derivative}
  \nabla_T u(Q,t):=  \nabla_Y u(Y,t)_{|_{Y=Q}} - \big \langle  \nabla_Y u(Y,t)_{|_{Y=Q}}, N_Q \big \rangle N_Q, \quad (Q,t) \in S_T.
\end{equation}
\\

Given these preliminaries, we are interested in the solvability, in the weak sense, (see Section \ref{sec:SIBVP} for the proper statements) of the following problems: \\

 \underline{Dirichlet's problem} \hspace{5.5cm}
\underline{Neumann's problem}
$$\left\{
	\begin{array}{l}
		\LL_A u = 0 \quad \text{in} \quad \Omega \times (0,T) \\
		\quad \\
         u(X,0)= 0, \quad  X \in \Omega
        \quad \\
		u=f \in L^2(S_T) \quad \text{on } S_T \\
	\end{array} \right.
	\qquad \qquad \qquad
	\left\{
	\begin{array}{l}
		\LL_A u = 0 \quad \text{in} \quad \Omega \times (0,T) \\
         \quad \\
         u(X,0)= 0, \quad  X \in \Omega
		\quad \\
		\partial_\nu u =f \in L^2(S_T) \quad \text{on } S_T \\
	\end{array} \right.$$
	
	\quad\\
	
	\underline{Regularity problem}\\
	$$
	 \hspace{1.5cm} \left\{
	\begin{array}{l}
		\LL_A u = 0 \quad \text{in} \quad \Omega \times (0,T) \\
         \quad \\
         u(X,0)= 0, \quad  X \in \Omega
		\quad \\
		u=f \in H^{1,1/2}(S_T) \quad \text{on } S_T \\
	\end{array} \right. \hspace{9cm}$$
	
	\quad\\

To achieve our goals, in Section \ref{sec:Basic notations and tools} we introduce the notations and recall some basic harmonic analytic tools which will be used throughout this paper. In Section \ref{sec:FS} we prove quite a few new estimates for various derivatives of the fundamental solution of parabolic divergence-type operators with H\"older-continuous diffusion coefficients, and also prove the corresponding estimates for the Fourier transform of the fundamental solution in the time variable. It should be noted that although the estimates that are obtained are similar to those in the constant coefficient case, this doesn't simplify the study of the solvability of the DNR problems in our setting. Indeed even in the elliptic divergence-form case studied in \cite{KeSh2} and \cite{MiTa}, one also has the same estimates as those for the constant coefficient Laplacian, but that by no means simplifies the problem. The major difficulty in the study of low regularity elliptic and parabolic problems is to show the invertibility of the corresponding layer potentials which is a significant task for equations with rough coefficients.\\

In Section \ref{sec:PLPO} we study the parabolic single and double-layer potentials associated to the operator $\LL_A$ and establish the $L^2$ boundedness of the boundary singular integral corresponding to this operator as well as the $L^2$ boundedness of the non-tangential maximal function associated to the double-layer potential and that of the normal derivative of the single layer potential. In this section we also prove a couple of jump formulas for the aforementioned operators and also show the $L^2$ boundedness of the non-tangential maximal function associated to a fractional derivative of the single layer potential. The proofs of the $L^2$ boundedness here is somewhat simpler since it can be done using parabolic Calder\'on-Zygmund theory, as was carried out by Brown \cite{BrownPhD} in the constant coefficient case. Next, in Section \ref{sec:Fourier transformed Layer potential} we consider the Fourier transform of the equation $\LL_A u(X,t)=0$ (in time) and establish the estimates proved in Section \ref{sec:PLPO} for the Fourier-transformed equation. These estimates will be very useful for us in one of the central sections of our paper namely Section \ref{sec:Parabolic Rellich}.\\

Here we have an approach which allows us, to transfer in a systematic way, estimates for equations with smooth coefficients to those with H\"older-continuous coefficients.
Briefly, the transference method works as follows. One writes the original H\"older-continuous diffusion matrix $A$ as $B+C$ where $B= \tilde{A}+ A-A^{(r)}$ and $C=A^{(r)}-\tilde{A},$ with a smooth diffusion coefficient $\tilde{A}$ and a suitable $A^{(r)}$ such that $\Vert C\Vert_{\infty}=\Vert A^{(r)}-\tilde{A}\Vert_{\infty}$ can be made arbitrary small by choosing $r$ small enough. Then the first step is to prove Rellich estimates for the smooth part $\tilde{A}$ and then transfer those estimates to $B$. Moreover, those terms in the invertibility estimates for the operator associated with $A$ that involve $C$, can be handled using the smallness of $\Vert C\Vert_{\infty}$ and suitable $L^2$ boundedness estimates. Apart from the proofs of the $L^2-$solvability of DNR problems, this transference method is one of the main achievements of the present paper.\\

In Section \ref{sec:Invertibility} we prove the invertibility results which are the key to the solvability of the DNR problems. This is done by using all the information that we have gathered up to that point and an application of Fredholm theory. Finally in Section \ref{sec:SIBVP} we very briefly outline the solvability of the Dirichlet, Neumann and regularity problems, which is as usual, a standard consequence of the invertibility of the relevant singular integral operators.\\

\noindent\textbf{Acknowledgements.}
The authors would like to thank Kaj Nystr\"om for bringing the subject of low regularity parabolic boundary value problems to their attention and for stimulating and encouraging discussions on this topic. We are also grateful to Carlos Kenig for his encouragement and interest in our work. 
\section{Basic notations and tools}\label{sec:Basic notations and tools}

One of the conventions in the theory of boundary value problems on low-regularity domains, which we shall follow hereafter, is that interior points in the domain $\Omega$ will be denoted by $X,Y$ while those of $\partial \Omega$ will be denoted by $P,Q$. Furthermore, it is also important to warn the reader that, when we write $dP$ or $dQ$ in the integrals that are performed over the boundary, then $dP$ or $dQ$ denote the surface measures $d\sigma(P)$ or $d\sigma(Q)$.\\
We sometimes write $a\lesssim b$ as shorthand notation for $a\leq Cb$.  The constant $C$ hidden in the estimate $a\lesssim b$ can be determined by known parameters in a given situation, but in general the values of such constants are not crucial to the problem at hand. Moreover the value of $C$ may differ from line to line.\\

Now let $\Omega^+:=\Omega$ and $\Omega^- := \Rn \setminus \overline{\Omega}$. Then for some $a>0$ one defines the \emph{non-tangential approaching domains} $\g_+(P) \subset \Omega^+$ and $\g_-(P) \subset \Omega^-$ as follows:
\begin{equation}\label{gammaplus}
 \g_+(P):= \big\{X \in \Omega : |X-P| < (1+a)\, \mathrm{dist}(X,\partial \Omega)\big\},
\end{equation}

\begin{equation}\label{gammaminus}
\g_-(P):= \big\{X \in \Rn \setminus \overline{\Omega} : |X-P| < (1+a)\, \mathrm{dist}(X,\partial \Omega)\big\}.
\end{equation}
It is important to note that, for every $P,Q \in \partial \Omega$ and $X \in \g_\pm(P)$ one has
\begin{equation}\label{eq:gXQXP}
 |X-Q|
    > \mathrm{dist}(X,\partial \Omega)
    > \frac{1}{1+a} |X-P|,
\end{equation}
and
$$|X-Q|
    \geq |P-Q|-|X-P|
    > |P-Q|-(1+a)|X-Q|$$
which implies,
\begin{equation}\label{eq:gXQPQ}
 |X-Q|
    > \frac{1}{2+a} |P-Q|.
\end{equation}

These estimates will be used in Sections \ref{sec:PLPO} and \ref{sec:Fourier transformed Layer potential} in connection to the $L^2$ estimates for non-tangential maximal functions associated to various operators.

Given \eqref{gammaplus} and \eqref{gammaminus} and a function $u$, for every $(P,t) \in S_T$,
the \emph{non-tangential maximal function} $u^*$ is defined by
 \begin{equation}\label{defn:nontangentialmax}
  u^\pm_* (P,t)
	:= \sup_{X \in \g_\pm(P)} |u(X,t)|.
\end{equation}
We consider also the \emph{non-tangential limits}
\begin{equation}\label{defn:nontangentiallimitplus}
u^+(P,t)
	:= \lim_{\substack{X \to P \\ X \in \g_{+}(P)}} |u(X,t)|,
\end{equation}
\begin{equation}\label{defn:nontangentiallimitminus}
  u^-(P,t)
	:= \lim_{\substack{X \to P \\ X \in \g_{-}(P)}} |u(X,t)|.
\end{equation}

The two limits defined above are the ones that appear in the jump relations occurring in this paper, see Sections \ref{sec:PLPO} and \ref{sec:Fourier transformed Layer potential}.\\

We denote by $\MM_1$ and $\MM_{\partial \Omega}$ the \emph{Hardy-Littlewood maximal operators} on $\R$ and $\partial \Omega$ respectively, that is,
$$\MM_1(h)(t)
      = \sup_{r>0} \frac{1}{2r} \int_{|t-s|<r} |h(s)|\, ds,$$
and
$$\MM_{\partial \Omega}(g)(P)
      = \sup_{r>0} \frac{1}{r^{n-1}} \int_{\partial \Omega \cap\{|P-Q|<r\}} |g(Q)|\, dQ.$$
It is well-known that both operators are bounded in $L^2$. We write $\MM$ to refer to $\MM_1$ or $\MM_{\partial \Omega}$, indistinctly. We also recall the following well-known result which will be used in the proofs of our $L^2$ estimates in Sections \ref{sec:PLPO} and \ref{sec:Fourier transformed Layer potential}.

\begin{Lem}\label{Lem:Duo}
    Let $\phi$ be a positive, radial, decreasing and integrable function. Then
    $$\sup_{r>0} |\phi_r * F (\omega)|
	\leq \|\phi\|_1 \MM(F)(\omega),$$
    where $*$ is the usual convolution in $\R^m$, $\phi_r(\omega)=\phi(\omega/r)/r^m$ and $m=1$ or $m=n-1$.
\end{Lem}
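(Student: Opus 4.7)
The plan is to establish the pointwise bound $|\phi_r * F(\omega)| \leq \|\phi\|_1 \MM(F)(\omega)$ uniformly in $r > 0$; taking the supremum then yields the lemma. Since $\phi_r$ is again positive, radial, decreasing, and integrable with $\|\phi_r\|_1 = \|\phi\|_1$ (by the change of variables $\omega \mapsto \omega/r$), it is enough to prove $|\phi * F(\omega)| \leq \|\phi\|_1 \MM(F)(\omega)$ for an arbitrary such $\phi$, and then to apply the result to $\phi_r$.

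The argument proceeds by the classical reduction to indicators of centered balls. First, since $\phi \geq 0$, we have $|\phi * F| \leq \phi * |F|$, so we may assume $F \geq 0$. Because $\phi$ is nonnegative and nonincreasing as a function of $|\omega|$, a layer-cake decomposition expresses it as the pointwise increasing limit of finite nonnegative combinations
$$\psi_N(\omega) = \sum_{i=1}^{N} c_i\, \mathbf{1}_{B(0, R_i)}(\omega), \qquad c_i > 0,\ R_i > 0,$$
where $B(0, R)$ denotes the ball of radius $R$ centered at the origin in $\R^m$ (a symmetric interval in the case $m=1$). Monotone convergence reduces the task to establishing the inequality $\psi_N * F(\omega) \leq \|\psi_N\|_1 \MM(F)(\omega)$ for every such simple majorant.

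For each centered ball one computes
$$\mathbf{1}_{B(0, R_i)} * F(\omega) = \int_{B(\omega, R_i)} F(y)\, dy \leq |B(0,R_i)|\, \MM(F)(\omega),$$
directly from the definition of the Hardy--Littlewood maximal operator in $\R^m$ (the same estimate is valid for $\MM_{\partial \Omega}$ when the convolution is realised in local graph coordinates, since the surface integral over a boundary ball of radius $R_i$ is controlled by $R_i^{n-1}\,\MM_{\partial\Omega}(F)(\omega)$). Summing these estimates with the weights $c_i$ gives
$$\psi_N * F(\omega) \leq \sum_{i=1}^{N} c_i\, |B(0, R_i)|\, \MM(F)(\omega) = \|\psi_N\|_1\, \MM(F)(\omega),$$
and letting $N \to \infty$ completes the proof.

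No real obstacle is anticipated; this is the standard majorisation principle for radial decreasing approximations of the identity. The only point requiring care is that the simple approximants $\psi_N$ can be built from indicators of balls \emph{centered at the origin}, which is precisely where the hypotheses of radiality and monotonicity are used.
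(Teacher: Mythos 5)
Your proof is correct and is the classical layer-cake argument; the paper itself does not prove Lemma \ref{Lem:Duo} but cites it as well-known (the label ``Duo'' points to Duoandikoetxea's textbook, where precisely this majorisation---scale to $r=1$, reduce to $F\geq 0$, approximate the radial decreasing $\phi$ from below by simple sums of indicators of centered balls, and use monotone convergence---is the standard proof). Your parenthetical remark about $\MM_{\partial\Omega}$ is the right caveat: the lemma as stated is slightly informal, since the convolution lives on $\R^{n-1}$ while $\MM_{\partial\Omega}$ averages over surface balls with normalisation $r^{-(n-1)}$ rather than the Euclidean ball volume, so the application on the boundary implicitly passes through Lipschitz graph coordinates and absorbs the resulting comparability constants into $\lesssim$ wherever the lemma is invoked. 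There is no gap; this is exactly the intended argument.
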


We shall also make a repeated use of the so called \emph{Schur's lemma}

\begin{Lem}\label{Lem:Schurslemma}
Let $X,\,Y$ be two measurable spaces. Let $T$ be an integral operator with Schwartz kernel $K(x,y)$, $x\in X$, $y\in Y$
$$ T f(x)=\int_{Y} K(x,y)f(y)\,dy.$$
If
$$\int_{Y} |K(x,y)|\,dy\leq \alpha$$
for almost all $x$ and
$$\int_{X} |K(x,y)|\,dx\leq \beta $$
for almost all $y$, then $T$ extends to a bounded operator $T:L^2 (Y)\to L^2 (X)$ with the operator norm
$$ \Vert T\Vert_{L^2(Y)\to L^2 (X)} \le\sqrt{\alpha\beta}.$$
\end{Lem}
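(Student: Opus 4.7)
My plan is to prove the estimate via a symmetric Cauchy--Schwarz argument on the kernel, combined with Fubini's theorem. The key device is to factor the absolute value of the kernel as $|K(x,y)| = |K(x,y)|^{1/2} \cdot |K(x,y)|^{1/2}$ so that both hypotheses can be brought to bear, one after integration in $y$ and the other after integration in $x$.

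First, starting from $|Tf(x)| \leq \int_{Y}|K(x,y)||f(y)|\,dy$, I would apply the Cauchy--Schwarz inequality in the form
\[
|Tf(x)|^2 \leq \left(\int_{Y} |K(x,y)|\,dy\right)\left(\int_{Y} |K(x,y)|\,|f(y)|^2\,dy\right).
\]
Using the first hypothesis, the first factor is bounded by $\alpha$ for almost every $x$, giving the pointwise estimate
\[
|Tf(x)|^2 \leq \alpha \int_{Y} |K(x,y)|\,|f(y)|^2\,dy.
\]

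Next, I would integrate in $x \in X$ and invoke Fubini's theorem (applicable since the integrand is non-negative) to exchange the order of integration:
\[
\|Tf\|_{L^2(X)}^2 \leq \alpha \int_{Y} |f(y)|^2 \left(\int_{X} |K(x,y)|\,dx\right)\,dy.
\]
The inner integral is bounded by $\beta$ for almost every $y$ by the second hypothesis, which immediately yields $\|Tf\|_{L^2(X)}^2 \leq \alpha\beta\,\|f\|_{L^2(Y)}^2$, and taking square roots gives the claimed operator norm bound.

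There is no real obstacle here; the proof is essentially one line once the correct splitting is written down. The only point of care is the qualitative one: the Cauchy--Schwarz step requires $f$ to be such that the integrals make sense, so I would first establish the estimate for $f$ belonging to a dense subspace (for example, simple functions with finite measure support) and then extend $T$ by continuity to all of $L^2(Y)$, obtaining the bounded extension asserted in the statement.
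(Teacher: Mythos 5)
Your proof is correct and is the standard Schur-test argument: split $|K|=|K|^{1/2}\cdot|K|^{1/2}$, apply Cauchy--Schwarz in $y$ to pull out a factor $\alpha$, then integrate in $x$ and use Tonelli to pull out a factor $\beta$. The paper states this lemma without proof as a well-known fact, so there is nothing to compare against; your write-up is exactly the expected argument, and your closing remark about first working on a dense subspace and then extending by continuity correctly accounts for the word ``extends'' in the statement.
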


For the application of the Fredholm theory in Section \ref{sec:Invertibility} we would also need the following elementary functional analytic lemmas. We include the proofs for the convenience of the reader.
\begin{Lem}\label{Lem:compact}
  Let $\delta>0$, $\Omega$ be a bounded domain in $\R^n$ and $C$ be the operator defined by
  $$C(g)(P)
    := \int_{\partial \Omega} \frac{g(Q)}{|Q-P|^{n-1-\delta}}\, dQ, \quad P \in \partial \Omega.$$
  Then, $C$ is a compact operator in $L^2(\partial \Omega)$.
\end{Lem}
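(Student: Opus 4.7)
The plan is to exhibit $C$ as the operator-norm limit of a sequence of Hilbert--Schmidt operators, and thus compact. The key observation is that the kernel $|P-Q|^{-(n-1-\delta)}$ is only weakly singular on $\partial\Omega$ (which has Hausdorff dimension $n-1$), so splitting off a small neighborhood of the diagonal gives a contribution whose $L^2$ operator norm can be controlled by Schur's lemma.

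Concretely, for each $\eps>0$ I would write $C = C_\eps^{\mathrm{in}} + C_\eps^{\mathrm{out}}$, where
$$C_\eps^{\mathrm{in}}(g)(P) := \int_{\partial\Omega \cap \{|P-Q|<\eps\}} \frac{g(Q)}{|Q-P|^{n-1-\delta}}\,dQ,
\qquad C_\eps^{\mathrm{out}}(g)(P) := \int_{\partial\Omega \cap \{|P-Q|\geq \eps\}} \frac{g(Q)}{|Q-P|^{n-1-\delta}}\,dQ.$$
To bound $C_\eps^{\mathrm{in}}$, I use that $\partial\Omega$ is $(n-1)$-dimensional, so the surface measure of $\partial\Omega\cap\{|P-Q|<r\}$ is $\lesssim r^{n-1}$ uniformly in $P$. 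A standard layer-cake (or radial) computation then gives
$$\int_{\partial\Omega \cap \{|P-Q|<\eps\}} \frac{dQ}{|Q-P|^{n-1-\delta}} \lesssim \int_0^\eps r^{\delta-1}\,dr = \frac{\eps^\delta}{\delta},$$
uniformly in $P$, and by symmetry the same estimate holds swapping $P$ and $Q$. Schur's lemma (Lemma \ref{Lem:Schurslemma}) therefore yields $\|C_\eps^{\mathrm{in}}\|_{L^2(\partial\Omega)\to L^2(\partial\Omega)} \lesssim \eps^\delta/\delta \to 0$ as $\eps\to 0$.

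For $C_\eps^{\mathrm{out}}$, its kernel is bounded pointwise by $\eps^{-(n-1-\delta)}$ and supported in the bounded set $\partial\Omega \times \partial\Omega$ (which has finite surface measure since $\Omega$ is bounded Lipschitz). Consequently the kernel belongs to $L^2(\partial\Omega\times\partial\Omega)$, so $C_\eps^{\mathrm{out}}$ is Hilbert--Schmidt and in particular compact on $L^2(\partial\Omega)$.

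Combining the two, $C$ is the operator-norm limit as $\eps\to 0$ of the compact operators $C_\eps^{\mathrm{out}}$, and since the space of compact operators is closed in the operator norm, $C$ itself is compact. The only slightly delicate point is controlling the near-diagonal piece, but the loss of $\delta$ in the singularity makes Schur's lemma apply with room to spare; no cancellation or deeper structural property of $\partial\Omega$ beyond the Ahlfors-regularity implicit in the Lipschitz hypothesis is needed.
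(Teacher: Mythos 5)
Your proof is correct and follows the same high-level strategy as the paper: approximate $C$ in operator norm by Hilbert--Schmidt (hence compact) operators and use Schur's lemma (Lemma \ref{Lem:Schurslemma}) to control the error. The only difference is cosmetic---the paper replaces the kernel by $(|Z|+\eps)^{-(n-1-\delta)}$ and applies dominated convergence to show the $L^1$ difference vanishes, whereas you truncate away from the diagonal and estimate the near-diagonal contribution directly via the Ahlfors regularity of $\partial\Omega$; both routes rely on the same weak-singularity structure and yield the same conclusion.
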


\begin{proof}
 We write
  $$C(g)(P)
    = \int_{\partial \Omega} K(Q-P) g(Q) dQ, \quad P \in \partial \Omega,$$
  where $K(Z):=|Z|^{-n+1+\delta}$. Analogously we consider, for each $\eps>0$, the operator $C_\eps$
  associated to the kernel $K_\eps(Z):=(|Z|+\eps)^{-n+1+\delta}$. Since,
  $$\int_{\partial \Omega} \int_{\partial \Omega} |K_\eps(Q-P)|^2 \, dQ \, dP
      \leq \frac{|\partial \Omega|^2}{\eps^{2(n-1-\delta)}} < \infty,$$
  for each $\eps>0$, $C_\eps$ is a Hilbert-Schmidt operator and hence it is compact. Moreover Lemma \ref{Lem:Schurslemma} and the Lebesgue dominated convergence theorem yield
  \begin{align*}
    \| C_\eps - C \|_{L^2(\partial \Omega) \to L^2(\partial \Omega)}
	\leq \| K_\eps - K \|_{L^1(\partial \Omega)}
	\longrightarrow 0, \quad \text{as } \eps \to 0.
  \end{align*}
  Therefore $C$ is a compact operator.
\end{proof}

In the estimates for the difference of the parabolic single layer potentials associated to two different diffusion coefficients, the following equality from the theory of Markov chains is useful.

\begin{Prop}\label{Chapman-Kolmogorov}$($Chapman-Kolmogorov formula$)$
Let $r$, $s$ and $t$ be real numbers with $r<s<t$ and let $\lambda>0$. Then
\begin{equation}\label{chapman-kolmogorov}
  \int_{\R^n} e^{\frac{-\lambda |w-v|^2}{2(t-s)}}\, e^{\frac{-\lambda |v-u|^2}{2(s-r)}}\, dv=(2\pi )^{-n/2}\Big(\frac{t-s}{\lambda}\Big)^{n/2}\Big(\frac{s-r}{\lambda}\Big)^{n/2}\Big(\frac{t-r}{\lambda}\Big)^{-n/2} e^{\frac{-\lambda |w-u|^2}{2(t-r)}}.
\end{equation}
\end{Prop}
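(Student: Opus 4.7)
The plan is to derive the identity from the semigroup (convolution) property of centred Gaussian densities. Put $\sigma_1^2:=(t-s)/\lambda$ and $\sigma_2^2:=(s-r)/\lambda$; both are positive by the hypothesis $r<s<t$ and $\lambda>0$. The two exponentials on the left-hand side are then, up to normalising constants, the centred Gaussian densities $p_{\sigma_j}(x):=(2\pi\sigma_j^2)^{-n/2}e^{-|x|^2/(2\sigma_j^2)}$ on $\R^n$ evaluated at $w-v$ and $v-u$ respectively, so
$$\int_{\R^n}e^{-\frac{\lambda|w-v|^2}{2(t-s)}}e^{-\frac{\lambda|v-u|^2}{2(s-r)}}\,dv=(2\pi\sigma_1^2)^{n/2}(2\pi\sigma_2^2)^{n/2}\,(p_{\sigma_1}*p_{\sigma_2})(w-u).$$

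Next, I would invoke the standard identity $p_{\sigma_1}*p_{\sigma_2}=p_{\sigma_3}$ with $\sigma_3^2:=\sigma_1^2+\sigma_2^2=(t-r)/\lambda$, which yields $(p_{\sigma_1}*p_{\sigma_2})(w-u)=(2\pi\sigma_3^2)^{-n/2}e^{-\lambda|w-u|^2/(2(t-r))}$. Substituting into the displayed equation and collecting the constants produces the right-hand side of \eqref{chapman-kolmogorov}: the three factors of $\sigma_j^2$ combine into $((t-s)/\lambda)^{n/2}((s-r)/\lambda)^{n/2}((t-r)/\lambda)^{-n/2}$, while the powers of $2\pi$ collapse by simple bookkeeping, so the only work at the end is to verify the overall $(2\pi)$-exponent and then read off the exponential.

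The convolution identity $p_{\sigma_1}*p_{\sigma_2}=p_{\sigma_3}$ itself can be verified in a couple of lines by completing the square in $v$: one writes
$$\frac{|w-v|^2}{2\sigma_1^2}+\frac{|v-u|^2}{2\sigma_2^2}=\frac{\sigma_1^2+\sigma_2^2}{2\sigma_1^2\sigma_2^2}\,\Bigl|v-\frac{\sigma_2^2 w+\sigma_1^2 u}{\sigma_1^2+\sigma_2^2}\Bigr|^2+\frac{|w-u|^2}{2(\sigma_1^2+\sigma_2^2)},$$
and then evaluates the resulting translated Gaussian integral in $v$ using the standard formula $\int_{\R^n}e^{-c|v|^2}\,dv=(\pi/c)^{n/2}$, with $c=(\sigma_1^2+\sigma_2^2)/(2\sigma_1^2\sigma_2^2)$. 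Alternatively, one can cite the probabilistic fact that the sum of two independent centred Gaussian vectors with covariances $\sigma_1^2 I$ and $\sigma_2^2 I$ has covariance $(\sigma_1^2+\sigma_2^2)I$. There is no genuine obstacle in the argument; the whole content is an elementary Gaussian computation, and the only place where care is needed is in tracking the normalising constants and the powers of $\lambda$ so that they align exactly with \eqref{chapman-kolmogorov}.
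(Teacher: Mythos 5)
Your approach — recognizing the left-hand side as the convolution of two centred Gaussians and invoking the semigroup identity $p_{\sigma_1}*p_{\sigma_2}=p_{\sigma_3}$ with $\sigma_3^2=\sigma_1^2+\sigma_2^2$ — is the standard and essentially unique way to establish this, and the paper does not actually give its own argument (it merely cites Proposition 3.2.3 of Johnson--Lapidus), so there is no competing proof to compare against. Your completion-of-the-square derivation of the convolution identity is also correct.

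However, you should carry out the ``simple bookkeeping'' of the $2\pi$ factors to the end, because it reveals that the constant in the statement as printed is wrong. Writing
$$\text{LHS}=(2\pi\sigma_1^2)^{n/2}(2\pi\sigma_2^2)^{n/2}\,p_{\sigma_3}(w-u)=(2\pi\sigma_1^2)^{n/2}(2\pi\sigma_2^2)^{n/2}(2\pi\sigma_3^2)^{-n/2}e^{-|w-u|^2/(2\sigma_3^2)},$$
the powers of $2\pi$ give $(2\pi)^{n/2+n/2-n/2}=(2\pi)^{+n/2}$, not $(2\pi)^{-n/2}$. A direct sanity check with $n=1$, $\lambda=1$, $w=u=0$, $t-s=s-r=1$ confirms this: the integral is $\int_{\R}e^{-v^2}\,dv=\sqrt{\pi}=(2\pi)^{1/2}\cdot 2^{-1/2}$, whereas the printed formula would give $(2\pi)^{-1/2}\cdot 2^{-1/2}$. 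So \eqref{chapman-kolmogorov} contains a sign error in the exponent of $2\pi$; the correct prefactor is $(2\pi)^{n/2}$. Your argument is sound, but you asserted that the bookkeeping ``produces the right-hand side of \eqref{chapman-kolmogorov},'' which is not quite true as the formula is printed, and a careful proof should flag the discrepancy rather than pass it by.
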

See \cite[Proposition 3.2.3]{LapJohn}  for a proof of the Chapman-Kolmogorov formula.\\

We shall also need the following elementary functional analytic lemma, which is useful in connection to the invertibility of the boundary singular integral:

\begin{Lem}\label{surjectivity of Fredholm}
Let $T$ be a bounded linear operator from a Hilbert space $H$ into itself.
Furthermore, assume that $T$ is injective, has closed range, and that
$T^* - T $ is a compact operator. Then $T$ is also surjective.
\end{Lem}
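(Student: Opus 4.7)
The plan is to reduce surjectivity of $T$ to injectivity of $T^*$, and then exploit the compactness of $K:=T^{*}-T$ via Fredholm index theory. Since $T$ has closed range, the closed range theorem yields $\mathrm{Range}(T)=(\ker T^{*})^{\perp}$, so it suffices to prove $\ker T^{*}=\{0\}$. Writing $T^{*}=T+K$ with $K$ compact, the first step is to show that $\ker T^{*}$ is finite-dimensional. Given a bounded sequence $\{x_n\}\subset \ker T^{*}$, the identity $Tx_n=-Kx_n$ together with compactness of $K$ produces a subsequence along which $Tx_{n_k}$ converges. Injectivity and closed range of $T$ provide, via the open mapping theorem applied to $T:H\to \mathrm{Range}(T)$, a constant $c>0$ with $\|x\|\leq c\,\|Tx\|$ for every $x\in H$; hence $\{x_{n_k}\}$ is Cauchy. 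This shows that the closed unit ball of $\ker T^{*}$ is compact, and by Riesz's theorem $\dim \ker T^{*}<\infty$.

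With $\dim\ker T=0$, closed range of $T$, and $\dim\ker T^{*}<\infty$, the operator $T$ is Fredholm with $\mathrm{ind}(T)=-\dim\ker T^{*}$. The second step is to argue that this index vanishes. Stability of the Fredholm index under compact perturbations gives $\mathrm{ind}(T)=\mathrm{ind}(T+K)=\mathrm{ind}(T^{*})$, while the general identity $\mathrm{ind}(T^{*})=-\mathrm{ind}(T)$ combined with this equality forces $\mathrm{ind}(T)=0$. Consequently $\dim\ker T^{*}=\dim\ker T=0$, and so $T$ is surjective.

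The essential obstacle is establishing the finite-dimensionality of $\ker T^{*}$: without the compactness of $T^{*}-T$, this kernel could be arbitrarily large and Fredholm index theory would be inapplicable. The crucial observation is that the injectivity and closed range of $T$ upgrade to a lower bound $\|x\|\lesssim \|Tx\|$, which, when paired with compactness of $K$, makes $\ker T^{*}$ locally compact. Once this finite-dimensionality is in hand, the index computation is routine and the conclusion follows immediately.
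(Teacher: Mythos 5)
Your proof is correct and follows the same core Fredholm-index argument as the paper: $\mathrm{ind}(T^*) = \mathrm{ind}(T)$ by compact-perturbation stability, $\mathrm{ind}(T^*) = -\mathrm{ind}(T)$ in general, hence $\mathrm{ind}(T) = 0$, which together with injectivity gives surjectivity. You add a useful preliminary step (a Riesz-compactness argument showing $\dim\ker T^* < \infty$, using the lower bound $\|x\|\lesssim\|Tx\|$ coming from injectivity and closed range) to place $T$ squarely in the Fredholm class before invoking index stability, whereas the paper leans directly on the fact that for an injective, closed-range operator the generalized index is already stable under compact perturbations, thereby skipping the separate verification that $\dim\ker T^*$ is finite.
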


\begin{proof}
  Since $T$ is a bounded, injective and has closed range, it is well-known that $\mathrm{ind}\,(T+K)=\mathrm{ind}\,(T)$ for all compact operators $K: H\to H$, where $\mathrm{ind}\,T:=\mathrm{dim\, Ker}\,T-\mathrm{dim\, Coker}\,T,$ denotes the \emph{Fredholm index} of $T.$ Therefore, since $\mathrm{ind}(T)= -\mathrm{ind}(T^*)$ and
$\mathrm{ind}\,(T^*)=\mathrm{ind}\,(T + T^*-T) =\mathrm{ind}\,(T),$ due to the compactness assumption on $T^*-T$, we also have that $\mathrm{ind}\,(T)=-\mathrm{ind}\,(T).$ Therefore $\mathrm{ind}\,(T)=0,$ which together with the injectivity of $T$, yields the surjectivity of $T.$\\
\end{proof}

Lemmas \ref{Lem:Atilde} and \ref{Lem:Ar} below are taken from \cite{KeSh2} and will be used in the proof of the Parabolic Rellich estimates in Section \ref{sec:Parabolic Rellich}.

Let $r_0:=diam \ \Omega <\infty$. We choose a cube $\mathcal{Q}_\Omega \subset \Rn$ such that $\Omega \subset \mathcal{Q}_\Omega$.
We call $2\mathcal{Q}_\Omega$ to the cube with the same centre as $\mathcal{Q}_\Omega$ but with the double size.

\begin{Lem} {\cite[Lemma 7.1]{KeSh2}}\label{Lem:Atilde}
  Given a matrix $A$ satisfying properties \hyperref[A1]{\small{$(A1)$}} -- \hyperref[A4]{\small{$(A4)$}},
  there exists $\tilde{A}\in C^\infty(2 \mathcal{Q}_\Omega \setminus \partial \Omega)$,
  such that $\tilde{A}=A$ on $\partial \Omega .$ Moreover
  \hyperref[A1]{\small{$(A1)$}} -- \hyperref[A4]{\small{$(A4)$}} hold for $\tilde{A}$ with a certain H\"older exponent $\alpha_0 \in (0, \alpha]$ and
  \begin{equation}\label{gradient of A}
   |\nabla \tilde{A}(X)|
        \lesssim \frac{1}{\mathrm{dist}(X,\partial \Omega)^{1-\alpha_0}}, \quad X \in 2 \mathcal{Q}_\Omega \setminus \partial \Omega.
  \end{equation}
\end{Lem}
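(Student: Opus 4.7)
The plan is to construct $\tilde A$ by mollifying $A$ at a scale that vanishes as one approaches $\partial\Omega$. First I would invoke Stein's regularised distance (Chapter VI of his \emph{Singular Integrals and Differentiability Properties of Functions}): there exists $\tilde\rho\in C^\infty(\R^n\setminus\partial\Omega)$ comparable to $\mathrm{dist}(\cdot,\partial\Omega)$ with $|\nabla^k\tilde\rho(X)|\lesssim\mathrm{dist}(X,\partial\Omega)^{1-k}$ for every $k\geq 1$. Fix a non-negative $\psi\in C_c^\infty(\R^n)$ supported in the unit ball with $\int\psi=1$, and define, for $X\in 2\mathcal{Q}_\Omega\setminus\partial\Omega$,
$$\tilde a_{ij}(X):=\int_{\R^n} a_{ij}(Y)\,\tilde\rho(X)^{-n}\,\psi\Bigl(\frac{X-Y}{\tilde\rho(X)}\Bigr)\,dY,\qquad i,j=1,\dots,n,$$
while setting $\tilde a_{ij}(P):=a_{ij}(P)$ for $P\in\partial\Omega$. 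Smoothness on $2\mathcal{Q}_\Omega\setminus\partial\Omega$ is immediate from the smoothness of $\tilde\rho$ and $\psi$, and the integrand has compact support in $Y$ for each fixed $X$, so differentiation under the integral is legitimate.

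Properties $(A1)$--$(A3)$ transfer for free: time-independence is built in, symmetry is preserved by averaging, and ellipticity holds because $\sum\tilde a_{ij}(X)\xi_i\xi_j$ is a convex combination of values of $\sum a_{ij}(Y)\xi_i\xi_j$, hence pinched between $\mu|\xi|^2$ and $\mu^{-1}|\xi|^2$. Continuity at $\partial\Omega$ comes from $(A4)$: since the mollifier is supported in a ball of radius $\sim\tilde\rho(X)$,
$$|\tilde A(X)-A(X)|\lesssim\tilde\rho(X)^\alpha\longrightarrow 0\quad\text{as }X\to P\in\partial\Omega,$$
while $A(X)\to A(P)=\tilde A(P)$ by continuity of $A$.

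The gradient estimate is the heart of the matter. Since $\int\tilde\rho(X)^{-n}\psi((X-Y)/\tilde\rho(X))\,dY=1$ independently of $X$, differentiating under the integral and subtracting the constant $a_{ij}(X)$ yields
$$\nabla_X\tilde a_{ij}(X)=\int_{\R^n}\bigl(a_{ij}(Y)-a_{ij}(X)\bigr)\,\nabla_X\!\Bigl[\tilde\rho(X)^{-n}\psi\Bigl(\tfrac{X-Y}{\tilde\rho(X)}\Bigr)\Bigr]\,dY.$$
Using $|\nabla\tilde\rho|\lesssim 1$, the gradient of the kernel is $O(\tilde\rho(X)^{-n-1})$ on a ball of radius $\sim\tilde\rho(X)$ around $X$, while $(A4)$ forces $|a_{ij}(Y)-a_{ij}(X)|\lesssim\tilde\rho(X)^\alpha$ throughout that ball; multiplying by the volume $\sim\tilde\rho(X)^n$ of the support produces $|\nabla\tilde A(X)|\lesssim\tilde\rho(X)^{\alpha-1}\asymp\mathrm{dist}(X,\partial\Omega)^{\alpha-1}$, which is \eqref{gradient of A} with $\alpha_0=\alpha$.

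To recover $(A4)$ globally for $\tilde A$ I would split into cases according to the relative sizes of $|X-Y|$, $\mathrm{dist}(X,\partial\Omega)$, $\mathrm{dist}(Y,\partial\Omega)$. When $X$ and $Y$ lie on the same component of $\R^n\setminus\partial\Omega$ and $|X-Y|\leq\tfrac12\min(\mathrm{dist}(X,\partial\Omega),\mathrm{dist}(Y,\partial\Omega))$, the segment $\overline{XY}$ stays at distance $\gtrsim|X-Y|$ from $\partial\Omega$, so integrating the gradient bound gives $|\tilde A(X)-\tilde A(Y)|\lesssim|X-Y|\cdot|X-Y|^{\alpha-1}=|X-Y|^\alpha$. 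Otherwise one picks $P\in\partial\Omega$ with $|X-P|+|P-Y|\lesssim|X-Y|$ and combines $\tilde A(P)=A(P)$ with the estimate $|\tilde A(X)-A(X)|\lesssim\mathrm{dist}(X,\partial\Omega)^\alpha$ and the Hölder regularity of $A$. The main technical nuisance I expect is the bookkeeping across $\partial\Omega$ in this last step, where one needs the Hölder control of $\tilde A$ to survive the transition between its two sides; the lemma's freedom to choose $\alpha_0\in(0,\alpha]$ absorbs any mild loss, though a careful execution shows $\alpha_0=\alpha$ in fact suffices.
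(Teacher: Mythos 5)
The paper does not prove this lemma itself but cites \cite[Lemma 7.1]{KeSh2}; the construction there is precisely the mollification-at-distance-scale argument you give, built on Stein's regularized distance. Your proof is correct and matches that approach, and your direct computation of $\nabla\tilde A$ in fact shows that one may take $\alpha_0=\alpha$, which is consistent with the lemma's weaker claim that $\alpha_0\in(0,\alpha]$ suffices.
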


\begin{Lem}{\cite[Lemma 7.2]{KeSh2}}\label{Lem:Ar}
  Let $A$ be a matrix satisfying properties \hyperref[A1]{\small{$(A1)$}} -- \hyperref[A4]{\small{$(A4)$}}.
  Fix $\theta \in C^\infty_c(-2r_0,2r_0)$ such that $0 \leq \theta \leq 1$ and $\theta \equiv 1$ on $(-r_0,r_0)$.
  Define, for each $0<r\leq 1$,
  \begin{equation*}\label{eq:defArho}
   A^{(r)}(X)
      := \theta \Big( \frac{\mathrm{dist}(X,\partial \Omega)}{r} \Big) A(X)
	+ \Big[1-\theta \Big( \frac{\mathrm{dist}(X,\partial \Omega)}{r} \Big)\Big] \tilde{A} (X), \quad X \in 2 \mathcal{Q}_\Omega,
  \end{equation*}
  where $\tilde{A}$ is the matrix given in $\mathrm{Lemma}$ $\ref{Lem:Atilde}.$ Then $A^{(r)}$ satisfies properties
  \hyperref[A1]{\small{$(A1)$}} -- \hyperref[A4]{\small{$(A4)$}} with the same H\"older exponent $\alpha_0 \in (0, \alpha]$ as in $\mathrm{Lemma}$ $\ref{Lem:Atilde}.$ Moreover $\|A^{(r)} - \tilde{A}\|_\infty \lesssim r^{\alpha_0}.$\\
\end{Lem}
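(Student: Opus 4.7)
The plan is to verify \hyperref[A1]{\small{$(A1)$}}--\hyperref[A4]{\small{$(A4)$}} for $A^{(r)}$ one property at a time, and then establish the uniform closeness estimate. Writing $\eta_r(X):=\theta\bigl(\mathrm{dist}(X,\partial\Omega)/r\bigr)$, we have $A^{(r)}=\eta_r A + (1-\eta_r)\tilde{A}$ with $\eta_r(X)\in[0,1]$, so $A^{(r)}$ is a pointwise convex combination of two matrices already satisfying \hyperref[A1]{\small{$(A1)$}}--\hyperref[A4]{\small{$(A4)$}} (the second by virtue of Lemma~\ref{Lem:Atilde}). Consequently, time-independence \hyperref[A1]{\small{$(A1)$}} and symmetry \hyperref[A2]{\small{$(A2)$}} are preserved automatically, and the uniform ellipticity \hyperref[A3]{\small{$(A3)$}} passes to $A^{(r)}$ with the worse of the two ellipticity constants of $A$ and $\tilde{A}$.

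For the H\"older regularity \hyperref[A4]{\small{$(A4)$}} I would use the algebraic decomposition
\[
A^{(r)}(X)-A^{(r)}(Y) = \eta_r(X)\bigl[A(X)-A(Y)\bigr] + (1-\eta_r(X))\bigl[\tilde{A}(X)-\tilde{A}(Y)\bigr] + \bigl[\eta_r(X)-\eta_r(Y)\bigr]\bigl[A(Y)-\tilde{A}(Y)\bigr].
\]
The first two terms are controlled directly by the H\"older norms (with exponent $\alpha_0$) of $A$ and $\tilde{A}$. For the third, since $\theta'$ is bounded and $\mathrm{dist}(\cdot,\partial\Omega)$ is $1$-Lipschitz, $\eta_r$ is Lipschitz with constant $O(1/r)$; combined with the trivial $|\eta_r(X)-\eta_r(Y)|\leq 2$ and interpolation over the two cases $|X-Y|\leq r$ and $|X-Y|>r$, this yields $|\eta_r(X)-\eta_r(Y)|\lesssim r^{-\alpha_0}|X-Y|^{\alpha_0}$. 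Multiplying by the uniform bound on $A-\tilde{A}$ produces \hyperref[A4]{\small{$(A4)$}} for $A^{(r)}$ with exponent $\alpha_0$ and a H\"older constant of size $O(r^{-\alpha_0})$.

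For the closeness bound $\|A^{(r)}-\tilde{A}\|_\infty \lesssim r^{\alpha_0}$, I write $A^{(r)}-\tilde{A}=\eta_r(A-\tilde{A})$, so only $X$ with $\mathrm{dist}(X,\partial\Omega)\leq 2r_0\, r$ contribute. Choosing $X_0\in\partial\Omega$ realising this distance and using the crucial fact from Lemma~\ref{Lem:Atilde} that $A(X_0)=\tilde{A}(X_0)$, the H\"older estimates for $A$ and $\tilde{A}$ give
\[
|A(X)-\tilde{A}(X)| \leq |A(X)-A(X_0)|+|\tilde{A}(X_0)-\tilde{A}(X)| \lesssim \mathrm{dist}(X,\partial\Omega)^{\alpha_0} \lesssim r^{\alpha_0},
\]
which is the required uniform estimate.

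I do not anticipate a real obstacle; the only mildly delicate step is the interpolation argument for the cutoff $\eta_r$, which makes explicit that the H\"older constant of $A^{(r)}$ degenerates like $r^{-\alpha_0}$ as $r\to 0$ even though the H\"older exponent $\alpha_0$ is preserved---a feature that will of course be important when $A^{(r)}$ is subsequently used in the Rellich transference argument.
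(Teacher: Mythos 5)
The paper does not supply a proof of this lemma --- it is imported verbatim from \cite[Lemma 7.2]{KeSh2} --- so there is no in-paper argument to compare against. Your proof establishes the statement as quoted and is correct: the pointwise convex-combination structure gives (A1)--(A3) at once, your three-term identity for $A^{(r)}(X)-A^{(r)}(Y)$ is algebraically right, the interpolation bound for $\eta_r$ is valid, and the $L^\infty$ estimate is exactly the intended argument, hinging on $A=\tilde{A}$ on $\partial\Omega$ as you correctly identify. One point of refinement: your closing claim that the H\"older seminorm of $A^{(r)}$ necessarily ``degenerates like $r^{-\alpha_0}$'' is an artefact of your estimate rather than a genuine property. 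Using the symmetric version of your decomposition (with third term $[\eta_r(X)-\eta_r(Y)]\,[A(Z)-\tilde{A}(Z)]$, where $Z$ is whichever of $X,Y$ is closer to $\partial\Omega$ --- the identity works either way), any configuration in which the bracket $\eta_r(X)-\eta_r(Y)$ is nonzero forces $\mathrm{dist}(Z,\partial\Omega)<2r_0 r$, so $|A(Z)-\tilde{A}(Z)|\lesssim r^{\alpha_0}$ by the very same boundary-coincidence argument you already use for the uniform bound. Combined with $|\eta_r(X)-\eta_r(Y)|\lesssim\min\{1,|X-Y|/r\}$, this yields a bound $\lesssim |X-Y|^{\alpha_0}$ with constant \emph{independent} of $r$, which is the cleaner form and the one the cited reference actually provides; it also spares you from having to track $r$-dependence of the H\"older seminorm through the fundamental-solution estimates for $B=\tilde{A}+A-A^{(r)}$ later on.
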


In the investigation of  the solvability of the regularity problem, we would need to deal with fractional Sobolev spaces. The fractional derivatives are defined as follows:

Let $f \in C^{\infty}(-\infty, T)$ and $f(t) = 0$ for $t < 0.$  Then letting $\Gamma(\sigma)$ denote Euler's gamma function, one defines \emph{the fractional integrals} and \emph{fractional derivatives} of $f$ via
\begin{equation*}%
  I_{\sigma} f(t) = \frac{1}{\Gamma(\sigma)}\int_{0}^{t} \frac{f(s)}{(t-s)^{1-\sigma}}\, ds\quad \mathrm{for}\quad 0 < \sigma\leq 1
\end{equation*}
and
\begin{equation}\label{defn:fractional derivative}
  D^{\sigma}_t f(t) = \begin{cases}
    \partial_{t} I_{1-\sigma} f(t), \quad &\mathrm{for}\quad 0 < \sigma< 1\\
    \partial_{t} f(t),\quad &\mathrm{for}\quad \sigma=1.
  \end{cases}
\end{equation}

Furthermore, for $\sigma_1$, $\sigma_2$ in $(0,1)$ and $\sigma_1 + \sigma_2 \leq 1$ one has the following identities for the fractional integrals and derivatives

\begin{equation*}
  I_{\sigma_1}(I_{\sigma_2}(f))= I_{\sigma_1 + \sigma_2} f,
\end{equation*}
\begin{equation*}
  D^{\sigma_1}_t(D^{\sigma_2}_t(f))= D^{\sigma_1 + \sigma_2}_t f.
\end{equation*}

Note that one also has
\begin{equation}\label{eq:fractionalfouriertransform}
    \widehat{D^{\sigma}_t f}(\tau)
        = \frac{(2\pi)^\sigma}{\sqrt{2}}\, (1+i \,\mathrm{sign}(\tau))\, |\tau|^\sigma\, \widehat{f}(\tau),
\end{equation}
where $\widehat{f}(\tau)=\int_{-\infty}^\infty e^{-i \tau t } f(t)\, dt.$ This will be useful in connection to the $L^2$ estimates for the fractional derivative of the single layer potential.

Now given $S_T$ as in the introduction section of this paper, \emph{the fractional Sobolev space} $H^{1, 1/2}(S_T )$ is the closure of space $\{v;\,  v=u|_{S_{T}},\, u\in C^{\infty}_{c}(\mathbb{R}^{n+1}), \, u(X,t)=0 \,\,\textrm{for}\,\, t<0\}$ with respect to the norm
\begin{equation*}
\Vert v\Vert_{H^{1, 1/2}(S_{T})}:=\left\{ \int_{0}^{T}\int_{\partial\Omega} (|\nabla_T v|^2+ |D_{t}^{1/2} v|^2 +|v|^2)\, dP\, dt\right\}^{\frac{1}{2}},
\end{equation*}
where $D^{1/2}_t$ is the fractional derivative defined using \eqref{defn:fractional derivative} and $\nabla_T$ is the tangential derivative defined in \eqref{defn:tangential derivative}.

The following estimate involving fractional derivatives, which has been taken from Brown's thesis \cite{BrownPhD},  will play an important role in the proof of parabolic Rellich estimates in Subsection \ref{subsec:rellich for parab}.

\begin{Lem}\label{Browns fractional}
  Let $f,g \in C^\infty(-\infty,T)$ and $f(t)=g(t)=0$ for $t<0$. Then,
  $$\Big| \int_0^T D_t^{1/4}(f)(t)\,g(t)\, dt \Big|
        \lesssim \Big( \int_0^T |f(t)|^2 \, dt \Big)^{1/2} \Big( \int_0^T |D_t^{1/4}(g)(t)|^2 \,dt \Big)^{1/2}.$$
\end{Lem}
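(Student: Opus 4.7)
The plan is to pass to the Fourier side via Plancherel's identity and invoke \eqref{eq:fractionalfouriertransform}, which represents $D_t^{1/4}$ as the multiplier $\frac{(2\pi)^{1/4}}{\sqrt 2}(1+i\,\mathrm{sign}(\tau))|\tau|^{1/4}$. To set this up, I extend $f$ and $g$ to $\R$ by declaring them zero for $t\geq T$; denote the extensions $\bar f,\bar g\in L^2(\R)$. The operator $D_t^{1/4}=\partial_t\circ I_{3/4}$ is causal (its value at time $t$ depends only on past values), so $D_t^{1/4}(\bar f)(t)=D_t^{1/4}(f)(t)$ for every $t\in(0,T)$, and since $\bar g$ vanishes off $[0,T]$,
$$\int_0^T D_t^{1/4}(f)(t)\,g(t)\,dt=\int_{\R}D_t^{1/4}(\bar f)(t)\,\bar g(t)\,dt.$$

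Next, applying Plancherel and \eqref{eq:fractionalfouriertransform} rewrites the right-hand side as an integral of $(1+i\,\mathrm{sign}(\tau))|\tau|^{1/4}\hat{\bar f}(\tau)\overline{\hat{\bar g}(\tau)}$, and I would use Cauchy--Schwarz to transfer the weight $|\tau|^{1/4}$ entirely onto $\hat{\bar g}$. A second use of Plancherel and \eqref{eq:fractionalfouriertransform} then converts $\||\tau|^{1/4}\hat{\bar g}\|_{L^2(\R)}$ back into $\|D_t^{1/4}(\bar g)\|_{L^2(\R)}$, producing
$$\Big|\int_0^T D_t^{1/4}(f)(t)\,g(t)\,dt\Big|\lesssim \|f\|_{L^2(0,T)}\,\|D_t^{1/4}(\bar g)\|_{L^2(\R)}.$$

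The main obstacle is to replace $\|D_t^{1/4}(\bar g)\|_{L^2(\R)}$ by $\|D_t^{1/4}(g)\|_{L^2(0,T)}$. By causality these agree on $(0,T)$ and both vanish on $(-\infty,0)$, so only the tail $\int_T^{\infty}|D_t^{1/4}(\bar g)(t)|^2\,dt$, which arises from the jump of $\bar g$ at $T$, needs to be estimated. For $t>T$ one has the explicit formula
$$D_t^{1/4}(\bar g)(t)=-\frac{1}{4\Gamma(3/4)}\int_0^T\frac{g(s)}{(t-s)^{5/4}}\,ds,$$
and, since $g(0)=0$ permits the identity $g=I_{1/4}(D_t^{1/4}g)$, substituting and applying Fubini expresses this tail as an integral operator acting on $D_t^{1/4}(g)$, whose $L^2$-boundedness can be verified through Schur's lemma (Lemma \ref{Lem:Schurslemma}). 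A more abstract alternative is to combine the Poincar\'e-type bound $\|g\|_{L^2(0,T)}\lesssim T^{1/4}\|D_t^{1/4}(g)\|_{L^2(0,T)}$, which comes from $g=I_{1/4}(D_t^{1/4}g)$ and Young's inequality with the $L^1$-kernel $t^{-3/4}\chi_{[0,T]}$, with the standard boundedness of the zero-extension map $H^{1/4}(0,T)\to H^{1/4}(\R)$ (valid because $1/4<1/2$); either route closes the argument.
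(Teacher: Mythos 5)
The paper does not actually prove this lemma; it is quoted from Brown's thesis \cite{BrownPhD}, so there is no in-text proof to compare with. Judged on its own terms, your overall strategy is sound: zero-extend, use causality of $D_t^{1/4}$ to identify $D_t^{1/4}\bar f$ with $D_t^{1/4}f$ on $(0,T)$, pass to the Fourier side and apply Cauchy--Schwarz so that the whole multiplier weight falls on $g$. You have also correctly identified the one genuinely non-trivial step, namely the tail bound $\int_T^\infty |D_t^{1/4}\bar g|^2\,dt \lesssim \int_0^T |D_t^{1/4}g|^2\,dt$.

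The gap is in the verification of that bound. In Route A you invoke Lemma~\ref{Lem:Schurslemma} for the kernel $K(t,r)=\int_r^T (s-r)^{-3/4}(t-s)^{-5/4}\,ds$ ($t>T$, $r\in(0,T)$), but the unweighted Schur condition fails: by Fubini, $\int_0^T K(t,r)\,dr = 4\int_0^T s^{1/4}(t-s)^{-5/4}\,ds \asymp (t-T)^{-1/4}$ as $t\downarrow T$, so $\alpha=\infty$ in the notation of that lemma and it simply does not apply. The estimate is nevertheless true: setting $u=T-r$, $v=t-T$, one computes $K\asymp u^{1/4}\big[(u+v)\,v^{1/4}\big]^{-1}$, a kernel homogeneous of degree $-1$, and the Hardy--Littlewood--P\'olya criterion (equivalently, the Schur test with the power weight $s^{-1/2}$) gives boundedness with constant $\int_0^\infty s^{-1/4}(1+s)^{-1}\,ds = B(3/4,1/4)<\infty$. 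So Route~A is salvageable, but only with the weighted version of Schur's test, not the one stated in the paper. Route~B has a different problem: "boundedness of zero-extension $H^{1/4}(0,T)\to H^{1/4}(\R)$" is usually stated for the Gagliardo (Slobodeckij) norm on $(0,T)$, and to use it you would still need the nontrivial one-sided equivalence $[g]_{H^{1/4}(0,T)}\lesssim \|D_t^{1/4}g\|_{L^2(0,T)}$ for $g$ vanishing at $0$, which is of essentially the same depth as the tail estimate you are trying to prove and is left unaddressed; as written, that route is circular rather than a genuine alternative.
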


We conclude this section by pointing out that in what follows, due to the elementary and standard nature of the arguments and lack of space, we will follow the common practice of refraining from comments on justifications of the legitimacy of interchanging the order of integrations and that of differentiations and integrations. Certainly, all these operations can be fully justified in each case under consideration by a careful glance at the relevant proofs.

\section{Estimates for the fundamental solution of $\LL_A$ with a H\"older-continuous matrix}\label{sec:FS}

Let $\G$ and $\G^*$ be the fundamental solutions in $\Rn$ for the operators $\LL_A$ and $\LL_A^*$ respectively, that is
$$\LL_A \G(X,t;Y,s) = \delta(X-Y) \delta(t-s), \qquad \LL_A^{*} \G^*(Y,s;X,t) = \delta(X-Y) \delta(t-s),$$
where $\delta$ denotes the Dirac's delta function and $\LL_A^*$ is the adjoint of $\LL_A$. Note that, if $A$ is symmetric,
$\LL_A^* u= - \partial_t - \nabla\cdot\big(A \nabla u \big) $. Also one has
 \begin{equation}\label{eq:sym}
  \G(X,t;Y,s)
      = \G^*(Y,s;X,t), \quad \quad X,Y \in \Rn, \ t\neq s,
\end{equation}
see e.g. \cite[Lemma 3.5]{CDK}.
Moreover, in the case of time--independent matrix $A$, we have that
$$\G(X,t;Y,s)=\G(X,t-s;Y,0), \quad X,Y \in \Rn, \ t\neq s.$$
From now on, we simply write the three-argument function $\G(X,Y,t-s)$ to refer to the above quantity, when there is no cause for confusion.

Recall the relation
\begin{equation*}%
 \int_0^\infty \G(X,Y,t)\, dt
      = \widetilde{\G}(X,Y), \quad X,Y \in \Rn,
\end{equation*}
where $\widetilde{\G}$ represents the fundamental solution to the elliptic operator $\nabla\cdot(A \nabla \cdot),$ see e.g. \cite[p. 895]{Aro}.

Next we collect some pointwise estimates for the fundamental solution, which shall play a basic role for the estimates of the forthcoming sections.
Note that the constants appearing in the estimates below will depend on various combinations of $n$, $\mu$, $\kappa$ and $\alpha$, see the introduction section for the definitions of these latter constants.\\

The following two lemmas are well-known for the fundamental solutions of divergence-type operators under much weaker conditions than those stated here, but since the regularities lower than H\"older-continuity don't concern us in this paper, we confine ourselves to the statements below.

\begin{Lem}[\cite{Aro}]\label{Lem:Aronson}
  Assume that \hyperref[A3]{\small{$(A3)$}} holds.
  Then for every $X,Y \in \Rn$ and $t,s>0$ one has that
  $$|\G(X,t;Y,s)|
	\lesssim \frac{e^{-c|X-Y|^2/(t-s)}}{(t-s)^{n/2}} \chi_{(s,\infty)}(t).$$
\end{Lem}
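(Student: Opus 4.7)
The plan is to establish the classical Aronson Gaussian upper bound, which depends only on the uniform ellipticity $(A3)$. The support condition $\chi_{(s,\infty)}(t)$ is immediate from the causality of the parabolic fundamental solution (for $t\le s$, $\G(\cdot,t;Y,s)$ vanishes since $\delta(t-s)$ is supported at $t=s$ and the principal part is forward-parabolic). By the autonomy of $\LL_A$ we may reduce to $s=0$, and regard $u(\cdot,t) = \G(\cdot,Y,t)$ as the heat semigroup $e^{-t\LL_A}\delta_Y$ acting on the initial datum $\delta_Y$.

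First I would establish the on-diagonal bound $\G(X,Y,t)\lesssim t^{-n/2}$. Using $(A3)$ in the identity
\begin{equation*}
  \tfrac{d}{dt}\|u(\cdot,t)\|_2^2 = -2\langle A\nabla u,\nabla u\rangle \le -2\mu\|\nabla u\|_2^2,
\end{equation*}
combined with Nash's inequality $\|u\|_2^{2+4/n}\lesssim \|\nabla u\|_2^{2}\|u\|_1^{4/n}$ and the $L^1$ mass-preservation property of the semigroup, one obtains $\|u(\cdot,t)\|_2\lesssim t^{-n/4}\|u_0\|_1$; duality and semigroup squaring then give the $L^1\!\to\! L^\infty$ bound, which in kernel form is precisely the on-diagonal estimate.

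Next I would bring in Davies' perturbation trick to produce the Gaussian factor. For a bounded Lipschitz $\psi$ with $|\nabla\psi|\le 1$ and a parameter $\lambda\in\R$, set $v(X,t):=e^{\lambda\psi(X)}u(X,t)$. A direct computation using $(A3)$ and Cauchy--Schwarz yields the differential inequality
\begin{equation*}
  \tfrac{d}{dt}\|v(\cdot,t)\|_2^2 \le C\mu^{-2}\lambda^2\,\|v(\cdot,t)\|_2^2,
\end{equation*}
where the cross term $2\lambda\langle A\nabla\psi\cdot v,\nabla v\rangle$ is absorbed against $\mu\|\nabla v\|_2^2$ at the price of a $\lambda^2$ correction. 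Running the Nash--Moser argument on the conjugated semigroup then yields the weighted on-diagonal estimate
\begin{equation*}
  \bigl|e^{\lambda\psi(X)}\,\G(X,Y,t)\,e^{-\lambda\psi(Y)}\bigr|
     \;\lesssim\; t^{-n/2}\,e^{C\lambda^2 t}.
\end{equation*}

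Finally, specializing $\psi$ to be a smooth Lipschitz truncation of the function $Z\mapsto |Z-Y|$ (so that $\psi(X)-\psi(Y)=|X-Y|$ while $|\nabla\psi|\le 1$), we obtain $\G(X,Y,t)\lesssim t^{-n/2}e^{-\lambda|X-Y|+C\lambda^2 t}$ for every $\lambda>0$; optimizing via $\lambda = |X-Y|/(2Ct)$ produces the Gaussian factor $e^{-c|X-Y|^2/t}$ with $c=1/(4C)$, giving the stated bound. The main technical obstacle is the Davies conjugation step, namely showing that the quadratic-in-$\lambda$ correction in the energy identity holds uniformly over all admissible $\psi$; once that is in place, the Nash step and the optimization are routine. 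Since only $(A3)$ is used and the result is classical, one may alternatively just invoke Aronson's original argument from \cite{Aro}, as the authors do.
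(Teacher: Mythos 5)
Your sketch is sound, but note that the paper itself offers no proof of this lemma: it is stated as a citation to Aronson's 1967 paper, and the estimate is taken as classical input. The route you outline (Nash's $L^1\!\to\!L^\infty$ iteration for the on-diagonal bound, followed by Davies' exponential conjugation and optimisation in $\lambda$) is the Fabes--Stroock--Davies reworking of the Gaussian upper bound and is genuinely different from Aronson's original argument, which proceeded through Moser's parabolic De Giorgi--Nash--Moser local boundedness and a chaining/scaling argument for the off-diagonal decay; both routes use only uniform ellipticity and measurability, hence both apply under $(A3)$. Two small imprecisions in your sketch are worth flagging if you intend it to be self-contained. First, the displayed conjugated energy inequality $\tfrac{d}{dt}\|v\|_2^2\le C\mu^{-2}\lambda^2\|v\|_2^2$ has dropped the dissipation term; what the computation actually gives is $\tfrac{d}{dt}\|v\|_2^2\le-\mu\|\nabla v\|_2^2+C\lambda^2\|v\|_2^2$, and it is the retained negative gradient term (fed into Nash's inequality) that produces the $t^{-n/2}$ decay, while the $\lambda^2$ correction only contributes the harmless Gronwall factor $e^{C\lambda^2 t}$. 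Second, the conjugated evolution is no longer $L^1$-contractive, so the weighted $L^1\!\to\!L^\infty$ step cannot be run verbatim as in the unweighted case; one usually closes it by duality (combining an $L^2\!\to\!L^\infty$ estimate for the forward conjugated semigroup with an $L^1\!\to\!L^2$ estimate obtained from the backward one with $\lambda\mapsto-\lambda$), or by a time-dependent Davies weight. These are standard points in the Davies literature and do not affect the correctness of the method, so your alternative derivation is valid; since the paper only needs the bound as a black box, simply citing \cite{Aro}, as the authors do, is of course sufficient.
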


\begin{Lem}[{\cite[Property 10, p. 163]{PE}}]\label{Lem:Gt}
  Assume that \hyperref[A1]{\small{$(A1)$}} and \hyperref[A3]{\small{$(A3)$}} hold.
  Then, for every $\ell \in \N$, $X,Y \in \Rn$ and $t>0$ one has
  $$|\partial_t^\ell \G(X,Y,t)|
	\lesssim \frac{e^{-c|X-Y|^2/t}}{t^{(n+2\ell)/2}} \chi_{(0,\infty)}(t).$$
\end{Lem}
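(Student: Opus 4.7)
The plan is to reduce the estimate to the $\ell=0$ Aronson bound (Lemma \ref{Lem:Aronson}) by exploiting assumption \hyperref[A1]{$(A1)$}: the time-independence of $A$ makes $\G(X,Y,\cdot)$ the integral kernel of the semigroup $e^{-tL_A}$ generated by $L_A u:=-\nabla\cdot(A\nabla u)$. Uniform ellipticity \hyperref[A3]{$(A3)$} ensures that $-L_A$ generates a bounded analytic semigroup on $L^2(\Rn)$, so $\partial_t^\ell e^{-tL_A}=(-L_A)^\ell e^{-tL_A}$ and the standard analytic-semigroup estimate gives
$$\|\partial_t^\ell e^{-tL_A}\|_{L^2\to L^2}\lesssim t^{-\ell},\qquad \ell\in\N,\;t>0.$$

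Second, I bootstrap this $L^2\to L^2$ bound to an $L^1\to L^\infty$ bound via ultracontractivity. Splitting $\partial_t^\ell e^{-tL_A}=e^{-tL_A/3}\,\partial_t^\ell e^{-tL_A/3}\,e^{-tL_A/3}$ and using $\|e^{-tL_A}\|_{L^1\to L^2}+\|e^{-tL_A}\|_{L^2\to L^\infty}\lesssim t^{-n/4}$ (both immediate from Lemma \ref{Lem:Aronson} by Cauchy--Schwarz against the Gaussian) yields
$$\|\partial_t^\ell e^{-tL_A}\|_{L^1\to L^\infty}\lesssim t^{-n/2-\ell},$$
that is, $|\partial_t^\ell\G(X,Y,t)|\lesssim t^{-n/2-\ell}$ uniformly in $X,Y$. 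This is the claim of the lemma \emph{without} the Gaussian factor.

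To restore the factor $e^{-c|X-Y|^2/t}$, the third step applies Davies's exponential weighting. For $\lambda\in\Rn$ and $(M_\lambda f)(X):=e^{\lambda\cdot X}f(X)$, set
$$L_A^\lambda:=M_\lambda L_A M_\lambda^{-1}=L_A-2\lambda\cdot A\nabla+\lambda\cdot A\lambda-\nabla\cdot(A\lambda).$$
When $|\lambda|^2 t\lesssim 1$, the added lower-order terms only enlarge the numerical range within a fixed sector, so Steps 1--2 go through for $e^{-tL_A^\lambda}$ with constants independent of $\lambda$. Since the kernel of $e^{-tL_A^\lambda}$ is $e^{\lambda\cdot(X-Y)}\G(X,Y,t)$, this gives
$$|\partial_t^\ell\G(X,Y,t)|\lesssim t^{-n/2-\ell}\,e^{-\lambda\cdot(X-Y)+C|\lambda|^2 t},$$
and optimising in the direction $\lambda\parallel(X-Y)$ with $|\lambda|\simeq|X-Y|/t$ produces the claimed Gaussian factor.

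The main obstacle is precisely Step 3: one has to propagate constants through the conjugation carefully to verify that analyticity and ultracontractivity survive \emph{uniformly in $\lambda$} in the relevant range. This is the classical Davies heat-kernel perturbation argument and it requires nothing beyond \hyperref[A1]{$(A1)$} and \hyperref[A3]{$(A3)$}, consistent with the minimal hypotheses under which Lemma \ref{Lem:Gt} is stated.
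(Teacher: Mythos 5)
The paper does not actually prove this estimate; it is cited verbatim from Porper--Eidel'man \cite[Property 10, p.\ 163]{PE}, so you are supplying a proof where the paper supplies a reference. Your semigroup-theoretic route (analyticity of $e^{-tL_A}$, ultracontractivity from Aronson, Davies's exponential perturbation) is a standard and valid way to obtain exactly this Gaussian bound on time derivatives, and it correctly uses only \hyperref[A1]{$(A1)$} and \hyperref[A3]{$(A3)$} (sectoriality of the form holds for real, merely elliptic $A$, no symmetry needed). That is a genuine and worthwhile alternative: it gives a self-contained functional-analytic derivation rather than a black-box citation.

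Two points in your Step~3 need repair. First, the restriction $|\lambda|^2 t\lesssim 1$ is incompatible with the optimisation $|\lambda|\simeq|X-Y|/t$: that choice gives $|\lambda|^2 t\simeq|X-Y|^2/t$, which is exactly the quantity that must be allowed to be large for the Gaussian factor to bite. When $|X-Y|^2/t\lesssim 1$ the Gaussian is comparable to $1$ and Step~2 already suffices; when $|X-Y|^2/t\gg 1$ you have left the regime you claimed to cover. The fix is to drop the restriction entirely and instead track the standard Davies penalty $e^{C|\lambda|^2 t}$ \emph{for all} $\lambda$: the twisted form satisfies $\mathrm{Re}\,a_\lambda(u,u)\geq\tfrac{\mu}{2}\|\nabla u\|^2-C|\lambda|^2\|u\|^2$ and remains uniformly sectorial after shifting by $C|\lambda|^2$, so $\|L_A^\lambda{}^{\,\ell}e^{-tL_A^\lambda}\|_{L^2\to L^2}\lesssim t^{-\ell}e^{C|\lambda|^2 t}$, and the ultracontractivity bounds for $e^{-tL_A^\lambda}$ carry an $e^{C|\lambda|^2 t}$ factor by completing the square inside Aronson's Gaussian. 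Then the displayed estimate holds without any restriction on $\lambda$ and the optimisation is legitimate. Second, your operator formula for $L_A^\lambda$ includes $\nabla\cdot(A\lambda)$, which presupposes $A$ differentiable; under $(A1)$--$(A3)$ alone $A$ is only bounded and measurable, so you should phrase the conjugation at the level of the sesquilinear form, $a_\lambda(u,v)=\int A(\nabla u-\lambda u)\cdot(\overline{\nabla v}+\lambda\bar v)\,dX$, where no derivative of $A$ appears. With those two adjustments the argument is sound.
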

Later on, in proving the $L^2$ estimates we would also need estimates on the spatial derivatives of the fundamental solution.
\begin{Lem}\label{Lem:GXYm}
  Assume that \hyperref[A1]{\small{$(A1)$}} -- \hyperref[A4]{\small{$(A4)$}} hold.
  Then for every $m \in \Nn$ such that $|m| \leq 2$; $X,Y \in \Rn$ and $t>0$ we have
  $$ |\partial_X^m \G(X,Y,t)| + | \partial_Y^m \G(X,Y,t)|
	\lesssim \frac{e^{-c|X-Y|^2/t}}{t^{(n+|m|)/2}}\chi_{(0,\infty)}(t),$$
  where $\partial_X^m = \partial_{x_1}^{m_1} \cdot \dots \cdot \partial_{x_n}^{m_n}$ if $m=(m_1, \dots, m_n)\in \Nn$ and $|m|=m_1 + \dots + m_n$.
\end{Lem}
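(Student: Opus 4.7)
The plan is to combine Aronson's Gaussian upper bound (Lemma \ref{Lem:Aronson}) with interior parabolic Schauder estimates, which under the H\"older regularity hypothesis \hyperref[A4]{\small{$(A4)$}} turn sup-bounds on $\G$ into sup-bounds on its first and second spatial derivatives. The case $|m|=0$ is exactly Lemma \ref{Lem:Aronson}, so only $|m|=1,2$ require work.

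Fix $Y\in\Rn$, $X_0\in\Rn$, $t_0>0$, and set $u(Z,s):=\G(Z,Y,s)$, which satisfies $\LL_A u=0$ away from $(Y,0)$. Take $r:=\sqrt{t_0}/4$ and consider the parabolic cylinder $Q_r:=B_r(X_0)\times(t_0-r^2,t_0)\subset\Rn\times(t_0/2,t_0)$, on which $u$ is a classical $C^{2+\alpha}$ solution. Interior parabolic Schauder theory, applied after the usual rescaling $\tilde{u}(Z,s):=u(X_0+rZ,t_0+r^2 s)$ (which preserves the H\"older norm of the diffusion matrix up to a favourable $r^\alpha$ factor), gives
$$\sup_{Q_{r/2}}|\partial_X^m u|\lesssim r^{-|m|}\sup_{Q_r}|u|,\qquad |m|\leq 2,$$
with constants depending only on $n,\mu,\kappa,\alpha$. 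By Lemma \ref{Lem:Aronson} we have $|u(Z,s)|\lesssim s^{-n/2}e^{-c|Z-Y|^2/s}$; since $s\asymp t_0$ on $Q_r$, a case split into $|X_0-Y|\leq\sqrt{t_0}$ (where $e^{-c|X_0-Y|^2/t_0}\asymp 1$) and $|X_0-Y|>\sqrt{t_0}$ (where $|Z-Y|\geq|X_0-Y|/2$ on $Q_r$, since $r<|X_0-Y|/2$) yields $\sup_{Q_r}|u|\lesssim t_0^{-n/2}e^{-c|X_0-Y|^2/t_0}$ for a possibly smaller $c$. Combining gives the claimed estimate on $|\partial_X^m\G(X_0,Y,t_0)|$, after a harmless time-shift of the reference cylinder placing $(X_0,t_0)$ in the closure of the inner cylinder.

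For $\partial_Y^m\G$ I would appeal to the symmetry identity \eqref{eq:sym}, $\G(X,Y,t)=\G^*(Y,X,t)$. Since $A$ is symmetric by \hyperref[A2]{\small{$(A2)$}}, $\LL_A^*=-\partial_s-\nabla\cdot(A\nabla\cdot)$ satisfies exactly the same structural hypotheses \hyperref[A1]{\small{$(A1)$}}--\hyperref[A4]{\small{$(A4)$}} as $\LL_A$ (the sign of $\partial_s$ being immaterial via the substitution $s\mapsto -s$), and Aronson's bound holds equally for $\G^*$. Running the previous argument with $v(Z,s):=\G^*(Z,X,s)$ in place of $u$ yields the bound on $\partial_Y^m\G^*(Y,X,t)=\partial_Y^m\G(X,Y,t)$. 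The main obstacle is the second-derivative case $|m|=2$, which genuinely uses \hyperref[A4]{\small{$(A4)$}}: under merely bounded measurable coefficients, Caccioppoli plus Moser iteration still yields a Gaussian bound on $\nabla u$, but provides no pointwise control on $\nabla^2 u$. The H\"older continuity of $A$ is precisely what allows parabolic Schauder theory to be invoked and the second spatial derivatives to be controlled pointwise.
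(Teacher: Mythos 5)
Your treatment of $|m|\leq 1$ is sound, and the reduction of $\partial_Y^m$ to $\partial_X^m$ via the symmetry of $A$ and the identity $\G(X,Y,t)=\G^*(Y,X,-t)$ (you dropped the sign on the time variable, though you correctly remark that it is immaterial) is exactly the device used in the paper. For the $X$-derivatives, however, the paper simply cites the fundamental-solution bounds of \cite[eq.\ (13.1), p.\ 376]{LSU}, while you attempt a self-contained proof through interior parabolic Schauder theory — and that is where the gap lies.

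Interior Schauder estimates do \emph{not} give $\sup_{Q_{r/2}}|\partial_X^m u|\lesssim r^{-|m|}\sup_{Q_r}|u|$ at $|m|=2$ for the operator $\LL_A$. The equation $\partial_t u-\nabla\cdot(A\nabla u)=0$ is in \emph{divergence form}, and for divergence-form parabolic (or elliptic) equations with coefficients that are merely $C^\alpha$, the interior Schauder estimate yields $C^{1,\alpha}$ regularity — $\nabla u$ and its H\"older seminorm are controlled by $\sup|u|$ — but gives no pointwise control of $\nabla^2 u$; second spatial derivatives of generic solutions need not even exist pointwise. (Already in one dimension, if $\partial_x(a\,\partial_x u)=0$ with $a(x)=1+|x|^\alpha$ near the origin, a nonconstant local solution has $\partial_x u=c/a$ only $C^\alpha$ and $\partial_x^2 u$ unbounded at the origin.) To control $\nabla^2 u$ via Schauder one needs either the equation in non-divergence form with $C^\alpha$ coefficients, or $A\in C^{1,\alpha}$ so that one may expand $\nabla\cdot(A\nabla u)=a_{ij}\partial_{ij}u+(\partial_j a_{ij})\,\partial_i u$; neither is available under \hyperref[A4]{\small{$(A4)$}}. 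Your closing remark that Caccioppoli plus Moser iteration give a pointwise Gaussian bound on $\nabla u$ under bounded measurable coefficients is also inaccurate — in that generality $\nabla u$ is controlled only in $L^2$ (or $L^{2+\eps}$ by Meyers), not pointwise. The $|m|=2$ case of this lemma genuinely rests on the finer Levi-parametrix structure of the fundamental solution — the decomposition $\G=\G_0+\G_1$ into a frozen-coefficient Gaussian plus a better-behaved remainder, which is what the LSU construction supplies and which the paper later uses explicitly in the proof of Proposition~\ref{Lem:PV} — and that structure is precisely what your Schauder argument does not see.
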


\begin{proof}
    The bound for $X$-derivatives can be found in \cite[eq. (13.1), p. 376]{LSU}.  To obtain the corresponding estimate for the $Y$ derivative, define $\overline{\G}(Y,X,t):=\G^*(Y,X,-t)=\G(X,Y,t)$ and observe that it satisfies
$\mathcal{L}_A\overline{\G}(Y,X,t)=\delta(X-Y)\delta(t)$. Therefore
    \begin{align*}
	|\partial_Y^m \G(X,Y,t)|
	    & = |\partial_Y^m \G^*(Y,X,-t)|= |\partial_Y^m \overline{\G}(Y,X,t)|
	       \lesssim \frac{e^{-c|Y-X|^2/t}}{t^{(n+|m|)/2}}\chi_{(0,\infty)}(t).
    \end{align*}
\end{proof}
The following few lemmas are entirely new and will be useful in the later sections.
\begin{Lem}\label{Lem:GXYt}
  Assume that \hyperref[A1]{\small{$(A1)$}} -- \hyperref[A4]{\small{$(A4)$}} hold.
  Then, for every $\ell \in \N$; $X,Y \in \Rn$ and $t>0$ we have that
  \begin{itemize}
      \item[$(i)$]   $\displaystyle | \partial_X \partial_Y \G(X,Y,t)|
					  \lesssim   \frac{e^{-c|X-Y|^2/t}}{t^{(n+2)/2}}\chi_{(0,\infty)}(t)$,
 \item[$(ii)$]  $\displaystyle | \partial_X \partial_t^\ell  \G(X,Y,t)| + | \partial_Y \partial_t^\ell  \G(X,Y,t)|
					  \lesssim\frac{e^{-c|X-Y|^2/t}}{t^{(n+2\ell +1)/2}}\chi_{(0,\infty)}(t)$,
  \end{itemize}
  where $\partial_X= \partial_{x_j}$, for some $j=1, \dots, n$.
\end{Lem}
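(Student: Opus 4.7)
The strategy is to exploit the time-independence of $A$, which gives the semigroup identity
\begin{equation*}
\G(X,Y,t_1+t_2) = \int_{\R^n} \G(X,Z,t_1)\,\G(Z,Y,t_2)\, dZ, \qquad t_1,t_2>0,
\end{equation*}
combined with the pointwise bounds of Lemmas \ref{Lem:Aronson}, \ref{Lem:Gt} and \ref{Lem:GXYm}, and the Chapman--Kolmogorov formula for Gaussian kernels (Proposition \ref{Chapman-Kolmogorov}).

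For $(i)$, I would split the time interval in half. Since $X$ enters only through the first factor and $Y$ only through the second, differentiation under the integral sign gives
\begin{equation*}
\partial_X \partial_Y \G(X,Y,t) = \int_{\R^n} \partial_X \G(X,Z,t/2)\,\partial_Y \G(Z,Y,t/2)\, dZ.
\end{equation*}
Lemma \ref{Lem:GXYm} with $|m|=1$ estimates each factor by a Gaussian of order $t^{-(n+1)/2}$. A single application of Chapman--Kolmogorov collapses the product of Gaussians into $t^{n/2}\,e^{-c'|X-Y|^2/t}$ with a $c'>0$ slightly smaller than the original constant. Collecting powers of $t$ then yields the claimed order $t^{-(n+2)/2}$.

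For $(ii)$, I would iterate the semigroup identity to write, for any admissible $t_1,t_3>0$ with $t_2:=t-t_1-t_3>0$,
\begin{equation*}
\G(X,Y,t) = \iint_{\R^n\times\R^n} \G(X,Z_1,t_1)\,\G(Z_1,Z_2,t_2)\,\G(Z_2,Y,t_3)\, dZ_1\, dZ_2,
\end{equation*}
and then differentiate in $t$ with $t_1,t_3$ held fixed, so that only the middle factor is affected, followed by $\partial_X$ acting locally on the first factor. Evaluating at $t_1=t_2=t_3=t/3$ gives
\begin{equation*}
\partial_X \partial_t^\ell \G(X,Y,t) = \iint \partial_X \G(X,Z_1,t/3)\,\partial_{t_2}^\ell \G(Z_1,Z_2,t/3)\,\G(Z_2,Y,t/3)\, dZ_1\, dZ_2.
\end{equation*}
Bounding the three factors by Lemmas \ref{Lem:GXYm}, \ref{Lem:Gt} and \ref{Lem:Aronson} respectively (orders $t^{-(n+1)/2}$, $t^{-(n+2\ell)/2}$ and $t^{-n/2}$ with Gaussian weights), and applying Chapman--Kolmogorov twice (first over $Z_1$, then over $Z_2$) to gain a factor $t^n$ and consolidate the three Gaussians into $e^{-c'|X-Y|^2/t}$, produces the total order $t^{-(n+2\ell+1)/2}$, as required. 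For the $\partial_Y \partial_t^\ell \G$ estimate, I would reuse the symmetry device from the proof of Lemma \ref{Lem:GXYm}: the function $\overline{\G}(Y,X,t):=\G(X,Y,t)$ satisfies the same parabolic equation in its $(Y,t)$-variables because $A$ is symmetric and time-independent, hence the estimate just obtained for $\partial_X \partial_t^\ell$ transfers verbatim with the roles of $X$ and $Y$ exchanged.

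The only genuinely delicate point I foresee is justifying the differentiation under the integral sign and the iterated semigroup identity; both follow from uniqueness of the fundamental solution for time-independent coefficients and the exponential decay of the integrands, which easily dominates any polynomial factors. Tracking the constants in the Gaussian exponents through Chapman--Kolmogorov is routine bookkeeping: each integration shrinks the constant but it remains strictly positive, so the final exponential still has the form $e^{-c|X-Y|^2/t}$ claimed in the statement.
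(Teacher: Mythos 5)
Your proof is correct for both parts. Part $(i)$ coincides with the paper's argument: a two-fold semigroup decomposition, differentiating each factor in its own spatial variable, one application of Chapman--Kolmogorov, and evaluation at the midpoint. For part $(ii)$ you take a genuinely different route. The paper stays with the two-fold identity $\G(X,Y,t-s)=\int\G(X,Z,t-r)\G(Z,Y,r-s)\,dZ$ and exploits the observation $\partial_t\G(X,Y,t-s)=-\partial_s\G(X,Y,t-s)$ to throw the time derivatives onto the \emph{second} factor (via $\partial_s$), so that $\partial_X$ falls on the first factor; a single Chapman--Kolmogorov pass then finishes the job. You instead iterate the semigroup identity to a three-fold convolution, pin the time derivative on the middle factor and $\partial_X$ on the outer one, and pay with two applications of Chapman--Kolmogorov. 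Both are correct and the dimension count closes either way; what the paper's version buys is one fewer Gaussian convolution and no extra intermediate variable, while yours buys transparency (each derivative lands on a distinct factor by construction, with no $\partial_t\leftrightarrow -\partial_s$ reinterpretation) and would scale more naturally to a hypothetical estimate with all three of $\partial_X$, $\partial_Y$ and $\partial_t^\ell$ present. One stray remark in your closing paragraph is off but harmless: Chapman--Kolmogorov with equal Gaussian parameter $\lambda$ over time scales $\tau_1$ and $\tau_2$ returns a Gaussian with the \emph{same} $\lambda$ over total time $\tau_1+\tau_2$, and when the two input constants differ the output constant (measured against the total time scale) is in fact at least the minimum of the two; the constant does not shrink, it is only the preliminary step of replacing $c_1,c_2,c_3$ by their minimum that may lose something. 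This does not affect the validity of the bound.
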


\begin{proof}
 We only prove $(i)$ in detail and briefly comment on the proof of $(ii)$. Fix $X,Y \in \Rn$, $0<s<t$. Using the well-known semigroup property of the fundamental solution (see e.g. (2.41) in \cite{CDK}), namely
\[
	\G(X,Y,t-s)=\int_{\R^{n}} \G(X,Z,t-r)\,\G(Z,Y,r-s)\, d Z,
\]
for any $r\in (s,t)$, we have
\[
	 \partial_X \partial_Y \G(X,Y,t-s)=\int_{\R^{n}}  \partial_X \G(X,Z,t-r)\,\partial_Y\G(Z,Y,r-s)\, d Z.
\]
Thus  the estimates in Lemma \ref{Lem:GXYm} and Proposition \ref{Chapman-Kolmogorov} yield
\[
	\left|\partial_X \partial_Y \G(X,Y,t-s)\right| \lesssim ((t-r)(r-s) (t-s)^n)^{-1/2} e^{-c{|X-Y|^2}/{(t-s)}}.
\]
Taking $r=(t+s)/2$  we obtain $(i)$.

To prove  $(ii)$, we observe that $\partial_t ( \G(X,Y,t-s))=-\partial_s (\G(X,Y,t-s))$, so following the same line of argument as in  $(i)$ using lemmas \ref{Lem:Gt} and \ref{Lem:GXYt}  yield  $(ii)$.

\end{proof}

For the fractional derivative defined as \eqref{defn:fractional derivative}, we have the following estimates:

\begin{Lem}\label{Lem:GDt}
  Assume that \hyperref[A1]{\small{$(A1)$}} -- \hyperref[A4]{\small{$(A4)$}} hold.
  Then, for every $X,Y \in \Rn$ and $t>0$ we have that
  \begin{itemize}
      \item[$(i)$]   $\displaystyle | D_t^{1/2} \G(X,Y,t)|
					  \lesssim \frac{e^{-c|X-Y|^2/t}}{t^{3/2}|X-Y|^{n-2}} \chi_{(0,\infty)}(t),$
      \quad \\
      \item[$(ii)$]  $\displaystyle | \partial_X D_t^{1/2} \G(X,Y,t)| + | \partial_Y D_t^{1/2}  \G(X,Y,t)|
					  \lesssim \frac{e^{-c|X-Y|^2/t}}{t^{3/2}|X-Y|^{n-1}} \chi_{(0,\infty)}(t),$
      \quad \\
      \item[$(iii)$]  $\displaystyle | \partial_t D_t^{1/2} \G(X,Y,t)|
					  \lesssim \frac{e^{-c|X-Y|^2/t}}{t^{5/2}|X-Y|^{n-2}} \chi_{(0,\infty)}(t).$
  \end{itemize}
\end{Lem}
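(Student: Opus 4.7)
The plan is based on the integration-by-parts identity
\[
D_t^{1/2}f(t) = \frac{1}{\sqrt{\pi}}\int_0^t \frac{f'(s)}{\sqrt{t-s}}\,ds, \qquad f(0)=0,
\]
which follows from $D_t^{1/2}f=\partial_t I_{1/2}f$ via the substitution $u=t-s$ and differentiation under the integral (the boundary term drops precisely because $f(0)=0$). By Aronson's estimate (Lemma \ref{Lem:Aronson}) together with Lemmas \ref{Lem:GXYm} and \ref{Lem:Gt}, the functions $\G(X,Y,\cdot)$, $\partial_X\G(X,Y,\cdot)$ and $\partial_s\G(X,Y,\cdot)$ all vanish as $s\to 0^+$ whenever $X\neq Y$. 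Hence the identity applies to $f=\G$ for $(i)$; to $f=\partial_X\G$ (or $\partial_Y\G$) for $(ii)$, pulling the spatial derivative under the integral; and to $f=\partial_s\G$ for $(iii)$, after using the elementary commutation $\partial_t D_t^{1/2}=D_t^{1/2}\partial_t$.

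For each of $(i)$--$(iii)$ I would split the integral at $s=t/2$. On $(t/2,t)$, the pointwise bound on $|f'(s)|$ from the relevant lemma (Lemma \ref{Lem:Gt} with $\ell=1$ for $(i)$, Lemma \ref{Lem:GXYt}$(ii)$ for $(ii)$, and Lemma \ref{Lem:Gt} with $\ell=2$ for $(iii)$) is uniformly $\leq Ce^{-c|X-Y|^2/t}/t^\gamma$ with $\gamma=(n+2)/2,\,(n+3)/2,\,(n+4)/2$ respectively, and $\int_{t/2}^t(t-s)^{-1/2}\,ds=\sqrt{2t}$ yields a parabolic-type bound $Ce^{-c|X-Y|^2/t}/t^{\gamma-1/2}$. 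On $(0,t/2)$ I would integrate by parts,
\[
\int_0^{t/2}\frac{f'(s)}{\sqrt{t-s}}\,ds = \left[\frac{f(s)}{\sqrt{t-s}}\right]_0^{t/2} - \frac{1}{2}\int_0^{t/2}\frac{f(s)}{(t-s)^{3/2}}\,ds;
\]
the boundary at $s=0$ vanishes, the one at $s=t/2$ contributes another parabolic bound, and the tail integral (using $(t-s)^{-3/2}\leq C/t^{3/2}$) reduces to $\int_0^{t/2}|f(s)|\,ds$. Estimating the latter via the Aronson-type bound $|f(s)|\leq Ce^{-c|X-Y|^2/s}/s^\alpha$ (with $\alpha=n/2,\,(n+1)/2,\,(n+2)/2$ in the three cases), the change of variable $u=c|X-Y|^2/s$, and the elementary inequality $\int_\beta^\infty u^k e^{-u}\,du\leq C_ke^{-\beta/2}$ (valid for $k>-1$), one obtains
\[
\int_0^{t/2}|f(s)|\,ds \leq C|X-Y|^{2-2\alpha}e^{-c|X-Y|^2/t}.
\]

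For $(i)$ and $(ii)$ this yields exactly the targets $Ce^{-c|X-Y|^2/t}/(t^{3/2}|X-Y|^{n-2})$ and $Ce^{-c|X-Y|^2/t}/(t^{3/2}|X-Y|^{n-1})$, while every parabolic-type contribution is absorbed into the target via the elementary trade $e^{-c|X-Y|^2/t}\leq C_\eps(t/|X-Y|^2)^{(n-2)/2}e^{-\eps|X-Y|^2/t}$. For $(iii)$ the analogous calculation produces only $Ce^{-c|X-Y|^2/t}/(t^{3/2}|X-Y|^n)$, which is off from the target $Ce^{-c|X-Y|^2/t}/(t^{5/2}|X-Y|^{n-2})$ by a factor of $|X-Y|^2/t$; to recover the correct powers I would integrate by parts once more on the $(0,t/2)$ tail, replacing $\partial_s\G$ by $\G$, with boundary contributions still parabolic and the remaining integral satisfying
\[
\int_0^{t/2}\G(s)(t-s)^{-5/2}\,ds \leq \frac{C}{t^{5/2}}\int_0^{t/2}|\G(s)|\,ds \leq \frac{Ce^{-c|X-Y|^2/t}}{t^{5/2}|X-Y|^{n-2}},
\]
matching the target exactly. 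The principal subtlety is this requirement of a second integration by parts in $(iii)$, whose role is to trade one factor of $|X-Y|^{-2}$ for one of $t^{-1}$ to align with the stated form; everything else reduces to the pointwise estimates already established and the Gaussian-trade bookkeeping.
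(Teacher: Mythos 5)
Your proof follows essentially the same route as the paper's: both start from the identity $\sqrt{\pi}\,D_t^{1/2}f(t)=\int_0^t f'(s)(t-s)^{-1/2}\,ds$ (valid since $\G(X,Y,0)=0$ for $X\neq Y$), split the integral at $t/2$, estimate the near-singularity piece $(t/2,t)$ directly, and integrate by parts on $(0,t/2)$ to trade $f'$ for $f$, controlling everything through Lemmas~\ref{Lem:Gt}, \ref{Lem:GXYt} and the Gaussian-tail inequality $\int_\beta^\infty u^k e^{-u}\,du\lesssim e^{-\beta/2}$. So for $(i)$ and $(ii)$ your argument reproduces the paper's precisely.

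The one place where you go beyond what the paper records is part $(iii)$. The paper only sketches this case, asserting it ``differs marginally'' from $(i)$. Your observation that a \emph{single} integration by parts on $(0,t/2)$ leaves a tail contribution of size $t^{-3/2}|X-Y|^{-n}e^{-c|X-Y|^2/t}$ is correct, and so is your observation that this is \emph{not} dominated by the target $t^{-5/2}|X-Y|^{-(n-2)}e^{-c'|X-Y|^2/t}$ in the regime $t\gg|X-Y|^2$: the two bounds differ by a factor $t/|X-Y|^2$ that the Gaussian-trade trick $e^{-c|X-Y|^2/t}\lesssim (t/|X-Y|^2)^\gamma e^{-\eps|X-Y|^2/t}$ goes the wrong way for. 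Your remedy---a second integration by parts on the tail, turning $\int_0^{t/2}\partial_s\G\,(t-s)^{-3/2}\,ds$ into a boundary term plus $\int_0^{t/2}\G\,(t-s)^{-5/2}\,ds$---is exactly what is needed; it replaces the absolute-value estimate by one that exploits the sign change in $\partial_s\G$, and the resulting bound $\frac{C}{t^{5/2}}\int_0^{t/2}|\G(s)|\,ds\lesssim\frac{e^{-c|X-Y|^2/t}}{t^{5/2}|X-Y|^{n-2}}$ hits the target. This is a genuine wrinkle that the paper's terse ``details are left to the reader'' glosses over, and your proposal supplies it correctly.
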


\begin{proof}
   To prove $(i)$  it is enough to assume that $X\neq Y$, which according to Lemma \ref{Lem:GXYt} part $(ii)$ yields the continuity of $\Gamma (X,Y, t)$ for $X\neq Y$ and $t\in (0,\infty)$. Therefore using the continuity of $\Gamma (X,Y, t)$, the definition of the fractional derivative (with $\Gamma(1/2)=\sqrt{\pi}$), and integration by parts, we obtain
   \begin{align*}
     \sqrt{\pi} D_t^{1/2} \G(X,Y,t)
     	& =\partial_t \int_{0}^{t}\frac{\G(X,Y,t-s)}{\sqrt{s}}\, ds= \frac{\G(X,Y,0)}{\sqrt{t}}+ \int_{0}^{t}\frac{\partial_t\G(X,Y,t-s)}{\sqrt{s}}\, ds \\ 			& = \frac{\G(X,Y,0)}{\sqrt{t}}+ \int_{0}^{t/2}\frac{\partial_s\G(X,Y,s)}{\sqrt{t-s}}\, ds + \int_{t/2}^{t}\frac{\partial_s\G(X,Y,s)}{\sqrt{t-s}}\, ds \\
		&=\frac{\G(X,Y,t/2)}{\sqrt{t/2}}+ \int_{0}^{t/2}\frac{\G(X,Y,s)}{2(t-s)^{3/2}}\, ds + \int_{t/2}^{t}\frac{\partial_s\G(X,Y,s)}{\sqrt{t-s}}\, ds .
   \end{align*}

   Now using Lemma \ref{Lem:Gt} to estimate each of the three terms above yields $(i)$.
   The proofs of $(ii)$ and $(iii)$ differ marginally from that of $(i),$ however in proof of $(ii)$, instead of using Lemma \ref{Lem:Gt}, one has to use Lemma \ref{Lem:GXYt} $(ii).$ The details are left to the reader.
\end{proof}

In our transference scheme which would enable us to transfer invertibility of layer potential operators associated to smooth diffusion coefficients to the invertibility of non-smooth layer potentials, the following simple lemma is very useful.

\begin{Lem}\label{difference of fundsolutions to parabolic}
  Let $A_1$ and $A_2$ be two diffusion coefficients, with the corresponding fundamental solutions $\Gamma_{A_1}( X, Y, t-s)$ and $\Gamma_{A_2}( X, Y, t-s)$. Then the following equality holds for the difference of fundamental solutions:
\begin{align*}
&  \Gamma_{A_1}( X, Y, t-s)-\Gamma_{A_2}( X, Y, t-s)\\
& \qquad = \sum_{i,j=1}^{n} \int_{0}^{\infty} \int_{\mathbb{R}^n} \partial_{z_i} \Gamma_{A_1}( X, Z, u)\,\partial_{z_j} \Gamma_{A_2}( Z, Y, t-s-u)\,(A_1 (Z) -A_2 (Z))\, dZ\, du.
  \end{align*}
\end{Lem}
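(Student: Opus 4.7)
The plan is to apply Duhamel's principle to the difference
$w(X,t) := \Gamma_{A_1}(X,Y,t-s) - \Gamma_{A_2}(X,Y,t-s)$,
with $Y$ and $s$ treated as fixed parameters. For $t>s$ both $\Gamma_{A_j}(\cdot,Y,\cdot-s)$ are classical solutions of their own homogeneous equations $\LL_{A_j}\Gamma_{A_j}=0$, so a direct expansion of $\LL_{A_1}-\LL_{A_2} = -\nabla\cdot\bigl((A_1-A_2)\nabla\,\cdot\,\bigr)$ gives
$$\LL_{A_1}w \;=\; -\LL_{A_1}\Gamma_{A_2}(\cdot,Y,\cdot-s) \;=\; \nabla_X\cdot\bigl((A_1-A_2)(X)\,\nabla_X\Gamma_{A_2}(X,Y,t-s)\bigr),$$
while the initial condition $\Gamma_{A_j}(\cdot,Y,0^+)=\delta(\,\cdot\,-Y)$ forces $w(\cdot,s^+)\equiv 0$.

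Next, Duhamel's representation for the Cauchy problem $\LL_{A_1}w=f$ with zero initial data at $t=s$ produces
$$w(X,t) \;=\; \int_s^t\int_{\R^n}\Gamma_{A_1}(X,Z,t-\sigma)\,\nabla_Z\cdot\bigl((A_1-A_2)(Z)\nabla_Z\Gamma_{A_2}(Z,Y,\sigma-s)\bigr)\,dZ\,d\sigma.$$
A spatial integration by parts in $Z$, legitimate thanks to the Gaussian decay of $\Gamma_{A_1}$ and $\nabla_Z\Gamma_{A_1}$ at infinity provided by Lemmas \ref{Lem:Aronson} and \ref{Lem:GXYm}, moves the divergence onto $\Gamma_{A_1}$ and replaces the inner integrand by the bilinear form $\sum_{i,j}\partial_{z_i}\Gamma_{A_1}(X,Z,t-\sigma)\,(A_1-A_2)_{ij}(Z)\,\partial_{z_j}\Gamma_{A_2}(Z,Y,\sigma-s)$ (with the appropriate sign absorbed from the integration-by-parts bookkeeping). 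Finally, the substitution $u:=t-\sigma$, combined with the causal vanishing of $\Gamma_{A_2}(Z,Y,\sigma-s)$ whenever $\sigma<s$ (which lets us harmlessly extend the $u$-range to $[0,\infty)$), puts the identity into the form stated in the lemma.

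The main technical point requiring careful handling is the rigorous justification of Duhamel's formula for $w$: one must verify that $w$, its spatial gradient, and the forcing term $f := \nabla_Z\cdot((A_1-A_2)\nabla_Z\Gamma_{A_2})$ all exhibit sufficient spatial decay and regularity for the convolution representation to hold and be unique. These requirements all follow by combining the uniform boundedness of $A_1-A_2$ with the pointwise Gaussian bounds assembled in Section \ref{sec:FS}; once they are in place, the integration by parts in $Z$ and the change of variables in the time variable are entirely routine.
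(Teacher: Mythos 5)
Your route through Duhamel's principle is a legitimate and genuinely different approach from the paper's. The paper starts from the right-hand side, splits $(A_1-A_2)_{ij}$ into two pieces, performs two different integrations by parts to produce the full divergences $\nabla_Z\cdot(A_1\nabla_Z\Gamma_{A_1})$ and $\nabla_Z\cdot(A_2\nabla_Z\Gamma_{A_2})$, replaces these using the parabolic equations they satisfy (including the delta sources), and finishes with a $u$-integration-by-parts; i.e.\ it \emph{verifies} the identity backward. You instead \emph{derive} the identity forward: compute $\mathcal{L}_{A_1}w=\nabla\cdot\bigl((A_1-A_2)\nabla\Gamma_{A_2}\bigr)$, invoke the Duhamel representation for the source problem with zero Cauchy data, and then integrate by parts once in space and change variables in time. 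Your route is conceptually more transparent (the identity "comes from somewhere" rather than being pulled out of a hat), while the paper's route only ever manipulates quantities such as $\nabla_Z\cdot(A_k\nabla_Z\Gamma_{A_k})$ that are intrinsically well-defined via the PDE, thereby sidestepping the distributional forcing. Both are correct modulo standard justification of the interchanges, which the paper explicitly waives at the end of Section~\ref{sec:Basic notations and tools}.

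One point you should not brush aside with "the appropriate sign absorbed from the integration-by-parts bookkeeping": if you track the signs carefully, Duhamel and the spatial integration by parts produce
\begin{equation*}
  \Gamma_{A_1}(X,Y,t-s)-\Gamma_{A_2}(X,Y,t-s)
    = -\sum_{i,j=1}^{n}\int_0^\infty\!\!\int_{\R^n}\partial_{z_i}\Gamma_{A_1}(X,Z,u)\,\partial_{z_j}\Gamma_{A_2}(Z,Y,t-s-u)\,(A_1-A_2)_{ij}(Z)\,dZ\,du,
\end{equation*}
i.e.\ the opposite sign from the one displayed in Lemma~\ref{difference of fundsolutions to parabolic}. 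A quick Fourier check in the constant-coefficient case $A_k=a_kI$ confirms that the minus sign is the correct one, so the displayed lemma (and, by taking a time Fourier transform, Lemma~\ref{difference of fundsolutions to fourierparabolic} as well) carries a sign typo. This is harmless for the paper since the lemma is used only to bound $|\Gamma_{A_1}-\Gamma_{A_2}|$ and its derivatives, but your proof should state the identity with the minus sign rather than claiming to reproduce the statement verbatim.
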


\begin{proof}
 Integration by parts yields
\begin{align*}
   &\sum_{i,j=1}^{n} \int_{0}^{\infty} \int_{\mathbb{R}^n} \partial_{z_i} \Gamma_{A_1}( X, Z, u)\,\partial_{z_j} \Gamma_{A_2}( Z, Y, t-s-u)\,(A_1 (Z) -A_2 (Z))\, dZ\, du\\
  & \qquad  =  \sum_{i,j=1}^{n}\Big(\int_{0}^{\infty} \int_{\mathbb{R}^n} \partial_{z_i}( A_2 (Z)\partial_{z_j}\Gamma_{A_2}( Z, Y, t-s-u)) \Gamma_{A_1}( X, Z, u)\,\, dZ\, du\\
  & \qquad \quad - \int_{0}^{\infty} \int_{\mathbb{R}^n} \partial_{z_j}(A_1 (Z)\partial_{z_i}\Gamma_{A_1}( Z, Y, u) ) \Gamma_{A_1}(Z, Y, t-s-u)\,\, dZ\, du\Big).
   \end{align*}
		
Now the claimed equality follows by also observing that
\begin{equation*}
\begin{split}
  &\partial_t \Gamma_{A_2}( Z,Y, t-s-u) - \nabla\cdot \big(A_2(Z) \nabla \Gamma_{A_2}( Z, Y,  t-s-u)\big)= \delta( Y-Z) \,\delta(t-s-u),\\
  &\partial_u \Gamma_{A_1}( X, Z, u) - \nabla\cdot \big(A_1 (Z) \nabla \Gamma_{A_1}( X, Z,u)\big)= \delta( X-Z) \,\delta(u),
  \end{split}
\end{equation*}
and that
$$\partial_u \int_{\mathbb{R}^n} \Gamma_{A_1}( X, Z, u)\, dZ
			= \partial_t \int_{\mathbb{R}^n} \Gamma_{A_1}( Z, Y, t-s-u)\, dZ =0,$$
since the integrals that are being differentiated are both equal to $1$ regardless of the time variable.
\end{proof}

In \cite{FaRi}, the problem of the invertibility of boundary singular integrals was handled by utilising the time independence of the Laplacian in the heat equation and performing a Fourier transformation in the time variable. This is an approach which we also adapt here and it has numerous advantages. However, it behoves us then to get suitable estimates for the fundamental solution of the Fourier-transformed operator. To this end, we define the truncated Fourier transform of a function $h$ as
  $$\widehat{h}(\tau)
      := \FF_t(h)(\tau)
      := \int_0^\infty e^{-i \tau t } h(t)\, dt.$$

If \hyperref[A1]{\small{$(A1)$}} is satisfied, we can take the Fourier transform in time in
\eqref{eq:parabolic} and get the new equation
\begin{equation}\label{eq:FourierEq}
  \widehat{\LL}_A \widehat{u}(X,\tau)
    :=-i \tau \widehat{u}(X,\tau) - \nabla_X\cdot\big(A(X) \nabla_X \widehat{u}(X,\tau)\big)
     =0, \quad X \in \Omega,
\end{equation}
for each $\tau$. This way, the parabolic equation becomes an elliptic equation
depending on the parameter $\tau$, which we assume to be fixed hereafter. Moreover, it is clear that
$$\widehat{\G}(X,Y,\tau)
    = \int_0^\infty e^{-i \tau t} \G(X,Y,t)\, dt, \quad X,Y \in \Rn,$$
is the fundamental solution of \eqref{eq:FourierEq}. The following lemmas establish estimates for $\widehat{\G}(X,Y,\tau)$.

\begin{Lem}\label{Lem:GFourier}
  Assume that \hyperref[A1]{\small{$(A1)$}} and \hyperref[A3]{\small{$(A3)$}} hold.
  Then, for every $N \in \N$; $X,Y \in \Rn$ we have that
  $$|\widehat{\G}(X,Y,\tau)|
	\lesssim \frac{\min\{1,(|\tau| |X-Y|^2)^{-N}\}}{|X-Y|^{n-2}},\quad \tau\neq 0.$$
\end{Lem}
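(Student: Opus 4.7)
The plan is to split the desired estimate into its two competing pieces and prove each by a Gaussian integration argument, the second piece being obtained from the first by repeated integration by parts in $t$.

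First I would dispose of the $(\min\{1,\cdot\})$-factor being $1$. Since we may assume $X\neq Y$ (the estimate is vacuous otherwise), Aronson's bound from Lemma \ref{Lem:Aronson} gives
$$
|\widehat{\G}(X,Y,\tau)|
 \leq \int_0^\infty |\G(X,Y,t)|\,dt
 \lesssim \int_0^\infty \frac{e^{-c|X-Y|^2/t}}{t^{n/2}}\,dt.
$$
The substitution $u=|X-Y|^2/t$ turns this into $|X-Y|^{-(n-2)}\int_0^\infty u^{n/2-2}e^{-cu}\,du$, which gives the desired bound by a constant times $|X-Y|^{-(n-2)}$.

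Next I would handle the $(|\tau||X-Y|^2)^{-N}$ factor by integrating by parts $N$ times in $t$. Writing $e^{-i\tau t}=-(i\tau)^{-1}\partial_t e^{-i\tau t}$ and observing, thanks to Lemma \ref{Lem:Gt}, that $\partial_t^k\G(X,Y,t)$ decays as $t^{-(n+2k)/2}e^{-c|X-Y|^2/t}$ (so vanishes at $t=0^+$ for $X\neq Y$ and at $t=\infty$ for every $k\geq 0$), all boundary terms are zero and one arrives at
$$
\widehat{\G}(X,Y,\tau)=\frac{1}{(i\tau)^N}\int_0^\infty e^{-i\tau t}\partial_t^N\G(X,Y,t)\,dt.
$$
Then Lemma \ref{Lem:Gt} and the same substitution as before yield
$$
|\widehat{\G}(X,Y,\tau)|\lesssim \frac{1}{|\tau|^N}\int_0^\infty \frac{e^{-c|X-Y|^2/t}}{t^{(n+2N)/2}}\,dt
 \lesssim \frac{1}{|\tau|^N |X-Y|^{n-2+2N}}=\frac{(|\tau||X-Y|^2)^{-N}}{|X-Y|^{n-2}}.
$$

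Combining the two bounds gives the claimed $\min\{1,(|\tau||X-Y|^2)^{-N}\}/|X-Y|^{n-2}$ estimate. I do not foresee a real obstacle here; the only point that needs care is the justification of the boundary terms in the repeated integration by parts, and for that the Gaussian factor $e^{-c|X-Y|^2/t}$ in Lemma \ref{Lem:Gt} (which forces all time-derivatives of $\G$ to vanish as $t\to 0^+$ when $X\neq Y$, and decay rapidly as $t\to\infty$) is exactly what is needed.
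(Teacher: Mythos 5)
Your proof is correct and takes essentially the same route as the paper: $N$-fold integration by parts in time (with the vanishing boundary terms justified by the Gaussian decay in Lemma \ref{Lem:Gt}), followed by the substitution $t\mapsto |X-Y|^2 s$ and the bound of Lemma \ref{Lem:Gt}. The paper merely performs the change of variable before the integration by parts rather than after, and obtains the $\min\{1,\cdot\}$ bound by running the same computation for all $N\geq 0$ at once instead of treating $N=0$ separately via Lemma \ref{Lem:Aronson}.
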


\begin{proof}
  An integration by parts and Lemma \ref{Lem:Gt} lead to
  \begin{align*}
      &( |\tau||(X-Y)|^2)^{N} |\widehat{\G}(X,Y,\tau)|
	 = |X-Y|^2 \Big| \int_0^\infty \partial_s^N \Big(e^{-i \tau |X-Y|^2 s} \Big)\G(X,Y,|X-Y|^2s)\, ds \Big| \\
      & \qquad \leq |X-Y|^2  \int_0^\infty |\partial_s^N  \G(X,Y,|X-Y|^2s)|\, ds
	  = |X-Y|^{2+2N}  \int_0^\infty |(\partial_s^N  \G)(X,Y,|X-Y|^2s)|\, ds \\
      & \qquad \lesssim |X-Y|^{2+2N}  \int_0^\infty \frac{e^{-c/s}}{(|X-Y|^2s)^{n/2+N}}\, ds
	  \lesssim \frac{1}{|X-Y|^{n-2}}.
  \end{align*}

\end{proof}
For various derivatives of $\widehat{\G}$ one also has the following estimates:
\begin{Lem}\label{Lem:GXYFourier}
  Assume that \hyperref[A1]{\small{$(A1)$}} -- \hyperref[A4]{\small{$(A4)$}} hold.
  Then for every $q>0$, $m \in \Nn$ such that $|m| \leq 2$ and $X,Y \in \Rn$ we have that
  \begin{itemize}
      \item[$(i)$]   $\displaystyle  |\partial_X^m \widehat{\G}(X,Y,\tau)| + | \partial_Y^m \widehat{\G}(X,Y,\tau)|
					  \lesssim \frac{1}{|X-Y|^{n-2+m}},$
      \quad \\
      \item[$(ii)$]  $\displaystyle | \partial_X \partial_Y \widehat{\G}(X,Y,\tau)|
					  \lesssim \frac{1}{|X-Y|^{n}},$
      \quad \\
      \item[$(iii)$]  $\displaystyle |\partial_X \widehat{\G}(X,Y,\tau)| + |\partial_Y \widehat{\G}(X,Y,\tau)|
					  \lesssim \frac{|\tau|^{-q}}{ |X-Y|^{n-1+2q}},\quad \tau\neq 0,$
      \quad \\
      \item[$(iv)$]  $\displaystyle |\partial_Y \widehat{\G}(X,Y,\tau_1) - \partial_Y \widehat{\G}(X,Y,\tau_2)|
					  \lesssim \frac{|\tau_1 - \tau_2|^{\beta}}{|X-Y|^{n-1-2\beta}},$ for  $n\geq 3$ and all $\beta \in (0,1).$
  \end{itemize}
\end{Lem}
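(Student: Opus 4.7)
The plan is to reduce each of $(i)$--$(iv)$ to the time-domain estimates from Lemmas \ref{Lem:Gt}--\ref{Lem:GXYt}, combined with the self-similar rescaling $t = |X-Y|^2 s$ under the $t$-integral that defines $\widehat{\G}$.

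For $(i)$ and $(ii)$, I would differentiate $\widehat{\G}(X,Y,\tau) = \int_0^\infty e^{-i\tau t}\,\G(X,Y,t)\,dt$ under the integral sign, pass absolute values inside, and invoke Lemma \ref{Lem:GXYm} (respectively Lemma \ref{Lem:GXYt}$(i)$) to obtain
\[
|\partial_X^m\widehat{\G}(X,Y,\tau)| + |\partial_Y^m\widehat{\G}(X,Y,\tau)| \lesssim \int_0^\infty \frac{e^{-c|X-Y|^2/t}}{t^{(n+|m|)/2}}\,dt = \frac{1}{|X-Y|^{n+|m|-2}}\int_0^\infty\frac{e^{-c/s}}{s^{(n+|m|)/2}}\,ds,
\]
the last integral being finite since $n\geq 3$; the same computation with the mixed-derivative bound from Lemma \ref{Lem:GXYt}$(i)$ yields $(ii)$.

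For $(iii)$, I would first treat integer $q=k\in\N$: integrate by parts $k$ times in $t$, transferring the factor $(i\tau)^k$ from $e^{-i\tau t}$ onto $\G$. The boundary terms vanish because Lemma \ref{Lem:Gt} forces $\partial_t^j\G(X,Y,t)\to 0$ exponentially as $t\to 0^+$ for $X\neq Y$, and polynomially as $t\to \infty$. This gives
\[
|\tau|^k\,|\partial_X\widehat{\G}(X,Y,\tau)|\lesssim \int_0^\infty |\partial_X\partial_t^k\G(X,Y,t)|\,dt \lesssim \frac{1}{|X-Y|^{n-1+2k}},
\]
after invoking Lemma \ref{Lem:GXYt}$(ii)$ and the rescaling $t=|X-Y|^2 s$. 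For a general exponent $q>0$ I would interpolate with the $q=0$ bound from $(i)$: if $|\tau||X-Y|^2\leq 1$ the bound from $(i)$ suffices because $(|\tau||X-Y|^2)^{-q}\geq 1$; if $|\tau||X-Y|^2\geq 1$, take $k=\lceil q\rceil$ and use $(|\tau||X-Y|^2)^{-k}\leq(|\tau||X-Y|^2)^{-q}$ to trade the integer power for $q$.

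The main step is $(iv)$. Writing
\[
\partial_Y\widehat{\G}(X,Y,\tau_1)-\partial_Y\widehat{\G}(X,Y,\tau_2) = \int_0^\infty \bigl(e^{-i\tau_1 t}-e^{-i\tau_2 t}\bigr)\partial_Y\G(X,Y,t)\,dt,
\]
I would use the elementary interpolation $|e^{ia}-e^{ib}|=2|\sin\tfrac{a-b}{2}|\leq 2^{1-\beta}|a-b|^\beta$, valid for $\beta\in[0,1]$, applied to $a=-\tau_1 t$ and $b=-\tau_2 t$; this produces a factor of $|\tau_1-\tau_2|^\beta\,t^\beta$. Combining with the bound $|\partial_Y\G|\lesssim t^{-(n+1)/2}e^{-c|X-Y|^2/t}$ from Lemma \ref{Lem:GXYm} and rescaling $t=|X-Y|^2 s$ yields
\[
|\partial_Y\widehat{\G}(X,Y,\tau_1)-\partial_Y\widehat{\G}(X,Y,\tau_2)|\lesssim \frac{|\tau_1-\tau_2|^\beta}{|X-Y|^{n-1-2\beta}}\int_0^\infty s^{\beta-(n+1)/2}e^{-c/s}\,ds.
\]
The substitution $u=1/s$ rewrites the remaining integral as $\int_0^\infty u^{(n-3)/2-\beta}e^{-cu}\,du$, which converges precisely when $\beta<(n-1)/2$. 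The hypotheses $n\geq 3$ and $\beta\in(0,1)$ together guarantee this, and this short-distance integrability near $t=0$ is the only delicate point in the argument, explaining both the dimensional assumption and the strictness $\beta<1$ in the statement.
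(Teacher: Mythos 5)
Your proof is correct and follows essentially the same route as the paper: differentiate under the integral, apply the time-domain kernel estimates from Lemmas \ref{Lem:GXYm} and \ref{Lem:GXYt}, rescale $t=|X-Y|^2 s$, integrate by parts to get the $|\tau|^{-k}$ factor in the integer case of $(iii)$, and use $|e^{-i\tau_1 t}-e^{-i\tau_2 t}|\lesssim |t(\tau_1-\tau_2)|^\beta$ for $(iv)$. The only variation is in passing from integer to general $q$ in $(iii)$: the paper bounds $|\partial_X\widehat{\G}|=|\partial_X\widehat{\G}|^{1-\theta}|\partial_X\widehat{\G}|^{\theta}$ by the $N$ and $N+1$ estimates with $q=N+\theta$, whereas you split into the cases $|\tau||X-Y|^2\lessgtr 1$ and compare with the nearest integer exponent; both are elementary and yield the same conclusion.
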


\begin{proof}
  $(i)$ and $(ii)$ are straightforward applications of Lemmas \ref{Lem:GXYm} and \ref{Lem:GXYt} $(i)$.

  For $(iii)$, if $q=N \in \N$, we can proceed as in the
  proof of Lemma \ref{Lem:GFourier}, taking into account Lemma \ref{Lem:GXYt} $(ii)$.
  Finally, if $q$ is not an integer, we use the following simple interpolation argument. Namely, write $q=N+\theta$, where $N=\lfloor q \rfloor \in \N$
  and $\theta \in (0,1)$. Then using (\emph{iii}) for the integer values of $q$, we obtain
  \begin{align*}
    |\partial_X \widehat{\G}(X,Y,\tau)|
	& = |\partial_X \widehat{\G}(X,Y,\tau)|^{1-\theta} |\partial_X \widehat{\G}(X,Y,\tau)|^{\theta} \\
	& \lesssim \Big( \frac{|\tau|^{-N}}{ |X-Y|^{n-1+2N}} \Big)^{1-\theta} \Big( \frac{|\tau|^{-(N+1)}}{ |X-Y|^{n-1+2(N+1)}} \Big)^{\theta}
	  = \frac{|\tau|^{-q}}{ |X-Y|^{n-1+2q}}.
  \end{align*}
  The proof of the estimate for $\partial_Y \widehat{\G}$ is exactly the same.

Statement $(iv)$ is a consequence of the elementary estimate  $|e^{-it\tau_1}-e^{-it\tau_2}| \lesssim |t(\tau_1 - \tau_2)|^\beta$, valid for all $0<\beta\leq 1$ and Lemma \ref{Lem:GXYm}. Indeed we have
  \begin{align*}
    & |\partial_Y \widehat{\G}(X,Y,\tau_1) - \partial_Y \widehat{\G}(X,Y,\tau_2)|
	 \leq \int_0^\infty |e^{-it\tau_1}-e^{-it\tau_2}| |\partial_Y \G(X,Y,t)|\, dt\\
&\lesssim |\tau_1 - \tau_2|^{\beta} \int_0^\infty t^{\beta} |\partial_Y \G(X,Y,t)|\, dt
\lesssim |\tau_1 - \tau_2|^{\beta} \int_0^\infty t^{\beta} \frac{e^{-c|X-Y|^2/t}}{t^{(n+1)/2}}\, dt \\
&\lesssim  \frac{|\tau_1 - \tau_2|^{\beta}}{|X-Y|^{n-1-2\beta}}\int_0^\infty e^{-s} s^{(n-3-2\beta)/2}\, ds \lesssim \frac{|\tau_1 - \tau_2|^{\beta}}{|X-Y|^{n-1-2\beta}} ,
  \end{align*}
provided that $n\geq3$ and $\beta\in(0,1).$

\end{proof}

For the Rellich estimates in Section \ref{sec:Parabolic Rellich} we would also need the following general lemma:
\begin{Lem}\label{Lem:tauq}
  Assume that \hyperref[A1]{\small{$(A1)$}} -- \hyperref[A4]{\small{$(A4)$}} hold.
  Let $q=0$ or $q=1/2$ and let $B$ be the operator defined by
  $$B(g)(P)
    := \int_{\partial \Omega} |\tau|^q \widehat{\G}(P,Q,\tau) g(Q) dQ, \quad P \in \partial \Omega.$$
  Then,
  $$\|B g\|_{L^2(\partial \Omega)}
	\lesssim \|g\|_{L^2(\partial \Omega)}, \quad g \in L^2(\partial \Omega),$$
  where the estimate is uniform in $\tau$.
\end{Lem}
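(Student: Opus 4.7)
I would use Schur's lemma (Lemma \ref{Lem:Schurslemma}) applied to the integral kernel $K(P,Q):=|\tau|^{q}\,\widehat{\G}(P,Q,\tau)$. Lemma \ref{Lem:GFourier} furnishes the pointwise bound
$$|K(P,Q)|\lesssim |\tau|^{q}\,\frac{\min\{1,(|\tau||P-Q|^{2})^{-N}\}}{|P-Q|^{n-2}},\qquad N\in\N,$$
which is symmetric in $P$ and $Q$. Consequently the two Schur conditions coincide and both reduce to showing
$$I_{\tau}(P):=|\tau|^{q}\int_{\partial\Omega}\frac{\min\{1,(|\tau||P-Q|^{2})^{-N}\}}{|P-Q|^{n-2}}\,dQ\lesssim 1,$$
uniformly in $P\in\partial\Omega$ and $\tau\in\R$. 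Since $\partial\Omega$ is Ahlfors regular of dimension $n-1$ and bounded, a standard dyadic-annuli computation yields
$$\int_{|P-Q|\leq R}\frac{dQ}{|P-Q|^{s}}\lesssim R^{n-1-s}\ (s<n-1),\qquad \int_{|P-Q|>R}\frac{dQ}{|P-Q|^{s}}\lesssim R^{n-1-s}\ (s>n-1),$$
both uniformly in $P$.

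For $q=0$, choosing $N=0$ reduces $I_{\tau}(P)$ to $\int_{\partial\Omega}|P-Q|^{-(n-2)}\,dQ\lesssim\mathrm{diam}(\Omega)$ uniformly in $P$, which is the desired bound (and is in fact $\tau$-independent).

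For $q=1/2$ I would introduce the critical scale $r_{\tau}:=|\tau|^{-1/2}$ and split the integration over $\partial\Omega$ into the near piece $\{|P-Q|\leq r_{\tau}\}$ and the far piece $\{|P-Q|>r_{\tau}\}$. On the near piece, applying Lemma \ref{Lem:GFourier} with $N=0$ together with the surface-integral bound above gives
$$|\tau|^{1/2}\int_{|P-Q|\leq r_{\tau}}\frac{dQ}{|P-Q|^{n-2}}\lesssim|\tau|^{1/2}\,r_{\tau}=1.$$
On the far piece, Lemma \ref{Lem:GFourier} with $N=1$ bounds the integrand by $|\tau|^{-1/2}|P-Q|^{-n}$ and the complementary surface-integral estimate yields
$$|\tau|^{-1/2}\int_{|P-Q|>r_{\tau}}\frac{dQ}{|P-Q|^{n}}\lesssim|\tau|^{-1/2}\,r_{\tau}^{-1}=1.$$
Summing the two contributions gives $I_{\tau}(P)\lesssim 1$ uniformly, and Schur's lemma then delivers the claimed $L^{2}$ bound.

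There is no real obstacle here: the whole content of the argument is the calibration of the cutoff radius $r_{\tau}=|\tau|^{-1/2}$, which is precisely the length scale at which the two regimes in the minimum in Lemma \ref{Lem:GFourier} meet, and at which the factor $|\tau|^{1/2}$ is exactly absorbed by the surface-integral bounds on each of the two regions. The estimate is independent of $\tau$ because $\partial\Omega$ is $(n-1)$-Ahlfors regular and bounded, so no boundary effects at infinity interfere.
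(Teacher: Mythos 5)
Your proposal is correct and follows essentially the same approach as the paper: Schur's lemma with the symmetric kernel bound from Lemma \ref{Lem:GFourier}, reducing to a uniform $L^1$ kernel estimate. The only cosmetic difference is that the paper rescales via the substitution $Z=|\tau|^{1/2}(Q'-P')$ in graph coordinates and then splits at $|Z|=1$, whereas you split the boundary integral directly at the critical radius $r_\tau=|\tau|^{-1/2}$; these are the same calibration.
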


\begin{proof}
  By Lemma \ref{Lem:Schurslemma} and the symmetry of the kernel, it is enough to show that
  $$\int_{\partial \Omega} |\tau^q \widehat{\G}(P,Q,\tau)| dQ
      \lesssim 1, \quad P \in \partial \Omega,$$
  uniformly in $\tau$.

  If $q=0$, by Lemma \ref{Lem:GXYFourier} $(i)$, we just need to check that
    $$\int_{\partial \Omega} \frac{dQ}{|Q-P|^{n-2}}
      \lesssim 1, \quad P \in \partial \Omega.$$
  Locally, we can write $P=(P',\varphi(P'))$, $Q=(Q',\varphi(Q'))$, $P',Q' \in \R^{n-1}$, for a certain
  Lipschitz function $\varphi$. Moreover, since $\Vert \nabla\varphi \Vert_{L^\infty}\lesssim 1$ and $\partial \Omega$ is a compact set, there exists $M>0$ such that
  $|Q'-P'|<M$, for every $Q,P \in \partial \Omega$.
  Then, the above integral is equal to
  \begin{align*}
    \int_{\substack{Q' \in \R^{n-1} \\ |Q'-P'|<M}} \frac{\sqrt{1+|\nabla\varphi(Q')|^2}}{(|Q'-P'|^2+|\varphi(Q')-\varphi(P')|^2)^{(n-2)/2}}\, dQ'
      \lesssim \int_{\substack{Q' \in \R^{n-1} \\ |Q'-P'|<M}} \frac{dQ'}{|Q'-P'|^{n-2}}
      \lesssim 1, \quad P \in \partial \Omega.
  \end{align*}

  Suppose now that $q=1/2$. We are going to proceed as before but with a slight modification in order to avoid the dependence on the parameter $\tau$. We use the improved estimate in Lemma \ref{Lem:GFourier} with $N \geq 1$, and write the corresponding integral in $\R^{n-1}$ as
  \begin{align*}
    & \int_{Q' \in \R^{n-1}}
	\frac{|\tau|^{1/2} \min\{1,[|\tau| (|Q'-P'|^2+|\varphi(Q')-\varphi(P')|^2)]^{-N}\}}{(|Q'-P'|^2+|\varphi(Q')-\varphi(P')|^2)^{(n-2)/2}}
	\sqrt{1+|\nabla\varphi(Q')|^2}\, dQ'\\
    & \qquad \qquad \lesssim \int_{Q' \in \R^{n-1}}
	\frac{|\tau|^{1/2} \min\{1,(|\tau| |Q'-P'|^2)^{-N}\}}{|Q'-P'|^{n-2}}\,  dQ'
      = \int_{Z \in \R^{n-1}} \frac{\min\{1,|Z|^{-2N}\}}{Z^{n-2}} \, dZ\\
    & \qquad \qquad = \int_{\R^{n-1}\cap\{|Z|<1\}} \frac{dZ}{Z^{n-2}}
		    + \int_{\R^{n-1}\cap\{|Z| \geq 1\}} \frac{dZ}{Z^{n-2+2N}}
      \lesssim 1, \quad P \in \partial \Omega.
  \end{align*}
\end{proof}

The following lemma will also be useful in dealing with the transference of the invertibility of boundary singular integrals.
\begin{Lem}\label{difference of fundsolutions to fourierparabolic}
  Let $A_1$ and $A_2$ be two diffusion coefficients, with the corresponding fundamental solutions $\widehat{\Gamma}_{A_1}( X, Y, \tau)$ and $\widehat{\Gamma}_{A_2}( X, Y, \tau)$. Then the following equality holds for the difference of fundamental solutions:
  \begin{equation*}
  \widehat{\Gamma}_{A_2}( X, Y, \tau)-\widehat{\Gamma}_{A_1}( X, Y, \tau)= \sum_{i,j=1}^{n} \int_{\mathbb{R}^n} \partial_{z_i} \widehat{\Gamma}_{A_2}( X, Z, \tau)\,\partial_{z_j} \widehat{\Gamma}_{A_1}( Z, Y, \tau)\,(A_2 (Z) -A_1 (Z))\, dZ.
  \end{equation*}
\end{Lem}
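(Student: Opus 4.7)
The plan is to derive this identity by mimicking, in the time-Fourier (elliptic) setting, the integration-by-parts argument used for Lemma \ref{difference of fundsolutions to parabolic}. The starting point is the pair of distributional identities in the $Z$ variable,
\begin{equation*}
\widehat{\LL}_{A_1,Z}\widehat{\Gamma}_{A_1}(Z,Y,\tau) = \delta(Z-Y),\qquad \widehat{\LL}_{A_2,Z}\widehat{\Gamma}_{A_2}(X,Z,\tau) = \delta(Z-X),
\end{equation*}
where the second uses the spatial symmetry $\widehat{\Gamma}_{A_2}(X,Z,\tau) = \widehat{\Gamma}_{A_2}(Z,X,\tau)$ coming from the symmetry of the matrix $A_2$. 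I would multiply the first identity by $\widehat{\Gamma}_{A_2}(X,Z,\tau)$, the second by $\widehat{\Gamma}_{A_1}(Z,Y,\tau)$, subtract, and integrate over $Z\in\R^n$; the $\delta$-functions collapse the left-hand side to exactly $\widehat{\Gamma}_{A_2}(X,Y,\tau)-\widehat{\Gamma}_{A_1}(X,Y,\tau)$.

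On the right-hand side, writing $\widehat{\LL}_{A_k}u = -i\tau u - \nabla\cdot(A_k\nabla u)$, the zeroth-order $-i\tau$ contributions cancel identically because they act by pointwise scalar multiplication and leave the same factor $\widehat{\Gamma}_{A_1}\widehat{\Gamma}_{A_2}$ in both terms. Only the divergence-form pieces then survive, and a single integration by parts in $Z$, combined with the symmetry of $A_1$ and $A_2$, collapses them into the bilinear form
\begin{equation*}
\sum_{i,j}\int_{\R^n}\partial_{z_i}\widehat{\Gamma}_{A_2}(X,Z,\tau)\,\partial_{z_j}\widehat{\Gamma}_{A_1}(Z,Y,\tau)\,(a^{(2)}_{ij}(Z)-a^{(1)}_{ij}(Z))\,dZ,
\end{equation*}
which is precisely the asserted right-hand side.

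The main delicate point is the rigorous justification of the integration by parts, since the integrand has mild interior singularities at $Z=X$ and $Z=Y$ coming from the fundamental solutions and their gradients (the latter being of order $|Z-\cdot|^{-(n-1)}$ by Lemma \ref{Lem:GXYFourier}$(i)$); each factor is locally integrable in $\R^n$ for $n\geq 3$, but the divergence-theorem manipulation has to be performed away from these points. I would handle this by excising small balls $B_\varepsilon(X)\cup B_\varepsilon(Y)$, carrying out the argument on the exterior, and letting $\varepsilon\to 0$: the spherical contributions on $\partial B_\varepsilon(X)$ and $\partial B_\varepsilon(Y)$ either vanish (e.g.\ the boundary contribution on $\partial B_\varepsilon(X)$ is of size $\varepsilon\cdot|X-Y|^{-(n-1)}$) or reproduce exactly the $\delta$-function contributions already extracted on the left-hand side, while the decay estimates of Lemma \ref{Lem:GFourier} make the surface terms at infinity vanish. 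A shorter and equivalent route which bypasses the integration-by-parts step altogether is simply to apply the truncated time Fourier transform $\FF_t$ to the identity of Lemma \ref{difference of fundsolutions to parabolic}: the $u$-integral on the right-hand side of that lemma is a one-sided convolution in $t$, so $\FF_t$ converts it into the pointwise product $\partial_{z_i}\widehat{\Gamma}_{A_2}(X,Z,\tau)\,\partial_{z_j}\widehat{\Gamma}_{A_1}(Z,Y,\tau)$, and the Gaussian bounds of Lemma \ref{Lem:GXYm} together with Proposition \ref{Chapman-Kolmogorov} furnish the absolute integrability needed to interchange $\FF_t$, the spatial integral, and the sum.
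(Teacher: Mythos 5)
Your ``shorter and equivalent route'' --- apply $\FF_t$ to the identity of Lemma~\ref{difference of fundsolutions to parabolic} and use that the one-sided $u$-integral is a causal convolution which becomes a pointwise product of $\partial_{z_i}\widehat{\Gamma}_{A_2}(X,Z,\tau)$ and $\partial_{z_j}\widehat{\Gamma}_{A_1}(Z,Y,\tau)$ --- is precisely the paper's one-line proof, and your remark about using the Gaussian bounds of Lemma~\ref{Lem:GXYm} together with Proposition~\ref{Chapman-Kolmogorov} to justify interchanging $\FF_t$ with the $Z$-integral is a reasonable elaboration of what the authors leave implicit.

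Your first, direct route is genuinely different: it is a Green's-identity argument carried out entirely at the level of the fixed-$\tau$ elliptic equation $\widehat{\LL}_{A_k}$ and bypasses the parabolic lemma altogether, which is self-contained and arguably cleaner since it requires none of the heat-kernel machinery. It is correct in outline (the excision of $B_\eps(X)\cup B_\eps(Y)$, the cancellation of the $i\tau$ zeroth-order terms, the decay estimates of Lemma~\ref{Lem:GFourier} at infinity, the spatial symmetry $\widehat{\Gamma}_{A_k}(X,Z,\tau)=\widehat{\Gamma}_{A_k}(Z,X,\tau)$), but one bookkeeping point deserves a warning: if you multiply $\widehat{\LL}_{A_1,Z}\widehat{\Gamma}_{A_1}(Z,Y,\tau)=\delta(Z-Y)$ by $\widehat{\Gamma}_{A_2}(X,Z,\tau)$, multiply $\widehat{\LL}_{A_2,Z}\widehat{\Gamma}_{A_2}(X,Z,\tau)=\delta(Z-X)$ by $\widehat{\Gamma}_{A_1}(Z,Y,\tau)$, subtract, and integrate, the left side is $\widehat{\Gamma}_{A_2}-\widehat{\Gamma}_{A_1}$ as you say, but a careful integration by parts of the divergence-form pieces (using the symmetry of $A_1,A_2$) produces the factor $a^{(1)}_{ij}(Z)-a^{(2)}_{ij}(Z)$, not $a^{(2)}_{ij}(Z)-a^{(1)}_{ij}(Z)$, which is opposite in sign to the statement. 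In fact the same sign discrepancy is present in Lemma~\ref{difference of fundsolutions to parabolic} itself (the standard Duhamel argument $F(u)=\int\Gamma_{A_1}(X,Z,u)\Gamma_{A_2}(Z,Y,t-u)\,dZ$, $F(t^-)-F(0^+)=\int_0^tF'(u)\,du$, gives $A_2-A_1$ there, and a one-dimensional constant-coefficient test confirms it), so the two stated lemmas are mutually consistent under $\FF_t$ but appear to carry a common overall sign slip relative to a direct check. Your Fourier route reproduces the paper's printed statement faithfully; the direct route, if you write it out carefully, yields the corrected sign.
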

\begin{proof}
 The proof is a consequence of Lemma \ref{difference of fundsolutions to parabolic} and taking the Fourier transform in the time variable.
\end{proof}

\section{Parabolic layer potential operators; SLP, DLP and BSI}\label{sec:PLPO}

The main operators, concerning elliptic and parabolic boundary value problems are the Layer-potential operators. One defines the parabolic \textit{single} and \textit{double-layer potential} operators by
$$\Ss(f)(X,t)
    := \int_0^t \int_{\partial \Omega} \G(X,Q,t-s) f(Q,s)\, dQ\, ds, \quad X \in \Omega, \ t>0,$$
and
\begin{align*}
  \DD(f)(X,t)
       :=  & \int_0^t \int_{\partial \Omega} \partial_\nu \G(X,Q,t-s) f(Q,s) \,dQ\, ds \\
        =  & \sum_{i,j=1}^n \int_0^t \int_{\partial \Omega} a_{ij}(Q) n_j(Q) \partial_{y_i} \G(X,Y,t-s)_{|_{Y=Q}}  f(Q,s)\, dQ\, ds \\
        =: & \sum_{i,j=1}^n \DD^{i,j}(f)(X,t), \quad X \in \Omega, \ t>0.
\end{align*}
The single and double-layer potentials (which we shall sometimes refer to as SLP and DLP) satisfy the equation $\LL_A u=0$ with zero Cauchy data. However to solve the DNR problems, one needs to study the boundary traces of these operators. To this end, one considers the \textit{boundary singular integral} (or BSI for short)
$$\KK(f)(P,t)
      :=  \lim_{\eps \to 0} \KK_\eps(f)(P,t), \quad P \in \partial \Omega, \ t>0, $$
where
\begin{align*}
  \KK_\eps(f)(P,t)
      := & \int_0^{t-\eps} \int_{\partial \Omega} \partial_\nu \G(P,Q,t-s)  f(Q,s)\, dQ\, ds \\
      = & \sum_{i,j=1}^n  \int_0^{t-\eps} \int_{\partial \Omega} a_{ij}(Q) n_j(Q) \partial_{y_i} \G(P,Y,t-s)_{|_{Y=Q}} f(Q,s)\, dQ \,ds \\
      =: & \sum_{i,j=1}^n \KK_{\eps}^{i,j}(f)(P,t), \quad \eps>0.
\end{align*}

\begin{Rem}
 Note that one uses the principal value in the integral defining the boundary singular integral because the points $P$ and $Q$ in the integrand are both on the boundary and can get very close to each other resulting in an undesired behaviour in the exponential function hidden in the integral kernel of $\KK$, when $t$ and $s$ are close to each other. The principal value is not needed in the integral formulas for  the single and double-layer potentials because the point $X$ is an interior point while $Q$ is on the boundary, hence they are separated. Also, as we shall see in $\mathrm{Proposition}$ $\ref{Lem:PV}$ below, it makes no difference if we consider the principal value in ``time'' or in ``space''.
\end{Rem}

\begin{Rem}\label{Rem:KKadjoint}
    We also need to consider the adjoint operator
    $$\KK^*(f)(P,t)
	  := \lim_{\eps \to 0} \sum_{i,j=1}^n \int_0^{t-\eps} \int_{\partial \Omega}
						  a_{ij}(P) n_j(P) \partial_{x_i} \G(X,Q,t-s)_{|_{X=P}} f(Q,s)\, dQ \,ds .$$
    Note this presentation is valid thanks to our assumption $A^*=A$.
    All the results that we are going to prove for $\KK$ are also valid for $\KK^*$, due to the same behaviour of their corresponding integral kernels.
\end{Rem}

\subsection{$L^2$ boundedness of BSI}

For the application of Fredholm theory in showing the invertibility of the relevant boundary integral operators, the following boundedness result is crucial.
\begin{Th}\label{Thm:BSI}
  Assume that \hyperref[A1]{\small{$(A1)$}} -- \hyperref[A4]{\small{$(A4)$}} hold.
  Let $\eps>0$. Then,
  $$\|\KK_{\eps}(f)\|_{L^2(S_\infty)}
      \lesssim \|f\|_{L^2(S_\infty)}, \quad f \in L^2(S_\infty).$$
\end{Th}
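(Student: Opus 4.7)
The strategy is to identify $\KK_\eps$ as a Calder\'on-Zygmund singular integral in the parabolic metric on $S_\infty=\partial\Omega\times(0,\infty)$, and then invoke standard parabolic CZ machinery. This is the route taken by Brown in \cite{BrownPhD} for the constant coefficient heat equation, and the authors signal in the introduction that it adapts to the H\"older coefficient case here because the kernel estimates of Section \ref{sec:FS} are of exactly the same type as those for the heat kernel.

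Concretely, I would endow $S_\infty$ with the parabolic quasi-metric $d((P,t),(Q,s)):=|P-Q|+|t-s|^{1/2}$ and with the product measure $d\sigma(P)\,dt$; this makes $S_\infty$ a space of homogeneous type of homogeneous dimension $n+1$ (namely $n-1$ for the Ahlfors regular Lipschitz boundary plus $2$ for the time variable in the parabolic scaling). For the kernel $K(P,t;Q,s):=\sum_{i,j=1}^n a_{ij}(Q)\,n_j(Q)\,\partial_{y_i}\G(P,Y,t-s)|_{Y=Q}$, the size bound $|K(P,t;Q,s)|\lesssim d((P,t),(Q,s))^{-(n+1)}$ is a consequence of Lemma \ref{Lem:GXYm} (with $|m|=1$), the uniform boundedness of $a_{ij}$ supplied by \hyperref[A3]{$(A3)$}, the fact that $|n_j(Q)|\leq 1$ a.e., and the elementary inequality $e^{-c|P-Q|^2/(t-s)}(t-s)^{-(n+1)/2}\lesssim (|P-Q|+(t-s)^{1/2})^{-(n+1)}$. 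The parabolic H\"ormander-type smoothness estimates, which provide an extra factor of $d^{-1}$ per spatial derivative and $d^{-2}$ per time derivative, follow from Lemma \ref{Lem:GXYm} with $|m|=2$ and from Lemma \ref{Lem:GXYt} $(i)$--$(ii)$. Hence $K$ is a standard parabolic Calder\'on-Zygmund kernel.

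With the kernel placed in the CZ class, two natural paths lead to the $L^2$ bound. The cleaner one is to freeze coefficients: using Lemma \ref{difference of fundsolutions to parabolic} one writes $\G$ as the sum of the constant-coefficient heat kernel for $A(P_0)$, at some reference $P_0\in\partial\Omega$, plus an integral correction; after a trivial linear change of variables turning $A(P_0)$ into the identity, the principal piece is exactly the parabolic BSI for the heat equation on a Lipschitz cylinder, whose $L^2$ boundedness is Brown's theorem from \cite{BrownPhD}. The remainder picks up an extra factor of $|P_0-Q|^\alpha$ from the H\"older continuity \hyperref[A4]{$(A4)$}, making its kernel weakly integrable in each variable over $S_\infty$; Schur's lemma (Lemma \ref{Lem:Schurslemma}) then supplies the $L^2$ bound for the remainder. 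The alternative path is a direct $T(1)$ argument on the homogeneous space $(S_\infty,d,d\sigma\otimes dt)$, which requires verifying that $\KK_\eps(1),\KK_\eps^*(1)\in BMO_{\mathrm{par}}(S_\infty)$ uniformly in $\eps$; this verification reduces to the constant-coefficient case again. All bounds are obtained with constants independent of $\eps>0$, since the truncation at $t-\eps$ only excises mass near the diagonal and can only shrink the relevant operator norms. The main obstacle is thus the constant-coefficient piece itself, which on a Lipschitz boundary is a genuine parabolic Calder\'on-commutator-type singular integral whose $L^2$ boundedness rests on Coifman-McIntosh-Meyer-type estimates adapted to the parabolic scaling; this is the substance of Brown's argument in \cite{BrownPhD} and is inherited directly here.
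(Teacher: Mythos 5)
The paper's proof is structurally quite different from yours: it Fourier-transforms $\KK_\eps(f)$ in time, decomposes the transformed operator into $I_1+\dots+I_5$, identifies the singular piece $I_1$ with (a truncation of) the \emph{elliptic} boundary singular integral $\widetilde{\KK}^{i,j}$ for the H\"older-coefficient matrix $A$, controls that piece uniformly in $\tau$ by invoking \cite[Theorem~3.1]{KeSh2}, bounds the remaining four pieces pointwise by the Hardy--Littlewood maximal function on $\partial\Omega$, and then applies Plancherel in $t$. The only genuinely hard $L^2$ input is the Kenig--Shen elliptic theorem for H\"older-continuous coefficients, not Brown's parabolic constant-coefficient theorem.

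Your freeze-and-perturb route at a single fixed $P_0\in\partial\Omega$ has a genuine gap in the remainder estimate. If you write $\G_A-\G_{A(P_0)}$ via Lemma~\ref{difference of fundsolutions to parabolic}, the gain you extract from \hyperref[A4]{$(A4)$} is $|A(Z)-A(P_0)|\lesssim|Z-P_0|^\alpha$, where $Z$ is the convolution variable. The dominant contribution to the convolution integral comes from $Z$ near the segment $[P,Q]$, so the gain is of order $\max(|P-P_0|,|Q-P_0|)^\alpha$ -- a \emph{bounded} quantity, not one that vanishes as $(P,t)\to(Q,s)$. The resulting remainder kernel therefore still has size $\sim (|P-Q|^2+|t-s|)^{-(n+1)/2}$, which is not integrable in $(Q,s)$ over $S_\infty$ (the $Q$-integral already diverges like $\int_{\partial\Omega}|P-Q|^{-(n-1)}\,dQ$), so Schur's lemma does not apply. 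This is exactly why the paper's transference machinery (Lemma~\ref{difference of fundsolutions to parabolic} combined with the argument in Theorem~\ref{boundedness SLP of the difference}) compares $A$ only with matrices that \emph{agree with $A$ on $\partial\Omega$}: that is what produces $|A_1(Z)-A_2(Z)|\lesssim|Z-Q|^\alpha$ with $Q\in\partial\Omega$, a factor that genuinely softens the singularity near the diagonal. Freezing to a constant matrix does not satisfy this hypothesis. If instead you freeze at the variable point $P$ (or $Q$), the local gain $|P-Q|^\alpha$ is there, but the ``principal piece'' is then a $P$-dependent family of kernels -- a Calder\'on-commutator-type operator, not the single convolution BSI covered by Brown's theorem -- and its $L^2$ boundedness is precisely the nontrivial content that \cite{KeSh2} and \cite{MiTa} supply via a $T(b)$-type argument. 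Your alternative $T(1)$ route has the same issue: computing $\KK_\eps(1)$ and $\KK_\eps^*(1)$ for a variable-coefficient kernel does not ``reduce to the constant-coefficient case''; the cancellation structure depends on $A$ throughout the interior.

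Your verification of the parabolic Calder\'on--Zygmund size and regularity bounds for the kernel (via Lemmas~\ref{Lem:GXYm} and~\ref{Lem:GXYt}) is fine, and so is the observation that the $\eps$-truncation only removes mass near the diagonal. But CZ kernel estimates alone do not give $L^2$ boundedness; the missing $L^2$ input cannot be obtained by the fixed-point coefficient freezing you propose, and the path the paper actually takes is to borrow the elliptic $L^2$ theorem after a Fourier transform in time.
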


\begin{proof}
The idea behind the proof is as follows. First, one takes the Fourier transform in time of $\KK_{\eps}(f)$
and rewrites the resulting operator as an elliptic boundary singular integral plus some error terms.
For the elliptic part which contains cancellations, we take advantage of the results in the elliptic theory, previously established in \cite{KeSh2}, while the error terms will be controlled by the Hardy-Littlewood maximal function $\MM_{\partial \Omega}$. Finally an application of Plancherel's identity allows us to return to the original operator.
Now, let $f \in L^2(S_\infty)$ and $i,j=1, \dots, n$.
For every $P \in \partial \Omega$ we have
  \begin{align}\label{eq:Khat}
    \widehat{\KK_{\eps}^{i,j}(f)}(P,\tau)
	& = \int_0^\infty \int_0^{t-\eps} \int_{\partial \Omega} a_{ij}(Q) n_j(Q) \partial_{y_j} \G(P,Y,t-s)_{|_{Y=Q}} f(Q,s) e^{-i \tau t }\, dQ\, ds\, dt \\
	& = \int_{\partial \Omega}  a_{ij}(Q) n_j(Q) \int_0^\infty f(Q,s) \Big[ \int_{s+\eps}^\infty \partial_{y_i} \G(P,Y,t-s)_{|_{Y=Q}}  e^{-i \tau t } dt \Big] \,ds\, dQ  \nonumber \\
	& = \int_{\partial \Omega}  a_{ij}(Q) n_j(Q) \widehat{f}(Q,\tau) \Big[ \int_{\eps}^\infty \partial_{y_i} \G(P,Y,\zeta)_{|_{Y=Q}}  e^{-i \tau \zeta } d\zeta \Big]\, dQ.\nonumber
  \end{align}
  Then we split the above integral as follows:
  $$\widehat{\KK_{\eps}^{i,j}(f)}
      := I_1(f) + I_2(f) + I_3(f) + I_4(f) + I_5(f),$$
  where
\begin{align*}
& I_1(f)(P,\tau)
		= \int_{\partial \Omega\cap\{\sqrt{\eps} < |P-Q| \leq \frac{1}{\sqrt{|\tau|}}\}}  a_{ij}(Q) n_j(Q) \widehat{f}(Q,\tau)
			\partial_{y_j} \widetilde{\G}(P,Y)_{|_{Y=Q}}\, dQ, \\
& I_2(f)(P,\tau)
		= \int_{\partial \Omega\cap\{|P-Q| \leq \sqrt{\eps}\}}  a_{ij}(Q) n_j(Q) \widehat{f}(Q,\tau)
			\Big[ \int_{\eps}^\infty \partial_{y_i} \G(P,Y,\zeta)_{|_{Y=Q}}  e^{-i \tau \zeta } d\zeta \Big]\, dQ,  \\
& I_3(f)(P,\tau)
		= - \int_{\partial \Omega\cap\{|P-Q| > \sqrt{\eps}\}}  a_{ij}(Q) n_j(Q) \widehat{f}(Q,\tau)
			\Big[ \int_0^{\eps} \partial_{y_i} \G(P,Y,\zeta)_{|_{Y=Q}}  e^{-i \tau \zeta } \, d\zeta \Big] \, dQ, \\
& I_4(f)(P,\tau)
		= \int_{\partial \Omega\cap\{|P-Q| > \max\{\sqrt{\eps}, \frac{1}{\sqrt{|\tau|}}\}\}}  a_{ij}(Q) n_j(Q) \widehat{f}(Q,\tau)
			\Big[ \int_0^{\infty} \partial_{y_j} \G(P,Y,\zeta)_{|_{Y=Q}}  e^{-i \tau \zeta } \, d\zeta \Big] \, dQ, \\
& I_5(f)(P,\tau)
		= \int_{\partial \Omega\cap\{\sqrt{\eps} < |P-Q| \leq \frac{1}{\sqrt{|\tau|}}\} }  a_{ij}(Q) n_j(Q) \widehat{f}(Q,\tau)
			\Big[ \int_0^{\infty} \partial_{y_j} \G(P,Y,\zeta)_{|_{Y=Q}} (e^{-i \tau \zeta } -1)\, d\zeta \Big] \, dQ.
  \end{align*}

First of all, observe that
$$|I_1(f)(P,\tau)|
    \leq 2 \widetilde{\KK}^{i,j}(\widehat{f}(\cdot,\tau))(P),$$
where $\widetilde{\KK}^{i,j}$ represents the elliptic boundary singular integral given by
$$\widetilde{\KK}^{i,j}(g)(P)
    := \sup_{\delta>0} \Big| \int_{\partial \Omega\cap\{|P-Q|>\delta\}} a_{ij}(Q) n_j(Q) \partial_{y_i} \widetilde{\G}(P,Y)_{|_{Y=Q}} g(Q)\, dQ \Big|.$$
Thus, the $L^2(S_\infty)$--boundedness of the integral $I_1$ follows from \cite[Theorem 3.1]{KeSh2}.
Next, we deal with the remaining integrals $I_2$ to $I_5$.

From Lemma \ref{Lem:GXYm} it follows that
$$\Big| \int_{\eps}^\infty \partial_{y_j} \G(P,Y,\zeta)_{|_{Y=Q}}  e^{-i \tau \zeta } d\zeta \Big|
      \lesssim \int_{\eps}^\infty \frac{d\zeta}{\zeta^{(n+1)/2}}
      \sim \frac{1}{\eps^{(n-1)/2}}.$$
Hence \hyperref[A3]{\small{$(A3)$}} yields
\begin{align*}
    |I_2(f)(P,\tau)|
	& \lesssim  \frac{1}{\eps^{(n-1)/2}}\int_{\partial \Omega\cap\{|P-Q| \leq \sqrt{\eps}\}} |\widehat{f}(Q,\tau)|\, dQ
	  \leq \MM_{\partial \Omega} (\widehat{f}(\cdot,\tau))(P).
\end{align*}

Lemma \ref{Lem:GXYm} once again yields
\begin{align*}
   \Big| \int_0^{\eps} \partial_{y_j} \G(P,Y,\zeta)_{|_{Y=Q}}  e^{-i \tau \zeta }\, d\zeta \Big|
   &   \lesssim \int_0^{\eps} \frac{e^{-c|P-Q|^2/\zeta}}{\zeta^{(n+1)/2}}\, d\zeta
      \sim \frac{1}{|P-Q|^{n-1}} \int_{c|P-Q|^2/\eps}^{\infty} e^{-s} s^{(n-1)/2}\, ds \\
   &  \lesssim \frac{e^{-c|P-Q|^2/\eps}}{|P-Q|^{n-1}}.
\end{align*}
Next, we apply Lemma \ref{Lem:Duo} to obtain
\begin{align*}
    |I_3(f)(P,\tau)|
	& \lesssim \int_{\partial \Omega\cap\{|P-Q| > \sqrt{\eps}\}}  \frac{e^{-c|P-Q|^2/\eps}}{\eps^{(n-1)/2}} |\widehat{f}(Q,\tau)|\,  dQ
	  \lesssim \MM_{\partial \Omega} (\widehat{f}(\cdot,\tau))(P).
\end{align*}

On the other hand, using Lemma \ref{Lem:GXYFourier} $(iii)$ for any $N \in \N \setminus\{0\}$, and using Lemma \ref{Lem:Duo}, it follows that
\begin{align*}
    |I_4(f)(P,\tau)|
	& \lesssim \frac{1}{|\tau|^N}\int_{\partial \Omega\cap\{|P-Q| > \frac{1}{\sqrt{|\tau|}}\}} \frac{|\widehat{f}(Q,\tau)|}{|P-Q|^{n-1+2N}}\, dQ
	  \lesssim \MM_{\partial \Omega} (\widehat{f}(\cdot,\tau))(P).
\end{align*}

Finally, we once again use the fact that $|e^{-i \tau \zeta } -1| \lesssim |\tau \zeta|^\beta$, for all $0<\beta\leq 1$,
and therefore Lemma \ref{Lem:GXYm} yields
\begin{align}\label{eq:expbeta}
  \Big| \int_0^{\infty} \partial_{y_j} \G(P,Y,\zeta)_{|_{Y=Q}} (e^{-i \tau \zeta } -1)\, d\zeta \Big|
	& \lesssim |\tau|^\beta \int_0^{\infty} \frac{e^{-c|P-Q|^2/\zeta}}{\zeta^{(n+1-2\beta)/2}}\, d\zeta  \\
	& \sim \frac{|\tau|^\beta}{|P-Q|^{n-1-2\beta}} \int_0^{\infty} e^{-s} s^{(n-3-2\beta)/2}\, ds
	  \sim \frac{|\tau|^\beta}{|P-Q|^{n-1-2\beta}}.\nonumber
\end{align}
Using this estimate and Lemma \ref{Lem:Duo}, we obtain
\begin{align*}
    |I_5(f)(P,\tau)|
	& \lesssim \tau^\beta \int_{\partial \Omega\cap\{ |P-Q| \leq \frac{1}{\sqrt{|\tau|}}\}}  \frac{|\widehat{f}(Q,\tau)|}{|P-Q|^{n-1-2\beta}}\, dQ
	\lesssim \MM_{\partial \Omega} (\widehat{f}(\cdot,\tau))(P).
\end{align*}
Summing all the pieces together, the $L^2$ boundedness of the Hardy-Littlewood maximal function and Plancherel's theorem yield the desired result.
\end{proof}

\begin{Rem}\label{rem: boundedness_result} The Banach-Steinhaus theorem, together with the above uniform $L^2$ boundedness and the pointwise convergence of  $\KK_{\eps}f$  for functions in ${C}^\infty_c(S_\infty)$, yield the convergence of $\KK_{\eps}$ in the $L^2$ norm to the $L^2$ bounded operator $\KK$.  Furthermore, based on this fact and on the estimates for $\G ( P, Q, t)$, which are of the same nature as in the constant coefficient case,  standard Calder\'on-Zygmund theory yields that the operator $\KK$ is bounded on $L^p (S_\infty)$ for any $1<p<\infty$.
\end{Rem}

As a consequence, we obtain the following pointwise convergence result:
\begin{Cor}\label{Cor:bddK}
Assume that \hyperref[A1]{\small{$(A1)$}} -- \hyperref[A4]{\small{$(A4)$}} hold.
The operator given by
 $$\tilde{\KK}(f)(P,t)
       :=  \sup_{\eps > 0} |\KK_{\eps}(f)(P,t)|, \quad P \in \partial \Omega, \ t>0,$$
  is bounded in $L^2(S_\infty)$. Hence, for every $f \in L^2(S_\infty)$ the limit
  $$\KK(f)(P,t)
       :=  \lim_{\eps \to 0} \KK_{\eps}(f)(P,t), \quad P \in \partial \Omega, \ t>0,$$
  exists almost everywhere.
\end{Cor}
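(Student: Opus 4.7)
My plan is to deduce both statements of the corollary from a Cotlar-type maximal inequality for $\KK_\eps$, followed by a standard density argument. The starting point is that the integral kernel
\[
k(P,Q,t-s) := \sum_{i,j=1}^n a_{ij}(Q)\, n_j(Q)\, \partial_{y_i}\G(P,Y,t-s)_{|_{Y=Q}}
\]
defining $\KK_\eps$ is a parabolic Calder\'on--Zygmund kernel with respect to the homogeneous metric $\rho((P,t),(Q,s)) := |P-Q|+\sqrt{|t-s|}$ on the space of homogeneous type $(S_\infty, \rho, d\sigma\otimes dt)$. Its parabolic size estimate $|k|\lesssim \rho^{-(n+1)}$ follows from Lemma \ref{Lem:GXYm} after absorbing $t^{-(n+1)/2}e^{-c|P-Q|^2/t}$ into a power of $\rho$ via the Gaussian factor, while the smoothness in the spatial and temporal variables comes from Lemma \ref{Lem:GXYt} combined with the H\"older continuity \hyperref[A4]{\small{$(A4)$}} of $a_{ij}$. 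This is exactly the framework alluded to in Remark \ref{rem: boundedness_result}.

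In this setting the standard Cotlar-type argument yields the pointwise bound
\[
\tilde{\KK}(f)(P,t)\lesssim M_\rho(\KK f)(P,t) + M_\rho(f)(P,t),
\]
where $M_\rho$ denotes the Hardy--Littlewood maximal operator over parabolic balls. The proof is routine: splitting $f=f_1+f_2$ with $f_1:=f\chi_{B_\rho((P,t),c\sqrt{\eps})}$, the size estimate for $k$ majorises $|\KK_\eps f_1(P,t)|$ by $M_\rho f(P,t)$, while for the far piece the smoothness estimate yields $|\KK_\eps f_2(P,t)-\KK f_2(Q,s)|\lesssim M_\rho f(P,t)$ uniformly for $(Q,s)$ in a smaller parabolic ball about $(P,t)$, and averaging in $(Q,s)$ produces the term $M_\rho(\KK f)(P,t)$. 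Since $M_\rho$ is bounded on $L^2(S_\infty)$ (the underlying space is doubling) and $\KK$ itself is bounded on $L^2(S_\infty)$ by Theorem \ref{Thm:BSI} combined with the Banach--Steinhaus extension from Remark \ref{rem: boundedness_result}, taking $L^2$ norms in the Cotlar inequality gives $\|\tilde\KK f\|_{L^2(S_\infty)}\lesssim \|f\|_{L^2(S_\infty)}$, establishing the first claim.

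The a.e.~existence of $\KK(f)(P,t)$ is then standard. For $f\in C^\infty_c(S_\infty)$, the dominated convergence theorem with the $\eps$-uniform Gaussian majorant from Lemma \ref{Lem:GXYm} shows that $\KK_\eps f(P,t)$ converges at every point. For arbitrary $f\in L^2(S_\infty)$ and any $g\in C^\infty_c(S_\infty)$, the oscillation
\[
\Omega f(P,t):=\limsup_{\eps\to 0}\KK_\eps f(P,t)-\liminf_{\eps\to 0}\KK_\eps f(P,t)
\]
satisfies $\Omega f=\Omega(f-g)\leq 2\tilde{\KK}(f-g)$, so Chebyshev's inequality combined with the $L^2$-bound on $\tilde{\KK}$ gives $|\{(P,t):\Omega f(P,t)>\lambda\}|\leq C\lambda^{-2}\|f-g\|_{L^2}^2$, and the density of $C^\infty_c(S_\infty)$ in $L^2(S_\infty)$ forces $\Omega f=0$ almost everywhere. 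The main obstacle I anticipate is the careful verification of the Cotlar inequality in the anisotropic parabolic setting, where the interaction between the spatial and temporal scales makes the kernel decomposition more delicate than in the Euclidean CZ theory; however, all the kernel bounds required are already in place from Section \ref{sec:FS}.
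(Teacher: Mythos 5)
Your overall strategy is the same as the paper's: a Cotlar-type maximal inequality reducing $\tilde\KK$ to $\KK$ plus Hardy--Littlewood maximal operators, followed by a density argument. The paper's proof is precisely such an argument (its term $I^2_\eps$ is the Cotlar part). However, as written your proposal has two substantive gaps in the Cotlar step, and one technical one in the density step.

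First, after estimating $|\KK_\eps f_2(P,t)-\KK f_2(Q,s)|$ and averaging over $(Q,s)$ in a small parabolic ball, you claim this ``produces the term $M_\rho(\KK f)(P,t)$.'' But what you actually obtain is an average of $|\KK f_2|=|\KK f-\KK f_1|$, so besides $M_\rho(\KK f)(P,t)$ you pick up the average of $|\KK f_1(Q,s)|$ over the ball. That term is not dominated by $M_\rho f(P,t)$ via the size bound alone, since the singularity of $\KK$ sits inside the support of $f_1$. The paper handles it by applying H\"older with $1<q<2$ and the $L^q$-boundedness of $\KK$ (Remark \ref{rem: boundedness_result}), which produces the middle term $\bigl(\MM_{\partial\Omega}(\MM_1(|f|^q))\bigr)^{1/q}$ in \eqref{eq:I want a beer 2}; this is still $L^2$-bounded because $2/q>1$. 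Your pointwise inequality in the form $\tilde\KK f\lesssim M_\rho(\KK f)+M_\rho f$ would require the weak $(1,1)$ bound for $\KK$; either way, the required ingredient is missing from your sketch.

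Second, $\KK_\eps$ is truncated in \emph{time} ($\int_0^{t-\eps}$), whereas your decomposition $f=f_1+f_2$ with $f_1=f\chi_{B_\rho((P,t),c\sqrt\eps)}$ is adapted to the \emph{parabolic-ball} truncation. The comparison $|\KK_\eps f_2(P,t)-\KK f_2(Q,s)|\lesssim M_\rho f(P,t)$ does not follow from kernel smoothness alone, because the two operators have different integration domains: $\KK_\eps f_2(P,t)$ omits the slab $\{t-\eps<s'<t\}$, on which $f_2$ need not vanish (it is nonzero there whenever $|P-Q'|\gtrsim\sqrt\eps$). The paper isolates exactly this discrepancy at the outset: its term $I^1_\eps$ is the operator with kernel times $\chi_{\{t-s>\eps\}}-\chi_{\{|P-Q|^2+t-s>\eps\}}$, shown to be controlled by $\MM_{\partial\Omega}(\MM_1 f)$ in \eqref{eq:I want a beer 1}. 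Without this reduction, your claimed inequality for the far piece is not a consequence of the smoothness estimate alone; you would need to add the corresponding error term.

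Finally, the assertion that pointwise convergence of $\KK_\eps f$ for $f\in C^\infty_c(S_\infty)$ follows ``from the dominated convergence theorem with the $\eps$-uniform Gaussian majorant'' is not correct as stated: $\int_0^t\int_{\partial\Omega}|k(P,Q,t-s)|\,dQ\,ds\sim\int_0^t(t-s)^{-1}\,ds=\infty$, so the integral is not absolutely convergent and there is no $L^1$ majorant. Convergence on a dense class needs cancellation of the leading part of the kernel (this is the content of the paper's Proposition \ref{Lem:PV}, which also reconciles the time and space truncations). The remaining parts of your density argument are standard and correct.
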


\begin{proof}

Define $I^1_\eps(f)(P,t)$ as
\[
	\begin{split}
	\int_0^{t} \int_{\partial \Omega} a_{ij}(Q) n_j(Q) \partial_{y_i} \G(P,Q,t-s) f(Q,s) \left(\chi_{\set{t-s>\eps}}(s)-\chi_{\set{\abs{P-Q}^2+t-s>\eps}} (Q,s)\right)\, dQ \,ds,
	\end{split}
\]
and let $I^2_\eps(f)(P,t):=\KK_{\eps}^{i,j}(f)(P,t)-I^1_\eps(f)(P,t)$. For the sake of simplicity, here and in the rest of the proof,  we suppress the dependency on the $i,j$ parameters.   Then, using the estimates for $ \partial_{Y} \G$ from Lemma \ref{Lem:GXYm}, and following a similar argument as in the proof of Theorem \ref{Thm:BSI} yield

\begin{equation}\label{eq:I want a beer 1}
	\sup_{\eps>0}\abs{I_\eps^1(f)(P,t)}\lesssim \mathcal{M}_{\partial \Omega}\brkt{\mathcal{M}_1 (f)(t)}(P).
\end{equation}

Now observe that for each $(P_1,t_1)\in S(P,t,\eps)$ where
\[
 	S(P,t,\eps):=\brkt{B (P, \sqrt{\eps/3})\cap \partial\Omega}\times (t-\eps/3,t+\eps/3),
\]
we have

\begin{equation}\label{pain_in_ass}
	\begin{split}
	I^2_\eps(f)(P,t) &=\KK(f)(P_1,t_1)-\KK \brkt{f(Q,s)\chi_{\set{\abs{P-Q}^2+t-s\leq\eps}} (Q,s)}(P_1,t_1)+\\
&+\int_0^\infty \int_{\partial \Omega} \brkt{K(P,Q,t-s)-K(P_1,Q,t_1-s)} \chi_{\set{\abs{P-Q}^2+t-s>\eps}} (Q,s) f(Q,s)\, dQ\, ds,\\
	\end{split}
\end{equation}
where $K(P,Q,t)=a_{ij}(Q) n_j(Q) \partial_{y_i} \G(P,Q,t)$.

We claim that, for any $1<q<2$
\begin{equation}\label{eq:I want a beer 2}
	\sup_{\eps>0} \abs{I^2_\eps(f)(P,t)}\lesssim {\mathcal{M}_{\partial\Omega}(\mathcal{M}_1(\KK f)(t)})(P)+\brkt{\mathcal{M}_{\partial\Omega}(\mathcal{M}_1( \abs{f}^q)(t))(P)}^{1/q}+\mathcal{M}_1({\mathcal{M}_{\partial\Omega}{(f)(t)})(P)}.
\end{equation}

Averaging the second term on the right hand side of \eqref{pain_in_ass}  over $S(P,t,\eps)$, H\"older's inequality with $1<q<2$, and the $L^q$ boundedness of the operator $\KK$ (see Remark \ref{rem: boundedness_result} above) yield
\[
\begin{split}
	&{\abs{S(P,t,\eps)}}^{-1}\int_{S(P,t,\eps)} \abs{\KK \brkt{f(Q,s)\chi_{\set{\abs{P-Q}^2+t-s\leq\eps}} (Q,s)}(P_1,t_1)} dP_1\, dt_1\\
&\lesssim \brkt{{\eps^{-\frac{n+1}{2}}}\int_{S(P,t,\eps)} \abs{\KK \brkt{f(Q,s)\chi_{\set{\abs{P-Q}^2+t-s\leq\eps}} (Q,s)}(P_1,t_1)}^q dP_1\, dt_1}^{1/q}\\
&\lesssim  \brkt{{\eps^{-\frac{n+1}{2}}}\int_{\set{\abs{P-Q}^2+t-s\leq\eps}} \abs{f(Q,s)}^q\, dQ ds}^{1/q}\lesssim \brkt{\mathcal{M}_{\partial\Omega}(\mathcal{M}_1( \abs{f(t)}^q))(P)}^{1/q},
\end{split}
\]
with constants independent on $\eps$.  Also, clearly we have
\[
	{\abs{S(P,t,\eps)}}^{-1}\int_{S(P,t,\eps)} \abs{\KK(f)(P_1,t_1)}\, d P_1\, d t_1\lesssim  {\mathcal{M}_{\partial\Omega}(\mathcal{M}_1(\KK f)(t)})(P).
\]

Thus it remains to show that the last term on the right hand side of  \eqref{pain_in_ass}  is bounded uniformly in $\eps$ by ${\mathcal{M}_1(\mathcal{M}_{\partial\Omega}{(f)(P)})(t)}$. To this end we observe that the aforementioned term is bounded by  a constant times
\[
	\begin{split}
	&\int_{\partial\Omega} \int_0^{t-\eps+\abs{P-Q}^2}{\abs{\partial_{y_j} \G (P,Q,t-s)-\partial_{y_j} \G (P_1,Q,t-s)}} \abs{f(Q,s)} \, d Q \, ds \\
&+
\int_{\partial\Omega} \int_0^{t-\eps+\abs{P-Q}^2}\abs{\partial_{y_j} \G (P_1,Q,t-s)-\partial_{y_j} \G (P_1,Q,t_1-s)} \abs{f(Q,s)} \, d Q \, ds.
	\end{split}
\]
Therefore, using the mean value theorem and Lemma \ref{Lem:GXYt} we have that the expression above is bounded by
\[
	\begin{split}
	&\sqrt{\eps} \int_{\partial\Omega}  \int_0^{t-\eps+\abs{P-Q}^2}\sup_{Z\in [P,P_1]} \frac{e^{-c \abs{Z-Q}^2/(t-s)}}{(t-s)^{(n+2)/2}} \abs{f(Q,s)} \, d Q \, ds \\
&+\eps
\int_{\partial\Omega} \int_0^{t-\eps+\abs{P-Q}^2}\sup_{\tau\in [t,t_1]} \frac{e^{-c \abs{P_1-Q}^2/(\tau-s)}}{(\tau-s)^{(n+3)/2}}\abs{f(Q,s)} \, d Q \, ds,
	\end{split}
\]
where $[a,b]$ denotes the segment connecting the points $a$ and $b$.

The analysis of the two terms above are quite similar, so we confine ourselves to deal with the first one.  To this end, we decompose the boundary integral in the first term above into two integrals - one over $\partial\Omega\cap\set{\abs{P-Q}\leq \sqrt{\eps/2}}$  and another over its complement. The first resulting integral can be bounded by
\[
	\begin{split}
	\sqrt{\eps} \int_{\partial\Omega\cap\set{\abs{P-Q}\leq \sqrt{\eps/2}}}  \int_0^{t-\eps/2} \frac{\abs{f(Q,s)}}{(t-s)^{(n+2)/2}}  \, d Q \, ds &\lesssim \eps^{n/2}   \int_0^{t-\eps/2} \frac{\mathcal{M}_{\partial\Omega} (f(\cdot,s))(P)}{(t-s)^{(n+2)/2}}\, ds\\
&\lesssim \mathcal{M}_1(\mathcal{M}_{\partial\Omega}{(f)(P)})(t),
	\end{split}
\]
where the last step follows from Lemma \ref{Lem:Duo}.  Using the triangle inequality we have that $\abs{Z-Q}> C \abs{P-Q}$ for $Z\in [P, P_1]$. Hence the term which is defined by integrating over  $\partial\Omega\cap\set{\abs{P-Q}> \sqrt{\eps/2}}$ is bounded by
\[
	\begin{split}
	&\sqrt{\eps}  \int_0^{\infty}  \frac{e^{-c \eps/(t-s)}}{(t-s)^{5/4}} \int_{\partial\Omega\cap\set{\abs{P-Q}> \sqrt{\eps/2}}} \frac{ e^{-c \abs{P-Q}^2/(t-s)}}{(t-s)^{n/2-1/4}} \abs{f(Q,s)} \, d Q \, ds\\
&\lesssim
\sqrt{\eps}  \int_0^{\infty}  \frac{e^{-c \eps/(t-s)}}{(t-s)^{5/4}} \int_{\partial\Omega\cap\set{\abs{P-Q}> \sqrt{\eps/2}}} \frac{\abs{f(Q,s)} }{\abs{P-Q}^{n-1/2}}  \, d Q \, ds\\
&\lesssim \mathcal{M}_1(\mathcal{M}_{\partial\Omega}{(f)(P)})(t),
	\end{split}
\]
where the last step follows once again from Lemma \ref{Lem:Duo}. This ends the proof of the claim for ${I^2_\eps(f)(P,t)}$.

Now, using \eqref{eq:I want a beer 1} and \eqref{eq:I want a beer 2},  the boundedness of the Hardy-Littlewood maximal functions and Remark \ref{rem: boundedness_result}, we obtain the desired boundedness result for $\widetilde{\KK}$, from which the pointwise converge follows at once.

\end{proof}

In connection to the jump relation for the double-layer potential, the following proposition will prove useful.
\begin{Prop}\label{Lem:PV}
  Let $f \in L^2(S_T)$. Then for a.e. $(P,t) \in S_T$,
  \begin{align*}
     & \lim_{\eps \to 0} \int_0^{t-\eps} \int_{\partial \Omega} \partial_\nu \G(P,Q,t-s) f(Q,s) dQ ds
      = \lim_{\eps \to 0} \int_0^{t} \int_{\partial \Omega\cap \{|P-Q| > \sqrt{\eps}\}} \partial_\nu \G(P,Q,t-s) f(Q,s)\, dQ \, ds.
  \end{align*}
\end{Prop}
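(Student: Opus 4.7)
The plan is to compare both sides of the identity against a common intermediate truncation, namely the parabolic-ball principal value
\[
	C_\eps(f)(P,t):=\iint_{\set{0<s<t,\,|P-Q|^2+(t-s)>\eps}}\partial_\nu\G(P,Q,t-s)\,f(Q,s)\,dQ\,ds,
\]
and to show that both differences $A_\eps(f)-C_\eps(f)$ and $B_\eps(f)-C_\eps(f)$ tend to zero for a.e.\ $(P,t)\in S_T$ as $\eps\to0$, where $A_\eps$ and $B_\eps$ denote the integrands of the left- and right-hand sides before taking the limit. Since $\set{t-s>\eps}$ and $\set{|P-Q|>\sqrt\eps}$ are both contained in $\set{|P-Q|^2+(t-s)>\eps}$, each difference reduces to an integral over a thin sliver region, which we denote by $\Sigma_\eps^{(i)}$, $i=1,2$.

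First I would derive a uniform maximal-function control
\[
	\sup_{\eps>0}\bigl(|A_\eps-C_\eps|+|B_\eps-C_\eps|\bigr)(P,t)\lesssim \MM_{\partial\Omega}\bigl(\MM_1 f\bigr)(P,t),
\]
along the lines of the proof of Corollary \ref{Cor:bddK}, using the pointwise bound from Lemma \ref{Lem:GXYm}, a dyadic-annulus decomposition around $P$, and Lemma \ref{Lem:Duo}. On the first sliver, the inequality $|P-Q|^2/(t-s)\gtrsim1$ permits the exchange $e^{-cx}\leq C_N\,x^{-N}$ for any $N>0$, providing the necessary decay; on the second sliver the small spatial measure $|B_{\sqrt\eps}(P)\cap\partial\Omega|\lesssim\eps^{(n-1)/2}$ together with the $(n+1)/2$-power singularity in time delivers the bound.

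Next, for $f\in C^\infty_c(\partial\Omega\times(0,T))$, I would establish pointwise vanishing of both differences by writing $f(Q,s)=f(P,t)+r(Q,s)$ with $|r(Q,s)|\lesssim|Q-P|+\sqrt{t-s}$. The remainder contributes $O(\sqrt\eps)$ through a change of variables $u=\eps/(t-s)$ that absorbs the extra weight into a Gaussian integral. The constant term $f(P,t)\iint_{\Sigma_\eps^{(i)}}\partial_\nu\G\,dQ\,ds$ is handled by the Green-type identity
\[
	\int_{\partial\Omega}\partial_\nu\G(P,Q,\tau)\,dQ\;=\;-\partial_\tau\!\!\int_\Omega\G(P,Y,\tau)\,dY,\qquad\tau>0,
\]
which follows from $\partial_\tau\G=\nabla_Y\cdot(A(Y)\nabla_Y\G)$ off the diagonal together with the Gaussian decay of Lemma \ref{Lem:Aronson}; the smoothness of $V(\tau):=\int_\Omega\G(P,Y,\tau)\,dY$ for $\tau>0$ then implies that the $\tau$-integration of $V'(\tau)$ over a window of length $\eps$ telescopes to an $O(\eps)$ quantity. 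Density of $C^\infty_c$ in $L^2(S_T)$ combined with the uniform maximal-function bound extends the a.e.\ convergence to all $f\in L^2(S_T)$ by a Banach--Steinhaus-type argument, exactly as in Corollary \ref{Cor:bddK}.

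The main difficulty is handling the constant term $f(P,t)$: raw pointwise bounds on the kernel yield only an $O(1)$ estimate for $\iint_{\Sigma_\eps^{(i)}}|\partial_\nu\G|\,dQ\,ds$, so genuine cancellation stemming from the divergence structure of $\LL_A$, as captured by the Green-type identity above, is essential.
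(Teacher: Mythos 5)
The overall architecture of your argument (uniform maximal control, density, decomposing $f(Q,s)=f(P,t)+r(Q,s)$, and isolating the constant term as the real obstacle) is sound and close in spirit to the paper's proof, and your estimate for the remainder $r$ does give $O(\sqrt\eps)$. The gap is in the treatment of the constant term.

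The Green-type identity $\int_{\partial\Omega}\partial_\nu\G(P,Q,\tau)\,dQ=-\partial_\tau\int_\Omega\G(P,Y,\tau)\,dY$ only produces cancellation when the $Q$-integration ranges over \emph{all} of $\partial\Omega$ for each fixed $\tau$. Neither of your slivers has that structure. For example, $\Sigma_\eps^{(1)}$ (the symmetric difference between the time truncation and the parabolic-ball truncation) has $\tau$-slice $\set{Q\in\partial\Omega:\ |P-Q|>\sqrt{\eps-\tau}}$ for $\tau\in(0,\eps]$, so writing
\[
\iint_{\Sigma_\eps^{(1)}}\partial_\nu\G\,dQ\,d\tau
 = \bigl(V(0^+)-V(\eps)\bigr)\; -\;\int_0^{\eps}\!\!\int_{\set{|P-Q|\leq\sqrt{\eps-\tau}}}\partial_\nu\G(P,Q,\tau)\,dQ\,d\tau,
\]
the Green identity disposes of the first term but leaves the parabolic-ball integral, for which the same crude bound $\int_{\set{|P-Q|\le\sqrt{\eps-\tau}}}|\partial_\nu\G|\,dQ\lesssim \tau^{-1}$ gives only $\int_0^\eps\tau^{-1}\,d\tau$, which is divergent, not $o(1)$. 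The situation for $\Sigma_\eps^{(2)}$ is similar (there the product piece $\set{|P-Q|\le\sqrt\eps}\times\set{\tau>\eps}$ carries an $O(1)$ constant-term contribution, and $V(\eps)-V(T)$ does not go to zero, so the full-boundary trick doesn't help). In short, the divergence structure gives cancellation across the \emph{whole} boundary, but the residual localized integrals remain and cannot be controlled by it.

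The cancellation you actually need is of a different kind: after flattening the boundary via the Lipschitz graph and translating $P$ to the origin, the \emph{leading parametrix} $\G_0(X,Y,\tau)=C_n\,(\det A(Y))^{-1/2}\,\tau^{-n/2}\,e^{-\langle A^{-1}(Y)(X-Y),X-Y\rangle/4\tau}$ produces a kernel $\partial_{y_i}\G_0((0,0),(Q',0),\tau)$ that is odd in $Q'$, so its integral over the symmetric set $\set{|Q'|\le\sqrt{\eps-\tau}}$ (or $\set{|Q'|\le\sqrt\eps}$) vanishes identically; the correction $\G_1=\G-\G_0$ satisfies $|\partial_Y\G_1|\lesssim \tau^{-(n+1-\alpha)/2}e^{-c|X-Y|^2/\tau}$ (see \eqref{eq:Galpha}), and this $\alpha/2$-gain makes the residual integral $O(\eps^{\alpha/2})$ rather than $O(1)$. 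This parametrix decomposition and odd-symmetry cancellation is exactly what the paper's proof of the proposition uses, via the split of $\tilde I_\eps$ into $I_{\eps,1}^{ij}+I_{\eps,2}^{ij}+I_{\eps,3}^{ij}$ and the treatment of $I_{\eps,3}^{ij}$; without it, your argument does not close. I would suggest replacing the Green-identity step with this parametrix/oddness argument; the rest of your scheme (in particular the intermediate truncation $C_\eps$ and the maximal-function reduction) can be kept.
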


\begin{proof}
Since
\begin{align*}
& \int_0^{t-\eps} \int_{\partial \Omega} \partial_\nu \G(P,Q,t-s) f(Q,s) dQ ds
      -\int_0^{t} \int_{\partial \Omega\cap \{|P-Q| > \sqrt{\eps}\}} \partial_\nu \G(P,Q,t-s) f(Q,s)\, dQ\, ds \\
& \qquad= \int_0^{t-\eps} \int_{\partial \Omega\cap \{|P-Q| \leq \sqrt{\eps}\}} \partial_\nu \G(P,Q,t-s) f(Q,s) \,dQ \,ds \\
& \qquad \quad  -\int_{t-\varepsilon}^{t} \int_{\partial \Omega\cap \{|P-Q| >\sqrt{\eps}\}} \partial_\nu \G(P,Q,t-s) f(Q,s)\, dQ \,ds=:I_{\varepsilon}f(P,t)+J_{\varepsilon}f(P,t).
\end{align*}

    It will be enough to show that,  for all $f\in C^{\infty}_{c}(S_T)$
    \begin{equation}\label{max estim for I epsilon}
    \Vert\sup_{\varepsilon >0} |I_{\varepsilon}f| \Vert_{L^2 (S_\infty)}\lesssim \Vert f \Vert_{L^2 (S_\infty)},
    \end{equation}
     \begin{equation}\label{max estim for J epsilon}
     \Vert\sup_{\varepsilon >0} |J_{\varepsilon}f| \Vert_{L^2 (S_\infty)}\lesssim \Vert f\Vert_{L^2 (S_\infty)},
     \end{equation}
and
 \begin{equation}\label{bloody limits}
 \lim_{\varepsilon\to 0}I_{\varepsilon}f=\lim_{\varepsilon\to 0}J_{\varepsilon}f=0.
 \end{equation}

 Now \eqref{max estim for I epsilon} would follow, if we could show that

 \begin{equation}\label{max estim for I epsilon 2}
\sup_{\varepsilon>0}|\int_0^{t-\eps} \int_{\partial \Omega\cap \{|P-Q| \leq \sqrt{\eps}\}} a_{ij}(Q) n_j(Q) \,\partial_{y_i} \G(P,Y,t-s)_{|_{Y=Q}}\, f(Q,s)\, dQ\, ds|\lesssim \MM_1(\MM_{\partial \Omega}(f)(P))(t).
 \end{equation}

  But then Lemma \ref{Lem:GXYm} yields that the left hand side of \eqref{max estim for I epsilon 2} is bounded by
\begin{align*}
    \int_0^{t-\eps} \int_{\partial \Omega\cap \{|P-Q| \leq 2 \sqrt{\eps}\}} \frac{|f(Q,s)|}{(t-s)^{(n+1)/2}}\, dQ \, ds
	&  \lesssim \eps^{(n-1)/2}\int_0^{t-\eps}  \frac{\MM_{\partial \Omega}(f(\cdot, s))(P)}{(t-s)^{(n+1)/2}}\,  ds \\
	& \lesssim \MM_1(\MM_{\partial \Omega}(f)(P))(t),
  \end{align*}
where in the last step we applied Lemma \ref{Lem:Duo}. This shows \eqref{max estim for I epsilon}. To prove \eqref{max estim for J epsilon}, it is enough to show that

 \begin{equation}\label{max estim for J epsilon 2}
\sup_{\varepsilon>0}|\int_{t-\eps}^t \int_{\partial \Omega\cap \{|P-Q| >\sqrt{\eps}\}} a_{ij}(Q) n_j(Q)\,\partial_{y_i} \G(P,Y,t-s)_{|_{Y=Q}}\, f(Q,s)\, dQ\, ds|\lesssim \MM_1(\MM_{\partial \Omega}(f)(P))(t).
 \end{equation}
Hence lemmas \ref{Lem:GXYm} and \ref{Lem:Duo}, yield that the left hand side of \eqref{max estim for J epsilon 2} is dominated by
\begin{align*}
   &\int_{t-\eps}^t \int_{\partial \Omega\cap \{|P-Q| > \sqrt{\eps}\}} \frac{e^{-c|P-Q|^2/(t-s)}}{(t-s)^{(n+1)/2}} |f(Q,s)|\, dQ\, ds
	  \lesssim \int_{t-\eps}^t \int_{\partial \Omega\cap \{|P-Q| > \sqrt{\eps}\}} \frac{|f(Q,s)|}{|P-Q|^{n+1}} \,dQ\, ds \\
	& \qquad \lesssim \frac{1}{\eps} \int_{t-\eps}^t \MM_{\partial \Omega}(f(\cdot, s))(P)\, ds
	  \leq \MM_1(\MM_{\partial \Omega}(f)(P))(t),
  \end{align*}
and \eqref{max estim for J epsilon}  follows easily from this.\\

Since the proofs of \eqref{bloody limits} are similar, we confine ourselves to the proof of $\lim_{\varepsilon\to 0}I_{\varepsilon}f(P,t)=0.$ To this end, without loss of generality, we translate the limit from the point $P$ to the origin and hence aim to prove that $\lim_{\varepsilon\to 0}I_{\varepsilon}f(0,t)=0.$ Note that in dealing with this limit, we can locally write $P=(P',\varphi(P'))$, $Q=(Q',\varphi(Q'))$, $P',Q' \in \R^{n-1}$, for a certain
Lipschitz function $\varphi$ with $|\varphi(P')|\leq |P'|\,\omega (|P'|)$ where $\omega \geq 0$, $\Vert \omega \Vert_{L^\infty}\leq\Vert \nabla\varphi \Vert_{L^\infty}\lesssim 1$ and $\lim_{t\to 0^+} \omega(t)=0$. Then, we have that
\begin{equation*}
  I_\varepsilon f(0, t)
    =  \sum_{i,j=1}^n\int_0^{t-\eps} \int_{\R^{n-1}\cap \{|Q'|^2 +|\varphi(Q')|^2 \leq \eps\}}  \partial_{y_i} \G((0,0),Y,t-s)_{|_{Y=(Q',\varphi(Q'))}} F_{ij}(Q',s)   \, dQ'\, ds,
\end{equation*}
where $F_{ij}(Q',s):=a_{ij}(Q', \varphi(Q')) n_j(Q', \varphi(Q'))f(Q', \varphi(Q'),s)\,\sqrt{1+|\nabla\varphi(Q')|^2}$. Observe that, since $F_{ij}\in L^2(S_T)$, in order to show that $\lim_{\varepsilon\to 0}I_{\varepsilon}f(0,t)=0,$ it is enough, by a standard density argument, to show the result for $F_{ij}(Q', s)= g(Q')\, h(s)$ where $g$ and $h$ are smooth compactly supported functions.
Moreover, using \cite[Lemma 4.4]{DV} we have that $\lim_{\varepsilon \to 0}I_{\varepsilon} f(0, t)= \lim_{\varepsilon \to 0}\tilde{I}_{\varepsilon} f(0, t)$ where

 \begin{equation}\label{thanks DV}
  \tilde{I}_\varepsilon f(0, t)
    :=  \sum_{i,j=1}^n\int_0^{t-\eps} \int_{\R^{n-1}\cap \{|Q'| \leq \sqrt{\eps}\}}  \partial_{y_i} \G((0,0),Y,t-s)_{|_{Y=(Q',\varphi(Q'))}} g(Q')\, h(s)   \, dQ'\, ds.
\end{equation}
Now we split \eqref{thanks DV} into the following three pieces
\begin{align*}
  & \int_0^{t-\eps} \int_{\R^{n-1}\cap \{|Q'| \leq \sqrt{\eps}\}}
    \Big(\partial_{y_i} \G((0,0),Y,t-s)_{|_{Y=(Q',\varphi(Q'))}} - \partial_{y_i} \G((0,0),Y,t-s)_{|_{Y=(Q',0)}}\Big)\\
  & \hspace{4.5cm} \times g(Q')\, h(s)  \, dQ'\, ds \\
  & \qquad + \int_0^{t-\eps} \int_{\R^{n-1}\cap \{|Q'| \leq \sqrt{\eps}\}}
     \partial_{y_i} \G((0,0),Y,t-s)_{|_{Y=(Q',0)}} \Big(g(Q')\, h(s) - g(0)\, h(s) \Big)  \, dQ'\, ds \\
  & \qquad + \int_0^{t-\eps}  g(0)\, h(s) \int_{\R^{n-1}\cap \{|Q'| \leq \sqrt{\eps}\}}
     \partial_{y_i} \G((0,0),Y,t-s)_{|_{Y=(Q',0)}}   \, dQ'\, ds \\
  & =: I_{\varepsilon,1}^{ij} f(0, t) + I_{\varepsilon,2}^{ij} f(0, t) + I_{\varepsilon,3}^{ij} f(0, t).
\end{align*}

Observe that the mean value theorem and Lemma \ref{Lem:GXYm} yield that
\begin{align*}
    |I_{\varepsilon,1}^{ij} f(0, t)|
        & \lesssim \int_0^{t-\eps} \int_{\R^{n-1}\cap \{|Q'| \leq \sqrt{\eps}\}} |\varphi(Q')| \,
        \frac{e^{-c|Q'|^2/(t-s)}}{(t-s)^{(n+2)/2}} \, dQ'\, ds \\
        & \lesssim \int_{\eps}^t \int_{\R^{n-1}\cap \{|Q'| \leq \sqrt{\eps}\}} |Q'| \, \omega(|Q'|) \,
        \frac{e^{-c|Q'|^2/s}}{s^{(n+2)/2}} \, dQ'\, ds \\
        & \lesssim \sqrt{\eps} \Big( \int_{\R^{n-1}\cap \{|Q'| \leq \sqrt{\eps}\}} \omega(|Q'|)  \, dQ' \Big)
        \Big(\int_{\eps}^\infty\frac{1}{s^{(n+2)/2}}\, ds \Big) \\
        & \sim \Big( \int_{\R^{n-1}\cap \{|Q'| \leq 1\}} \omega(\sqrt{\eps}|Q'|)  \, dQ' \Big)
        \Big(\int_{1}^\infty\frac{1}{s^{(n+2)/2}}\, ds \Big) \longrightarrow 0, \quad \varepsilon \to 0.
\end{align*}

Using again Lemma \ref{Lem:GXYm} and the mean value theorem to $g$, we get
\begin{align*}
    |I_{\varepsilon,2}^{ij} f(0, t)|
        & \lesssim \int_0^{t-\eps} \int_{\R^{n-1}\cap \{|Q'| \leq \sqrt{\eps}\}}
      |Q'| \, \frac{e^{-c|Q'|^2/(t-s)}}{(t-s)^{(n+1)/2}} \, dQ'\, ds
      \lesssim \int_{\R^{n-1}\cap \{|Q'| \leq \sqrt{\eps}\}} \int_{\eps}^t
      \frac{e^{-c|Q'|^2/s}}{s^{n/2}} \, ds \, dQ' \\
      & \lesssim \Big(\int_{\R^{n-1}\cap \{|Q'| \leq \sqrt{\eps}\}} \frac{dQ'}{|Q'|^{n-2}}\Big)
      \Big(\int_{0}^\infty e^{-r} r^{n/2-2} \, dr \Big)
      \sim \sqrt{\eps} \longrightarrow 0, \quad \varepsilon \to 0.
\end{align*}

Finally, note that $\G(X,Y,t-s)=\G_0(X,Y,t-s)+\G_1(X,Y,t-s)$, where
$$\G_0(X,Y,t-s)
    = C_n \frac{e^{-\langle A^{-1}(Y)(X-Y), X-Y\rangle /4(t-s)}}{(t-s)^{n/2}(\det A(Y))^{1/2}}.$$
See e.g. \cite{LSU} for the details. Now we claim that

\begin{equation*}%
\int_{\R^{n-1}\cap \{|Q'| \leq \sqrt{\eps}\}}
     \partial_{y_i} \G_0 ((0,0),Y,t-s)_{|_{Y=(Q',0)}}   \, dQ' =0.
\end{equation*}
This follows by using the same reasoning as in the proof of Lemma \ref{Lem:GXYm} and the oddness of the resulting kernel.

Now for $\G_1$ one has the estimate
\begin{equation}\label{eq:Galpha}
|\partial_Y \G_1(X,Y,t-s)|
	\lesssim \frac{e^{-c|X-Y|^2/(t-s)}}{(t-s)^{(n+1-\alpha)/2}}\chi_{(0,\infty)}(t-s),
\end{equation}
where $0<\alpha<1$ is the H\"older exponent appearing in assumption (A4). The estimate \eqref{eq:Galpha} follows from  those in \cite[p. 377]{LSU}, and once again from the same reasoning as in the proof of Lemma \ref{Lem:GXYm}. Therefore

$$\lim_{\varepsilon\to 0} \int_{0}^{t-\varepsilon}\int_{\R^{n-1}\cap \{|Q'| \leq \sqrt{\eps}\}}
     |\partial_{y_i} \G_1 ((0,0),Y,t-s)_{|_{Y=(Q',0)}}|   \, dQ' \, dt=0,$$
since \eqref{eq:Galpha} allows one to apply the Lebesgue dominated convergence theorem. This in turn yields that $\lim_{\varepsilon\to 0}I_{\varepsilon,3}^{ij} f(0, t)=0,$ and summing up, we obtain $ \lim_{\varepsilon \to 0}\tilde{I}_{\varepsilon} f(0, t)=0$ which concludes the proof.

\end{proof}

\subsection{$L^2$ boundedness maximal DLP}
The estimates for non-tangential maximal functions of the layer potentials are crucial for establishing almost everywhere convergence of the solutions to the initial data as well as the jump relations, which will be used in the analysis of the invertibility problems. In analogy with the usual heat equation, the following $L^2$ estimate holds.
\begin{Th}\label{Thm:NMF}
  Assume that \hyperref[A1]{\small{$(A1)$}} -- \hyperref[A4]{\small{$(A4)$}} hold.
Then for $f \in L^2(S_\infty)$ one has
  $$  \Big \|(\DD(f))_{*}^{\pm} \Big \|_{L^2(S_\infty)}
      \lesssim \|f\|_{L^2(S_\infty)},$$
  where $(\cdot)_{*}^{\pm}$ denotes  the non-tangential maximal function defined in \eqref{defn:nontangentialmax}.
\end{Th}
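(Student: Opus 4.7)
The plan is to establish the pointwise domination
\[
(\DD(f))_*^\pm(P,t) \lesssim \widetilde{\KK}(f)(P,t) + \MM_1\bigl(\MM_{\partial \Omega}(f)\bigr)(P,t),
\]
from which the theorem follows at once from Corollary \ref{Cor:bddK} (which gives the $L^2$ boundedness of $\widetilde{\KK}$) and the $L^2$ boundedness of the Hardy--Littlewood maximal operators.

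Fix $P \in \partial \Omega$, $t > 0$, and $X \in \gamma_\pm(P)$. Set $\delta := |X-P|$ and $\eps := \delta^2$. Writing $K_{ij}(X,Q,t-s) := a_{ij}(Q) n_j(Q) \partial_{y_i} \G(X,Y,t-s)_{|_{Y=Q}}$, one compares $\DD(f)(X,t)$ with $\KK_\eps(f)(P,t)$ through
\[
\DD(f)(X,t) - \KK_\eps(f)(P,t) = \sum_{i,j=1}^n \int_0^t \int_{\partial\Omega} \bigl[ K_{ij}(X,Q,t-s) - K_{ij}(P,Q,t-s)\chi_{\{t-s > \eps\}} \bigr] f(Q,s)\, dQ\, ds.
\]

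I would then split the integration domain into four regions according to whether $|P-Q|$ is larger or smaller than $\sqrt{\eps}$ and whether $t-s$ is larger or smaller than $\eps$. In the two regions where $t-s \leq \eps$, the truncation kills the $\KK_\eps$ contribution and only the term involving $K_{ij}(X,Q,t-s)$ remains; for these, using Lemma \ref{Lem:GXYm} together with the geometric estimates \eqref{eq:gXQXP} and \eqref{eq:gXQPQ} (which give $|X-Q| \gtrsim |X-P| = \sqrt{\eps}$ and $|X-Q| \gtrsim |P-Q|$), one bounds the kernel by $e^{-c|X-Q|^2/(t-s)}/(t-s)^{(n+1)/2}$ and, by a computation entirely analogous to the one performed in the proof of Proposition \ref{Lem:PV}, controls both contributions by $\MM_1(\MM_{\partial \Omega}(f))(P,t)$. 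In the two regions where $t-s > \eps$, one applies the mean value theorem in the first variable to the difference $K_{ij}(X,Q,t-s) - K_{ij}(P,Q,t-s)$, exploiting the second-derivative bound from Lemma \ref{Lem:GXYt}$(i)$ to gain a factor $|X-P| = \sqrt{\eps}$; balancing this gain against the residual singularity of the kernel through the exponential decay factor, in the spirit of the interpolation argument leading to \eqref{eq:expbeta}, once again produces a bound by $\MM_1(\MM_{\partial \Omega}(f))(P,t)$.

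Taking the supremum over $X \in \gamma_\pm(P)$, and noting that $\sup_X |\KK_{|X-P|^2}(f)(P,t)| \leq \widetilde{\KK}(f)(P,t)$, yields the claimed pointwise estimate. The main technical obstacle is the careful bookkeeping in the region $\{|P-Q| > \sqrt{\eps},\, t-s > \eps\}$, where one must balance the $\sqrt{\eps}$-gain from the mean value theorem against the singularity of $\partial_X \partial_Y \G$ and the parabolic scaling; this step is conceptually the same as the computation performed for $\widetilde{\KK}$ in Corollary \ref{Cor:bddK}, and crucially relies on the fact that lemmas \ref{Lem:GXYm} and \ref{Lem:GXYt} furnish Gaussian-type bounds of the same strength as in the constant-coefficient case.
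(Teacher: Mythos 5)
Your proof follows essentially the same strategy as the paper: set $\eps=|X-P|^2$, compare $\DD(f)(X,t)$ with $\KK_\eps(f)(P,t)$, split the error into four pieces according to $|P-Q|\gtrless 2\sqrt{\eps}$ and $t-s\gtrless\eps$, dominate each by $\MM_1(\MM_{\partial\Omega}(f))$ using Lemmas \ref{Lem:GXYm}, \ref{Lem:GXYt} and \ref{Lem:Duo}, and conclude via Corollary \ref{Cor:bddK}. The only cosmetic deviation is in the near-space, far-time piece ($|P-Q|\lesssim\sqrt{\eps}$, $t-s>\eps$): the paper bounds the two kernels separately by the first-derivative estimate of Lemma \ref{Lem:GXYm} rather than applying the mean value theorem there (which you propose for all of $t-s>\eps$); both computations yield the same bound, though if one does invoke the mean value theorem on this piece one must note that $|Z-Q|$ can be small for $Z\in[X,P]$, so that the Gaussian factor in the second-derivative bound of Lemma \ref{Lem:GXYt}$(i)$ is simply discarded and the estimate closes by counting measure.
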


\begin{proof}
  Fix $i,j=1,\dots,n$. Let $P \in \partial \Omega$, $X \in \g_\pm(P)$ and set $\eps:=|X-P|^2$. Then we can write
  \begin{align*}
      & \DD^{i,j}(f)(X,t)\\
      &  \qquad =\KK^{i,j}_\eps(f)(P,t) + J_1(f)(X,P,t) + J_2(f)(X,P,t) + J_3(f)(X,P,t) + J_4(f)(X,P,t),
  \end{align*}
  where
\begin{align*}
& J_1(f)(X,P,t)
				  := \int_0^{t-\eps} \int_{\partial \Omega\cap \{|P-Q| \leq 2 \sqrt{\eps}\}} a_{ij}(Q) n_j(Q) f(Q,s) \\
& \qquad \qquad \qquad \qquad \qquad \times \Big[ \partial_{y_i} \G(X,Y,t-s)_{|_{Y=Q}} - \partial_{y_i} \G(P,Y,t-s)_{|_{Y=Q}} \Big] \, dQ\, ds, \\
& J_2(f)(X,P,t)
				  := \int_0^{t-\eps} \int_{\partial \Omega\cap \{|P-Q| > 2 \sqrt{\eps}\}} a_{ij}(Q) n_j(Q) f(Q,s)\\
& \qquad \qquad \qquad \qquad \qquad\times \Big[ \partial_{y_i} \G(X,Y,t-s)_{|_{Y=Q}} - \partial_{y_i} \G(P,Y,t-s)_{|_{Y=Q}} \Big] \, dQ \,ds,  \\
& J_3(f)(X,P,t)
				  := \int_{t-\eps}^t \int_{\partial \Omega\cap \{|P-Q| \leq 2 \sqrt{\eps}\}} a_{ij}(Q) n_j(Q) f(Q,s)
					\partial_{y_i} \G(X,Y,t-s)_{|_{Y=Q}}\, dQ \,ds,  \\
& J_4(f)(X,P,t)
				  := \int_{t-\eps}^t \int_{\partial \Omega\cap \{|P-Q| > 2 \sqrt{\eps}\}} a_{ij}(Q) n_j(Q) f(Q,s)
					\partial_{y_i} \G(X,Y,t-s)_{|_{Y=Q}}\, dQ \,ds.
  \end{align*}

  By Lemma \ref{Lem:GXYm} we easily obtain
  \begin{align*}
    |J_1(f)(X,P,t)|
	& \lesssim \int_0^{t-\eps} \int_{\partial \Omega\cap \{|P-Q| \leq 2 \sqrt{\eps}\}} \frac{|f(Q,s)|}{(t-s)^{(n+1)/2}}\, dQ \, ds
	  \lesssim \eps^{(n-1)/2}\int_0^{t-\eps}  \frac{\MM_{\partial \Omega}(f(\cdot, s))(P)}{(t-s)^{(n+1)/2}}\,  ds \\
	& \lesssim \MM_1(\MM_{\partial \Omega}(f)(P))(t),
  \end{align*}
  where in the last step we applied Lemma \ref{Lem:Duo}.

  For $J_2$ we first use the mean value theorem and then Lemma \ref{Lem:GXYt} $(i)$, to get
  \begin{align}\label{eq:J2}
    & |J_2(f)(X,P,t)|\\
	& \qquad \lesssim \int_0^{t-\eps} \int_{\partial \Omega\cap \{|P-Q| > 2 \sqrt{\eps}\}} |f(Q,s)| |X-P|
			    \sup_{Z \in [X,P]} \big|\nabla_X \partial_{y_i} \G(X,Y,t-s)_{|_{X=Z, Y=Q}}\big| \,dQ\, ds \nonumber \\
	& \qquad \lesssim \sqrt{\eps} \int_0^{t-\eps} \int_{\partial \Omega\cap \{|P-Q| > 2 \sqrt{\eps}\}}
			    \sup_{Z \in [X,P]} \frac{|f(Q,s)| }{|Z-Q|^{n-1/2}  (t-s)^{5/4}} \,dQ \,ds,\nonumber
  \end{align}
  where $[X,P]$ denotes the line segment which connects the points $X$ and $P$. Moreover, since $|X-P|= \sqrt{\varepsilon}$, if $|P-Q| > 2 \sqrt{\eps}$ then $|X-P| <|P-Q|/2$ and therefore
   \begin{equation}\label{eq:ZQPQ}
    |Z-Q| > |P-Q| - |X-P| > \frac{|P-Q|}{2} \quad \mathrm{for}\quad |P-Q| > 2 \sqrt{\eps}, \ Z \in [X,P].
  \end{equation}
  Hence, the last integral in \eqref{eq:J2} can be controlled by
  \begin{align*}
     \sqrt{\eps} \int_0^{t-\eps} \frac{1}{(t-s)^{5/4}} \int_{\partial \Omega\cap \{|P-Q| > 2 \sqrt{\eps}\}} \frac{|f(Q,s)| }{|P-Q|^{n-1/2}}\, dQ \,ds
	& \lesssim \eps^{1/4} \int_0^{t-\eps} \frac{\MM_{\partial \Omega}(f(\cdot, s))(P)}{(t-s)^{5/4}}\,  ds \\
	& \lesssim \MM_1(\MM_{\partial \Omega}(f)(P))(t),
  \end{align*}
  where we have used Lemma \ref{Lem:Duo} twice.

  For $J_3$, estimate \eqref{eq:gXQXP} and Lemmas \ref{Lem:GXYm} and \ref{Lem:Duo} yield
  \begin{align*}
    |J_3(f)(X,P,t)|
	& \lesssim \eps^{(n-1)/2} \int_{t-\eps}^t \frac{e^{-c\eps/(t-s)}}{(t-s)^{(n+1)/2}} \MM_{\partial \Omega}(f(\cdot, s))(P)\, ds
	  \lesssim \MM_1(\MM_{\partial \Omega}(f)(P))(t).
  \end{align*}

  Finally,  \eqref{eq:ZQPQ}, Lemmas \ref{Lem:GXYm} and \ref{Lem:Duo} yield
  \begin{align*}
    |J_4(f)(X,P,t)|
	& \lesssim \int_{t-\eps}^t \int_{\partial \Omega\cap \{|P-Q| > 2\sqrt{\eps}\}} \frac{e^{-c|P-Q|^2/(t-s)}}{(t-s)^{(n+1)/2}} |f(Q,s)|\, dQ\, ds \\
	& \lesssim \int_{t-\eps}^t \int_{\partial \Omega\cap \{|P-Q| > 2\sqrt{\eps}\}} \frac{|f(Q,s)|}{|P-Q|^{n+1}} \,dQ\, ds \\
	& \lesssim \frac{1}{\eps} \int_{t-\eps}^t \MM_{\partial \Omega}(f(\cdot, s))(P)\, ds
	  \leq \MM_1(\MM_{\partial \Omega}(f)(P))(t).
  \end{align*}

  In conclusion, we obtain the pointwise estimate
  $$\sup_{X \in \g_\pm(P)} |\DD^{i,j}(f)(X,t)|
      \lesssim \tilde{\KK}^{i,j}(f)(P,t) + \MM_1(\MM_{\partial \Omega}(f)(P))(t), \quad (P,t) \in S_\infty.$$
  Hence to end the proof of this proposition, we use Corollary \ref{Cor:bddK} and the
  $L^2$-boundedness of the Hardy-Littlewood maximal operators.
\end{proof}

\begin{Rem}
  Since $\mathrm{Lemma}$ $\ref{Lem:GXYm}$ part $(i)$ and $\mathrm{Lemma}$ $\ref{Lem:GXYt}$ part $(i)$ yield the same estimate $($as far as the decay in $|X-Y|$ is concerned$)$ for the second derivative of $\partial_X ^2 \Gamma(X, Y, t)$ and $\partial_{XY} ^2 \Gamma(X, Y, t)$, using a similar argument as in the proof of $\mathrm{Theorem}$ $\ref{Thm:NMF}$, one has

 \begin{equation}\label{estim:grad of SLP parabolic}
    \Big \| (\nabla \Ss(f))_*^{\pm} \Big\|_{L^2(S_{\infty})}
      \lesssim \|f\|_{L^2(S_{\infty})}, \quad f \in L^2(S_{\infty}).
  \end{equation}
\end{Rem}
This will be important for the invertibility of the BSI associated to parabolic equations.
\subsection{The jump relations}
The discontinuity of the double-layer potential in the non-tangential direction across the boundary is reflected in a precise way in the following jump relation.
\begin{Prop}\label{Prop:Jump}
  Assume that \hyperref[A1]{\small{$(A1)$}} -- \hyperref[A4]{\small{$(A4)$}} hold.
  Let $f \in L^2(S_\infty)$.
  Then,
  $$\lim_{\substack{X \to P \\ X \in \g_{\pm}(P)}} \DD(f)(X,t)
    = \mp \frac{1}{2} f(P,t) + \KK(f)(P,t), \quad \text{a.e. } (P,t) \in S_\infty.$$
\end{Prop}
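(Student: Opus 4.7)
By Theorem \ref{Thm:NMF} and Corollary \ref{Cor:bddK}, both $f \mapsto \DD(f)^\pm_*$ and $f \mapsto \tilde\KK(f)$ are bounded from $L^2(S_\infty)$ to $L^2(S_\infty)$. A standard density argument therefore reduces the problem to proving the jump formula on a dense subclass of data; I would take $f$ to be compactly supported in $\partial\Omega \times (0,\infty)$ and parabolic Lipschitz, i.e.\ $|f(Q,s) - f(P,t)| \lesssim |Q-P| + |t-s|^{1/2}$. Extension to general $f \in L^2(S_\infty)$ is then obtained from the boundedness of the controlling maximal operators via passage to a subsequence for pointwise a.e.\ convergence.

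Fix such an $f$ and a point $(P,t) \in S_\infty$. For $X \in \g_\pm(P)$ the central decomposition is
\[
\DD(f)(X,t) = f(P,t)\,I(X) + R(X),
\]
where $I(X) := \int_0^t\!\int_{\partial\Omega} \partial_\nu\G(X,Q,t-s)\,dQ\,ds$ and $R(X)$ is the same integral with $f(Q,s)$ replaced by $f(Q,s)-f(P,t)$. The parabolic Lipschitz bound, combined with the kernel estimates of Lemmas \ref{Lem:GXYm} and \ref{Lem:GXYt}, dominates the integrand of $R(X)$ uniformly in $X$ near $P$ by an integrable function on $(0,t) \times \partial\Omega$. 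Dominated convergence then yields
\[
\lim_{\substack{X \to P \\ X\in\g_\pm(P)}} R(X) = \int_0^t\!\int_{\partial\Omega}\partial_\nu\G(P,Q,t-s)\,[f(Q,s)-f(P,t)]\,dQ\,ds,
\]
which by Proposition \ref{Lem:PV} equals $\KK(f)(P,t) - f(P,t)\,\KK(1)(P,t)$, where $\KK(1)(P,t)$ is understood through the spatial principal value (the subtraction of $f(P,t)$ is exactly what makes the remaining tangential singularity locally integrable).

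The term $f(P,t)\,I(X)$ is evaluated via the divergence theorem. The symmetry of $A$ and its time-independence produce the pointwise identity $\nabla_Y\cdot(A(Y)\nabla_Y \G(X,Y,t-s)) = -\partial_s\G(X,Y,t-s)$ away from $Y=X$. Applying the divergence theorem on $\Omega$ with the inner-normal convention, then integrating in $s$ and using $\int_\Omega\G(X,Y,0^+)\,dY = \chi_{\Omega^+}(X)$, gives
\[
I(X) = \chi_{\Omega^+}(X) - \int_\Omega \G(X,Y,t)\,dY, \qquad X \notin \partial\Omega.
\]
Because $X \mapsto \int_\Omega \G(X,Y,t)\,dY$ is continuous across $\partial\Omega$ (the kernel $\G(\cdot,Y,t)$ is smooth for $t>0$), $I(X)$ has a unit jump at $\partial\Omega$. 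A parallel computation at $X = P \in \partial\Omega$, combining the divergence theorem on $\Omega \setminus B(P,\sqrt\eps)$ with Proposition \ref{Lem:PV} and exploiting that exactly one half of a small Euclidean ball around $P$ lies inside $\Omega$ (Lipschitz graph representation), yields an explicit expression for $\KK(1)(P,t)$ whose difference with the two non-tangential limits of $I$ is precisely $\mp\tfrac12$. Combining the two steps gives
\[
\lim_{\substack{X \to P \\ X\in\g_\pm(P)}} \DD(f)(X,t) = \mp\tfrac12\,f(P,t) + \KK(f)(P,t),
\]
as required.

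The main obstacle will be the rigorous evaluation of $\KK(1)(P,t)$. The spatial truncation $\int_0^t\!\int_{\partial\Omega \cap \{|P-Q|>\sqrt\eps\}} \partial_\nu\G(P,Q,t-s)\,dQ\,ds$ converges only through cancellation, since a direct pointwise bound via Lemma \ref{Lem:GXYm} shows that the Gaussian integral over the boundary is of order $(t-s)^{-1}$, which is non-integrable up to $s=t$. Implementing the excision around $P$ thus requires coupling the spatial parameter $\eps$ with a careful analysis of the contribution of $\partial B(P,\sqrt\eps) \cap \Omega$, using the Gaussian decay and the Lipschitz structure of $\partial\Omega$ at $P$ to extract the residual $\tfrac12$ as $\eps \to 0$.
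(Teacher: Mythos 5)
Your approach is genuinely different from the paper's, which is worth noting before discussing the gap. The paper takes the Fourier transform in time, writes the transformed DLP as an elliptic piece $\mathcal{A}_1$ (with kernel $\widetilde{\G}(X,Y)=\int_0^\infty\G(X,Y,\zeta)\,d\zeta$) plus a correction $\mathcal{A}_2$ whose kernel carries the extra factor $e^{-i\tau\zeta}-1$, shows $\mathcal{A}_2$ is continuous across $\partial\Omega$, and simply imports the $\mp\tfrac12$ jump from the elliptic jump relation of Kenig and Shen. You instead run a classical Privalov-type decomposition $\DD(f)(X,t)=f(P,t)\,I(X)+R(X)$ and evaluate $I$ via the divergence theorem. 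The identity $I(X)=\chi_{\Omega^+}(X)-\int_\Omega\G(X,Y,t)\,dY$ is correct (modulo justifying the $s\to t^-$ limit in the distributional sense) and is a nice observation that the paper's route does not use. The treatment of $R(X)$ also works: for $X\in\g_\pm(P)$, estimate \eqref{eq:gXQPQ} gives a $\tau$-free dominant, and the subtracted integrand is absolutely integrable, so dominated convergence applies.

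The genuine gap is the step you yourself flag as "the main obstacle": producing the explicit expression for $\KK(1)(P,t)$ so that its difference with the two one-sided limits of $I$ is $\mp\tfrac12$. This is not a technical footnote; it is the entire content of the jump formula. The principal value $\lim_{\eps\to 0}\int_0^t\int_{\partial\Omega\cap\{|P-Q|>\sqrt\eps\}}\partial_\nu\G(P,Q,t-s)\,dQ\,ds$ converges only through cancellation (you correctly observe the kernel integrates to $(t-s)^{-1}$ over the boundary), and extracting the finite part by the divergence theorem on $\Omega\setminus B(P,\sqrt\eps)$ requires controlling the flux through $\partial B(P,\sqrt\eps)\cap\Omega$ for a.e.\ $P$ on a Lipschitz graph. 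That analysis --- including the fact that the half-ball statement is only asymptotically valid at a.e.\ (differentiability) point --- essentially re-proves the elliptic jump relation that the paper takes off the shelf. Until it is carried out, the argument is a blueprint, not a proof.

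There is also a sign issue you should resolve before asserting the final combination. Testing your $I$-formula on the heat equation in $\Omega=\{y_n>0\}$ at $P=0$, where $\partial_\nu\G(P,Q,u)$ vanishes identically for $P,Q$ on the flat boundary so that $\KK(1)\equiv 0$, one gets $\lim_{\g_+}I=1-\tfrac12=\tfrac12$ and $\lim_{\g_-}I=0-\tfrac12=-\tfrac12$, i.e.\ a contribution $\pm\tfrac12 f(P,t)$ on the two sides, not $\mp\tfrac12 f(P,t)$. Your blueprint therefore does not, as written, reproduce the sign you claim; you need to pin down the inner-normal convention \eqref{defn:normal derivative} in a concrete model and reconcile it with the statement being proved, rather than asserting that "combining the two steps gives" the stated formula.
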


\begin{proof}
  Taking into account Proposition \ref{Thm:NMF} and using standard techniques,
  it is enough to see that, for every $f \in L^2(S_\infty)$ such that $\widehat{f} \in C_c^\infty(S_\infty)$,
  \begin{equation*}
    \lim_{\substack{X \to P \\ X \in \g_{\pm}(P)}} \widehat{\DD(f)}(X,\tau)
	= \mp \frac{1}{2} \widehat{f}(P,\tau) + \widehat{\KK(f)}(P,\tau), \quad \text{a.e. } P \in \partial \Omega .
  \end{equation*}

  Let $i,j=1,\dots,n$, $P \in \partial \Omega$ and $X \in \g_\pm(P)$. As in \eqref{eq:Khat}, we can write
  \begin{align*}
    \widehat{\DD^{i,j}(f)}(X,\tau)
       = \ & \int_{\partial \Omega}  a_{ij}(Q) n_j(Q) \widehat{f}(Q,\tau) \Big[ \int_0^\infty \partial_{y_i} \G(X,Y,\zeta)_{|_{Y=Q}}  e^{-i \tau \zeta }\, d\zeta \Big] \, dQ \\
       = \ & \int_{\partial \Omega}  a_{ij}(Q) n_j(Q) \widehat{f}(Q,\tau) \partial_{y_i} \widetilde{\G}(X,Y)_{|_{Y=Q}}\, dQ \\
         & + \int_{\partial \Omega}  a_{ij}(Q) n_j(Q) \widehat{f}(Q,\tau) \Big[ \int_0^\infty \partial_{y_i} \G(X,Y,\zeta)_{|_{Y=Q}}  (e^{-i \tau \zeta }-1)\, d\zeta \Big]\, dQ \\
       =: & \mathcal{A}^{i,j}_1(f)(X,\tau) + \mathcal{A}^{i,j}_2(f)(X,\tau).	
  \end{align*}

  Recall that, from the elliptic jump relation (see for example \cite[Theorem 4.6]{KeSh2}),
  \begin{align}\label{eq:jumpellip}
      \lim_{\substack{X \to P \\ X \in \g_{\pm}(P)}} \sum_{i,j=1}^n \mathcal{A}^{i,j}_1(f)(X,\tau)
	  = & \mp \frac{1}{2} \widehat{f}(P,\tau)
	     + \lim_{\eps \to 0} \int_{\partial \Omega\cap \{|P-Q|>\sqrt{\eps}\}}  \partial_\nu \widetilde{\G}(P,Q) \widehat{f}(Q,\tau)dQ.
  \end{align}

  Moreover,
  \begin{equation}\label{eq:limA}
   \lim_{\substack{X \to P \\ X \in \g_{\pm}(P)}} \mathcal{A}^{i,j}_2(f)(X,\tau)
      = \mathcal{A}^{i,j}_2(f)(P,\tau),
  \end{equation}
  by the Lebesgue's dominated convergence theorem. Indeed, estimates \eqref{eq:expbeta} and \eqref{eq:gXQPQ} yield
  \begin{align*}
      |\mathcal{A}^{i,j}_2(f)(X,\tau)|	
	  & \lesssim \int_{\partial \Omega} \frac{|\widehat{f}(Q,\tau)|}{|X-Q|^{n-1-2\beta}}\, dQ
	    \lesssim \int_{\partial \Omega} \frac{dQ}{|P-Q|^{n-1-2\beta}}
	    < \infty,
  \end{align*}
  where it was important that $\widehat{f}$ is bounded. Observe that the constants involved in the above
  inequalities might depend on $\tau$
  but not on $X$.

  It remains to see that the second term in the right hand side of \eqref{eq:jumpellip} plus the sum of the right hand side factors of
  \eqref{eq:limA} equals $\widehat{\KK(f)}$. Since the integrals $\mathcal{A}^{i,j}_2(f)$ are absolutely convergent,
  we easily get
  \begin{align*}
    & \lim_{\eps \to 0} \int_{\partial \Omega\cap \{|P-Q|>\sqrt{\eps}\}} \partial_\nu \widetilde{\G}(P,Q) \widehat{f}(Q,\tau)\, dQ
	+ \sum_{i,j=1}^n \mathcal{A}^{i,j}_2(f)(P,\tau)\\
    & \qquad \qquad= \lim_{\eps \to 0} \int_{\partial \Omega\cap \{|P-Q|>\sqrt{\eps}\}} \partial_\nu \widehat{\G}(P,Q,\tau) \widehat{f}(Q,\tau)\, dQ.
  \end{align*}
  On the other hand, by Proposition \ref{Lem:PV} we have that
  $$\mathcal{K}(f)(P,t)
      = \lim_{\eps \to 0} \int_0^{t} \int_{\partial \Omega\cap \{|P-Q| > \sqrt{\eps}\}} \partial_\nu \G(P,Q,t-s) f(Q,s)\, dQ \,ds.$$
  Hence, proceeding as in \eqref{eq:Khat}, we conclude that
  $$\widehat{\mathcal{K}(f)}(P,\tau)
      = \lim_{\eps \to 0} \int_{\partial \Omega\cap \{|P-Q| > \sqrt{\eps}\}} \partial_\nu \widehat{\G}(P,Q,\tau) \widehat{f}(Q,\tau)\, dQ.$$
  Notice that we have used (see Corollary \ref{Cor:bddK})
  $$\mathcal{K}_\eps(f)(P,\cdot) \longrightarrow \mathcal{K}(f)(P,\cdot), \quad \eps \to 0, \quad \text{in } L^2(0,\infty),$$
  and the continuity of the Fourier transform in $L^2(0,\infty)$.
\end{proof}
For the normal derivative of the single layer potential one has the following jump relation.
\begin{Prop}\label{Prop:Jump2}
  Assume that \hyperref[A1]{\small{$(A1)$}} -- \hyperref[A4]{\small{$(A4)$}} hold and
  let $f \in L^2(S_\infty).$
  Then we have
  $$\partial_\nu (\Ss (f))^{\pm}(P,t)
    = \pm \frac{1}{2} f(P,t) + \KK^*(f)(P,t), \quad \text{a.e. } (P,t) \in S_\infty,$$
where $\partial_\nu$ is defined as in \eqref{defn:normal derivative} and $(\cdot)^{\pm}$ is defined as in \eqref{defn:nontangentiallimitplus} and \eqref{defn:nontangentiallimitminus}.
\end{Prop}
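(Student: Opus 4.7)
The proof follows the blueprint of Proposition \ref{Prop:Jump}. The key observation is that, for each fixed $P\in\partial\Omega$, the kernel of the operator $X \mapsto \langle \nabla_X \Ss(f)(X,t), A(P) N_P\rangle$ is $\sum_{i,j} a_{ij}(P) n_j(P)\, \partial_{x_i} \G(X,Q,t-s)$, which enjoys the same pointwise bounds as the double-layer kernel studied in Proposition \ref{Prop:Jump} (the only differences being that $(a_{ij} n_j)$ is evaluated at the boundary point $P$ rather than at $Q$, and that the spatial derivative falls on the first argument instead of the second). The non-tangential maximal function bound \eqref{estim:grad of SLP parabolic} here plays the role that Theorem \ref{Thm:NMF} played there.

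First I would use \eqref{estim:grad of SLP parabolic} together with a standard density argument to reduce matters to the case $\widehat{f}\in C^\infty_c(S_\infty)$. Taking the Fourier transform in time and arguing as in \eqref{eq:Khat}, for each $X\in \gamma_\pm(P)$ one obtains
$$\FF_t\big[\langle \nabla_X \Ss(f)(X,\cdot), A(P) N_P\rangle\big](\tau) = \sum_{i,j}\big(\mathcal{B}_1^{i,j}(f)(X,\tau)+\mathcal{B}_2^{i,j}(f)(X,\tau)\big),$$
where $\mathcal{B}_1^{i,j}$ is obtained by replacing $e^{-i\tau\zeta}$ with $1$ (and thus features the elliptic kernel $\partial_{x_i}\widetilde{\G}(X,Q)$), while $\mathcal{B}_2^{i,j}$ carries the factor $e^{-i\tau\zeta}-1$. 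The elliptic jump relation for the conormal derivative of the single layer potential associated to $\nabla\cdot(A\nabla\cdot)$, due to Kenig--Shen \cite[Theorem 4.6]{KeSh2}, yields
$$\lim_{\substack{X\to P\\X\in\g_\pm(P)}}\sum_{i,j}\mathcal{B}_1^{i,j}(f)(X,\tau) = \pm\tfrac{1}{2}\widehat{f}(P,\tau) + \lim_{\eps\to 0}\int_{\partial\Omega\cap\{|P-Q|>\sqrt\eps\}}\sum_{i,j} a_{ij}(P)n_j(P)\partial_{x_i}\widetilde{\G}(X,Q)_{|_{X=P}}\widehat{f}(Q,\tau)\,dQ.$$
For $\mathcal{B}_2^{i,j}$, the bound $|e^{-i\tau\zeta}-1|\lesssim|\tau\zeta|^\beta$ together with Lemma \ref{Lem:GXYm} and estimate \eqref{eq:gXQPQ} produce a $Q$-integrable majorant independent of $X\in\gamma_\pm(P)$, so Lebesgue's dominated convergence theorem gives $\mathcal{B}_2^{i,j}(f)(X,\tau)\to\mathcal{B}_2^{i,j}(f)(P,\tau)$.

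It remains to recombine the two limits. Adding the elliptic principal value to the absolutely convergent sum $\sum_{i,j}\mathcal{B}_2^{i,j}(f)(P,\tau)$ produces the principal-value integral of $\sum_{i,j}a_{ij}(P)n_j(P)\partial_{x_i}\widehat{\G}(X,Q,\tau)_{|_{X=P}}$ against $\widehat{f}(\cdot,\tau)$, which by the version of Proposition \ref{Lem:PV} adapted to $\KK^*$ (valid for the identical reason, since the spatial-derivative kernel satisfies the same size bounds as the one used there) equals $\widehat{\KK^*(f)}(P,\tau)$. Inverting the Fourier transform and invoking the density reduction yields the desired jump formula. The most delicate point, exactly as in Proposition \ref{Prop:Jump}, is the reconciliation of the time-truncation appearing in the definition of $\KK^*$ with the spatial principal value produced by the elliptic limit; this is precisely where the $\KK^*$-analog of Proposition \ref{Lem:PV} is needed.
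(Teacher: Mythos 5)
Your proof is correct and takes essentially the same route as the paper, which disposes of this proposition in two lines by noting that the kernel of $\partial_\nu\Ss$ has the same size and cancellation properties as the DLP kernel and then appealing to the proof of Proposition \ref{Prop:Jump}. You have merely unpacked what "proceed as in the proof of Proposition \ref{Prop:Jump}" entails — Fourier transform in time, the split into the frozen elliptic piece and the $(e^{-i\tau\zeta}-1)$ remainder, the Kenig--Shen elliptic jump for the conormal derivative of the SLP, dominated convergence, and the $\KK^*$-variant of Proposition \ref{Lem:PV} — all of which is exactly the intended argument.
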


\begin{proof}
 We observe that the integral kernel of $\partial_\nu \Ss $ is  similar to that of DLP's and hence verifies the same estimates. Therefore proceeding as in the proof of Proposition \ref{Prop:Jump}, we will obtain the desired jump relation.
 \end{proof}

As a consequence of the jump relation above, and the definition of $\nabla_T$ one sees just as in the case of constant coefficient operators that
\begin{equation*}
 \nabla_T (\Ss f)^+ = \nabla_T (\Ss f)^- ,
\end{equation*}
where $\nabla_T$ is defined as in \eqref{defn:tangential derivative}, see also \cite{BrownPhD}.

Furthermore, taking the limit in the integral defining the SLP, using the estimate \eqref{Lem:Aronson} and the Lebesgue dominated convergence theorem yield that
\begin{equation*}
 (\Ss f)^+ = (\Ss f)^- .
\end{equation*}

\subsection{$L^2$ boundedness of the maximal fractional time-derivative of SLP} \label{sec:FractDer}
In connection to the problem of invertibility of the single-layer potential and the regularity problem, we would also need an $L^2$ estimate for the fractional derivative of the single-layer potential. The fractional time-derivative of the single -layer potential is defined as
$$D_t^{1/2}\Ss(f)(X,t)
    := \int_0^t \int_{\partial \Omega} D_t^{1/2}\G(X,Q,t-s) f(Q,s)\, dQ \,ds, \quad X \in \Omega, \ t>0.$$

Our goal is to prove the analogues of the estimates derived for the DLP, for the fractional derivative of SLP defined above. To this end we need a series of results showing the $L^2$ boundedness of various operators related to $D^{1/2}\Ss$.
Let $\varphi:\mathbb{R}^{n-1}\mapsto \R$ be Lipschitz and define a kernel $\mathscr{K}$ by

\begin{equation*}%
 \mathscr{K}(P', Q', t):=D_t^{1/2}(\Gamma((P', \varphi(P')),(Q', \varphi(Q')),t ).
\end{equation*}
Since $\varphi$ is Lipschitz it follows that $\mathscr{K}(P', Q', t)$ satisfies all the estimates in Lemma \ref{Lem:GDt}, if we replace $|X-Y|$ in the right hand sides of those estimates by $|P' -Q'|$.
Now let $r(P', Q'):=\sqrt{|P'-Q'|^2 +|\varphi(P')-\varphi(Q')|^2 }$ and for any $\varepsilon >0$ define for $f\in L^2(\R^{n-1}\times (0, \infty))$ the operator $T_\varepsilon$, acting on $f$ by

\begin{equation*}%
  T_\varepsilon f(P',t)= \int_{0}^{t} \int_{r(P', Q')>\varepsilon}\mathscr{K}(P', Q', t-s)\, f(Q', s) \, dQ'\, ds.
\end{equation*}

One now proceeds by investigating the $T_\varepsilon (f)$ and $Tf:=\lim_{\varepsilon\to 0}T_\varepsilon (f)$ for all $f\in L^2 (\R^{n-1}\times (0, \infty))$, however in the proposition below, we view those $f$ whose domain have been extended to $\R^{n-1}\times \R$ in such a way that $f(Q', t)=0,$ for $t\leq 0.$ This enables us to take the Fourier transform of $f(Q', t)$ in $t$, which will prove useful.

\begin{Prop}
 Let $\R_{+}^{n-1}:=\R^{n-1}\times (0, \infty)$ and $f\in L^2 (\R_{+}^{n-1}).$ Then one has
 \begin{enumerate}
   \item [$(i)$] $\Vert T_\varepsilon f\Vert_{L^2 (\R_{+}^{n-1})}\lesssim \Vert f\Vert_{L^2 (\R_{+}^{n-1})}$ where the constant hidden in the right hand side of this estimate doesn't depend on $\varepsilon$.
   \item [$(ii)$] The limit $\lim_{\varepsilon\to 0}T_\varepsilon (f)$ exits in the $L^2 (\R_{+}^{n-1})$ norm and therefore it is legitimate to define $Tf$ as this limit.
 \end{enumerate}
\end{Prop}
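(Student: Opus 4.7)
The plan is to follow the Fourier-in-time strategy used in Theorem \ref{Thm:BSI}. After extending $f$ by zero to $\R^{n-1}\times\R$ (so that $\widehat{f}(Q',\tau)=\int_0^\infty e^{-i\tau s}f(Q',s)\,ds$) and applying identity \eqref{eq:fractionalfouriertransform} to the convolution in $t$, I obtain
$$\widehat{T_\varepsilon f}(P',\tau)=c(1+i\,\mathrm{sign}(\tau))\int_{r(P',Q')>\varepsilon}|\tau|^{1/2}\widehat{\G}\bigl((P',\varphi(P')),(Q',\varphi(Q')),\tau\bigr)\widehat{f}(Q',\tau)\,dQ'.$$
Thus $T_\varepsilon$ becomes, on the Fourier side, a one-parameter family of spatial integral operators with kernel $|\tau|^{1/2}\widehat{\G}$, symmetric in $P',Q'$ by \eqref{eq:sym}, plus a trivial $\varepsilon$-truncation in space. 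The Lipschitz property of $\varphi$ guarantees $\mathrm{dist}((P',\varphi(P')),(Q',\varphi(Q')))\sim|P'-Q'|$, which is what allows the decay estimates for $\widehat{\G}$ to be transferred to the graph.

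For (i), I would apply Schur's lemma (Lemma \ref{Lem:Schurslemma}) to the spatial operator, essentially reproducing the argument of Lemma \ref{Lem:tauq} for $q=1/2$. Using Lemma \ref{Lem:GFourier} with a sufficiently large $N$, the verification of the uniform Schur bound reduces to splitting $\int_{\R^{n-1}}|\tau|^{1/2}|\widehat{\G}|\,dQ'$ at $|P'-Q'|\sim 1/\sqrt{|\tau|}$: the inner piece contributes
$$\int_{|Z|\leq 1/\sqrt{|\tau|}}\frac{|\tau|^{1/2}}{|Z|^{n-2}}\,dZ \sim 1,$$
and the outer piece contributes
$$\int_{|Z|>1/\sqrt{|\tau|}}\frac{|\tau|^{1/2-N}}{|Z|^{n-2+2N}}\,dZ \sim 1,$$
both estimates independent of $\tau$. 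Since restricting the integration to $r(P',Q')>\varepsilon$ only shrinks the domain, the bound is uniform in $\varepsilon$ as well, and Plancherel's theorem in $t$ translates this into the desired uniform $L^2(\R_+^{n-1})$ estimate for $T_\varepsilon$.

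For (ii), I would show that $\widehat{T_\varepsilon f}$ converges in $L^2(\R^{n-1}\times\R)$ as $\varepsilon\to 0$, and then invoke Plancherel. Fixing $\tau$, the estimate $|\widehat{\G}(X,Y,\tau)|\lesssim|X-Y|^{-(n-2)}$ from Lemma \ref{Lem:GXYFourier} $(i)$ is locally integrable in $Q'$ (this is where $n\geq 3$ is essential); hence for $g\in C_c^\infty(\R^{n-1})$ Lebesgue's dominated convergence theorem yields pointwise convergence of the spatial operators as $\varepsilon\to 0$. Combined with the uniform Schur bound from (i) and density of $C_c^\infty$ in $L^2(\R^{n-1})$, this upgrades to strong $L^2$ convergence of the spatial operators for every $\tau$. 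Finally the uniform-in-$\tau$ bound supplies the $L^1(d\tau)$ majorant $4C^2\|\widehat{f}(\cdot,\tau)\|_{L^2}^2$, so dominated convergence in $\tau$ yields convergence of $\widehat{T_\varepsilon f}$ in $L^2(\R^{n-1}\times\R)$.

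The principal technical point is the uniform-in-$\tau$ Schur constant, which rests on two ingredients already in hand: the $N$-fold integration-by-parts decay of Lemma \ref{Lem:GFourier} and the bi-Lipschitz comparability of $r(P',Q')$ with $|P'-Q'|$. A pleasant simplification compared with Theorem \ref{Thm:BSI} is that no principal-value cancellation is needed here — the $\varepsilon$-truncation together with local integrability of $\widehat{\G}$ for $n\geq 3$ makes each $T_\varepsilon$ an absolutely convergent operator, so Calder\'on-Zygmund cancellation can be replaced throughout by the weighted Schur bound.
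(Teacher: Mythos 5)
Your proof follows essentially the same route as the paper: take the Fourier transform in $t$ via \eqref{eq:fractionalfouriertransform}, reduce to a $\tau$-parametrized family of spatial operators with kernel $|\tau|^{1/2}\widehat{\G}$, verify a uniform Schur bound exactly as in Lemma \ref{Lem:tauq} with $q=1/2$ using the decay from Lemma \ref{Lem:GFourier}, and close with Plancherel; for (ii) the paper defers the Cauchy/dominated-convergence argument to the reader, and your sketch supplies precisely that. One small imprecision worth tightening: in (ii), pointwise convergence of $B_\varepsilon^\tau g$ for $g\in C_c^\infty$ should be upgraded to $L^2$ convergence on this dense subset before invoking the uniform Schur bound -- which follows immediately since $|B_\varepsilon^\tau g - B_0^\tau g|\lesssim\|g\|_\infty\int_{|Z|\leq\varepsilon}|Z|^{2-n}\,dZ$ is small uniformly and supported in a fixed compact set.
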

\begin{proof}
    To prove $(i)$, using \eqref{eq:fractionalfouriertransform}, we observe that the time-Fourier transform of $T_\varepsilon f$ is bounded by
    $$|\widehat{T_\varepsilon f}(P',\tau)|
        \lesssim \int_{r(P', Q')>\varepsilon} |\tau|^{1/2} |\widehat{\Gamma}((P',\varphi(P')),(Q',\varphi(Q')),\tau)| \, |\widehat{f}(Q', \tau)| \, dQ'.$$
    Now Lemma \ref{Lem:GFourier} yields
    $$|\widehat{\G}((P',\varphi(P')),(Q',\varphi(Q')),\tau)|
	\lesssim \frac{\min\{1,(|\tau| r(P', Q')^2)^{-N}\}}{r(P', Q')^{n-2}},$$
    and therefore we can proceed as in the proof of Lemma \ref{Lem:tauq} with $q=1/2$ to show that
    $$\int_{\R^{n-1}} |\tau|^{1/2} |\widehat{\Gamma}((P',\varphi(P')),(Q',\varphi(Q')),\tau)|  \, dQ'
      \lesssim 1.$$
    Thus, Lemma \ref{Lem:Schurslemma} and Plancherel's theorem imply the boundedness of $T_\varepsilon f$.

    The proof of $(ii)$ is standard since it amounts to show that the sequence $\{T_\varepsilon f\}$ is Cauchy in
    $L^2(\R_{+}^{n-1})$. This is done by using Fubini's, Plancherel's and Lebesgue's dominated convergence theorems. The details are left to the reader.
\end{proof}

Now if one uses the fact that for $(P',t)\ni \mathrm{supp}\, (f)$ one has
$$T f(P',t)= \int_{0}^{t} \int_{\R_{+}^{n-1}}\mathscr{K}(P', Q', t-s)\, f(Q', s) \, dQ'\, ds,$$ then thanks to Lemma \ref{Lem:GDt}, one can follow the exact same steps in \cite[Appendix A, Proposition A.3]{BrownPhD}  to show that $\Vert T f\Vert_{L^2 (\R_{+}^{n-1})}\lesssim \Vert f\Vert_{L^2 (\R_{+}^{n-1})}.$ Moreover, setting $T_{*}f:= \sup_{\varepsilon >0} T_\varepsilon (F)(P',t)$ and following the same argument as in \cite[Appendix A, Theorem A.5]{BrownPhD}, one can show that

\begin{equation}\label{L2 max function estim for T epsilon}
 \Vert T_{*}f\Vert_{L^2 (\R_{+}^{n-1})}\lesssim \Vert f\Vert_{L^2 (\R_{+}^{n-1})}.
\end{equation}
 We would like to emphasise once again that these $L^2$ estimates follow using the same method as in the constant coefficient case in \cite{BrownPhD}, because the proofs there are all only dependent on estimates for the kernel $\mathscr{K}(P', Q', t-s)$, which due to Lemma \ref{Lem:GDt} are the same as those in the constant coefficient case. As usual the maximal function estimate \eqref{L2 max function estim for T epsilon} yields that, for $f\in L^2 (\R_{+}^{n-1})$ one has

$$T f(P',t)= \lim_{\varepsilon\to 0}\int_{0}^{t} \int_{r(P', Q')>\varepsilon}  \mathscr{K}(P', Q', t-s)\, f(Q', s) \, dQ'\, ds,$$
for almost all $(P', t) \in \R_{+}^{n-1}.$

Now using the definition \eqref{defn: The lipshitz domain Omega} of the boundary of the Lipschitz domain $\Omega ,$ one can transfer the operator $T$ to $S_\infty$, by setting

\begin{equation*}
  \widetilde{T} f(P,t)= \lim_{\varepsilon\to 0}\int_{0}^{t} \int_{\partial \Omega \cap\{|P-Q|>\varepsilon\}}  \mathscr{K}(P, Q, t-s)\, f(Q', s) \, dQ'\, ds.
\end{equation*}

Therefore using the boundary coordinates $(P', \varphi(P'))$, one sees that $$\widetilde{T} f((P', \varphi(P')),t)=T(f\,\sqrt{1+|\nabla\varphi|^2}) (P',t),$$ and hence all the results above for $T$ and $T_*$ are also valid for $\widetilde{T}$ and $\widetilde{T}_*$.

Finally, following the same arguments in \cite[Propositions A.7 and A.8]{BrownPhD}, line by line, one has the following two results:

\begin{Prop}\label{Prop:NMFDt1/2}
  Assume that \hyperref[A1]{\small{$(A1)$}} -- \hyperref[A4]{\small{$(A4)$}} hold.
Then
  $$ \Big \| (D_t^{1/2}\Ss f)_*^{\pm} \Big \|_{L^2(S_\infty)}
      \lesssim \|f\|_{L^2(S_\infty)}, \quad f \in L^2(S_\infty).$$
\end{Prop}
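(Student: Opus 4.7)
The plan is to adapt, essentially verbatim, the proof of Theorem~\ref{Thm:NMF}, with $\partial_\nu \G$ replaced everywhere by the kernel $D_t^{1/2}\G$ and with the boundary singular integral $\KK$ replaced by the boundary operator $\widetilde{T}$ introduced just above the proposition. The two key ingredients available are the pointwise bounds in Lemma~\ref{Lem:GDt} for $D_t^{1/2}\G$ and its derivatives, and the $L^2$-boundedness of the spatially truncated maximal operator $\widetilde{T}_\ast$ coming from \eqref{L2 max function estim for T epsilon} (which, via the Lipschitz parametrization of $\partial\Omega$, transfers to $S_\infty$ just as described in the paragraph preceding the proposition).

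I would fix $P\in \partial \Omega$, $X\in\gamma_\pm(P)$ and set $\eps:=|X-P|^2$. Subtracting the spatially truncated version $\widetilde{T}_{\sqrt{\eps}}(f)(P,t)$ from $D_t^{1/2}\Ss(f)(X,t)$ produces two pieces:
\begin{align*}
 \mathcal{E}_1(f)(X,P,t) &:= \int_0^t \int_{\partial\Omega\cap\{|P-Q|\leq \sqrt{\eps}\}} D_t^{1/2}\G(X,Q,t-s)\, f(Q,s)\, dQ\,ds,\\
 \mathcal{E}_2(f)(X,P,t) &:= \int_0^t \int_{\partial\Omega\cap\{|P-Q|> \sqrt{\eps}\}} \bigl[D_t^{1/2}\G(X,Q,t-s) - D_t^{1/2}\G(P,Q,t-s)\bigr] f(Q,s)\, dQ\,ds.
\end{align*}
For $\mathcal{E}_1$ I would apply Lemma~\ref{Lem:GDt}$(i)$ together with $|X-Q|\gtrsim\sqrt{\eps}$ (from \eqref{eq:gXQXP}), reducing matters to a convolution in $s$ of $\MM_{\partial\Omega}(f(\cdot,s))(P)$ with a function of $(t-s)$ that admits a radial decreasing integrable majorant and therefore is controlled by $\MM_1(\MM_{\partial\Omega}(f)(P))(t)$ via Lemma~\ref{Lem:Duo} (plus a standard dyadic-annulus argument to handle the non-monotonic part near the origin). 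For $\mathcal{E}_2$ I would use the mean value theorem combined with Lemma~\ref{Lem:GDt}$(ii)$, along with the lower bound $|Z-Q|\gtrsim|P-Q|$ on the segment $[X,P]$ when $|P-Q|>2\sqrt{\eps}$ (exactly as in \eqref{eq:ZQPQ}), and handle the remaining narrow annulus $\sqrt{\eps}<|P-Q|\le 2\sqrt{\eps}$ by the same argument as for $\mathcal{E}_1$.

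The elementary redistribution of the Gaussian factor,
\[
 \frac{e^{-c|X-Y|^2/\tau}}{\tau^{3/2}\,|X-Y|^{n-1}} \lesssim \frac{1}{\tau^{5/4}\,|X-Y|^{n-1/2}},\qquad \tau>0,
\]
then reproduces, mutatis mutandis, the same $J_2$-type estimate used in the proof of Theorem~\ref{Thm:NMF}, and yields the pointwise domination
\[
 \sup_{X\in\gamma_\pm(P)} |D_t^{1/2}\Ss(f)(X,t)| \lesssim \widetilde{T}_\ast(f)(P,t) + \MM_1(\MM_{\partial\Omega}(f)(P))(t).
\]
Taking $L^2(S_\infty)$ norms and invoking the $L^2$ boundedness of $\widetilde{T}_\ast$ together with the $L^2$ boundedness of the two Hardy-Littlewood maximal operators finishes the proof.

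The main obstacle will be handling $\mathcal{E}_1$: in Theorem~\ref{Thm:NMF} the $\partial_\nu \G$ kernel carried the singularity purely in $t-s$, so short-time integrations produced clean powers of $\eps$ that matched the surface measure of the small cap $\{|P-Q|\le\sqrt{\eps}\}$. Here, by contrast, Lemma~\ref{Lem:GDt}$(i)$ forces a $|X-Q|^{-(n-2)}$ singularity, and one must carefully exploit the non-tangential lower bound $|X-Q|\gtrsim\sqrt{\eps}$ \emph{before} invoking Lemma~\ref{Lem:Duo}, so that the spatial and temporal powers of $\eps$ balance and no dependence on $\eps$ survives.
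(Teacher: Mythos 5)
Your overall strategy is the one the paper intends: the paper simply cites Brown's thesis, Propositions A.7--A.8, and the underlying argument there is exactly a Cotlar-type comparison of $D_t^{1/2}\Ss f(X,t)$ with a truncated boundary operator at $P$, with the error dominated pointwise by maximal functions. Your decomposition into $\widetilde{T}_{\sqrt{\eps}}(f)(P,t)$, $\mathcal{E}_1$, $\mathcal{E}_2$ is well-adapted to the operator $T_\eps$ as defined in the paper (truncation only in the spatial variable), and your handling of $\mathcal{E}_1$ --- retaining the factor $e^{-c\eps/(t-s)}$ coming from $|X-Q|\gtrsim\sqrt{\eps}$ before invoking Lemma~\ref{Lem:Duo} --- is correct.

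The $\mathcal{E}_2$ estimate as you wrote it, however, contains a gap. The "elementary redistribution" $\frac{e^{-c|X-Y|^2/\tau}}{\tau^{3/2}|X-Y|^{n-1}} \lesssim \frac{1}{\tau^{5/4}|X-Y|^{n-1/2}}$ throws away the entire Gaussian, and the resulting bound for the $\{|P-Q|>2\sqrt{\eps}\}$ piece of $\mathcal{E}_2$ is of the form $\eps^{1/4}\int_0^t (t-s)^{-5/4}\,\MM_{\partial\Omega}(f(\cdot,s))(P)\,ds$, whose time kernel $\eps^{1/4}\tau^{-5/4}\chi_{(0,\infty)}(\tau)$ is \emph{not} integrable near $\tau=0$; Lemma~\ref{Lem:Duo} therefore does not apply. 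The analogous $J_2$ step in the proof of Theorem~\ref{Thm:NMF} only works because there the time integral is truncated to $s<t-\eps$, which your purely spatial truncation does not supply, so the claimed "mutatis mutandis" hides a real difference. The fix is the same idea you already used for $\mathcal{E}_1$: keep a piece of the Gaussian. Since $|P-Q|>2\sqrt{\eps}$ gives $e^{-c|P-Q|^2/(t-s)}\lesssim e^{-c'\eps/(t-s)}\bigl((t-s)/|P-Q|^2\bigr)^{1/4}$, the time kernel becomes $\eps^{1/4}(t-s)^{-5/4}e^{-c'\eps/(t-s)}$, which does admit a radial decreasing integrable majorant at scale $\eps$, and Lemma~\ref{Lem:Duo} then yields $\MM_1(\MM_{\partial\Omega}(f)(P))(t)$ as desired. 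With this correction the pointwise domination $\sup_{X\in\g_\pm(P)}|D_t^{1/2}\Ss f(X,t)|\lesssim \widetilde{T}_*(f)(P,t)+\MM_1(\MM_{\partial\Omega}(f)(P))(t)$ follows and the $L^2$ estimate is immediate.
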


To establish the jump relation for the fractional derivative of SLP, one also needs the following result:

\begin{Prop}\label{Prop:JumpDt1/2}
  Assume that \hyperref[A1]{\small{$(A1)$}} -- \hyperref[A4]{\small{$(A4)$}} hold and let $f \in L^2(S_\infty)$.
  Then for a.e. $(P,t) \in S_\infty$,
 \begin{equation*}
      \lim_{\substack{X \to P \\ X \in \g_{\pm}(P)}} D_t^{1/2}\Ss(f)(X,t)= \widetilde{T} f(P,t)=\lim_{\eps \to 0} \int_0^{t} \int_{\partial \Omega\cap \{|P-Q| > \sqrt{\eps}\}} D_t^{1/2} \G(P,Q,t-s) f(Q,s)\, dQ\, ds.
  \end{equation*}
\end{Prop}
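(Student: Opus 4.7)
The argument will parallel the proof of Theorem~\ref{Thm:NMF} and Proposition~\ref{Prop:Jump}, with $\partial_\nu\G$ replaced by $D_t^{1/2}\G$ and with Lemmas~\ref{Lem:GXYm} and \ref{Lem:GXYt} replaced by the analogous pointwise estimates of Lemma~\ref{Lem:GDt}. A notable simplifying feature is that, unlike in Proposition~\ref{Prop:Jump}, no $\mp\tfrac12 f$ jump contribution should appear here: the kernel is not a conormal derivative of $\G$, so the dipole cancellation that produces the boundary jump term in the DLP setting is absent, and the non-tangential limits from either side should coincide with the principal-value integral $\widetilde T f(P,t)$.

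\textbf{Step 1 (Decomposition at a single non-tangential point).} Fix $P\in\partial\Omega$, $X\in\g_\pm(P)$, and set $\eps:=|X-P|^2$. Mirroring the proof of Theorem~\ref{Thm:NMF}, I would write
\begin{equation*}
D_t^{1/2}\Ss(f)(X,t)\;=\;\widetilde T_{\sqrt{\eps}}f(P,t)\;+\;J_1(X,P,t)+J_2(X,P,t)+J_3(X,P,t)+J_4(X,P,t),
\end{equation*}
where $\widetilde T_{\sqrt{\eps}}f$ denotes the spatial truncation appearing inside the limit on the right-hand side of the statement; $J_1,J_3$ collect the contributions from $\{|P-Q|\le 2\sqrt{\eps}\}$ on the time-ranges $(0,t-\eps)$ and $(t-\eps,t)$ respectively, $J_2,J_4$ are the corresponding pieces on $\{|P-Q|>2\sqrt{\eps}\}$, and in $J_1,J_2$ the kernel is replaced by the difference $D_t^{1/2}\G(X,Q,t-s)-D_t^{1/2}\G(P,Q,t-s)$.

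\textbf{Step 2 (Maximal bound on the error pieces).} Using Lemma~\ref{Lem:GDt} together with \eqref{eq:gXQXP}, \eqref{eq:gXQPQ}, and the mean value theorem along $[X,P]$ for $J_2$, I expect to dominate each $|J_k(X,P,t)|$ by $\MM_1\bigl(\MM_{\partial\Omega}(f)(P)\bigr)(t)$. Specifically, $J_1$ is handled via the bound $|D_t^{1/2}\G|\lesssim (t-s)^{-3/2}|X-Q|^{-(n-2)}$ and an application of Lemma~\ref{Lem:Duo}; $J_2$ uses $|\nabla_X D_t^{1/2}\G|\lesssim (t-s)^{-3/2}|X-Q|^{-(n-1)}$ together with $|Z-Q|\gtrsim|P-Q|$ for $Z\in[X,P]$ (cf.\ \eqref{eq:ZQPQ}); and $J_3,J_4$ are controlled by exploiting the Gaussian factor $e^{-c|X-Q|^2/(t-s)}$, using $|X-Q|\gtrsim \sqrt{\eps}$ in the first case and $|X-Q|\gtrsim|P-Q|$ in the second. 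This yields the pointwise estimate
\begin{equation*}
\sup_{X\in\g_\pm(P)}\bigl|D_t^{1/2}\Ss(f)(X,t)-\widetilde T_{\sqrt{\eps}}f(P,t)\bigr|\;\lesssim\;\MM_1\bigl(\MM_{\partial\Omega}(f)(P)\bigr)(t).
\end{equation*}

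\textbf{Step 3 (Density reduction and conclusion).} Combining the estimate of Step~2 with the $L^2(S_\infty)$ boundedness of $\widetilde T_{\ast}$ from \eqref{L2 max function estim for T epsilon} and of the Hardy--Littlewood maximal functions, the sublinear operator $f\mapsto\sup_{X\in\g_\pm(P)}|D_t^{1/2}\Ss(f)(X,t)-\widetilde T f(P,t)|$ is bounded on $L^2(S_\infty)$. A standard density argument then reduces the a.e.\ convergence to the case $f\in C_c^\infty(S_\infty)$, for which the kernel estimates of Lemma~\ref{Lem:GDt} allow an elementary application of Lebesgue's dominated convergence theorem to show both $\widetilde T_{\sqrt{\eps}}f(P,t)\to\widetilde T f(P,t)$ and $J_k(X,P,t)\to 0$ as $X\to P$ non-tangentially. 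The most delicate piece will be $J_3$, where the singularity $(t-s)^{-3/2}$ in time and the surface-measure smallness of $\{|P-Q|\le 2\sqrt{\eps}\}$ must be balanced via the exponential factor $e^{-c\eps/(t-s)}$ supplied by \eqref{eq:gXQXP}; this is the analogue of the corresponding delicate term in the proof of Theorem~\ref{Thm:NMF}, and works here essentially as in \cite[Proposition A.8]{BrownPhD}.
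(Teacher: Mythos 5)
Your Steps 1 and 2 are sound in spirit (modulo the usual bookkeeping that the truncation in $\widetilde T_{\sqrt{\eps}}$ is in space while the time split $[0,t-\eps]\cup[t-\eps,t]$ is needed for $J_2$), and they yield the correct maximal estimate $\sup_{X\in\g_\pm(P)}|D_t^{1/2}\Ss f(X,t)-\widetilde T_{\sqrt\eps}f(P,t)|\lesssim\MM_1(\MM_{\partial\Omega}f)(P)(t)$, which is all the $L^2$ boundedness of Step 3 needs. The paper itself gives no details here, simply citing \cite[Prop.~A.8]{BrownPhD}; so you are filling in more of the argument than the paper does.

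The gap is in the claim at the end of Step 3 that, for $f\in C_c^\infty$, the error terms $J_k\to 0$ by dominated convergence using the kernel bounds of Lemma~\ref{Lem:GDt}. This is false as stated, because those bounds only give $O(1)$, not $o(1)$, for the near-diagonal pieces. For instance take $J_3$ (or the whole near-diagonal piece $A:=\int_0^t\int_{\{|P-Q|\le\sqrt\eps\}}D_t^{1/2}\G(X,Q,t-s)f\,dQ\,ds$): using $|X-Q|\gtrsim\sqrt\eps$ and Lemma~\ref{Lem:GDt}$(i)$ one gets
\[
|A|\lesssim\|f\|_\infty\,\eps^{(n-1)/2}\int_0^t\frac{e^{-c\eps/(t-s)}}{(t-s)^{3/2}\eps^{(n-2)/2}}\,ds
=\|f\|_\infty\sqrt\eps\int_0^t\frac{e^{-c\eps/u}}{u^{3/2}}\,du,
\]
and the substitution $u=\eps v$ shows the right-hand side converges to a positive constant $\times\,\|f\|_\infty$ as $\eps\to 0$, not to $0$. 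The same is true of $B=\int_0^t\int_{\{|P-Q|>\sqrt\eps\}}[D_t^{1/2}\G(X,\cdot)-D_t^{1/2}\G(P,\cdot)]f$: the mean-value bound of Lemma~\ref{Lem:GDt}$(ii)$ gives $\sqrt\eps\int_{\{|P-Q|>2\sqrt\eps\}}|P-Q|^{-n}\,dQ\sim\text{const}$, again $O(1)$. In other words, unlike for the boundedness of the maximal function, the absolute-value kernel estimates are of exactly the same size as the putative jump constant, and cannot rule it out. Your informal remark about ``no dipole'' does not discharge this: the vanishing of the would-be jump is a genuine cancellation, equivalently $\int_0^\infty D_t^{1/2}\G(X,Q,u)\,du=0$ (since $D_t^{1/2}\G=\pi^{-1/2}\partial_u I_{1/2}\G$ and $I_{1/2}\G(X,Q,u)\to 0$ at both endpoints for $X\ne Q$, $n\ge3$). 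The cleanest way to close the gap --- and the one that mirrors how the paper identifies the limit in Proposition~\ref{Prop:Jump} --- is to pass to the Fourier side: for $\widehat f\in C_c^\infty$, $\widehat{D_t^{1/2}\Ss f}(X,\tau)\propto(1+i\,\mathrm{sign}\,\tau)|\tau|^{1/2}\mathbb{S}^\tau(\widehat f(\cdot,\tau))(X)$ by \eqref{eq:fractionalfouriertransform}, and $\mathbb{S}^\tau(g)(X)\to\mathbb{S}^\tau(g)(P)$ with no boundary jump because $|\widehat\G(X,Q,\tau)|\lesssim|X-Q|^{2-n}$ is only weakly singular (Lemma~\ref{Lem:GFourier}); one then inverts the Fourier transform to recover $\widetilde Tf(P,t)$. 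Either this argument or the explicit cancellation $\int_0^\infty D_t^{1/2}\G=0$ should be invoked; ``dominated convergence from Lemma~\ref{Lem:GDt}'' alone is not enough.
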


The proof of this proposition is once again the same as the corresponding result in the constant coefficient case in Appendix A of \cite{BrownPhD}. From Proposition \ref{Prop:JumpDt1/2}, it follows at once that
\begin{equation*}
 (D_t^{1/2}\Ss f)^+ = (D_t^{1/2}\Ss f)^-.
 \end{equation*}

For the invertibility of the SLP discussed in Subsection \ref{subsec:Invertibility of the SLP}, the following boundedness estimate plays a crucial role.

\begin{Th}\label{boundedness SLP of the difference}
Assume that $A_1$ and $A_2$ satisfy the conditions \hyperref[A1]{\small{$(A1)$}} -- \hyperref[A4]{\small{$(A4)$}} and that $A_1 =A_2$ on $\partial \Omega$. Then one has for $0\leq T<\infty$
\begin{align}\label{estim:boundedness SLP of the difference}
& \Vert \nabla ( \Ss_{A_{1}}- \Ss_{A_{2}} ) f \Vert_{L^{2}(S_{T})}
+ \Vert D_{t}^{1/2}( \Ss_{A_{1}} - \Ss_{A_{2}} ) f\Vert_{L^{2}(S_{T})}
+ \Vert (\Ss_{A_{1}} - \Ss_{A_{2}}) f\Vert_{L^{2}(S_{T})} \\
& \qquad \lesssim \Vert A_{1} - A_{2}\Vert^{1/2}_{\infty} \Vert f\Vert_{L^{2}(S_{T})}. \nonumber
\end{align}
\end{Th}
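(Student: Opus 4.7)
The plan is to pass to the Fourier transform in time and reduce each of the three estimates in~\eqref{estim:boundedness SLP of the difference} to a $\tau$-uniform $L^{2}(\partial\Omega)$ bound for the Fourier-transformed integral operators. By Plancherel's theorem (applied to $f$ extended by zero outside $(0,T)$) together with the multiplier identity~\eqref{eq:fractionalfouriertransform}, it suffices to prove that the operators $T_{\tau}^{D} g(P):=\int_{\partial\Omega} K^{D}(P,Q,\tau)\, g(Q)\, dQ$ satisfy
\begin{equation*}
\|T_{\tau}^{D}\|_{L^{2}(\partial\Omega)\to L^{2}(\partial\Omega)}\lesssim \|A_{1}-A_{2}\|_{\infty}^{1/2}
\end{equation*}
uniformly in $\tau\in\R$, for each $D\in\{I,\nabla_{X},D_{t}^{1/2}\}$, where the Fourier-side kernels are $K^{I}=\widehat{\G}_{A_{1}}-\widehat{\G}_{A_{2}}$, $K^{\nabla_{X}}=\nabla_{X}(\widehat{\G}_{A_{1}}-\widehat{\G}_{A_{2}})$, and $|K^{D_{t}^{1/2}}(P,Q,\tau)|\sim |\tau|^{1/2}|\widehat{\G}_{A_{1}}(P,Q,\tau)-\widehat{\G}_{A_{2}}(P,Q,\tau)|$.

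The kernel analysis rests on two elementary observations. First, since $A_{1}=A_{2}$ on $\partial\Omega$, condition~\hyperref[A4]{$(A4)$} gives $|A_{1}(Z)-A_{2}(Z)|\lesssim \mathrm{dist}(Z,\partial\Omega)^{\alpha}$, and interpolation with the trivial $L^{\infty}$ bound yields $|A_{1}(Z)-A_{2}(Z)|\lesssim \|A_{1}-A_{2}\|_{\infty}^{1/2}\,\mathrm{dist}(Z,\partial\Omega)^{\alpha/2}$; for $P,Q\in\partial\Omega$ the inequality $\mathrm{dist}(Z,\partial\Omega)\le \min(|P-Z|,|Z-Q|)$ then produces the symmetric splitting
\begin{equation*}
|A_{1}(Z)-A_{2}(Z)|\lesssim \|A_{1}-A_{2}\|_{\infty}^{1/2}|P-Z|^{\alpha/4}|Z-Q|^{\alpha/4}.
\end{equation*}
Second, inserting this into the representation of Lemma~\ref{difference of fundsolutions to fourierparabolic} and invoking Lemma~\ref{Lem:GXYFourier} --- part~$(i)$ with $|m|=1$ on each factor for $K^{I}$; parts~$(i)$ and~$(ii)$ for $K^{\nabla_{X}}$; and part~$(iii)$ with $q=1/4$ on each factor for $K^{D_{t}^{1/2}}$, so that the product $|\tau|^{-1/4}\cdot |\tau|^{-1/4}$ exactly cancels the extra $|\tau|^{1/2}$ --- reduces each kernel to a Riesz-type integral $\int_{\R^{n}}|P-Z|^{-a}|Z-Q|^{-b}\,dZ$. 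The classical estimate $\int_{\R^{n}}|P-Z|^{-a}|Z-Q|^{-b}\,dZ\lesssim |P-Q|^{n-a-b}$, valid whenever $a+b>n$ and $a,b<n$ (both conditions being ensured here by $n\geq 3$ and $\alpha\in(0,1]$), yields, uniformly in $\tau\in\R$,
\begin{align*}
|K^{I}(P,Q,\tau)|&\lesssim \|A_{1}-A_{2}\|_{\infty}^{1/2}\,|P-Q|^{-(n-2-\alpha/2)},\\
|K^{\nabla_{X}}(P,Q,\tau)|+|K^{D_{t}^{1/2}}(P,Q,\tau)|&\lesssim \|A_{1}-A_{2}\|_{\infty}^{1/2}\,|P-Q|^{-(n-1-\alpha/2)}.
\end{align*}

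All three exponents are strictly less than $n-1$, so every Fourier-side kernel is weakly singular on the Lipschitz surface $\partial\Omega$. Schur's lemma~(Lemma~\ref{Lem:Schurslemma}) together with the standard estimate $\int_{\partial\Omega}|P-Q|^{-s}\,dQ\lesssim 1$ for $s<n-1$ then produces the desired $\tau$-uniform $L^{2}(\partial\Omega)$-operator bound, and Plancherel on the time variable concludes, in fact with a constant that is independent of $T$. I anticipate the main technical subtlety to lie in the $D_{t}^{1/2}$ estimate: the choice $q=1/4$ in Lemma~\ref{Lem:GXYFourier}$(iii)$ is crucial, as it exactly matches the Fourier multiplier $|\tau|^{1/2}$ from~\eqref{eq:fractionalfouriertransform} against the decay $|\tau|^{-2q}$ supplied by the kernel bound while keeping the resulting spatial kernel weakly singular on $\partial\Omega$; all remaining steps are routine given the preparatory material developed in Sections~\ref{sec:FS} and~\ref{sec:Fourier transformed Layer potential}.
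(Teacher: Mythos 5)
Your proposal is correct, and it takes a genuinely different route from the paper's. The paper works entirely in time--space: it writes the Schwartz kernels $K_1,K_2,K_3$ of the three difference operators directly via Lemma~\ref{difference of fundsolutions to parabolic}, estimates the resulting Gaussian compositions using Lemmas~\ref{Lem:GXYm}, \ref{Lem:GXYt}, \ref{Lem:GDt} together with the Chapman--Kolmogorov formula (Proposition~\ref{Chapman-Kolmogorov}), arrives at the pointwise space--time bound
$|K_i(X,Q,t-s)|\lesssim \Vert A_1-A_2\Vert_\infty^{1/2}\,(|X-Q|^2+t-s)^{-(n+1)/2+\alpha/4}$,
and then applies Schur's lemma on $S_T$ after splitting this into a time factor $(t-s)^{-1+\alpha/8}$ and a space factor $|P-Q|^{-(n-1)/2+\alpha/8}$. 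You instead Fourier transform in $t$, invoke the elliptic-side representation of Lemma~\ref{difference of fundsolutions to fourierparabolic}, bound the two gradient factors by Lemma~\ref{Lem:GXYFourier} uniformly in $\tau$, compose via the Riesz integral, and finish with Schur's lemma on $\partial\Omega$ plus Plancherel. Both routes rest on the same two pillars --- the factorized representation of the difference of fundamental solutions and the gain on $|A_1-A_2|$ away from $\partial\Omega$ coming from $A_1=A_2$ on $\partial\Omega$ --- but the bookkeeping is different, and each has something to offer. Your route sidesteps Chapman--Kolmogorov and disposes of the $D_t^{1/2}$ term in one line by balancing $q=1/4$ in Lemma~\ref{Lem:GXYFourier}$(iii)$ against the multiplier $|\tau|^{1/2}$ from \eqref{eq:fractionalfouriertransform}, where the paper needs the separate fractional-derivative kernel bound Lemma~\ref{Lem:GDt}$(ii)$ and a dyadic split inside the Chapman--Kolmogorov integral; it also yields a constant independent of $T$ (the paper's time-Schur integral $\int_0^t(t-s)^{-1+\alpha/8}\,ds\sim t^{\alpha/8}$ carries a harmless $T^{\alpha/8}$). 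A minor cosmetic difference: you distribute the gain from $|A_1-A_2|$ symmetrically as $|P-Z|^{\alpha/4}|Z-Q|^{\alpha/4}$, whereas the paper puts the full $|Z-Q|^{\alpha/2}$ on one factor; either works. The paper's direct time--space estimate, on the other hand, is the approach one would be forced to use if the coefficients were allowed to depend on $t$ and Fourier transforming in time were unavailable.
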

\begin{proof}
 Since
  $$ D_{t}^{1/2}( \Ss_{A_{1}} - \Ss_{A_{2}} ) f(X, t)= \int_0^\infty \int_{\partial \Omega} D_{t}^{1/2}( \G_{A_1}(X,Q,t-s) - \G_{A_2}(X,Q,t-s) )\, f(Q,s)\, dQ\, ds,$$
using Lemma \ref{difference of fundsolutions to parabolic} we observe that the Schwartz kernel of the integral operator $D_{t}^{1/2}( \Ss_{A_{1}} - \Ss_{A_{2}} )$ is given by
\begin{equation*}
  K_1(X, Q, t-s)=\sum_{i,j=1}^{n} \int_{0}^{\infty} \int_{\mathbb{R}^n} \partial_{z_i} \Gamma_{A_1}( X, Z, u)\,\partial_{z_j}\, D_{t}^{1/2}\Gamma_{A_2}( Z, Q, t-s-u)\,(A_1 (Z) -A_2 (Z))\, dZ\, du.
\end{equation*}

Now we observe that since $A_1= A_2$ on $\partial\Omega$ for any point $Q\in \Omega$ and any $Z\in \overline{\Omega}$ we have

\begin{align*}
A_1 (Z)- A_2 (Z)
	& = A_1 (Z)-A_1 (Q)+A_1 (P)-A_2 (Q)+ A_2 (Q)- A_2 (Z)\\
	&=A_1 (Z)-A_1 (Q)+ A_2 (Q)- A_2 (Z).
\end{align*}

Hence, using the fact that $A_1$ and $A_2$ are H\"older-continuous matrices, we have that for any point $Q\in \Omega$ and any $Z\in \overline{\Omega}$, one has that
\begin{equation}\label{Holder cont of difference}
|A_1 (Z)- A_2 (Z)|\lesssim |Z-Q|^{\alpha},\qquad 0<\alpha<1.
\end{equation}

Taking this fact into account, Lemma \ref{Lem:GXYm} and Lemma \ref{Lem:GDt} part $(ii)$ yield that
\begin{align*}
  &|K_1(X, Q, t-s)| \\
  & \quad \lesssim  \Vert A_1 -A_2 \Vert^{1/2}_{\infty} \int_{0}^{t-s} \int_{\mathbb{R}^n}\frac{e^{-c|X-Z|^2/u}}{u^{(n+1)/2}}\,\frac{e^{-c|Z-Q|^2/(t-s-u)}}{(t-s-u)^{3/2}|Z-Q|^{n-1-\alpha/2}} \, dZ\, du  \nonumber\\
  & \quad \lesssim \Vert A_1 -A_2 \Vert^{1/2}_{\infty}\int_{0}^{t-s} \frac{1}{u^{(n+1)/2}}\frac{1}{(t-s-u)^{3/2}}  \Big(\int_{\mathbb{R}^n}\frac{e^{-c|X-Q-Z|^2/u}\,e^{-c|Z|^2/(t-s-u)}}{|Z|^{n-1-\alpha/2}} \, dZ \Big)\, du \nonumber\\
  & \quad \lesssim \Vert A_1 -A_2 \Vert^{1/2}_{\infty}\int_{0}^{t-s} \frac{1}{u^{(n+1)/2}}\frac{1}{(t-s-u)^{3/2}}  \Big(\int_{|Z|\geq (t-s-u)^{1/2} }\frac{e^{-c|X-Q-Z|^2/u}\,e^{-c|Z|^2/(t-s-u)}}{|Z|^{n-1-\alpha/2}} \, dZ \Big)\, du \nonumber\\
  & \quad \quad +\Vert A_1 -A_2 \Vert^{1/2}_{\infty}\int_{0}^{t-s} \frac{1}{u^{(n+1)/2}}\frac{1}{(t-s-u)^{3/2}}  \Big(\int_{|Z|\leq (t-s-u)^{1/2} }\frac{e^{-c|X-Q-Z|^2/u}\,e^{-c|Z|^2/(t-s-u)}}{|Z|^{n-1-\alpha/2}} \, dZ \Big)\, du \\
  & \quad := \textbf{I}+\textbf{J}\nonumber.
  \end{align*}
Now using Chapman-Kolmogorov formula \eqref{chapman-kolmogorov} we have
\begin{align*}
 \textbf{I}
 	& \lesssim \Vert A_1 -A_2 \Vert^{1/2}_{\infty} \Big(\int_{0}^{t-s} \frac{du}{u^{(n+1)/2 - n/2}(t-s-u)^{3/2 +(n-1-\alpha/2)/2 - n/2}}\Big)\frac{e^{-c|X-Q|^{2}/t-s}}{(t-s)^{n/2}}\\
	&= \Vert A_1 -A_2 \Vert^{1/2}_{\infty}\Big(\int_{0}^{t-s} \frac{du}{u^{1/2}(t-s-u)^{1-\alpha/4}}\Big)\frac{e^{-c|X-Q-Z|^{2}/t-s}}{(t-s)^{n/2}}\\
	&= \Vert A_1 -A_2 \Vert^{1/2}_{\infty} \Big(\int_{0}^{1} \frac{du}{u^{1/2}(1-u)^{1-\alpha/4}}\Big)\frac{e^{-c|X-Q-Z|^{2}/t-s}}{(t-s)^{(n+1)/2 -\alpha/4}} \\
	&= B(1/2,\alpha/4)\Vert A_1 -A_2\Vert^{1/2}_{\infty}  \frac{e^{-c|X-Q|^{2}/t-s}}{(t-s)^{(n+1)/2 -\alpha/4}},
\end{align*}
where $B(\cdot, \cdot)$ is Euler's Beta function.

To estimate $\textbf{J}$ we observe that
\begin{align*}
&\int_{|Z|\leq (t-s-u)^{1/2} }\frac{e^{-c|X-Q-Z|^2/u}\,e^{-c|Z|^2/(t-s-u)}}{|Z|^{{n-1-\alpha/2}}} \, dZ\\
& \qquad \simeq \sum_{j=0}^{\infty}\int_{ 2^{-j-1}(t-s-u)^{1/2} <|Z|\leq 2^{-j}(t-s-u)^{1/2}} \frac{e^{-c|X-Q-Z|^2/u}\,e^{-c|Z|^2/(t-s-u)}}{|Z|^{{n-1-\alpha/2}}} \, dZ \\
& \qquad \simeq \sum_{j=0}^{\infty} 2^{j(n-2)}\,e^{-c 2^{-2j}/2} (t-s-u)^{-({n-1-\alpha/2})/2}\\
& \qquad \qquad \times \int_{2^{-j-1}(t-s-u)^{1/2} <|Z|\leq 2^{-j}(t-s-u)^{1/2}} e^{-c|X-Q-Z|^2/2u} \,e^{-c|Z|^2/ 2(t-s-u)} \, dZ\\
& \qquad \lesssim (t-s-u)^{-({n-1-\alpha/2})/2} \int_{\mathbb{R}^n} e^{-c|X-Q-Z|^2/2u} \,e^{-c|Z|^2/ 2(t-s-u)} \, dZ.
\end{align*}
Therefore, using Chapman-Kolmogorov formula \eqref{chapman-kolmogorov} again we obtain
\begin{align*}
 \textbf{J}
 	& \lesssim \Vert A_1 -A_2 \Vert^{1/2}_{\infty} \Big(\int_{0}^{t-s} \frac{du}{u^{(n+1)/2 - n/2}(t-s-u)^{3/2 +({n-1-\alpha/2})/2 - n/2}}\Big)\frac{e^{-c|X-Q|^{2}/2(t-s)}}{(2t-2s)^{n/2}}\\
	& = \Vert A_1 -A_2 \Vert^{1/2}_{\infty}\Big(\int_{0}^{t-s} \frac{du}{u^{1/2}(t-s-u)^{1-\alpha/4}}\Big)\frac{e^{-c|X-Q-Z|^{2}/2(t-s)}}{(2t-2s)^{n/2}}\\
	&\lesssim  B(1/2,\alpha/4)\Vert A_1 -A_2 \Vert^{1/2}_{\infty}  \frac{e^{-c|X-Q|^{2}/2(t-s)}}{(t-s)^{(n+1)/2 -\alpha/4}}.
\end{align*}
Thus
$$|K_1(X, Q, t-s)|\lesssim   \frac{\Vert A_1 -A_2 \Vert^{1/2}_{\infty} }{(|X-Q|^2+t-s)^{(n+1)/2 -\alpha/4}}.$$

For the operator $\nabla( \Ss_{A_{1}} - \Ss_{A_{2}} )$, we observe that its Schwartz kernel is given by
\begin{equation*}
  K_2 (X, Q, t-s)=\sum_{i,j,k=1}^{n} \int_{0}^{\infty} \int_{\mathbb{R}^n} \partial_{x_k}\partial_{z_i} \Gamma_{A_1}( X, Z, u)\,\partial_{z_j}\, \Gamma_{A_2}( Z, Q, t-s-u)\,(A_1 (Z) -A_2 (Z))\, dZ\, du.
\end{equation*}
This, Lemma \ref{Lem:GXYt} part $(i)$, Lemma \ref{Lem:GXYm} and \eqref{Holder cont of difference} yield that
\begin{align*}
& | K_2 (X, Q, t-s)|
 	 \lesssim \int_{0}^{\infty} \int_{\mathbb{R}^n} |\partial_{x_k}\partial_{z_i} \Gamma_{A_1}( X, Z, u)\,\partial_{z_j}\, \Gamma_{A_2}( Z, Q, t-s-u)|\,|A_1 (Z) -A_2 (Z)|\, dZ\, du\\
	& \qquad \lesssim\Vert A_{1} - A_{2}\Vert^{1/2}_{\infty} \int_{0}^{\infty}\int_{\mathbb{R}^n} \frac{1}{(|Z-X|^2+u)^{(n+2)/2}}\frac{1}{(|Z-Q|^2+t-s-u)^{(n+1)/2 -\alpha/2}}\, dZ \, du\\
	& \qquad \lesssim  \frac{\Vert A_{1} - A_{2}\Vert^{1/2}_{\infty}}{(|X-Q|^2 + t-s)^{(n+1-\alpha)/2}}.
\end{align*}

Finally for $ \Ss_{A_{1}} - \Ss_{A_{2}} $ we observe that its Schwartz kernel is
\begin{equation*}
  K_3(X, Q, t-s)=\sum_{i,j,k=1}^{n} \int_{0}^{\infty} \int_{\mathbb{R}^n} \partial_{z_i} \Gamma_{A_1}( X, Z, u)\,\partial_{z_j}\, \Gamma_{A_2}( Z, Q, t-s-u)\,(A_1 (Z) -A_2 (Z))\, dZ\, du.
\end{equation*}
This, Lemma \ref{Lem:GXYm} and \eqref{Holder cont of difference} yield
\begin{align*}
& | K_3(X, Q, t-s)|
 	 \lesssim \int_{0}^{\infty} \int_{\mathbb{R}^n} |\partial_{z_i} \Gamma_{A_1}( X, Z, u)\,\partial_{z_j}\, \Gamma_{A_2}( Z, Q, t-s-u)|\,|A_1 (Z) -A_2 (Z)|\, dZ\, du\\
	& \qquad \lesssim\Vert A_{1} - A_{2}\Vert^{1/2}_{\infty} \int_{0}^{\infty}\int_{\mathbb{R}^n} \frac{1}{(|Z-X|^2+u)^{(n+1)/2}}\frac{1}{(|Z-Q|^2+t-s-u)^{(n+1)/2 -\alpha/2}}\, dZ \, du\\
	& \qquad \lesssim  \frac{\Vert A_{1} - A_{2}\Vert^{1/2}_{\infty}}{(|X-Q|^2 + t-s)^{(n-\alpha)/2}}
	=\frac{\Vert A_{1} - A_{2}\Vert^{1/2}_{\infty}}{(|X-Q|^2 + t-s)^{(n+1)/2-(1+\alpha)/2}}.
\end{align*}

In conclusion, combining the above estimates we get
\begin{align*}
	|K_1(X, Q, t-s)| + |K_2(X, Q, t-s)| + |K_3(X, Q, t-s)|
		\lesssim \frac{\Vert A_{1} - A_{2}\Vert^{1/2}_{\infty}\chi_{(0,t)}(s)}{(|X-Q|^2 + t-s)^{(n+1)/2-\alpha/4}}.
\end{align*}

Now observe that

$$\frac{\chi_{(0,t)}(s)}{(|X-Q|^2 + t-s)^{(n+1)/2-\alpha/4}}
\lesssim \frac{\chi_{(0,t)}(s)}{(t-s)^{1-\alpha/8}} \frac{1}{|P-Q|^{(n-1)/2-\alpha/8}}.$$

Therefore, since the estimate in the proof of Lemma \ref{Lem:tauq} shows that
\begin{align*}
	&\sup_{P\in \partial \Omega} \Big(\int_{\partial \Omega} \frac{dQ}{|P-Q|^{(n-1)/2-\alpha/8}} \Big) \lesssim 1\\
	&\sup_{Q\in \partial \Omega} \Big(\int_{\partial \Omega} \frac{dP}{|P-Q|^{(n-1)/2-\alpha/8}} \Big) \lesssim 1,
\end{align*}
and since
\begin{align*}
	&\sup_{0\leq t\leq T}\Big(\int_0^T \frac{\chi_{(0,t)}(s)}{(t-s)^{1-\alpha/8}}\, ds \Big)=\sup_{0\leq t\leq T}\Big(\int_0^t \frac{ds}{(t-s)^{1-\alpha/8}}\Big)\lesssim \sup_{0\leq t\leq T} t^{\alpha/8}\lesssim 1 \\
	&\sup_{0\leq s\leq T}\Big(\int_0^T \frac{\chi_{(0,t)}(s)}{(t-s)^{1-\alpha/8}}\, dt\Big)=\sup_{0\leq s\leq T}\Big(\int_s^T \frac{dt}{(t-s)^{1-\alpha/8}}\Big)\lesssim \sup_{0\leq s\leq T} (T-s)^{\alpha/8}\lesssim 1,
\end{align*}

Schur's Lemma implies \eqref{estim:boundedness SLP of the difference}.
\end{proof}

\section{Layer potential operators associated to the Fourier-transformed equation}\label{sec:Fourier transformed Layer potential}

As mentioned earlier, the independence of the diffusion coefficient matrix of the time variable is an advantage here which enable us to perform a Fourier transform in the time variable in the parabolic equation and bring it to a parameter-dependent elliptic equation. However, one then needs to establish all the estimates we derived in the previous sections for the corresponding layer potentials of the Fourier-transformed equation.

To this end, we define the Fourier-transformed \textit{single} and \textit{double-layer potential} operators by
$$\mathbb{S}^\tau(g)(X)
    := \int_{\partial \Omega} \widehat{\G}(X,Q,\tau) g(Q)\, dQ, \quad X \in \Omega,$$
and
\begin{align*}
  \mathbb{D}^\tau(g)(X)
       :=  & \int_{\partial \Omega}  \partial_\nu \widehat{\G}(X,Q,\tau) g(Q)\, dQ
        =   \sum_{i,j=1}^n \int_{\partial \Omega} a_{ij}(Q) n_j(Q) \partial_{y_i} \widehat{\G}(X,Y,\tau)_{|_{Y=Q}}  g(Q)\, dQ  \\
        =: & \sum_{i,j=1}^n (\mathbb{D}^\tau)^{i,j}(g)(X), \quad X \in \Omega.
\end{align*}
We also consider the \textit{boundary singular integral}
$$\mathbb{K}^\tau(g)(P)
      :=  \lim_{\eps \to 0} \mathbb{K}^\tau_\eps(g)(P), \quad P \in \partial \Omega,$$
where
\begin{align*}
  \mathbb{K}^\tau_\eps(g)(P)
      := & \int_{\partial \Omega\cap \{|P-Q|>\eps\}} \partial_\nu \widehat{\G}(P,Q,\tau)  g(Q) \,dQ  \\
      = & \sum_{i,j=1}^n  \int_{\partial \Omega\cap \{|P-Q|>\eps\}} a_{ij}(Q) n_j(Q) \partial_{y_i} \widehat{\G}(P,Y,\tau)_{|_{Y=Q}} g(Q)\, dQ  \\
      =: & \sum_{i,j=1}^n (\mathbb{K}^\tau_{\eps})^{i,j}(g)(P), \quad \eps>0.
\end{align*}

\begin{Rem}\label{Rem:Kadjoint}
    Note that we would also need to consider the adjoint operator, which is given by
\begin{equation*}%
(\mathbb{K}^\tau)^*(g)(P)
	  := \lim_{\eps \to 0} \sum_{i,j=1}^n  \int_{\partial \Omega\cap \{|P-Q|>\eps\}}
		a_{ij}(P) n_j(P) \partial_{x_i} \widehat{\G}(X,Q,\tau)_{|_{X=P}} g(Q)\, dQ.
\end{equation*}
    Once again, $(\mathbb{K}^\tau)^*$ and $(\mathbb{K}^\tau)$ will satisfy the same $L^2$ estimates.
\end{Rem}

\begin{Rem}
In subsections $\ref{subsec:The boundary singular integral}$ and $\ref{subsec:The nontangential maximal function}$ the constants hidden in the right hand sides of the estimates are all independent of parameter $\tau.$
\end{Rem}
\subsection{$L^2$ boundedness of the truncated Fourier-transformed BSI}\label{subsec:The boundary singular integral}
The following theorem is one of the main tools in proving the invertibility of Fourier-transformed boundary singular integral.
\begin{Prop}\label{Prop:BSItau}
  Assume that \hyperref[A1]{\small{$(A1)$}} -- \hyperref[A4]{\small{$(A4)$}} hold and
  let $\eps>0$. Then one has
  $$\|\mathbb{K}^\tau_{\eps}(g)\|_{L^2(\partial \Omega)}
      \lesssim \|g\|_{L^2(\partial \Omega)}, \quad g \in L^2(\partial \Omega).$$
\end{Prop}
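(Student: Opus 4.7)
The plan is to mimic the structure of the proof of Theorem \ref{Thm:BSI}, but without the time-integration, since the Fourier transform in time has already been taken and the resulting operator is purely spatial. The key decomposition is to write the kernel as the elliptic kernel plus a smoother remainder, namely
\[
\partial_\nu \widehat{\G}(P,Q,\tau) = \partial_\nu \widetilde{\G}(P,Q) + \partial_\nu[\widehat{\G}(P,Q,\tau) - \widetilde{\G}(P,Q)],
\]
and to split the truncation region according to whether $|P-Q|$ is smaller or larger than $1/\sqrt{|\tau|}$, which is the natural scale coming from Lemma \ref{Lem:GXYFourier}. Thus I would write
\[
(\mathbb{K}^\tau_{\eps})^{i,j}(g)(P) = J_1(g)(P,\tau) + J_2(g)(P,\tau) + J_3(g)(P,\tau),
\]
where $J_1$ is the truncated elliptic part (restricted to $\eps < |P-Q| \leq 1/\sqrt{|\tau|}$, using the kernel $a_{ij}(Q)n_j(Q)\partial_{y_i}\widetilde{\G}(P,Y)_{|Y=Q}$), $J_2$ is the remainder on the same annulus coming from the kernel difference, and $J_3$ is the piece for $|P-Q| > \max(\eps, 1/\sqrt{|\tau|})$.

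For $J_1$, the classical result \cite[Theorem 3.1]{KeSh2} gives the $L^2(\partial\Omega)$ boundedness of the truncated elliptic BSI, uniformly in the truncation parameter. For $J_2$, using
\[
\widehat{\G}(P,Q,\tau) - \widetilde{\G}(P,Q) = \int_0^\infty (e^{-i\tau t} - 1)\,\G(P,Q,t)\,dt,
\]
interchanging derivative and integral, and applying the elementary estimate $|e^{-i\tau t} - 1| \lesssim |\tau t|^{\beta}$ (valid for any $0<\beta\leq 1$) together with Lemma \ref{Lem:GXYm}, precisely as in \eqref{eq:expbeta}, I obtain the pointwise bound
\[
|\partial_\nu[\widehat{\G}(P,Q,\tau) - \widetilde{\G}(P,Q)]| \lesssim \frac{|\tau|^{\beta}}{|P-Q|^{n-1-2\beta}}.
\]
Choosing $0<\beta<1/2$ and integrating against $|g|$ over the annulus $\{|P-Q|\leq 1/\sqrt{|\tau|}\}$ yields, via Lemma \ref{Lem:Duo}, the pointwise bound $|J_2(g)(P,\tau)| \lesssim \MM_{\partial\Omega}(g)(P)$, with a constant independent of $\tau$ and $\eps$.

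For $J_3$, I would apply Lemma \ref{Lem:GXYFourier}$(iii)$ with some integer $q=N\geq 1$ so that the kernel is controlled by $|\tau|^{-N}/|P-Q|^{n-1+2N}$, and integrate over $\{|P-Q| > 1/\sqrt{|\tau|}\}$; another application of Lemma \ref{Lem:Duo} gives $|J_3(g)(P,\tau)|\lesssim \MM_{\partial\Omega}(g)(P)$, again uniformly in $\tau$ and $\eps$. Summing $J_1$, $J_2$, $J_3$ and using the $L^2$-boundedness of $\MM_{\partial\Omega}$ completes the proof. The main obstacle is purely bookkeeping: one has to check that every constant emerging from the pointwise kernel estimates and from Lemma \ref{Lem:Duo} is independent of $\tau$ (so that the bound is uniform in $\tau$) and of $\eps$; once the splitting into $J_1, J_2, J_3$ is set up against the natural scale $1/\sqrt{|\tau|}$ this uniformity is automatic, exactly as in the parabolic case handled in Theorem \ref{Thm:BSI}.
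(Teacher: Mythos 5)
Your proposal is correct and takes essentially the same approach as the paper: you split the kernel at the scale $|P-Q|\sim 1/\sqrt{|\tau|}$, identify the elliptic piece $\partial_\nu\widetilde{\G}$ on the inner annulus (handled by \cite[Theorem 3.1]{KeSh2}), and control the remainder $\partial_\nu(\widehat{\G}-\widetilde{\G})$ on the inner annulus via the $|e^{-i\tau\zeta}-1|\lesssim|\tau\zeta|^\beta$ bound and the far piece via Lemma \ref{Lem:GXYFourier}$(iii)$, each through $\MM_{\partial\Omega}$ and Lemma \ref{Lem:Duo} — exactly the paper's decomposition into $I_1$, $I_3$, $I_2$ (reusing the estimates for $I_4$, $I_5$ from the proof of Theorem \ref{Thm:BSI}), just relabelled.
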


\begin{proof}
    Let $g \in L^2(\partial \Omega)$ and $i,j=1, \dots, n$. We write, for each $P \in \partial \Omega$,
    \begin{align*}
	(\mathbb{K}^\tau_{\eps})^{i,j}(g)(P)
	  & =  \int_{\partial \Omega\cap \{\eps < |P-Q| \leq \frac{1}{\sqrt{|\tau|}}\}} a_{ij}(Q) n_j(Q) g(Q) \partial_{y_i}\widetilde{\G}(P,Y)_{|_{Y=Q}}\,  dQ \\
	  & \qquad + \int_{\partial \Omega\cap \{|P-Q|>\max\{\eps,\frac{1}{\sqrt{|\tau|}}\}\}} a_{ij}(Q) n_j(Q) g(Q) \Big[\int_0^\infty \partial_{y_i} \G(P,Y,t)_{|_{Y=Q}} e^{-i  \tau t}\, dt \Big] \, dQ \\
	  & \qquad + \int_{\partial \Omega\cap \{\eps < |P-Q| \leq \frac{1}{\sqrt{|\tau|}}\}} a_{ij}(Q) n_j(Q) g(Q) \Big[\int_0^\infty \partial_{y_i} \G(P,Y,t)_{|_{Y=Q}} (e^{-i  \tau t} -1)\, dt \Big] \, dQ \\
	  & =: I_1(g)(P) + I_2(g)(P) + I_3(g)(P).
    \end{align*}
    It is clear that
    $$|I_1(g)(P)|
	\leq 2 \widetilde{\KK}^{i,j}(g)(P).$$
    Moreover, following the same method as in the proof of Theorem \ref{Thm:BSI} for $I_4$ and $I_5 ,$ we obtain the estimate
    $$|I_2(g)(P)| + |I_3(g)(P)|
	  \lesssim \MM_{\partial \Omega}(g)(P).$$

\end{proof}

\begin{Cor}\label{Cor:bddKtau}
Assume that \hyperref[A1]{\small{$(A1)$}} -- \hyperref[A4]{\small{$(A4)$}} hold.
The operator given by
 $$\tilde{\mathbb{K}}^\tau(g)(P)
       :=  \sup_{\eps > 0} |\mathbb{K}^\tau_{\eps}(g)(P)|, \quad P \in \partial \Omega,$$
  is bounded in $L^2(\partial \Omega)$. Hence, for every $g \in L^2(\partial \Omega)$ the limit
  $$\mathbb{K}^\tau(g)(P)
       :=  \lim_{\eps \to 0} \mathbb{K}^\tau_{\eps}(g)(P), \quad P \in \partial \Omega,$$
  exists almost everywhere, and it also defines a bounded operator in $L^2(\partial \Omega)$.
\end{Cor}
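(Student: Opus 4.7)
The plan is to observe that the proof of Proposition \ref{Prop:BSItau} already furnishes a pointwise majorant for $|\mathbb{K}^\tau_\eps(g)(P)|$ that is independent of $\eps$, and from this both claims follow by standard arguments. Indeed, revisiting the decomposition $(\mathbb{K}^\tau_\eps)^{i,j}(g) = I_1(g) + I_2(g) + I_3(g)$ used in that proof, one has $|I_1(g)(P)| \leq 2\widetilde{\KK}^{i,j}(g)(P)$ because $\widetilde{\KK}^{i,j}$ is \emph{by definition} the supremum over all truncation radii $\delta > 0$ of the truncated elliptic boundary integral; and the bounds $|I_2(g)(P)| + |I_3(g)(P)| \lesssim \MM_{\partial\Omega}(g)(P)$ obtained there do not depend on $\eps$ either. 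Summing over $i,j$ and taking the supremum over $\eps > 0$ therefore yields
$$\tilde{\mathbb{K}}^\tau(g)(P) \lesssim \sum_{i,j=1}^n \widetilde{\KK}^{i,j}(g)(P) + \MM_{\partial\Omega}(g)(P).$$
The $L^2(\partial\Omega)$-boundedness of $\widetilde{\KK}^{i,j}$ is given by \cite[Theorem 3.1]{KeSh2}, and that of $\MM_{\partial\Omega}$ is classical; hence $\tilde{\mathbb{K}}^\tau$ is bounded in $L^2(\partial\Omega)$.

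For the almost-everywhere convergence, I would proceed by density. On a dense subclass of $L^2(\partial\Omega)$, say the H\"older-continuous functions on $\partial\Omega$, convergence of $\mathbb{K}^\tau_\eps(g)(P)$ as $\eps \to 0$ is verified piece by piece in the decomposition: the elliptic truncated BSI on the region $\{\eps < |P-Q| \leq 1/\sqrt{|\tau|}\}$ converges a.e.\ by the elliptic theory of \cite{KeSh2}, while the correction integrals $I_2, I_3$ converge pointwise by Lebesgue's dominated convergence theorem, since absolute integrability of the relevant kernels is ensured by the decay estimates from Lemmas \ref{Lem:GXYm}--\ref{Lem:GXYFourier} together with the boundedness of $g$. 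Equipped with the maximal inequality from the previous paragraph, the standard extension argument (decompose $g = g_1 + g_2$ with $g_1$ in the dense subclass and $\|g_2\|_{L^2(\partial\Omega)}$ arbitrarily small, and use the maximal bound to control the oscillation coming from $g_2$) yields a.e.\ convergence for every $g \in L^2(\partial\Omega)$. Finally, Fatou's lemma combined with the maximal bound shows that the limit operator $\mathbb{K}^\tau$ is itself bounded on $L^2(\partial\Omega)$.

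The only real obstacle is verifying the $\eps$-uniformity of the dominating bounds produced in Proposition \ref{Prop:BSItau}; this is essentially automatic, since that proof already dominates $|I_1|$ by the \emph{maximal} elliptic BSI $\widetilde{\KK}^{i,j}$ and $|I_2|,|I_3|$ by $\MM_{\partial\Omega}(g)$, neither of which involves $\eps$. This is exactly parallel to the way Corollary \ref{Cor:bddK} was extracted from Theorem \ref{Thm:BSI} in the parabolic setting, but simpler because there is no time variable to average over.
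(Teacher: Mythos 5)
Your proof is correct, and it takes a genuinely simpler route than what the paper's one-line reference (``goes along the same lines as Corollary~\ref{Cor:bddK}'') points at. The paper's Corollary~\ref{Cor:bddK} invokes a Cotlar-type averaging argument: one compares $\KK_\eps(f)(P,t)$ to averages of $\KK(f)$ and of $f$ over a small parabolic box centred at $(P,t)$, because the $\eps$-truncation there is in the \emph{time} variable, which does not align with the spatial scales appearing in the kernel estimates, so no pointwise majorant uniform in $\eps$ drops directly out of the proof of Theorem~\ref{Thm:BSI}. In contrast, you observe that for $\mathbb{K}^\tau_\eps$ the truncation is spatial (over $\{|P-Q|>\eps\}$), and that the decomposition $I_1+I_2+I_3$ in Proposition~\ref{Prop:BSItau} already yields
$$\sup_{\eps>0}|\mathbb{K}^\tau_\eps(g)(P)|\lesssim \sum_{i,j}\widetilde{\KK}^{i,j}(g)(P)+\MM_{\partial\Omega}(g)(P)$$
with no $\eps$-dependence in the dominant: $I_1$ is a difference of two truncations of the elliptic BSI, hence $\le 2\widetilde{\KK}^{i,j}(g)$, while the bounds on $I_2,I_3$ go through Lemma~\ref{Lem:Duo} with the full (un-truncated) kernel and so are automatically uniform in $\eps$. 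This gives the maximal $L^2$ bound in one step. Your density argument for a.e.\ convergence is also sound; one small remark is that for $I_2$ nothing needs to be proved, since once $\eps<1/\sqrt{|\tau|}$ the term $I_2$ is literally constant in $\eps$, and for $I_3$ the bound $\int_{\partial\Omega}\frac{|\tau|^\beta|g(Q)|}{|P-Q|^{n-1-2\beta}}\,dQ\lesssim\MM_{\partial\Omega}(g)(P)<\infty$ for a.e.\ $P$ gives dominated convergence directly, so the dense subclass is really only needed for the elliptic piece $I_1$. The upshot is that your approach trades the paper's Cotlar machinery for a simpler inspection of the uniformity in $\eps$, which is available here precisely because of the spatial truncation; this is a clean simplification that is worth recording.
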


\begin{proof}
  The proof is standard and goes along the same lines as that of Corollary \ref{Cor:bddK}.
\end{proof}

\subsection{$L^2$ boundedness of the maximal Fourier-transformed DLP}\label{subsec:The nontangential maximal function}
For the Fourier-transformed double-layer potential one also needs to demonstrate the $L^2$ boundedness of non-tangential maximal function, in order to show the corresponding almost everywhere convergence and jump relations. To this end we have
\begin{Prop}\label{Prop:NMFtau}
  Assume that \hyperref[A1]{\small{$(A1)$}} -- \hyperref[A4]{\small{$(A4)$}} hold.
  Then,
 $$\Big \| (\mathbb{D}^\tau(g))_*^{\pm}\Big\|_{L^2(\partial \Omega)}
      \lesssim \|g\|_{L^2(\partial \Omega)}, \quad g \in L^2(\partial \Omega).$$
\end{Prop}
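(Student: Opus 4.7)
The proof plan follows the blueprint of Theorem \ref{Thm:NMF}, but in the static (Fourier-transformed) setting the time integration disappears, so only two correction terms need to be handled instead of four. Fix $i,j \in \{1,\dots,n\}$, a point $P \in \partial\Omega$ and $X \in \gamma_\pm(P)$, and set $\epsilon := |X-P|$. The goal is to pass from the interior evaluation to the boundary value of the truncated singular integral plus harmless errors controlled by $\mathcal{M}_{\partial\Omega}(g)$. Accordingly, I would split
\[
(\mathbb{D}^\tau)^{i,j}(g)(X) = (\mathbb{K}^\tau_\epsilon)^{i,j}(g)(P) + J_1(g)(X,P) + J_2(g)(X,P),
\]
where
\[
J_1(g)(X,P) := \int_{\partial\Omega \cap \{|P-Q|\leq 2\epsilon\}} a_{ij}(Q)\, n_j(Q)\,\bigl[\partial_{y_i}\widehat{\G}(X,Y)_{|Y=Q} - \partial_{y_i}\widehat{\G}(P,Y)_{|Y=Q}\bigr]\, g(Q)\, dQ,
\]
and $J_2(g)(X,P)$ is the analogous integral over $\partial\Omega \cap \{|P-Q|>2\epsilon\}$.

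For the near-diagonal piece $J_1$, I would invoke Lemma~\ref{Lem:GXYFourier}$(i)$ with $|m|=1$ to get $|\partial_{y_i}\widehat{\G}(X,Y)| + |\partial_{y_i}\widehat{\G}(P,Y)| \lesssim |X-Y|^{-(n-1)} + |P-Y|^{-(n-1)}$, then combine \eqref{eq:gXQPQ} with the standard bound $\int_{|P-Q|\leq 2\epsilon} |P-Q|^{-(n-1)}\, dQ \lesssim \epsilon$ (and its analogue centred at $X$, via \eqref{eq:gXQXP}) to conclude $|J_1| \lesssim \mathcal{M}_{\partial\Omega}(g)(P)$. For the far piece $J_2$, the mean value theorem gives
\[
\bigl|\partial_{y_i}\widehat{\G}(X,Y)_{|Y=Q} - \partial_{y_i}\widehat{\G}(P,Y)_{|Y=Q}\bigr| \lesssim |X-P|\sup_{Z\in[X,P]} |\partial_Z\partial_Y \widehat{\G}(Z,Q)|,
\]
and Lemma~\ref{Lem:GXYFourier}$(ii)$ upgrades this to $\epsilon\,|Z-Q|^{-n}$. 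Since $|P-Q|>2\epsilon = 2|X-P|$, the triangle inequality gives $|Z-Q|\gtrsim |P-Q|$ uniformly for $Z \in [X,P]$, so
\[
|J_2| \lesssim \epsilon \int_{|P-Q|>2\epsilon} \frac{|g(Q)|}{|P-Q|^n}\, dQ \lesssim \mathcal{M}_{\partial\Omega}(g)(P),
\]
by decomposing the region into dyadic annuli around $P$ and summing the geometric series.

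Taking the supremum over $X \in \gamma_\pm(P)$ and summing over $i,j$ yields the pointwise majorisation
\[
(\mathbb{D}^\tau(g))_*^{\pm}(P) \lesssim \tilde{\mathbb{K}}^\tau(g)(P) + \mathcal{M}_{\partial\Omega}(g)(P), \quad P \in \partial\Omega,
\]
and the conclusion follows from Corollary~\ref{Cor:bddKtau} together with the $L^2(\partial\Omega)$ boundedness of the Hardy--Littlewood maximal function $\mathcal{M}_{\partial\Omega}$. The main subtlety, which is also where the Fourier-transformed setting behaves essentially like the elliptic one, is ensuring that all constants remain independent of the parameter $\tau$; this is guaranteed because Lemma~\ref{Lem:GXYFourier}$(i)$ and $(ii)$ deliver $\tau$-uniform pointwise bounds on $\widehat{\G}$ and its derivatives, and Corollary~\ref{Cor:bddKtau} provides a $\tau$-uniform bound on the maximal truncated singular integral. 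No genuine obstacle is anticipated; the argument is a direct parabolic-to-elliptic transcription of Theorem~\ref{Thm:NMF} using the kernel estimates already established in Section~\ref{sec:FS}.
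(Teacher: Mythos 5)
Your far-field analysis is correct and agrees with the paper, but your decomposition is wrong in a way that breaks the near-field estimate. You put the \emph{difference} kernel $\partial_{y_i}\widehat{\G}(X,\cdot)-\partial_{y_i}\widehat{\G}(P,\cdot)$ into both $J_1$ and $J_2$, so $J_1+J_2$ is not $(\mathbb{D}^\tau)^{i,j}(g)(X)-(\mathbb{K}^\tau_\epsilon)^{i,j}(g)(P)$; the identity you claim does not hold. Worse, the near piece $J_1$ then contains $\int_{\{|P-Q|\le 2\epsilon\}}|\partial_{y_i}\widehat{\G}(P,Q)||g(Q)|\,dQ$, and your proposed estimate $\int_{\{|P-Q|\le 2\epsilon\}}|P-Q|^{-(n-1)}\,dQ\lesssim\epsilon$ is false: $\partial\Omega$ is an $(n-1)$-dimensional Lipschitz surface, so $\int_{\{|P-Q|\le 2\epsilon\}}|P-Q|^{-(n-1)}\,dQ\sim\int_0^{2\epsilon}r^{-1}\,dr$ diverges. (Your bound would be correct for the weaker singularity $|P-Q|^{-(n-2)}$ of $\widehat{\G}$ itself, not of its gradient.) The term $\partial_{y_i}\widehat{\G}(P,Q)$ near the diagonal simply cannot be controlled pointwise; it must be absorbed into the truncated operator, not the error terms.

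The paper's decomposition does exactly this: with $\eps=|X-P|$ it writes
$(\mathbb{D}^\tau)^{i,j}(g)(X)=(\mathbb{K}^\tau_{2\eps})^{i,j}(g)(P)+J_1+J_2$, where $J_1$ is the far piece $\{|P-Q|>2\eps\}$ with the difference kernel (your $J_2$, handled correctly via MVT, Lemma~\ref{Lem:GXYFourier}$(ii)$ and \eqref{eq:ZQPQ}), and $J_2$ is the near piece $\{|P-Q|\le 2\eps\}$ carrying \emph{only} $\partial_{y_i}\widehat{\G}(X,\cdot)$, which is harmless because by \eqref{eq:gXQXP} $|X-Q|\gtrsim\eps$, giving $|\partial_{y_i}\widehat{\G}(X,Q)|\lesssim\eps^{-(n-1)}$ uniformly on that set and hence $|J_2|\lesssim\mathcal{M}_{\partial\Omega}(g)(P)$. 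Note also the truncation radius must be $2\eps$, matching the near/far threshold, so the identity is exact. Everything else you say — the $\tau$-uniformity of the kernel bounds, the appeal to Corollary~\ref{Cor:bddKtau}, the final maximal-function majorization — is on target once the decomposition is corrected.
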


\begin{proof}
  Fix $i,j=1,\dots,n$. Let $P \in \Omega$, $X \in \g_\pm(P)$ and take $\eps:=|X-P|$. We consider the following decomposition
  $$(\mathbb{D}^\tau)^{i,j}(g)(X)
	= (\mathbb{K}^\tau_{2\eps})^{i,j}(g)(P) + J_1(g)(X,P) + J_2(g)(X,P),$$
  where
  \begin{itemize}
      \item[] $\displaystyle J_1(g)(X,P)
				  := \int_{\partial \Omega \cap\{|P-Q| > 2 \eps\}} a_{ij}(Q) n_j(Q) g(Q)
					  \Big[ \partial_{y_i} \widehat{\G}(X,Y,\tau)_{|_{Y=Q}}
					      - \partial_{y_i} \widehat{\G}(P,Y,\tau)_{|_{Y=Q}} \Big] \, dQ,$ \\ \quad \\

      \item[] $\displaystyle J_2(g)(X,P)
				  := \int_{\partial \Omega \cap\{|P-Q| \leq 2 \eps\}} a_{ij}(Q) n_j(Q) g(Q)
					  \partial_{y_i} \widehat{\G}(X,Y,\tau)_{|_{Y=Q}}\, dQ.$
  \end{itemize}

  For $J_1$ we first apply the mean value theorem, secondly Lemma \ref{Lem:GXYFourier} $(ii)$;
  next the relation \eqref{eq:ZQPQ} and finally Lemma \ref{Lem:Duo} to get
  \begin{align*}
    |J_1(g)(X,P)|
	& \lesssim \int_{\partial \Omega \cap\{|P-Q| > 2 \eps\}} |g(Q)| |X-P| \sup_{Z \in [X,P]} \big|\nabla_X \partial_{y_i} \widehat{\G}(X,Y,\tau)_{|_{X=Z, Y=Q}}\big| \, dQ \\
	& \lesssim \eps \int_{\partial \Omega \cap\{|P-Q| > 2 \eps\}} |g(Q)|  \sup_{Z \in [X,P]} \frac{1}{|Z-Q|^n}\, dQ
	 \lesssim \eps \int_{\partial \Omega \cap\{|P-Q| > 2 \eps\}}  \frac{|g(Q)|}{|P-Q|^n}\, dQ \\
	& \lesssim \MM_{\partial \Omega}(g)(P).
  \end{align*}

  To estimate $J_2$ we just need Lemma \ref{Lem:GXYFourier} $(i)$, and estimate \eqref{eq:gXQXP},
  \begin{align*}
    |J_2(g)(X,P)|
	& \lesssim \int_{\partial \Omega \cap\{|P-Q| \leq 2 \eps\}} |g(Q)| \big|\partial_{y_i} \widehat{\G}(X,Y,\tau)_{|_{Y=Q}}\big| \, dQ \\
	& \lesssim \eps \int_{\partial \Omega \cap\{|P-Q| \leq 2 \eps\}}  \frac{|g(Q)|}{|X-Q|^n}\, dQ
	  \lesssim \MM_{\partial \Omega}(g)(P).
  \end{align*}

  Thus,
  $$\sup_{X \in \g_\pm(P)} |(\mathbb{D}^\tau)^{i,j}(g)(X)|
      \lesssim (\mathbb{K}^{\pm, \tau}_*)^{i,j}(g)(P) + \MM_{\partial \Omega}(g)(P), \quad P \in \partial \Omega,$$
  and the proof of this proposition follows from Corollary \ref{Cor:bddKtau}.
\end{proof}

\begin{Rem}
  Since Lemma $\ref{Lem:GXYFourier}$ part $(i)$ and $(ii)$ yield the same estimate for the second derivative of $\partial_X ^2\widehat{\Gamma}(X, Y, \tau)$ and $\partial_{XY} ^2\widehat{\Gamma}(X, Y, \tau)$, using a similar argument as in the proof of $\mathrm{Proposition}$ $\ref{Prop:NMFtau}$, one readily sees that
  \begin{equation}\label{estim:grad of SLP}
    \Big \| (\nabla \mathbb{S}^{\tau}(g))^\pm_* \Big\|_{L^2(\partial \Omega)}
      \lesssim \|g\|_{L^2(\partial \Omega)}, \quad g \in L^2(\partial \Omega).
  \end{equation}
This will be important in the proof of the Rellich estimates in $\mathrm{Section}$ $\ref{sec:Parabolic Rellich}.$
\end{Rem}

\begin{Th}\label{boundedness Ftrans BSI of the difference}
  Assume that $A_1$ and $A_2$ satisfy the conditions \hyperref[A1]{\small{$(A1)$}} -- \hyperref[A4]{\small{$(A4)$}} and that $A_1 =A_2$ on $\partial \Omega$. Then one has $($uniformly in $\tau$$)$
 \begin{equation}\label{for ftransBSI itself}
\Big\| ((\mathbb{K}_{A_1}^\tau)-(\mathbb{K}_{A_2}^\tau))g\Big\|_{L^2(\partial \Omega)}\lesssim \Vert A_1 -A_2\Vert^{1/2}_{\infty} \| g\|_{L^2(\partial \Omega)},
\end{equation}
and
\begin{equation}\label{for ftransBSI adjoint}
\Big\| ((\mathbb{K}_{A_1}^\tau)^*-(\mathbb{K}_{A_2}^\tau)^*)g\Big\|_{L^2(\partial \Omega)}\lesssim \Vert A_1 -A_2\Vert^{1/2}_{\infty} \| g\|_{L^2(\partial \Omega)},
\end{equation}
for $g \in L^2(\partial \Omega).$
\end{Th}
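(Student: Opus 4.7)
The plan is to represent $\mathbb{K}^\tau_{A_1}-\mathbb{K}^\tau_{A_2}$ as a genuine (non-principal-value) integral operator with a weakly singular kernel, and then close the argument via Schur's Lemma. Since $A_1=A_2$ on $\partial\Omega$, the boundary coefficients $a_{ij}(Q)n_j(Q)$ of the two operators coincide, so I would first write
\begin{equation*}
(\mathbb{K}^\tau_{A_1}-\mathbb{K}^\tau_{A_2})g(P)=\sum_{i,j}\int_{\partial\Omega}a_{ij}(Q)n_j(Q)g(Q)\,\partial_{y_i}\bigl[\widehat{\G}_{A_1}-\widehat{\G}_{A_2}\bigr](P,Y,\tau)\big|_{Y=Q}\,dQ,
\end{equation*}
and then apply Lemma \ref{difference of fundsolutions to fourierparabolic} together with differentiation in $Y$ to expand
\begin{equation*}
\partial_{y_i}\bigl[\widehat{\G}_{A_2}-\widehat{\G}_{A_1}\bigr](P,Y,\tau)=\sum_{k,l}\int_{\R^n}\partial_{z_k}\widehat{\G}_{A_2}(P,Z,\tau)\,\partial_{z_l}\partial_{y_i}\widehat{\G}_{A_1}(Z,Y,\tau)\,(A_2-A_1)(Z)\,dZ.
\end{equation*}
From Lemma \ref{Lem:GXYFourier} parts $(i)$ and $(ii)$, both uniform in $\tau$, I would read off the pointwise bounds $|\partial_{Z}\widehat{\G}_{A_2}(P,Z,\tau)|\lesssim|P-Z|^{-(n-1)}$ and $|\partial_Z\partial_Y\widehat{\G}_{A_1}(Z,Q,\tau)|\lesssim|Z-Q|^{-n}$.

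The decisive step is the pointwise control of $A_1-A_2$. Since $A_1=A_2$ on $\partial\Omega$ and both matrices are H\"older continuous with exponent $\alpha$, for every $W\in\partial\Omega$ the elementary bound $|A_1(Z)-A_2(Z)|\le|A_1(Z)-A_1(W)|+|A_2(W)-A_2(Z)|\lesssim|Z-W|^\alpha$ holds. Taking $W=P$ and $W=Q$ and forming the geometric mean gives $|A_1(Z)-A_2(Z)|\lesssim|Z-P|^{\alpha/2}|Z-Q|^{\alpha/2}$; splitting $|A_1-A_2|=|A_1-A_2|^{1/2}\cdot|A_1-A_2|^{1/2}$ and estimating one factor by $\|A_1-A_2\|_\infty^{1/2}$ and the other by the above H\"older bound produces
\begin{equation*}
|A_1(Z)-A_2(Z)|\lesssim\|A_1-A_2\|_\infty^{1/2}\,|Z-P|^{\alpha/4}|Z-Q|^{\alpha/4},\qquad Z\in\R^n,\ P,Q\in\partial\Omega.
\end{equation*}
This is the symmetric weight that will tame the otherwise non-integrable $|Z-Q|^{-n}$ factor; it is also the step I expect to be the main obstacle, since the naive bound $|A_1-A_2|\le\|A_1-A_2\|_\infty$ would fail to make the $Z$-integral converge.

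With this in hand, the kernel $\tilde K(P,Q)$ of $\mathbb{K}^\tau_{A_1}-\mathbb{K}^\tau_{A_2}$ satisfies
\begin{equation*}
|\tilde K(P,Q)|\lesssim\|A_1-A_2\|_\infty^{1/2}\int_{\R^n}\frac{dZ}{|P-Z|^{n-1-\alpha/4}|Z-Q|^{n-\alpha/4}},
\end{equation*}
and the substitution $Z=Q+|P-Q|W$ shows that the $Z$-integral equals a constant multiple of $|P-Q|^{-(n-1-\alpha/2)}$. Since $\alpha>0$, the same boundary computation as in the proof of Lemma \ref{Lem:tauq} (the Lipschitz graph parametrisation reducing the surface integral to one on $\R^{n-1}$) gives $\int_{\partial\Omega}|P-Q|^{-(n-1-\alpha/2)}\,dQ\lesssim 1$ uniformly in $P$, and by symmetry in $P$. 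Schur's Lemma \ref{Lem:Schurslemma} then yields \eqref{for ftransBSI itself} with constant proportional to $\|A_1-A_2\|_\infty^{1/2}$, uniformly in $\tau$ because every pointwise estimate on $\widehat{\G}$ used above is $\tau$-independent. The weaker singularity of $\tilde K$ (order $n-1-\alpha/2<n-1$) also justifies exchanging the operator limit with the $Z$-integration and shows that no principal value is needed for the difference.

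The adjoint estimate \eqref{for ftransBSI adjoint} would follow by the identical scheme, differentiating the identity of Lemma \ref{difference of fundsolutions to fourierparabolic} in $X$ instead of $Y$ at $X=P$. This interchanges the roles of parts $(i)$ and $(ii)$ of Lemma \ref{Lem:GXYFourier}: now $\partial_X\partial_Z\widehat{\G}_{A_2}(P,Z,\tau)$ contributes $|P-Z|^{-n}$ and $\partial_Z\widehat{\G}_{A_1}(Z,Q,\tau)$ contributes $|Z-Q|^{-(n-1)}$. The total singularity in the $Z$-integrand and the final kernel estimate are thus of the same form, so the Schur estimate carries over verbatim.
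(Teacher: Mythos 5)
Your proposal is correct and follows the same overall scheme as the paper: express the kernel of the difference via Lemma~\ref{difference of fundsolutions to fourierparabolic}, estimate the two $\widehat{\G}$-derivatives by Lemma~\ref{Lem:GXYFourier} parts $(i)$ and $(ii)$ (which are $\tau$-independent), exploit the vanishing of $A_1-A_2$ on $\partial\Omega$ through H\"older continuity, and close with Schur's lemma using the boundary integral computation from Lemma~\ref{Lem:tauq}. The one point where you genuinely improve on the paper's writing is the treatment of $|A_1(Z)-A_2(Z)|$. The paper only invokes the H\"older bound $|A_1(Z)-A_2(Z)|^{1/2}\lesssim|Z-Q|^{\alpha/2}$ (after peeling off $\|A_1-A_2\|_\infty^{1/2}$), which leaves the intermediate integral $\int_{\R^n}|P-Z|^{-n}|Z-Q|^{-(n-1-\alpha/2)}\,dZ$ — and as written this diverges logarithmically at $Z=P$. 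The step is of course repairable because $P\in\partial\Omega$ too, so $A_1(Z)-A_2(Z)$ also vanishes to order $\alpha$ at $Z=P$; the paper simply does not write the corresponding factor. Your geometric-mean bound $|A_1(Z)-A_2(Z)|\lesssim\|A_1-A_2\|_\infty^{1/2}|Z-P|^{\alpha/4}|Z-Q|^{\alpha/4}$ distributes the regularising weight to both singular points, yields a manifestly convergent $Z$-integral, and still reproduces the final kernel bound $|P-Q|^{-(n-1-\alpha/2)}$; it is the cleaner formulation of the argument. Your side remarks — that the weakened singularity of the difference kernel dispenses with the principal value, and that the adjoint case follows by swapping which argument is hit by the second derivative — are both accurate and consistent with the paper (which conversely proves the adjoint version first and deduces the other by $L^2$-duality).
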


\begin{proof}
We only prove \eqref{for ftransBSI adjoint}, since it will be the one we will be using later. Moreover \eqref{for ftransBSI itself} follows at once from \eqref{for ftransBSI adjoint}. The kernel of $(\mathbb{K}_{A_1}^\tau)^*-(\mathbb{K}_{A_2}^\tau)^*$ is given by $$\sum_{i,j=1}^n (a^{1}_{ij}(P) n_j(P) \partial_{x_i} \widehat{\G}_{A_1}(X,Q,\tau)_{|_{X=P}}-a^{2}_{ij}(P) n_j(P) \partial_{x_i} \widehat{\G}_{A_2}((X,Q,\tau)_{|_{X=P}}).$$

Using the fact that $a^{1}_{ij}(P)=a^{2}_{ij}(P)$, Lemma \ref{difference of fundsolutions to fourierparabolic}, Lemma \ref{Lem:GXYFourier} $(ii)$ and $(i)$, and \eqref{Holder cont of difference}, we see that the aforementioned kernel is bounded by
\begin{align*}
  |K(P,Q, \tau)|
  	& \lesssim \Vert A_{1} - A_{2}\Vert^{1/2}_{\infty} \sum_{i,j,k=1}^{n} \int_{\mathbb{R}^n}| \partial_{x_k}\partial_{z_i} \widehat{\Gamma}_{A_2}( X, Z, \tau)_{|_{X=P}}\,\partial_{z_j} \widehat{\Gamma}_{A_1}( Z, Q, \tau)||A_2 (Z) -A_1 (Z)|\, dZ\\
	& \lesssim \Vert A_{1} - A_{2}\Vert^{1/2}_{\infty} \int_{\mathbb{R}^n} \frac{1}{|P-Z|^n}\frac{dZ}{|Q-Z|^{n-1-\alpha/2}}
	\lesssim  \frac{\Vert A_{1} - A_{2}\Vert^{1/2}_{\infty}}{|P-Q|^{n-1-\alpha/2}}.
\end{align*}
Therefore since $\sup_{Q\in \partial \Omega}\int_{\partial \Omega} |K(P,Q, \tau)|\, dP$ and $\sup_{P\in \partial \Omega}\int_{\partial \Omega} |K(P,Q, \tau)|\, dQ$ are both bounded by $\Vert A_{1} - A_{2}\Vert^{1/2}_{\infty}$, Schur's Lemma implies the desired $L^2$ boundedness.
\end{proof}

\subsection{The Fourier-transformed jump relations}
Here we shall consider the jump relations that are valid for the Fourier-transformed layer potential operators. We start first with the jump relation for the Fourier-transformed DLP.
\begin{Prop}\label{Prop:Jumptau}
  Assume that \hyperref[A1]{\small{$(A1)$}} -- \hyperref[A4]{\small{$(A4)$}} hold.
  Let $g \in L^2(\partial \Omega)$.
  Then,
  $$\lim_{\substack{X \to P \\ X \in \g_{\pm}(P)}} \mathbb{D}^\tau(g)(X)
    = \mp \frac{1}{2} g(P) + \mathbb{K}^\tau(g)(P), \quad \text{a.e. } P \in \partial \Omega.$$
\end{Prop}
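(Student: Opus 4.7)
The plan is to mimic the argument of Proposition \ref{Prop:Jump} but in the stationary setting: since $\mathbb{D}^\tau$ carries no convolution in time, the proof reduces essentially to the classical elliptic jump relation plus a weakly singular remainder. First, by Proposition \ref{Prop:NMFtau} together with a standard density argument (approximating $g$ by functions in $C_c^\infty(\partial\Omega)$ and using that pointwise a.e.\ statements pass through once we control the non--tangential maximal function in $L^2$), it suffices to prove the identity for $g \in C_c^\infty(\partial \Omega)$.

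Next I would split, for fixed $\tau$ and $i,j=1,\dots,n$,
$$(\mathbb{D}^\tau)^{i,j}(g)(X) = \mathcal{A}_1^{i,j}(g)(X) + \mathcal{A}_2^{i,j}(g)(X,\tau),$$
where
$$\mathcal{A}_1^{i,j}(g)(X) := \int_{\partial \Omega} a_{ij}(Q) n_j(Q) g(Q) \, \partial_{y_i}\widetilde{\G}(X,Y)_{|_{Y=Q}}\, dQ,$$
$$\mathcal{A}_2^{i,j}(g)(X,\tau) := \int_{\partial \Omega} a_{ij}(Q) n_j(Q) g(Q) \Big[\int_0^\infty \partial_{y_i}\G(X,Y,\zeta)_{|_{Y=Q}} (e^{-i\tau\zeta}-1)\, d\zeta \Big] dQ.$$
For the elliptic piece $\sum_{i,j}\mathcal{A}_1^{i,j}$ the known jump relation (cf.\ \cite[Theorem 4.6]{KeSh2}) yields
$$\lim_{\substack{X \to P \\ X \in \g_\pm(P)}} \sum_{i,j=1}^n \mathcal{A}_1^{i,j}(g)(X) = \mp \tfrac{1}{2} g(P) + \lim_{\eps \to 0}\int_{\partial \Omega \cap \{|P-Q|>\eps\}} \partial_\nu \widetilde{\G}(P,Q)\, g(Q)\, dQ.$$

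For the remainder $\mathcal{A}_2^{i,j}$ I would reuse the bound \eqref{eq:expbeta} (which holds verbatim for $X$ in place of $P$, by Lemma \ref{Lem:GXYm} and the estimate $|e^{-i\tau\zeta}-1|\lesssim |\tau\zeta|^\beta$), choosing $\beta \in (0,1/2)$, to obtain
$$\Big|\int_0^\infty \partial_{y_i}\G(X,Y,\zeta)_{|_{Y=Q}} (e^{-i\tau\zeta}-1)\, d\zeta\Big| \lesssim \frac{|\tau|^\beta}{|X-Q|^{n-1-2\beta}}.$$
Combined with \eqref{eq:gXQPQ}, this kernel is dominated (uniformly in $X\in\g_\pm(P)$) by the integrable function $|\tau|^\beta \|g\|_\infty/|P-Q|^{n-1-2\beta}$, so Lebesgue's dominated convergence theorem gives
$$\lim_{\substack{X \to P \\ X \in \g_\pm(P)}} \mathcal{A}_2^{i,j}(g)(X,\tau) = \mathcal{A}_2^{i,j}(g)(P,\tau).$$

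Finally I would identify the limit as $\mp\tfrac{1}{2}g(P)+\mathbb{K}^\tau(g)(P)$. Since each $\mathcal{A}_2^{i,j}(g)(P,\tau)$ is absolutely convergent, one can carry the truncation $\{|P-Q|>\eps\}$ inside the sum and write
$$\lim_{\eps \to 0}\int_{\partial \Omega \cap \{|P-Q|>\eps\}} \partial_\nu \widetilde{\G}(P,Q)\, g(Q)\, dQ + \sum_{i,j=1}^n \mathcal{A}_2^{i,j}(g)(P,\tau) = \lim_{\eps \to 0}\int_{\partial \Omega \cap \{|P-Q|>\eps\}} \partial_\nu \widehat{\G}(P,Q,\tau)\, g(Q)\, dQ,$$
which by definition (and the a.e.\ existence of the limit, granted by Corollary \ref{Cor:bddKtau}) equals $\mathbb{K}^\tau(g)(P)$. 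The main, and really only delicate, point will be ensuring that the exponents work out so that the remainder kernel is uniformly dominated by an $L^1(\partial\Omega)$ function independent of $X \in \g_\pm(P)$; this is the reason for choosing $\beta<1/2$ and invoking \eqref{eq:gXQPQ} to bound $|X-Q|$ from below by a multiple of $|P-Q|$.
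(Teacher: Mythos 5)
Your proposal follows the paper's proof essentially line for line: the same reduction to $g\in C_c^\infty(\partial\Omega)$ via Proposition \ref{Prop:NMFtau}, the same decomposition of $(\mathbb{D}^\tau)^{i,j}$ into an elliptic piece (handled by the elliptic jump relation from \cite{KeSh2}) plus a weakly singular remainder controlled by \eqref{eq:expbeta} and \eqref{eq:gXQPQ} and dominated convergence, and the same reassembly of the two limits into $\mathbb{K}^\tau(g)(P)$ via absolute convergence of the remainder. The only cosmetic difference is your restriction $\beta<1/2$, which is harmless but unnecessary; any $\beta\in(0,1)$ works.
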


\begin{proof}
From Proposition \ref{Prop:NMFtau}, by standard techniques and
    density arguments it suffices to consider the case of $g \in C^\infty_c(\partial \Omega)$.

    Let $i,j=1,\dots,n$, $P \in \partial \Omega$, $X \in \g_\pm(P)$ and $\tau>0$. We have that
  \begin{align*}
    (\mathbb{D}^\tau)^{i,j}(g)(X)
       = \ & \int_{\partial \Omega}  a_{ij}(Q) n_j(Q) g(Q) \partial_{y_i} \widetilde{\G}(X,Y)_{|_{Y=Q}}\, dQ \\
         & + \int_{\partial \Omega}  a_{ij}(Q) n_j(Q) g(Q) \Big[ \int_0^\infty \partial_{y_i} \G(X,Y,\zeta)_{|_{Y=Q}}  (e^{-i \tau \zeta }-1) d\zeta \Big]\, dQ \\
       =: & A^{i,j}_1(f)(g)(X) + A^{i,j}_2(f)(g)(X).	
  \end{align*}

  As in the proof of Proposition \ref{Prop:Jump} we get
  \begin{align*}
      \lim_{\substack{X \to P \\ X \in \g_{\pm}(P)}} \sum_{i,j=1}^n A^{i,j}_1(g)(X)
	  = & \mp \frac{1}{2} g(P)
	     + \lim_{\eps \to 0} \int_{\partial \Omega \cap\{|P-Q|> \eps\}}  \partial_\nu \widetilde{\G}(P,Q) g(Q)\, dQ.
  \end{align*}
  and
  \begin{equation*}
   \lim_{\substack{X \to P \\ X \in \g_{\pm}(P)}} A^{i,j}_2(g)(X)
      = A^{i,j}_2(g)(P).
  \end{equation*}
  Moreover, it is straightforward to see that
  \begin{align*}
    & \lim_{\eps \to 0} \int_{\partial \Omega \cap\{|P-Q|> \eps\}} \partial_\nu \widetilde{\G}(P,Q) g(Q)\, dQ
	+ \sum_{i,j=1}^n A^{i,j}_2(g)(P)
	= \mathbb{K}^\tau(g)(P),
  \end{align*}
  which allows us to conclude this proof.
\end{proof}
For the normal derivative of the Fourier-transformed single-layer potential, we have the following jump relation which will be used in proof of the invertibility of boundary singular integrals.
\begin{Prop}\label{Prop:Jumptau2}
  Assume that \hyperref[A1]{\small{$(A1)$}} -- \hyperref[A4]{\small{$(A4)$}} hold.
  Let $g \in L^2(\partial \Omega)$.
  Then
  $$\partial_\nu (\mathbb{S}^\tau(g))^{\pm}(P)
    = \pm \frac{1}{2} g(P) + (\mathbb{K}^\tau)^*(g)(P), \quad \text{a.e. } P \in \partial \Omega.$$
\end{Prop}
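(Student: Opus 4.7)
My plan is to mirror the argument of Proposition \ref{Prop:Jumptau}, exploiting the fact that the kernel $\partial_{X_i} \widehat{\G}(X,Q,\tau)$ entering $\partial_\nu \mathbb{S}^\tau$ satisfies exactly the same estimates as the DLP kernel $\partial_{Y_i} \widehat{\G}(X,Y,\tau)|_{Y=Q}$ (Lemma \ref{Lem:GXYFourier} is symmetric in the two variables). Using the non-tangential maximal function bound \eqref{estim:grad of SLP} for $\nabla \mathbb{S}^\tau$ together with the standard density argument employed in the proof of Proposition \ref{Prop:Jumptau}, it suffices to establish the claim for $g \in C^\infty_c(\partial \Omega)$.

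For such $g$ and $X \in \gamma_\pm(P)$, I would decompose the fundamental solution as
$$\widehat{\G}(X,Q,\tau) = \widetilde{\G}(X,Q) + R(X,Q,\tau), \qquad R(X,Q,\tau) := \int_0^\infty (e^{-i\tau t} - 1)\,\G(X,Q,t)\, dt,$$
using the identity $\int_0^\infty \G(X,Q,t)\, dt = \widetilde{\G}(X,Q)$. Correspondingly,
$$\partial_\nu \mathbb{S}^\tau(g)(X) = \sum_{i,j} a_{ij}(P)\, n_j(P) \int_{\partial\Omega} \partial_{X_i} \widetilde{\G}(X,Q)\, g(Q)\, dQ + \sum_{i,j} a_{ij}(P)\, n_j(P) \int_{\partial\Omega} \partial_{X_i} R(X,Q,\tau)\, g(Q)\, dQ.$$
The first summand is the conormal derivative of the elliptic single-layer potential for $\nabla\cdot(A\nabla\cdot)$, whose jump across $\partial\Omega$ is known from the elliptic theory of \cite{KeSh2}: its non-tangential limit from $\gamma_\pm(P)$ equals $\pm\tfrac{1}{2}g(P) + \widetilde{\KK}^*(g)(P)$, where $\widetilde{\KK}^*$ is the corresponding elliptic adjoint BSI in principal-value form. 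For the second summand, the pointwise estimate $|e^{-i\tau t}-1| \lesssim |\tau t|^{\beta}$ combined with Lemma \ref{Lem:GXYm} yields, as in \eqref{eq:expbeta}, the bound
$$|\partial_{X_i} R(X,Q,\tau)| \lesssim \frac{|\tau|^\beta}{|X-Q|^{n-1-2\beta}}, \qquad \beta \in (0,1);$$
choosing $\beta \in (0,1/2)$ makes the majorant locally integrable on $\partial\Omega$, uniformly for $X$ in a neighborhood of $P$, so that Lebesgue's dominated convergence theorem permits exchanging the limit $X \to P$ with the integral in $Q$.

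Finally, to identify the sum of the two limits as $\pm\tfrac{1}{2}g(P) + (\mathbb{K}^\tau)^*(g)(P)$, I would observe that, again by dominated convergence applied to the absolutely convergent $R$-integral at $X=P$,
$$(\mathbb{K}^\tau)^*(g)(P) = \widetilde{\KK}^*(g)(P) + \sum_{i,j} a_{ij}(P)\, n_j(P) \int_{\partial\Omega} \partial_{X_i} R(X,Q,\tau)_{|_{X=P}}\, g(Q)\, dQ,$$
which closes the argument. The main obstacle is ensuring that the $R$-contribution is genuinely continuous in $X$ across $\partial\Omega$; this is precisely what the $|\tau t|^\beta$ trick from \eqref{eq:expbeta} is designed to supply, by reducing the effective singularity of $\partial_{X_i} R$ from the critical order $|X-Q|^{-(n-1)}$ down to the strictly integrable order $|X-Q|^{-(n-1-2\beta)}$. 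Everything else is a routine repetition of the reasoning already used in the proof of Proposition \ref{Prop:Jumptau}.
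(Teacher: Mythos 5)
Your argument is correct and follows exactly the route the paper indicates, namely mirroring the proof of Proposition~\ref{Prop:Jumptau}: split $\widehat{\G}$ into the elliptic part $\widetilde{\G}$ plus a remainder $R$, invoke the elliptic jump relation of \cite{KeSh2} for the $\widetilde{\G}$-contribution, and use the $|e^{-i\tau t}-1|\lesssim|\tau t|^{\beta}$ bound together with Lemma~\ref{Lem:GXYm} and dominated convergence to handle $R$, whose kernel is subcritically singular so that the principal value is carried solely by the elliptic piece. Your restriction to $\beta\in(0,1/2)$ is slightly stronger than needed (any $\beta\in(0,1)$ works for $n\ge3$), but this is harmless; otherwise the proposal simply spells out in full the details the paper leaves implicit by reference.
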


\begin{proof}
Since the integral kernel of $\partial_\nu (\mathbb{S}^\tau)$ is similar to that of the Fourier-transformed DLP and therefore verifies the same estimates, proceeding as in the proof of Proposition \ref{Prop:Jumptau}, we will obtain the desired jump relation.
\end{proof}

As a consequence of the jump relation above, and the definition of $\nabla_T$ one sees as before that

\begin{equation*}
 \nabla_T (\mathbb{S}^\tau g)^+ = \nabla_T (\mathbb{S}^\tau g)^- .
\end{equation*}

Furthermore, taking the limit of the integral defining the Fourier-transformed SLP, using estimate \eqref{Lem:GFourier} and the Lebesgue dominated convergence theorem we obtain
\begin{equation*}
 (\mathbb{S}^\tau g)^+ = (\mathbb{S}^\tau g)^- .
\end{equation*}

\section{Parabolic Rellich estimates}\label{sec:Parabolic Rellich}

\subsection{Rellich estimates for the elliptic Fourier-transformed equation}
\quad\\
Rellich estimates are the most basic tools in proving the invertibility of boundary singular integral operators appearing in the study of elliptic and parabolic DNR problems. In this section we produce a systematic way to derive a family of  Rellich-type estimates which will be enough for proving the invertibility of our boundary singular integrals. This is done by a rather general transference method which transfers estimates valid for operators with smooth diffusion coefficients to those for operators with H\"older-continuous diffusion coefficients.\\

We begin this section by recalling that $r_0<\infty$ will denote the diameter of $\Omega $ and $\mathcal{Q}_\Omega$ is a cube that contains $\Omega$. Moreover $2\mathcal{Q}_\Omega$ will denote the cube with the same centre as $\mathcal{Q}_\Omega$ but with the double size.\\

In what follows, it is important to note that, by multiplying the Fourier-transformed equation \eqref{eq:FourierEq} by $\overline{u}(X,\tau)$ and integrating by parts one obtains the following useful identity

\begin{equation}\label{Divergence thm for Fouriertransformed eq}
  \int_{\partial \Omega} \overline{u (P, \tau)}\, \partial_{\nu_{A}} u^{\pm}(P, \tau)\,dP
    =\pm \int_{\Omega^\pm} \left( \sum_{i, j} a_{ij}(X)\, \partial_{i} \overline{u(X, \tau)}\, \partial_{j} u(X, \tau) + i \tau \,|u(X, \tau)|^2 \right)\,dX.
\end{equation}

Now we start by proving some basic estimates concerning the solutions of the Fourier-transformed equations. It is important to note that in all the estimates that are established here (until the end of this subsection), the constants hidden in the right hand sides of the estimates are all independent of parameter $\tau.$

\begin{Lem}\label{Lem:intboundGAO}
    Let $A$ be a matrix satisfying properties \hyperref[A1]{\small{$(A1)$}} -- \hyperref[A4]{\small{$(A4)$}}.
    Assume that $\widehat{\LL}_A u=0$ in $\Omega$. Then for every $\eps>0$
    $$\int_{\Omega^{\pm}} \Big( |\nabla u(Y,\tau)|^2 + |\tau| |u(Y,\tau)|^2\Big)\,dY
	\lesssim \int_{\partial \Omega} \Big(\eps |\partial_{\nu_A} u^{\pm}(P,\tau)|^2 + \frac{|u(P,\tau)|^2}{\eps} \Big)\,dP.$$
\end{Lem}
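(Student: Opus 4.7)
The plan is to derive the estimate as a direct consequence of the conormal identity \eqref{Divergence thm for Fouriertransformed eq}, by taking real and imaginary parts and then splitting via a weighted Cauchy--Schwarz inequality. I expect no serious obstacle here: once \eqref{Divergence thm for Fouriertransformed eq} is in hand, the argument is purely algebraic plus ellipticity.

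First, I would apply \eqref{Divergence thm for Fouriertransformed eq} and take the real part. Since $A$ is real and symmetric, $\sum_{i,j} a_{ij}(X)\,\partial_i \overline{u}\,\partial_j u$ is real, and the term $i\tau|u|^2$ is purely imaginary, so
\[
 \pm \int_{\Omega^{\pm}} \sum_{i,j} a_{ij}(X)\, \partial_{i} \overline{u}\, \partial_{j} u \, dX
  = \mathrm{Re}\int_{\partial\Omega} \overline{u(P,\tau)}\,\partial_{\nu_A} u^\pm(P,\tau)\, dP.
\]
By the uniform ellipticity condition \hyperref[A3]{\small{$(A3)$}} applied to the complex vector $\nabla u$ (using symmetry and reality of $A$), the left-hand side dominates $\pm\mu\int_{\Omega^{\pm}}|\nabla u|^2\,dY$ up to sign; taking absolute values (note the ``$\pm$'' only affects signs and does not matter after applying $|\cdot|$), I obtain
\[
\mu\int_{\Omega^{\pm}} |\nabla u(Y,\tau)|^2\,dY \leq \int_{\partial\Omega}|u(P,\tau)|\,|\partial_{\nu_A}u^\pm(P,\tau)|\,dP.
\]

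Next, I would take the imaginary part of \eqref{Divergence thm for Fouriertransformed eq}. Since the first summand on the right-hand side is real, this yields
\[
\pm\tau\int_{\Omega^\pm}|u(Y,\tau)|^2\,dY = \mathrm{Im}\int_{\partial\Omega}\overline{u(P,\tau)}\,\partial_{\nu_A}u^\pm(P,\tau)\,dP,
\]
from which, again by taking absolute values and applying Cauchy--Schwarz on the boundary,
\[
|\tau|\int_{\Omega^\pm}|u(Y,\tau)|^2\,dY \leq \int_{\partial\Omega}|u(P,\tau)|\,|\partial_{\nu_A}u^\pm(P,\tau)|\,dP.
\]

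Adding the two displays and invoking Young's inequality in the form $ab\leq \tfrac{\eps}{2}a^2+\tfrac{1}{2\eps}b^2$ with $a=|\partial_{\nu_A}u^\pm(P,\tau)|$ and $b=|u(P,\tau)|$, I conclude
\[
\int_{\Omega^{\pm}}\!\left(|\nabla u(Y,\tau)|^2+|\tau||u(Y,\tau)|^2\right)dY \lesssim \int_{\partial\Omega}\!\left(\eps\,|\partial_{\nu_A}u^\pm(P,\tau)|^2+\frac{|u(P,\tau)|^2}{\eps}\right)dP,
\]
which is the asserted bound. The only caveat worth a line of comment concerns the case $\Omega^-$, where the domain is unbounded; but since \eqref{Divergence thm for Fouriertransformed eq} has already been granted in both cases by the authors (with the appropriate decay at infinity of solutions to the elliptic Fourier-transformed equation), the argument above applies uniformly to both signs.
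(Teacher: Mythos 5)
Your proof is correct and follows essentially the same route as the paper, which simply states that the lemma follows from \eqref{Divergence thm for Fouriertransformed eq} together with Cauchy's inequality; you have merely filled in the details (splitting into real and imaginary parts, then applying Young's inequality). The only nitpick is terminology: bounding $\bigl|\operatorname{Im}\int_{\partial\Omega}\overline{u}\,\partial_{\nu_A}u^{\pm}\,dP\bigr|$ by $\int_{\partial\Omega}|u|\,|\partial_{\nu_A}u^{\pm}|\,dP$ is the triangle inequality (together with $|\operatorname{Im}z|\leq|z|$), not Cauchy--Schwarz, but this is cosmetic and the argument is sound.
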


\begin{proof}
  The estimate follows easily by using \eqref{Divergence thm for Fouriertransformed eq} and Cauchy's inequality $ab \leq \eps a^2 + b^2/\eps$, $\eps>0$.
\end{proof}

\begin{Lem}\label{Lem:intboundGAO3.2}
    Let $A$ be a matrix satisfying properties \hyperref[A1]{\small{$(A1)$}} -- \hyperref[A4]{\small{$(A4)$}} and
    assume that $\widehat{\LL}_A u=0$ in $\Omega$. Then
    $$\int_{\partial \Omega}  |u(P,\tau)|^2 dP
	\lesssim \int_{\Omega^\pm} \Big( |\nabla u(X,\tau)|^2 + |u(X,\tau)|^2   \Big) \, dX.$$
\end{Lem}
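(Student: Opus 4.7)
The plan is to recognize this as a classical trace inequality on the Lipschitz domain $\Omega^{\pm}$. The hypothesis $\widehat{\LL}_A u=0$ is not really used in the inequality itself: once $u \in H^1$ of the relevant region, the estimate is purely a consequence of the divergence theorem combined with the Lipschitz structure of $\partial\Omega$. The parameter $\tau$ plays no role here, which is consistent with the claim that the implicit constant is $\tau$-independent.

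The first step is to construct a smooth, compactly supported vector field $h:\R^n\to\R^n$ with the transversality property $\langle h(P), N_P\rangle \geq c > 0$ for a.e.\ $P\in\partial\Omega$. Using the definition \eqref{defn: The lipshitz domain Omega} of the Lipschitz domain, a partition of unity subordinate to the finite covering of $\partial\Omega$ by the cylinders $\ZZ$ allows one to assemble $h$ out of locally constant vectors, because in each cylinder the Lipschitz graph representation forces the inner normal $N_P$ to lie uniformly within a cone around the cylinder axis, so that a constant multiple of the axis direction pairs positively with $N_P$.

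For the interior case $(\Omega^+)$, I would apply the divergence theorem on the bounded Lipschitz domain $\Omega^+$ to the vector field $|u(\cdot,\tau)|^2 h$, obtaining
$$\int_{\partial\Omega} |u(P,\tau)|^2\, \langle h(P), N_P\rangle\, dP
   = \int_{\Omega^+} \nabla_X\cdot\bigl(|u(X,\tau)|^2 h(X)\bigr)\, dX.$$
Expanding the divergence as $|u|^2 (\nabla\cdot h) + 2\operatorname{Re}(\overline{u}\,h\cdot\nabla u)$ and applying the elementary estimate $2|u||\nabla u|\leq |u|^2+|\nabla u|^2$ together with the transversality of $h$ yields
$$c \int_{\partial\Omega} |u(P,\tau)|^2\, dP
   \lesssim \int_{\Omega^+} \bigl(|u(X,\tau)|^2 + |\nabla_X u(X,\tau)|^2\bigr)\, dX,$$
as desired. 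For the exterior case $(\Omega^-)$, I would run exactly the same argument on the bounded Lipschitz domain $\Omega^- \cap U$, where $U$ is a bounded open neighbourhood of $\partial\Omega$ containing $\operatorname{supp} h$; the additional boundary contribution on $\partial U\cap\Omega^-$ vanishes because $h\equiv 0$ there, and the resulting volume integral is trivially bounded by its extension to $\Omega^-$.

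I do not anticipate a serious obstacle: the only mildly technical point is the construction of $h$ in the Lipschitz category, but this is entirely standard in the theory of boundary value problems on Lipschitz domains.
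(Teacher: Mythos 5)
Your proof is correct and follows essentially the same route as the paper: apply the divergence theorem to $|u|^2 H$ for a vector field $H$ transversal to $\partial\Omega$ (the paper uses this same device, making the transversality explicit a few lines later in Proposition \ref{Lem:Rellsmooth}), expand the divergence, and close with Cauchy's inequality. You correctly note (and the paper's proof tacitly uses) that the PDE is not actually needed, and you supply a bit more detail on constructing $H$ and on handling the unbounded exterior region, neither of which changes the substance.
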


\begin{proof}
 Given a $C^1$ vector field $H$ Integration by parts (i.e. divergence theorem) yields
\begin{equation*}%
\int_{\partial \Omega}  \langle H, N(P)\rangle |u(P,\tau)|^2 dP=\pm \int_{\Omega^{\pm}} |u(X,\tau)|^2 \, (\nabla\cdot H)\, dX \pm \mathrm{Re}\,\sum_{k} \int_{\Omega^{\pm}} H^{k} (X) \, \partial_{k} u (X, \tau)\, \overline{u (X,\tau)}\,  dX
\end{equation*}
 From this one deduces that
 \begin{equation}\label{divergence theorem fact}
   \int_{\partial \Omega}  |u(P,\tau)|^2 dP
	\lesssim \int_{\Omega^{\pm}} \Big( |u(X,\tau)||\nabla u(X,\tau)| + |u(X,\tau)|^2   \Big) \, dX.
 \end{equation}
Therefore \eqref{divergence theorem fact} and an application of Cauchy's inequality to the term $|u(X,\tau)||\nabla u(X,\tau)| $ yield the desired estimate.
 \end{proof}

\begin{Lem}\label{Lem:umax}
Let $A\in C^\infty(2 \mathcal{Q}_\Omega \setminus \partial \Omega)$ be a matrix verifying \eqref{gradient of A}, then for each $\delta >0$ one has

\begin{equation*}
    \int_{\Omega^{\pm}}|\nabla A(X)||\nabla u(X,\tau)|^2\, dX
        \lesssim \delta^{{n-1+\alpha_0}}  \int_{\partial \Omega} \sup_{X\in \gamma_{\delta}(P)}|\nabla u(X,\tau)|^2\, dP + \frac{1}{\delta^{(1-\alpha_0)}}\int_{\Omega^{\pm}}|\nabla u(X,\tau)|^2\, dX,
\end{equation*}
where $\gamma_{\delta}(P):= \gamma_+(P)\cap \{X \in \Omega : \mathrm{dist}(X,\partial \Omega) \leq \delta \},$ is a truncated cone with the vertex $P\in \partial \Omega$, and $\alpha_0$ is the parameter appearing in \eqref{gradient of A}.
\end{Lem}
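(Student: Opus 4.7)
The plan is to split $\Omega^{\pm}$ into a thin neighborhood of $\partial \Omega$ of width $\delta$ and its complement, and to handle each region separately. Set $r(X):=\mathrm{dist}(X,\partial\Omega)$ and $\Omega^{\pm}_{\delta}:=\Omega^{\pm}\cap\{r(X)\leq\delta\}$. On the exterior piece $\Omega^{\pm}\setminus\Omega^{\pm}_{\delta}$ the bound \eqref{gradient of A} gives $|\nabla A(X)|\lesssim r(X)^{\alpha_0-1}\leq\delta^{\alpha_0-1}$, which immediately produces the second term $\delta^{-(1-\alpha_0)}\int_{\Omega^{\pm}}|\nabla u(X,\tau)|^2\,dX$ on the right-hand side.

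The substantive piece is $\int_{\Omega^{\pm}_{\delta}}|\nabla A||\nabla u|^2\,dX$. The plan is to dyadically decompose into shells
$$S_k:=\{X\in\Omega^{\pm}:2^{-k-1}\delta<r(X)\leq 2^{-k}\delta\},\qquad k\geq 0,$$
on each of which $|\nabla A(X)|\lesssim(2^{-k}\delta)^{\alpha_0-1}$ by \eqref{gradient of A}, and then to relate the volume integral $\int_{S_k}|\nabla u|^2\,dX$ to the boundary integral $\int_{\partial\Omega}\sup_{X\in\gamma_\delta(P)}|\nabla u|^2\,dP$ through a Fubini/cone-geometry argument. Two geometric facts drive this: (i) for fixed $X\in S_k$, the set $\{P\in\partial\Omega:X\in\gamma_+(P)\}$ is essentially a surface ball of radius $\sim(1+a)r(X)$, hence of $(n-1)$-measure $\sim(2^{-k}\delta)^{n-1}$; (ii) for fixed $P\in\partial\Omega$, the slice $S_k\cap\gamma_+(P)$ has volume $\lesssim(2^{-k}\delta)^n$ and, after a mild enlargement of the cone aperture, is contained in $\gamma_\delta(P)$. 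Combining (i) and (ii) with the crude bound $\int_{S_k\cap\gamma_+(P)}|\nabla u|^2\,dX\lesssim(2^{-k}\delta)^n\sup_{\gamma_\delta(P)}|\nabla u|^2$ and swapping the order of integration through the indicator of $\gamma_+(P)$ gives a shell estimate of the form
$$\int_{S_k}|\nabla u|^2\,dX\lesssim 2^{-k}\delta\int_{\partial\Omega}\sup_{Y\in\gamma_\delta(P)}|\nabla u(Y,\tau)|^2\,dP.$$
Multiplying by the shell bound on $|\nabla A|$ and summing the resulting geometric series in $k$ then produces the first term on the right-hand side of the claimed inequality.

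The main obstacle I anticipate is the cone-enlargement bookkeeping: one must verify that replacing the aperture $a$ by some larger $a'>a$ only modifies the non-tangential supremum by a universal constant depending on $a$, $a'$, and the Lipschitz character of $\partial\Omega$, so that the supremum over the enlarged truncated cone can be reabsorbed into $\sup_{\gamma_\delta(P)}|\nabla u|^2$ up to harmless constants. Once that comparison is in place, the two contributions assemble directly into the stated inequality.
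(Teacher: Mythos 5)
Your shell decomposition and Fubini bookkeeping are sound, but look at what they actually produce. Multiplying the shell bound $|\nabla A|\lesssim (2^{-k}\delta)^{\alpha_0-1}$ on $S_k$ by your shell estimate $\int_{S_k}|\nabla u|^2\,dX\lesssim 2^{-k}\delta\int_{\partial\Omega}\sup_{\gamma_\delta(P)}|\nabla u|^2\,dP$ and summing gives
$\sum_{k\geq 0}(2^{-k}\delta)^{\alpha_0-1}\cdot 2^{-k}\delta=\delta^{\alpha_0}\sum_{k\geq 0}2^{-k\alpha_0}\sim\delta^{\alpha_0}$,
so the prefactor you obtain is $\delta^{\alpha_0}$, \emph{not} $\delta^{n-1+\alpha_0}$ as you assert in your final sentence. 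This is not a defect of your method: the exponent $n-1+\alpha_0$ in the lemma's statement is erroneous, and the paper's own proof introduces it by bounding $\int_{E_1}\frac{|\nabla u|^2}{r^{1-\alpha_0}}\,dX$ by $\int_{\partial\Omega}\int_{\gamma_\delta(P)}\frac{|\nabla u|^2}{r^{1-\alpha_0}}\,dX\,dP$ as if the order of integration could be swapped for free, thereby omitting exactly the Jacobian factor $r(X)^{n-1}\sim\sigma\bigl(\{P\in\partial\Omega : X\in\gamma_+(P)\}\bigr)$ that your argument correctly accounts for. Restoring it, the inner integral becomes $\int_{\gamma_\delta(P)}r(X)^{\alpha_0-n}\,dX\sim\int_0^\delta t^{\alpha_0-1}\,dt\sim\delta^{\alpha_0}$, in agreement with your dyadic sum (so the dyadic decomposition is not strictly needed, but it is a perfectly good way to carry out the same Fubini argument).

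One can check directly that the stated inequality fails: take $|\nabla u|=\chi_{\{r(X)<\delta^2\}}$. Then the left-hand side is $\sim\int_{\{r<\delta^2\}}r^{\alpha_0-1}\,dX\sim\delta^{2\alpha_0}$, while the right-hand side is $\sim\delta^{n-1+\alpha_0}+\delta^{\alpha_0-1}\cdot\delta^{2}=\delta^{n-1+\alpha_0}+\delta^{1+\alpha_0}$, and since $2\alpha_0<\min\{1+\alpha_0,\,n-1+\alpha_0\}$ for $\alpha_0\leq 1$ and $n\geq 3$, the bound collapses as $\delta\to 0$. With $\delta^{\alpha_0}$ in place of $\delta^{n-1+\alpha_0}$ it holds and is exactly what your argument proves. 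Fortunately the discrepancy is harmless downstream: in Proposition~\ref{Lem:Rellsmooth}, Theorem~\ref{Thm:Transferencetau}, Lemma~\ref{Lem:DLPinverttau}, and Proposition~\ref{Lem:Rellsmoothparab} only a positive power of $\delta$ is required to close the absorption argument, and $\delta^{\alpha_0}$ serves equally well. The cone-enlargement comparison you flagged as the main anticipated obstacle is routine and not where the subtlety lies; you should instead state the shell-sum outcome correctly as $\delta^{\alpha_0}$.
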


\begin{proof}
    We confine ourselves to the proof for $\Omega^+$, as the one for $\Omega^-$ is similar. Set $\Omega^+:=\Omega$ and write it as a union of the sets $E_{1}$ and $E_{2}$ where $E_{1}:=\{X \in \Omega : \mathrm{dist}(X,\partial \Omega) \leq \delta \}$ and $E_{2}:=\{X \in \Omega : \mathrm{dist}(X,\partial \Omega) > \delta \}$. Then using \eqref{gradient of A} we have
\begin{align*}
	& \int_{\Omega}|\nabla A(X)||\nabla u(X,\tau)|^2\, dX
            = \int_{E_{1}}|\nabla A(X)||\nabla u(X,\tau)|^2\, dX
                        + \int_{E_{2}}|\nabla A(X)||\nabla u(X,\tau)|^2\, dX \\
     & \qquad \lesssim  \int_{E_{1}} \frac{|\nabla u(X,\tau)|^2}{\mathrm{dist}(X,\partial \Omega)^{1-\alpha_0}}\, dX
                        + \frac{1}{\delta^{(1-\alpha_0)}}\int_{E_{2}}|\nabla u(X,\tau)|^2\, dX \\
      & \qquad  \lesssim  \int_{\partial \Omega}\int_{\{|X-P|\leq (1+a)\mathrm{dist}(X,\partial \Omega);\,\mathrm{dist}(X,\partial \Omega) <\delta \}} \frac{|\nabla u(X,\tau)|^2}{\mathrm{dist}(X,\partial \Omega)^{1-\alpha_0}}\, dX\, dP
                        +  \frac{1}{\delta^{(1-\alpha_0)}}\int_{\Omega}|\nabla u(X,\tau)|^2\, dX\\
      & \qquad \lesssim   \int_{\partial \Omega}  \sup_{X\in \gamma_{\delta}(P)}|\nabla u(X,\tau)|^2\,\Big( \int_{|X-P|\lesssim \delta}\frac{dX}{|X-P|^{1-\alpha_0}}\Big)\, dP + \frac{1}{\delta^{(1-\alpha_0)}}\int_{\Omega}|\nabla u(X,\tau)|^2 \, dX\\
      & \qquad \lesssim \delta^{n-1+\alpha_0} \int_{\partial \Omega}\sup_{X\in \gamma_{\delta}(P)}|\nabla u(X,\tau)|^2\, dP
                        + \frac{1}{\delta^{(1-\alpha_0)}}\int_{\Omega}|\nabla u(X,\tau)|^2 \, dX.
    \end{align*}
\end{proof}

For operators with smooth diffusion coefficients one has the following Rellich-type estimate.
\begin{Prop}\label{Lem:Rellsmooth}
  Let $\tilde{A}$ be as in $\mathrm{Lemma}$ $\ref{Lem:Atilde}$ and assume that $\widehat{\LL}_{\tilde{A}} u=0$ in $\Omega$. Then,
  \begin{align}\label{eq:Rell2}
    \int_{\partial \Omega} \Big(|\nabla u^{\pm}(P,\tau)|^2 + |\tau| |u(P,\tau)|^2 \Big)\, dP
      \lesssim & \int_{\partial \Omega} \Big(|\partial_{\nu_A} u^{\pm}(P,\tau)|^2 + |u(P,\tau)|^2\Big)\, dP
  \end{align}
  and
  \begin{align}\label{eq:Rell1}
    \int_{\partial \Omega} |\nabla u^{\pm}(P,\tau)|^2 dP
      \lesssim & \int_{\partial \Omega} \Big(|\nabla_T u(P,\tau)|^2 + |\tau| |u(P,\tau)|^2 + \frac{|u(P,\tau)|^2}{\varepsilon^2}\Big)\, dP\\&+ \delta^{n-1+\alpha_0} \int_{\partial \Omega}\sup_{X\in \gamma_{\delta}(P)}|\nabla u(X,\tau)|^2\, dP , \nonumber
  \end{align}
  for any $\delta>0$ and $\eps \in (0,\eps_0(\delta))$.
\end{Prop}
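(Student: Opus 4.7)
The plan is to combine the classical Rellich--Pohozaev integral identity with Lemmas \ref{Lem:intboundGAO}, \ref{Lem:intboundGAO3.2} and \ref{Lem:umax}. Fix a $C^\infty$ vector field $h\colon\R^n\to\R^n$ satisfying $\langle h(P),N_P\rangle\geq c>0$ for a.e.\ $P\in\partial\Omega$; such a vector field exists for any bounded Lipschitz domain. Multiplying $\widehat{\LL}_{\tilde A}u=0$ by $2\,\overline{h\cdot\nabla u}$, integrating over $\Omega^{\pm}$, integrating by parts twice (once to move a derivative off $\nabla\cdot(\tilde A\nabla u)$, once to symmetrise the resulting Hessian--like term) and taking real parts, one obtains
\[
\int_{\partial\Omega}(h\cdot N)\langle\tilde A\nabla u,\nabla\overline u\rangle\,dP
   =\pm 2\,\mathrm{Re}\!\int_{\partial\Omega}\overline{(h\cdot\nabla u)}\,\partial_{\nu_{\tilde A}}u\,dP+\mathcal{R}[u],
\]
where $\mathcal{R}[u]$ collects the three classical interior terms $(\mathrm{div}\,h)\langle\tilde A\nabla u,\nabla\overline u\rangle$, $(\partial_ih_k)\tilde a_{ij}\partial_k\overline u\partial_ju$ and $h_k(\partial_k\tilde a_{ij})\partial_iu\partial_j\overline u$, together with a $\tau$--dependent contribution $2\tau\,\mathrm{Im}\int_{\Omega^\pm}\overline u\,(h\cdot\nabla u)\,dX$ produced by the substitution $\nabla\cdot(\tilde A\nabla u)=-i\tau u$.

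For \eqref{eq:Rell2}, the left-hand side is bounded below by $c\mu\int_{\partial\Omega}|\nabla u^\pm|^2\,dP$ by ellipticity, while the first boundary term on the right is split by Cauchy's inequality into $\eta\int_{\partial\Omega}|\nabla u|^2+C_\eta\int_{\partial\Omega}|\partial_{\nu_{\tilde A}}u|^2$, the first piece being absorbed into the left. The three bulk contributions in $\mathcal{R}[u]$ are bounded, respectively, by $C\int_{\Omega^\pm}|\nabla u|^2$, by $C\int_{\Omega^\pm}|\nabla\tilde A||\nabla u|^2$ and, for the $\tau$--term, by $|\tau|\int_{\Omega^\pm}|u||\nabla u|\leq\eta\int_{\Omega^\pm}|\nabla u|^2+C_\eta|\tau|^2\int_{\Omega^\pm}|u|^2$. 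All of the $\int_{\Omega^\pm}|\nabla u|^2$ integrals are converted into boundary integrals by Lemma \ref{Lem:intboundGAO}, the $|\tau|^2\int_{\Omega^\pm}|u|^2$ factor is further controlled using the imaginary part of the energy identity \eqref{Divergence thm for Fouriertransformed eq}, and the singular coefficient integral $\int_{\Omega^\pm}|\nabla\tilde A||\nabla u|^2$ is handled by Lemma \ref{Lem:umax}. Finally, the $|\tau|\int_{\partial\Omega}|u|^2$ contribution on the left of \eqref{eq:Rell2} is generated by applying Lemma \ref{Lem:intboundGAO3.2} to $u$ and invoking once more the imaginary part of \eqref{Divergence thm for Fouriertransformed eq}.

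For \eqref{eq:Rell1}, I would start from the pointwise decomposition $|\nabla u|^2\lesssim|\nabla_Tu|^2+|\partial_{\nu_{\tilde A}}u|^2$ along $\partial\Omega$ (which follows from ellipticity together with the definition \eqref{defn:tangential derivative}) to rearrange the Rellich identity so that $\int_{\partial\Omega}|\nabla u^\pm|^2$ sits on the left and $\int_{\partial\Omega}|\nabla_Tu|^2$ on the right. The bulk terms are treated exactly as before, but now the free parameter $\delta$ in Lemma \ref{Lem:umax} is kept as a parameter: this produces the $\delta^{n-1+\alpha_0}\int_{\partial\Omega}\sup_{X\in\gamma_{\delta}(P)}|\nabla u|^2\,dP$ contribution of \eqref{eq:Rell1}, while the residual $\delta^{\alpha_0-1}\int_{\Omega^\pm}|\nabla u|^2$ is absorbed via Lemma \ref{Lem:intboundGAO}, which gives rise to the $\int_{\partial\Omega}|u|^2/\varepsilon^2$ summand; the constraint $\varepsilon\in(0,\varepsilon_0(\delta))$ is precisely the condition for this final absorption to succeed. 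The $|\tau|\int_{\partial\Omega}|u|^2$ term is produced again through Lemma \ref{Lem:intboundGAO3.2} and \eqref{Divergence thm for Fouriertransformed eq}.

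The main technical obstacle is the singular coefficient integral $\int_{\Omega^\pm}h_k(\partial_k\tilde a_{ij})\partial_iu\partial_j\overline u$, which is only borderline integrable because of the pointwise bound \eqref{gradient of A}. Lemma \ref{Lem:umax} is tailor--made for this: at the cost of a nontangential--maximal contribution on $\partial\Omega$, it converts the singular interior integral into controllable boundary and absorbable interior quantities. The presence of this maximal term in \eqref{eq:Rell1} is therefore essentially unavoidable in this approach, whereas its absence in \eqref{eq:Rell2} is compensated by the stronger right-hand side involving the full conormal derivative $|\partial_{\nu_{\tilde A}}u|^2$.
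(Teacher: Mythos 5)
Your plan for \eqref{eq:Rell1} matches the paper's: the standard Rellich--Pohozaev identity with divergence field $H$, with the singular bulk integral $\int_{\Omega^\pm}|\nabla\tilde A||\nabla u|^2$ handled by Lemma~\ref{Lem:umax}, the $\tau$-term by \eqref{Divergence thm for Fouriertransformed eq}, and the remaining interior gradient integral absorbed via Lemma~\ref{Lem:intboundGAO}. That part is fine, though the paper is more careful than you are about \emph{why} the remaining boundary terms are tangential: it observes that for fixed $j$ the vector $n_k(H_i\tilde a_{kj}-H_k\tilde a_{ij})$ is orthogonal to $N$, rather than invoking a pointwise gradient decomposition.

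There is, however, a genuine gap in your treatment of \eqref{eq:Rell2}. You run the \emph{same} Rellich--Pohozaev identity and again invoke Lemma~\ref{Lem:umax} to dispose of the $\int_{\Omega^\pm}|\nabla\tilde A||\nabla u|^2$ term. But Lemma~\ref{Lem:umax} unavoidably inserts the quantity $\delta^{n-1+\alpha_0}\int_{\partial\Omega}\sup_{X\in\gamma_\delta(P)}|\nabla u(X,\tau)|^2\,dP$ on the right-hand side, and that term is \emph{not} present in \eqref{eq:Rell2}. Your closing claim --- that the absence of the maximal term in \eqref{eq:Rell2} is ``compensated by the stronger right-hand side involving the full conormal derivative'' --- is not a valid deduction: the presence of $|\partial_{\nu_{\tilde A}}u|^2$ on the right does nothing to absorb the singular coefficient integral, and in any case having the conormal derivative in place of the tangential one does not dominate a boundary nontangential-maximal quantity. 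The paper circumvents the difficulty by using a \emph{different} integration-by-parts scheme for \eqref{eq:Rell2}, namely \eqref{eq:divergence theorem fourier transform eq 3}: after inserting the equation $\sum_j\partial_i(\tilde a_{ij}\partial_j u)=-i\tau u$ and integrating by parts once more so that the derivative falls on $H$ rather than on $\tilde A$ (see \eqref{eq:baby rellich 2}), no $\nabla\tilde A$ term survives in the bulk, so Lemma~\ref{Lem:umax} is never needed, and the final estimate is genuinely free of the maximal term. Without this extra integration by parts, your argument only proves a weaker version of \eqref{eq:Rell2} with the unwanted $\delta^{n-1+\alpha_0}\int_{\partial\Omega}\sup_{X\in\gamma_\delta(P)}|\nabla u|^2\,dP$ on the right, which is not what the proposition asserts and is not what the subsequent transference arguments use.
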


\begin{proof}
  First of all we observe that \eqref{Divergence thm for Fouriertransformed eq} yields

\begin{equation}\label{eq:baby rellich 1}
 \int_{\Omega^{\pm}}\Big(|\nabla u(X,\tau)|^2 + |\tau| \, |u(X,\tau)|^2\Big) \, dX\lesssim \int_{\partial \Omega} |\partial_{\nu_A} u^{\pm}(P,\tau)|\, |u(P,\tau)| \, dP.
\end{equation}

Applying Cauchy's inequality to the left hand side of \eqref{eq:baby rellich 1} we also get
\begin{equation}\label{Divergence and Schwarz}
  \int_{\Omega^{\pm}}  |\tau|^{\frac{1}{2}}  \, |\nabla u(X,\tau)| \, |u(X,\tau)| \, dX \lesssim \int_{\partial \Omega}\, |\partial_{\nu_A} u^{\pm}(P,\tau)|\, |u(P,\tau)| \, dP.
\end{equation}

Now, given a $C^1$ vector field $H$ and using the Fourier-transformed equation \eqref{eq:FourierEq}, an application of the divergence theorem yields
\begin{align}\label{eq:divergence theorem fourier transform eq 3}
   & \sum_{i,j=1}^n \int_{\partial \Omega} \langle N(P), H(P)\rangle \,\tilde{a}_{ij}(P)\, \partial_i u^{\pm}(P,\tau)  \,\partial_j \bar{u}^{\pm}(P,\tau) \, dP \\
   & \qquad = 2\,\mathrm{Re}\, \sum_{i,j,k=1}^n  \int_{\Omega^{\pm}} H_k(X)\Big[\partial_i \Big(\tilde{a}_{ij}(X) \,\partial_k u(X,\tau) \, \partial_j \bar{u}(X,\tau) \Big) -i \tau\, u(X,\tau) \,\partial_k \, u(X,\tau)\Big]\, dX \nonumber \\
   & \qquad \qquad +\sum_{i,j=1}^n \int_{\Omega^{\pm}} (\nabla\cdot H)\, \tilde{a}_{ij}(X) \, \partial_i u(X,\tau) \, \partial_j \bar{u}(X,\tau) \Big] \, dX, \nonumber
   \end{align}
where $N$ denotes, as before, the unit normal vector to the boundary. Applying first the divergence theorem and then \eqref{eq:baby rellich 1} to the first term on the right hand side of \eqref{eq:divergence theorem fourier transform eq 3}, we have

\begin{align}\label{eq:baby rellich 2}
& |2\,\mathrm{Re}\, \sum_{i,j,k=1}^n  \int_{\Omega^{\pm}} H_k(X)\partial_i \Big(\tilde{a}_{ij}(X) \,\partial_k u(X,\tau) \, \partial_j \bar{u}(X,\tau) \Big)\, dX|\\
& \qquad = |2\,\mathrm{Re}\, \sum_{i,j,k=1}^n  \int_{\Omega^{\pm}} \partial_i H_k(X) \, \tilde{a}_{ij}(X) \,\partial_k u(X,\tau) \, \partial_j \bar{u}(X,\tau) \, dX \nonumber\\
& \qquad \qquad + \sum_{i,j,k=1}^n \int_{\partial \Omega}  H_k(P)\, n_i (P) \,\tilde{a}_{ij}(P) \,\partial_j \bar{u}(P,\tau)\, \partial_k u(P,\tau) \, dP | \nonumber\\
& \qquad \lesssim
\int_{\partial \Omega} \Big( |\partial_{\nu_A} u^{\pm}(P,\tau)|\, |\nabla u(P,\tau)|+|\partial_{\nu_A} u^{\pm}(P,\tau)|\,|u(P,\tau)|\Big)\, dP. \nonumber
\end{align}

For the second and the third terms of \eqref{eq:divergence theorem fourier transform eq 3}, we use \eqref{Divergence and Schwarz} and \eqref{eq:baby rellich 1} and Cauchy's inequality which yield
\begin{align}\label{eq:baby rellich 3}
\Big|i \tau\, \int_{\Omega^{\pm}} u(X,\tau) \,\partial_k \, u(X,\tau)\, dX +\sum_{i,j=1}^n \int_{\Omega^{\pm}} (\nabla\cdot H)\, \tilde{a}_{ij}(X) \, \partial_i u(X,\tau) \, \partial_j \bar{u}(X,\tau) \, dX\Big|  \\
\lesssim \int_{\partial \Omega} \Big( |\partial_{\nu_A} u^{\pm}(P,\tau)|^2 + |u(P,\tau)|^2 + |\tau|^{1/2}\, |u|\,|\partial_{\nu_A} u^{\pm}(P,\tau)|\Big)\, dP. \nonumber
\end{align}

Observe that we can pick the vector field $H$ such that $\langle H(P), N(P) \rangle \gtrsim  1$ independent of $P \in \partial \Omega$. Hence applying the ellipticity condition (A3) to the left hand side of \eqref{eq:divergence theorem fourier transform eq 3}, the estimates \eqref{eq:baby rellich 2} and \eqref{eq:baby rellich 3} to the right hand side of \eqref{eq:divergence theorem fourier transform eq 3} and also Cauchy's inequality, we will finally obtain
  \begin{equation*}%
    \int_{\partial \Omega} \Big(|\nabla u^{\pm}(P,\tau)|^2 + |\tau| \, |u(P,\tau)|^2 \Big)\, dP
      \lesssim \int_{\partial \Omega} \Big(|\partial_{\nu_A} u^{\pm}(P,\tau)|^2 + |u(P,\tau)|^2\Big)\, dP,
    \end{equation*}
which proves \eqref{eq:Rell2}.\\

The proof of \eqref{eq:Rell1} is a bit more involved and goes as follows. Given a $C^1$ vector field $H$ and using \eqref{eq:FourierEq}, an elementary calculation yields
   $$\sum_{i,j,k=1}^n \partial_k \,[(H_k \tilde{a}_{ij} - H_i \tilde{a}_{kj} - H_j \tilde{a}_{ik})\, \partial_i u  \,\partial_j \bar{u}]
        = \sum_{i,j=1}^n b_{ij} \, \partial_i u \, \partial_j \bar{u}-2 \tau \sum_{i=1}^n \mathrm{Im} \, (H_i \,\partial_i \bar{u} \, u),$$
         where $b_{ij}= \partial_k (H_k \tilde{a}_{ij} - H_i \tilde{a}_{kj} - H_j \tilde{a}_{ik}).$
Therefore, divergence theorem implies that
   \begin{align}\label{eq:divergence theorem fourier transform eq}
   & \sum_{i,j,k=1}^n \int_{\partial \Omega} n_k(P)\,\Big(H_k(P) \tilde{a}_{ij}(P) - H_i(P) \tilde{a}_{kj}(P) - H_j(P) \tilde{a}_{ik}(P)\Big)\, \partial_i u^{\pm}(P,\tau)  \,\partial_j \bar{u}^{\pm}(P,\tau) \, dP \\
   & \qquad   = \pm  \int_{\Omega^{\pm}} \Big[2 \tau\, \sum_{i=1}^n \mathrm{Im} \, \Big(H_i(X) \,\partial_i \bar{u}(X,\tau) \, u(X,\tau)\Big)
            + \sum_{i,j=1}^n b_{ij}(X) \, \partial_i u(X,\tau) \, \partial_j \bar{u}(X,\tau) \Big] \, dX. \nonumber
   \end{align}
Now since
\begin{align*}
& \sum_{i,j,k=1}^n \int_{\partial \Omega} n_k(P)\,\Big(H_k(P) \tilde{a}_{ij}(P) - H_i(P) \tilde{a}_{kj}(P) - H_j(P) \tilde{a}_{ik}(P)\Big)\, \partial_i u^{\pm}(P,\tau)  \,\partial_j \bar{u}^{\pm}(P,\tau) \, dP \\
& \qquad = \sum_{i,j,k=1}^n \Big(-\int_{\partial \Omega} n_k(P)\, \Big[ H_i(P)\,  \tilde{a}_{kj}(P)-H_k(P)\,  \tilde{a}_{ij}(P) \Big] \, \partial_i u^{\pm}(P,\tau)  \,\partial_j \bar{u}^{\pm}(P,\tau) \, dP \\
& \qquad \quad - \int_{\partial \Omega} n_k(P)\, \Big[H_j(P)\,  \tilde{a}_{ik}(P) - H_k(P)\,  \tilde{a}_{ij}(P)\Big] \, \partial_i u^{\pm}(P,\tau)  \,\partial_j \bar{u}^{\pm}(P,\tau) \, dP \\
& \qquad \quad -  \int_{\partial \Omega} n_k(P)\, H_k(P)\,  \tilde{a}_{ij}(P)\, \partial_i u^{\pm}(P,\tau)  \,\partial_j \bar{u}^{\pm}(P,\tau) \, dP\Big),
\end{align*}

equation \eqref{eq:divergence theorem fourier transform eq} yields that
\begin{align}\label{eq:a bloody mess}
& \sum_{i,j,k=1}^n \int_{\partial \Omega} n_k(P)\, H_k(P)\,  \tilde{a}_{ij}(P)\, \partial_i u^{\pm}(P,\tau)  \,\partial_j \bar{u}^{\pm}(P,\tau) \, dP \\
    & \quad = -\sum_{i,j,k=1}^n \int_{\partial \Omega} n_k(P)\, \Big[ H_i(P)\,  a_{kj}(P)-H_k(P)\,  \tilde{a}_{ij}(P) \Big] \, \partial_i u^{\pm}(P,\tau)  \,\partial_j \bar{u}^{\pm}(P,\tau) \, dP \nonumber\\
    & \quad \quad - \sum_{i,j,k=1}^n \int_{\partial \Omega} n_k(P)\, \Big[H_j(P)\,  \tilde{a}_{ik}(P) - H_k(P)\,  \tilde{a}_{ij}(P)\Big] \, \partial_i u^{\pm}(P,\tau)  \,\partial_j \bar{u}^{\pm}(P,\tau) \, dP \nonumber\\
    & \quad \quad \mp  \int_{\Omega^{\pm}} \Big[2 \tau\, \sum_{i=1}^n \mathrm{Im} \, \Big(H_i(X) \,\partial_i \bar{u}(X,\tau) \, u(X,\tau)\Big)
            + \sum_{i,j=1}^n b_{ij}(X) \, \partial_i u(X,\tau) \, \partial_j \bar{u}(X,\tau) \Big] \, dX. \nonumber
\end{align}

Now we observe that for fixed $j$ (resp. fixed $i$) the vector $ n_k(P)[H_i(P)\,  \tilde{a}_{kj}(P)-H_k(P)\,  \tilde{a}_{ij}(P)]$  (resp. $n_k(P) [H_j(P)\,  \tilde{a}_{ik}(P) - H_k(P)\,  \tilde{a}_{ij}(P)]$ is orthogonal to the normal vector $N$. Therefore the first two terms on the right hand side of \eqref{eq:a bloody mess} can be estimated by $\int_{\partial \Omega} |\nabla  u^{\pm}(P,\tau)|  \,|\nabla_{T} u(P,\tau)| \, dP .$  Moreover, if we once again use the fact that $\langle H(P), N(P) \rangle \gtrsim  1,$ then using the ellipticity condition (A3) and \eqref{eq:a bloody mess} we obtain
\begin{align}\label{eq:a bloody mess 2}
& \int_{\partial \Omega} |\nabla u^{\pm}(P,\tau)|^2 dP
    \lesssim \int_{\partial \Omega} |\nabla  u^{\pm}(P,\tau)|  \,|\nabla_{T} u(P,\tau)| \, dP  \\
    & \qquad \qquad +  \int_{\Omega^{\pm}} 2 |\tau| \, |\nabla u(X,\tau)| \, |u(X,\tau)| \, dX+ \int_{\Omega^{\pm}}|\nabla \tilde{A}(X)| \,|\nabla u(X,\tau)|^2\, dX. \nonumber
\end{align}

Now since \eqref{Divergence and Schwarz} yields that

\begin{equation*}
  \int_{\Omega^{\pm}}  |\tau| \, |\nabla u(X,\tau)| \, |u(X,\tau)|\, dX \lesssim \int_{\partial \Omega}|\tau|^{\frac{1}{2}}\, |\nabla u^{\pm}(P,\tau)|\, |u(P,\tau)| \, dP,
\end{equation*}

estimate \eqref{eq:a bloody mess 2} and Cauchy's inequality imply that

  \begin{equation}\label{eq:Rellich with grad A 2}
    \int_{\partial \Omega} |\nabla u^{\pm}(P,\tau)|^2 dP
      \lesssim  \int_{\partial \Omega} \Big(|\nabla_T u(P,\tau)|^2 + |\tau| |u(P,\tau)|^2 \Big)\, dP+\int_{\Omega^{\pm}}|\nabla \tilde{A}(X)| \,|\nabla u(X,\tau)|^2\, dX.
  \end{equation}
As the reader can see, the estimate above contains an undesired term $\int_{\Omega^{\pm}}|\nabla \tilde{A}(X)| \,|\nabla u(X,\tau)|^2\, dX$ which will be removed in what follows. To this end since $\tilde{A},$ verifies all the assumptions of Lemmas \ref{Lem:intboundGAO} and \ref{Lem:umax}, using these lemmas we have
  \eqref{eq:Rellich with grad A 2}
  \begin{align*}
    & \int_{\partial \Omega} |\nabla u^{\pm}(P,\tau)|^2 dP
      \lesssim  \int_{\partial \Omega} \Big(|\nabla_T u(P,\tau)|^2 + |\tau| |u(P,\tau)|^2 \Big)\, dP\\
    & \qquad \qquad  +  \delta^{n-1+\alpha_0} \int_{\partial \Omega} \sup_{X\in \gamma_{\delta}(P)}|\nabla u(X,\tau)|^2\, dP
                        + \frac{1}{\delta^{(1-\alpha_0)}}\int_{\Omega^{\pm}}|\nabla u(X,\tau)|^2\, dX \\
    & \qquad \lesssim  \int_{\partial \Omega} \Big(|\nabla_T u(P,\tau)|^2 + |\tau| |u(P,\tau)|^2 \Big)\, dP
        + \delta^{n-1+\alpha_0} \int_{\partial \Omega} \sup_{X\in \gamma_{\delta}(P)}|\nabla u(X,\tau)|^2\, dP  \\
    & \qquad \qquad  + \frac{1}{\delta^{(1-\alpha_0)}}
    \int_{\partial \Omega} \Big(\eps |\nabla u^{\pm}(P,\tau)|^2 + \frac{|u(P,\tau)|^2}{\eps} \Big)\,dP,
  \end{align*}
  where we have used the fact that $|\partial_{\nu_{\tilde{A}}} u^{\pm}| \lesssim |\nabla u^{\pm}|.$ Next picking $\varepsilon$ sufficiently small, say $\eps<\eps_0(\delta)<1$, we can absorb the term $ \frac{\eps }{\delta^{(1-\alpha_0)}}
    \int_{\partial \Omega} |\nabla u^{\pm}(P,\tau)|^2 \,dP $ in the left hand side of the estimate above. This establishes \eqref{eq:Rell1}.
\end{proof}

Now we are ready to state and prove our first Rellich-type estimates for the Fourier-transformed equation associated to the diffusion matrix $\tilde{A} + A-A^{(r)}$.

\begin{Th}\label{Thm:Transferencetau}
    Let $A$ be a matrix satisfying properties \hyperref[A1]{\small{$(A1)$}} -- \hyperref[A4]{\small{$(A4)$}} and assume that $\tilde{A}$ and $A^{(r)}$, with $0<r<1$, are defined as is $\mathrm{Lemmas}$ $\ref{Lem:Atilde}$ and $\ref{Lem:Ar}.$
    Set $B:= \tilde{A} + A-A^{(r)}$ and assume that $\widehat{\LL}_B u=0$ in $\Omega$.
    Then for all $\delta\in (0, \delta_0 (r))$ and $\eps' \in (0,\eps_0'(\delta,r))$, we have the following Rellich-type estimates
    \begin{align}\label{eq:RellB1}
    \int_{\partial \Omega} \Big(|\nabla u^{\pm}(P,\tau)|^2 + |\tau| |u(P,\tau)|^2 \Big)\, dP
      \lesssim & \int_{\partial \Omega} \Big(|\partial_{\nu_B} u^{\pm}(P,\tau)|^2 + |u(P,\tau)|^2\Big)\, dP ,
    \end{align}
 \begin{align}\label{eq:RellB2}
  \int_{\partial \Omega} |\nabla u^{\pm}(P,\tau)|^2 dP
      &\lesssim \int_{\partial \Omega} \Big(|\nabla_T u(P,\tau)|^2 + |\tau| |u(P,\tau)|^2 + \frac{|u(P,\tau)|^2}{\eps'}\Big)\, dP\\
      & \quad + \delta^{n-1+\alpha_0} \int_{\partial \Omega}|(\nabla u)^+_* (P,\tau)|^2\, dP\nonumber,
  \end{align}

   \begin{equation}\label{eq:RellB3}
  \int_{\partial \Omega} |\partial_{\nu_B} u^-(P,\tau)|^2 dP
  	\lesssim \int_{\partial \Omega} \Big(\frac{|u(P,\tau)|^2}{\eps'} + |\partial_{\nu_B} u^+(P,\tau)|^2 + \delta^{n-1+\alpha_0}|(\nabla u)^+_*(P,\tau)|^2\Big)\, dP.
 \end{equation}
 where {$(\nabla u)^+_*$} denotes the non-tangential maximal function of $\nabla u.$

\end{Th}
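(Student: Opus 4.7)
The plan is to transfer the Rellich estimates of Proposition \ref{Lem:Rellsmooth} from the smooth matrix $\tilde A$ to the composite matrix $B=\tilde A + A - A^{(r)}$. The crucial structural observation is that, by the construction of $A^{(r)}$ in Lemma \ref{Lem:Ar}, $B-\tilde A=(1-\theta(\mathrm{dist}(\cdot,\partial\Omega)/r))(A-\tilde A)$ vanishes in a neighborhood of $\partial\Omega$ of width $\gtrsim r$ (where $\theta\equiv 1$); hence $B\equiv\tilde A$ near the boundary, and in particular $B=A=\tilde A$ pointwise on $\partial\Omega$, so the conormal derivatives $\partial_{\nu_B},\partial_{\nu_{\tilde A}},\partial_{\nu_A}$ all coincide on $\partial\Omega$. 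Consequently every boundary-trace term in the Pohozaev-type identities driving the proof of Proposition \ref{Lem:Rellsmooth} is unchanged when $\tilde A$ is replaced by $B$, and only the volume terms have to be reanalyzed.

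For \eqref{eq:RellB1} I would simply rerun the proof of \eqref{eq:Rell2} with $\widehat{\LL}_Bu=0$ replacing $\widehat{\LL}_{\tilde A}u=0$: the identities \eqref{Divergence thm for Fouriertransformed eq}--\eqref{eq:baby rellich 3} use only the equation, the ellipticity of the matrix, and an $L^\infty$ bound on it, none of which is affected by the H\"older-only regularity of $B$ far from $\partial\Omega$. For \eqref{eq:RellB2}, the Pohozaev multiplier identity \eqref{eq:divergence theorem fourier transform eq}--\eqref{eq:a bloody mess} applied with $B$ in place of $\tilde A$ produces the analogue of \eqref{eq:Rellich with grad A 2}, whose essential new feature is the volume remainder $\int_{\Omega^\pm}|\nabla B|\,|\nabla u|^2\,dX$. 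I would split this along the neighborhood of $\partial\Omega$ on which $B=\tilde A$: on the near part Lemma \ref{Lem:umax} provides exactly the $\delta^{n-1+\alpha_0}\int_{\partial\Omega}|(\nabla u)^+_*|^2 + \delta^{-(1-\alpha_0)}\int_{\Omega^\pm}|\nabla u|^2$ bound sought on the right-hand side of \eqref{eq:RellB2}; on the far region, where $B$ is only H\"older, the corresponding contribution must be read distributionally and integrated by parts via $\widehat{\LL}_Bu=0$, producing further bulk $L^2$-norms of $u$ and $\nabla u$ which Lemma \ref{Lem:intboundGAO} pushes back to $\partial\Omega$ weighted by $\eps'$ and $1/\eps'$. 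Choosing $\eps'<\eps_0'(\delta,r)$ sufficiently small then absorbs the $\eps'\delta^{-(1-\alpha_0)}\int|\nabla u^\pm|^2$ contribution into the left-hand side and yields \eqref{eq:RellB2}.

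For \eqref{eq:RellB3} I would chain the two previous estimates across $\partial\Omega$. Ellipticity of $B$ gives $|\partial_{\nu_B}u^-|\lesssim|\nabla u^-|$ pointwise on $\partial\Omega$, so \eqref{eq:RellB2} applied in $\Omega^-$ bounds $\int_{\partial\Omega}|\partial_{\nu_B}u^-|^2$ by $\int_{\partial\Omega}\bigl[|\nabla_T u|^2+|\tau||u|^2+|u|^2/\eps'+\delta^{n-1+\alpha_0}|(\nabla u)^-_*|^2\bigr]$. In the layer-potential setting where this theorem is eventually applied, $u$ is continuous across $\partial\Omega$, so $\nabla_T u$ agrees on the two sides and the non-tangential maximal functions $(\nabla u)^\pm_*$ are interchangeable by standard interior regularity together with the two-sided trace; then \eqref{eq:RellB1} on $\Omega^+$ converts $\int|\nabla_T u|^2+|\tau||u|^2$ into $\int|\partial_{\nu_B}u^+|^2+|u|^2$, producing \eqref{eq:RellB3}. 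The main obstacle throughout is the treatment of the distributional object $\nabla B$ in the Pohozaev identity behind \eqref{eq:RellB2}: one must argue carefully that the error terms on the far region, where $B$ has no classical gradient, do not degrade the $\delta^{n-1+\alpha_0}$ prefactor in front of $|(\nabla u)^+_*|^2$ — that prefactor is precisely what subsequently allows the transference of the Rellich estimates to the full H\"older matrix $A$ by absorbing the maximal-function term via a small parameter.
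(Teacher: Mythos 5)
Your proposal identifies the right target (Proposition~\ref{Lem:Rellsmooth}) and correctly observes that $B=\tilde A$ near $\partial\Omega$, but the route you take to transfer the estimates is genuinely different from the paper's, and it has a gap at exactly the point you yourself flag as the ``main obstacle.'' The paper does \emph{not} rerun the Pohozaev identity directly with $B$ in place of $\tilde A$. Instead it decomposes the solution $u=w+v$, where $\widehat{\LL}_{\tilde A}w=0$ in $\Omega$ and
$$v(X,\tau)=-\sum_{i,j}\int_{\Omega}\partial_{y_i}\widehat{\G}_{\tilde A}(X,Y,\tau)\,m_{ij}(Y)\,\partial_{y_j}u(Y,\tau)\,dY,\qquad m_{ij}:=A-A^{(r)}.$$
It then applies Proposition~\ref{Lem:Rellsmooth} verbatim to $w$ (which is legitimate since $\tilde A$ is the genuinely smooth matrix), and controls the $v$-terms by exploiting the fact that $m_{ij}$ is supported in $\{\mathrm{dist}(\cdot,\partial\Omega)\geq r_0 r\}$: pointwise on $\partial\Omega$, $|\nabla v|+|\tau|^{1/2}|v|+|v|\lesssim r^{-n}\int_{\Omega}|\nabla u|$ via Lemma~\ref{Lem:GXYFourier}, and the bulk energy $\int_{\Omega}|\nabla u|^2$ is pushed back to $\partial\Omega$ by Lemma~\ref{Lem:intboundGAO}. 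At no point is a derivative of $B$, or of $A-A^{(r)}$, ever formed. The $\nabla\tilde A$ term in the Pohozaev identity applies only to the genuinely smooth $\tilde A$, where Lemma~\ref{Lem:umax} is available.

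Your plan, by contrast, proposes to form the Pohozaev identity with $B$ itself and then confront the resulting ``$\int_{\Omega^\pm}|\nabla B|\,|\nabla u|^2$'' term by a near/far split. On the far region $B$ is only H\"older continuous, so $\nabla B$ does not exist even in $L^1_{\rm loc}$, and the product $\nabla B\cdot|\nabla u|^2$ cannot be read as a pairing of a distribution with a test function ($|\nabla u|^2$ is merely in $L^1$). Your remark that it ``must be read distributionally and integrated by parts via $\widehat{\LL}_Bu=0$'' is precisely the hard step, and you do not supply the mechanism; this is not a detail that routine mollification fixes, since mollifying $B$ destroys the exact identity $\widehat{\LL}_Bu=0$. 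The $u=w+v$ decomposition is what the paper introduces exactly to bypass this obstruction — it is the central new idea of the transference scheme advertised in the introduction — so skipping it leaves a real hole. Secondarily, for \eqref{eq:RellB3} you invoke continuity of $u$ and agreement of $\nabla_T u$ across $\partial\Omega$, which holds for single-layer potentials but is not among the hypotheses of the theorem; the paper obtains \eqref{eq:RellB3} simply by chaining \eqref{eq:RellB2} (on $\Omega^-$) with \eqref{eq:RellB1} (on $\Omega^+$) and the pointwise bound $|\partial_{\nu_B}u^\pm|\lesssim|\nabla u^\pm|$, without appealing to such extra structure.
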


\begin{proof}
By definition, we have that $B:= \tilde{A} + A-A^{(r)}$ and $B$ satisfies properties \hyperref[A1]{\small{$(A1)$}} -- \hyperref[A4]{\small{$(A4)$}}. The idea is to transfer the Rellich inequalities \eqref{eq:Rell2} and \eqref{eq:Rell1}, which are valid for the smooth matrix $\tilde{A}$, to the corresponding ones for $B$.

Let $m_{ij}$ be the matrix associated with $ A-A^{(r)}$, and $b_{ij}$ be that of $B$'s. Observe that by the construction of $A^{(r)}$
 \begin{equation*}
    m_{ij}(X)=0 \quad \mathrm{when}\quad  \mathrm{dist}(X,\partial \Omega) < r_0 r,
  \end{equation*}
where $r_0$ is the diameter of $\Omega$. Since $\widehat{\LL}_ B u=0$ in $\Omega$, it is clear that
  \begin{align*}
    \widehat{\LL}_{\tilde{A}} u
	= - i \tau u - \sum_{i,j=1}^n \partial_i \big[ \big(b_{ij}-m_{ij}\big)\, \partial_j u \big]
	= \sum_{i,j=1}^n \partial_i(m_{ij}\,\partial_j u )
	\quad \text{in } \Omega.
  \end{align*}
Hence, we can decompose $u=w+v;$
  where $\widehat{\LL}_{\tilde{A}} w=0$ in $\Omega$, and
  $$v(X,\tau)
	:= -\sum_{i,j=1}^n \int_\Omega \partial_{y_i}\widehat{\G}_{\tilde{A}}(X,Y,\tau)\, m_{ij}(Y)\partial_{y_j} u(Y,\tau)\, dY=: \sum_{i,j=1}^n v_{ij}(X,\tau) ,$$
  where $\G_{\tilde{A}}$ denotes the fundamental solution $\G$ considered in Section \ref{sec:FS},
  but here we emphasise its dependence on $\tilde{A}$.

  First we prove \eqref{eq:RellB1}. We observe that applying \eqref{eq:Rell2} to $w$ yields
  \begin{align}\label{starting rellich}
    & \int_{\partial \Omega} \Big(|\nabla u^{\pm}(P,\tau)|^2 + |\tau||u(P,\tau)|^2 \Big)\, dP  \\
    & \qquad \lesssim \int_{\partial \Omega} \Big(|\nabla w^{\pm}(P,\tau)|^2 + |\tau| |w(P,\tau)|^2 \Big)\,dP
		    + \int_{\partial \Omega} \Big(|\nabla v^{\pm}(P,\tau)|^2 + |\tau| |v(P,\tau)|^2 \Big)\, dP \nonumber \\
    & \qquad \lesssim \int_{\partial \Omega} \Big(|\partial_{\nu_{\tilde{A}}} w^{\pm}(P,\tau)|^2 + |w(P,\tau)|^2\Big)\,dP
		    + \int_{\partial \Omega} \Big(|\nabla v^{\pm}(P,\tau)|^2 + |\tau| |v(P,\tau)|^2 \Big)\,dP \nonumber \\
    & \qquad \lesssim \int_{\partial \Omega} \Big(|\partial_{\nu_{B}} u^{\pm}(P,\tau)|^2 + |u(P,\tau)|^2\Big)\,dP
		    + \int_{\partial \Omega} \Big(|\nabla v^{\pm}(P,\tau)|^2 + |\tau||v(P,\tau)|^2 + |v(P,\tau)|^2\Big)\,dP,\nonumber
  \end{align}
  where we have used that $\tilde{A}=B$ on $\partial \Omega$, and
  $|\partial_{\nu_{\tilde{A}}} v^\pm| \lesssim |\nabla v^\pm|$. It remains to control the integrals involving the function $v$.

 Applying Lemma \ref{Lem:GXYFourier} $(i)$, $(ii)$ and $(iii)$ (taken with $q=1/2$), we have that
  \begin{align*}
      |\nabla v_{ij}^\pm(P,\tau)| + |\tau|^{1/2} |v_{ij}(P,\tau)| + |v_{ij}(P,\tau)|
	& \lesssim \int_{\Omega\cap \{|Y-P| \geq r_0 r\}} \frac{|\nabla u(Y,\tau)|}{|Y-P|^n} \, dY \\
	& \lesssim \frac{1}{r^n} \int_{\Omega} |\nabla u(Y,\tau)|\, dY, \quad P \in \partial  \Omega.
  \end{align*}
  Thus,
  \begin{align}\label{estim:vijs great estim}
    \int_{\partial \Omega} \Big(|\nabla v_{ij}^\pm(P,\tau)|^2 + |\tau| |v_{ij}(P,\tau)|^2 + |v_{ij}(P,\tau)|^2\Big)\,dP
      & \lesssim \frac{1}{r^{2n}} \int_{\Omega} |\nabla u(Y,\tau)|^2 \, dY.
  \end{align}

Now applying Lemma \ref{Lem:intboundGAO} to the term $\int_{\Omega} |\nabla u(Y,\tau)|^2 \, dY$ in the above estimate and inserting the resulting estimate in \eqref{starting rellich}, we arrive at
 \begin{equation*}
     \int_{\partial \Omega} \Big(|\nabla u^\pm(P,\tau)|^2 + |\tau ||u(P,\tau)|^2 \Big)\, dP \lesssim \Big(1 + \frac{1}{r^{2n}} \Big) \int_{\partial \Omega} \Big(|\partial_{\nu_{B}} u^\pm (P,\tau)|^2 + |u(P,\tau)|^2\Big)\, dP.
  \end{equation*}
  This proves \eqref{eq:RellB1}.

  We proceed with the proof of \eqref{eq:RellB2}. We observe that applying \eqref{eq:Rell1} to $w$ yields
\begin{align}\label{ending rellich}
& \int_{\partial \Omega}|\nabla u^{\pm}(P,\tau)|^2 \, dP
    	\lesssim \int_{\partial \Omega} |\nabla w^{\pm}(P,\tau)|^2\,dP + \int_{\partial \Omega} |\nabla v^{\pm}(P,\tau)|^2 \, dP\\
& \qquad  \lesssim \int_{\partial \Omega} \Big(|\nabla_T w(P,\tau)|^2 + |\tau| |w(P,\tau)|^2 + \frac{|w(P,\tau)|^2}{\eps^2}\Big)\, dP \nonumber \\
& \qquad \quad + \delta^{n-1+\alpha_0} \int_{\partial \Omega}\sup_{X\in \gamma_{\delta}(P)}|\nabla w(X,\tau)|^2\, dP+  \int_{\partial \Omega} |\nabla v^{\pm}(P,\tau)|^2 \, dP \nonumber\\
&\qquad \lesssim \int_{\partial \Omega} \Big(|\nabla_T u(P,\tau)|^2 + |\tau| |u(P,\tau)|^2 + \frac{|u(P,\tau)|^2}{\eps^2}\Big)\, dP+ \delta^{n-1+\alpha_0} \int_{\partial \Omega}|(\nabla u)^+_* (P,\tau)|^2\, dP \nonumber\\
&\qquad \quad  +\int_{\partial \Omega} \Big(|\nabla v^{\pm}(P,\tau)|^2 +|\nabla_T v(P,\tau)|^2 + |\tau| |v(P,\tau)|^2 +
\frac{|v(P,\tau)|^2}{\eps^2}\Big)\, dP \nonumber\\
&\qquad \quad + \delta^{n-1+\alpha_0} \int_{\partial \Omega}\sup_{X\in \gamma_{\delta}(P)}|\nabla v(X,\tau)|^2\, dP. \nonumber
  \end{align}
Now using \eqref{estim:vijs great estim} and Lemma \ref{Lem:intboundGAO}, we once again see that

\begin{align}\label{estim: rellich fur v2}
& \int_{\partial \Omega} \Big(|\nabla v^{\pm}(P,\tau)|^2 +|\nabla_T v(P,\tau)|^2 + |\tau| |v(P,\tau)|^2 + \frac{|v(P,\tau)|^2}{\eps^2}\Big)\, dP\\
& \qquad \lesssim \int_{\partial \Omega} \Big(|\nabla v^{\pm}(P,\tau)|^2 + |\tau| |v(P,\tau)|^2 + \frac{|v(P,\tau)|^2}{\eps^2}\Big)\, dP
\lesssim \frac{1}{\eps^2 r^{2n}} \int_{\Omega} |\nabla u(Y,\tau)|^2 \, dY \nonumber \\
& \qquad \lesssim \frac{1}{\eps^2 r^{2n}}  \int_{\partial \Omega} \Big(\eps' |\nabla u^\pm (P,\tau)|^2 + \frac{|u(P,\tau)|^2}{\eps'}\Big)\, dP. \nonumber
\end{align}
Moreover using the support property of $m_{ij}$ one has
\begin{equation}\label{nontangmax for vij}
   |\nabla v_{ij}(X,\tau)| \lesssim \int_{\Omega\cap \{Y;\,  \mathrm{dist}(Y,\partial \Omega) \geq r_0 r\}} \frac{|\nabla u(Y,\tau)|}{|X-Y|^n} \, dY.
\end{equation}
If now $P\in\partial \Omega$,  $X\in \gamma_{\delta}(P)$ and $\mathrm{dist}(Y,\partial \Omega) \geq r_0 r$, then triangle inequality yields that $$|X-Y|\geq r_0 r -(1+a) \delta\geq C>0, $$ provided that we choose $\delta <\frac{r_0 r}{1+a}=:\delta_0 (r)$. Therefore choosing $\delta<\delta_0 (r)$, estimate \eqref{nontangmax for vij}, Lemma \ref{Lem:intboundGAO} and the Cauchy-Schwarz inequality yield
\begin{equation}\label{nontangmax for v}
   \int_{\partial \Omega}\sup_{X\in \gamma_{\delta}(P)}|\nabla v(X,\tau)|^2\, dP \lesssim \int_{\Omega} |\nabla u(Y,\tau)|^2 \, dY \lesssim  \int_{\partial \Omega} \Big(\eps' |\nabla u^\pm (P,\tau)|^2 + \frac{|u(P,\tau)|^2}{\eps'}\Big)\, dP.
\end{equation}
Hence choosing $\eps'$  small enough, say $\eps'<\eps_0'(\delta,r,\eps)$, we can absorb the term $\int_{\partial \Omega} \eps' |\nabla u^\pm (P,\tau)|^2\, dP$ on the right hand side of \eqref{estim: rellich fur v2} and \eqref{nontangmax for v} in the left hand side of \eqref{ending rellich}. Thus, \eqref{estim: rellich fur v2}, \eqref{nontangmax for v} and \eqref{ending rellich} together yield
 \begin{align*}
\int_{\partial \Omega} |\nabla u^{\pm}(P,\tau)|^2 dP
      & \lesssim \int_{\partial \Omega} \Big(|\nabla_T u(P,\tau)|^2 + |\tau| |u(P,\tau)|^2 + \frac{|u(P,\tau)|^2}{\eps'}\Big)\, dP \\
      & \quad + \delta^{n-1+\alpha_0} \int_{\partial \Omega}|(\nabla u)^+_* (P,\tau)|^2\, dP.
\end{align*}

Finally, \eqref{eq:RellB3} follows by using the fact that $|\partial_{\nu_{B}} u^{\pm}| \lesssim |\nabla u^{\pm}|$ and applying first \eqref{eq:RellB2} and then \eqref{eq:RellB1}.
\end{proof}

\subsection{Rellich estimates for the parabolic equation}\label{subsec:rellich for parab}
Once again we start by some basic estimates for operators with smooth diffusion coefficients. However, for a H\"older-continuous $A$ we have the following lemma
\begin{Lem}\label{Lem:intboundGAO4.1}
    Let $A$ be a matrix satisfying properties \hyperref[A1]{\small{$(A1)$}} -- \hyperref[A4]{\small{$(A4)$}}.
    Assume that $\LL_A u=0$ in $\Omega \times (0,T)$ and $u(X,0)=0$. Then for every $\eps>0$
    $$\int_0^T \int_{\Omega^{\pm}} |\nabla u(X,t)|^2 \, dX\, dt
	\lesssim \int_0^T \int_{\partial \Omega} \Big(\eps |\nabla u^{\pm}(P,t)|^2 + \frac{|u(P,t)|^2}{\eps} \Big)\,dP \,dt.$$
\end{Lem}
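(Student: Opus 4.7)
The plan is to establish a parabolic energy identity analogous to \eqref{Divergence thm for Fouriertransformed eq} and then conclude by combining ellipticity with Cauchy's inequality, in the spirit of Lemma \ref{Lem:intboundGAO}.

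First I would multiply the equation $\LL_A u = \partial_t u - \nabla_X\cdot(A(X)\nabla_X u) = 0$ by $u(X,t)$ and integrate over $\Omega^{\pm} \times (0,T)$. Integration by parts in $X$, together with the identity $u\,\partial_t u = \tfrac{1}{2}\partial_t(u^2)$ and the hypothesis $u(X,0)=0$, produces a boundary contribution $\int_0^T\!\int_{\partial\Omega} u\,\partial_{\nu_A} u^{\pm}\, dP\, dt$, an interior gradient integral $\int_0^T\!\int_{\Omega^\pm} \langle A\nabla u,\nabla u\rangle\, dX\, dt$, and a non-negative terminal time-slice term $\tfrac{1}{2}\int_{\Omega^\pm} u(X,T)^2\, dX$. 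In the exterior case, the standard decay of $u$ at infinity (inherited from the layer-potential representation one ultimately uses) kills the boundary contribution at infinity and justifies the integration by parts.

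The crucial observation is that the gradient integral and the non-negative terminal term enter the resulting identity with the same sign, so after invoking the lower ellipticity bound $\langle A\xi,\xi\rangle \geq \mu|\xi|^2$ from \hyperref[A3]{\small{$(A3)$}} one may discard the terminal term and arrive at
$$\int_0^T \int_{\Omega^{\pm}} |\nabla u(X,t)|^2\, dX\, dt \lesssim \Big| \int_0^T\!\int_{\partial\Omega} u(P,t)\,\partial_{\nu_A} u^{\pm}(P,t)\, dP\, dt \Big|.$$

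To finish, I would use the upper bound in \hyperref[A3]{\small{$(A3)$}} to estimate $|\partial_{\nu_A} u^{\pm}| \lesssim |\nabla u^{\pm}|$ on $\partial\Omega$, and then apply the elementary Cauchy inequality $|ab| \leq \eps a^2 + b^2/\eps$ to the boundary integrand, which immediately yields the stated estimate. No serious obstacle is anticipated; the only mild technicality is the justification of the integration by parts in the unbounded exterior case $\Omega^-$, which is entirely analogous to the elliptic situation treated in Lemma \ref{Lem:intboundGAO} and so adds nothing new.
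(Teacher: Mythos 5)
Your proof is correct and follows essentially the same route as the paper: multiply by $u$, integrate by parts using $u\,\partial_t u=\tfrac12\partial_t(u^2)$ and $u(\cdot,0)=0$, discard the nonnegative terminal term $\tfrac12\int_{\Omega^\pm}|u(\cdot,T)|^2$, invoke ellipticity, and finish with Cauchy's inequality together with $|\partial_{\nu_A}u^\pm|\lesssim|\nabla u^\pm|$. The only difference is that you spell out the last inequality and the exterior-domain integration by parts, which the paper leaves implicit.
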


\begin{proof}
 We multiply equation $\LL_A u=0$ by $u$ and integrate by parts. Thereafter we use the ellipticity assumption ($A_3$) and the assumption $u(X,0)=0,$ which yield

  \begin{align}\label{estim: energy related}
    \pm \int_{0}^{T}\int_{\partial \Omega} u\, \partial_{\nu_{A}} u^{\pm}\, dP \, dt
    &=\int_{0}^{T}\int_{\Omega^\pm} \left( \sum_{ij} a_{ij}\, \partial_{i} u\, \partial_{j} u +  u\, \partial_{t} u\right)\, dX \, dt \\
    &\geq \mu \int_{0}^{T}\int_{\Omega^\pm} |\nabla u|^2 \, dX \, dt + \frac{1}{2} \int_{\Omega^\pm} |u (X, T) |^2\, dX. \nonumber
  \end{align}
Now, neglecting the last term in the right hand side of \eqref{estim: energy related} and applying Cauchy's inequality we obtain the estimate claimed in the lemma.
\end{proof}

\begin{Lem}\label{Lem:umaxparab}
Let $A\in C^\infty(2 \mathcal{Q}_\Omega \setminus \partial \Omega)$ be a matrix verifying \eqref{gradient of A}, then for each $\delta >0$ one has
\begin{align*}
& \int_{0}^{T}\int_{\Omega^{\pm}}|\nabla A(X)||\nabla u(X,t)|^2\, dX\, dt
 \lesssim \delta^{n-1+\alpha_0}  \int_{0}^{T}\int_{\partial \Omega} \sup_{X\in \gamma_{\delta}(P)}|\nabla u(X,t)|^2\, dP \, dt \\
 & \qquad \qquad + \frac{1}{\delta^{(1-\alpha_0)}}\int_{0}^{T}\int_{\Omega^{\pm}}|\nabla u(X,t)|^2\, dX \, dt,
\end{align*}
where $\gamma_{\delta}(P)$ was defined in $\mathrm{Lemma}$ $\ref{Lem:umax}$ and $\alpha_0$ is the parameter appearing in \eqref{gradient of A}.
\end{Lem}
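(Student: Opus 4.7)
The plan is to mimic the proof of Lemma~\ref{Lem:umax} essentially verbatim, with the only modification being the inclusion of an outer time integral $\int_0^T dt$ throughout. Since the pointwise estimate \eqref{gradient of A} on $|\nabla A(X)|$ is purely spatial and does not involve the time variable, Fubini's theorem permits the time integration to be carried along outside every spatial manipulation without any interaction.

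First I would decompose $\Omega^\pm = E_1 \cup E_2$, where
$$E_1 := \{X \in \Omega^\pm : \mathrm{dist}(X,\partial\Omega) \leq \delta\}, \qquad E_2 := \{X \in \Omega^\pm : \mathrm{dist}(X,\partial\Omega) > \delta\},$$
and split the double integral accordingly. The contribution on $E_2$ is immediate: since $\mathrm{dist}(X,\partial\Omega) > \delta$ there, the bound \eqref{gradient of A} gives $|\nabla A(X)| \lesssim \delta^{-(1-\alpha_0)}$ pointwise, and enlarging the spatial domain of integration back from $E_2$ to $\Omega^\pm$ produces exactly the second term on the right hand side of the claim.

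Next I would handle the $E_1$ contribution. Invoking \eqref{gradient of A} again, the task reduces to controlling $\int_0^T \int_{E_1} \frac{|\nabla u(X,t)|^2}{\mathrm{dist}(X,\partial\Omega)^{1-\alpha_0}}\, dX\, dt$. Following the elliptic template, I would pass to a double integral in which $P$ ranges over $\partial\Omega$ and $X$ over the cone slice $\gamma_\delta(P)$, use that $\mathrm{dist}(X,\partial\Omega) \simeq |X-P|$ there to replace the distance weight by $|X-P|^{-(1-\alpha_0)}$, bound the $|\nabla u(X,t)|^2$ factor pointwise by $\sup_{X \in \gamma_\delta(P)} |\nabla u(X,t)|^2$, and finally evaluate the remaining radial integral
$$\int_{\{|X-P| \lesssim \delta\}} \frac{dX}{|X-P|^{1-\alpha_0}} \lesssim \delta^{n-1+\alpha_0}.$$
The surviving $\int_0^T dt$ then sits outside everything, yielding the first term on the right hand side.

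Since the argument reduces, at every step, to the elliptic case of Lemma~\ref{Lem:umax} with a silent outer time integral, I anticipate no new technical obstacle; the only thing worth being careful about is the justification of the Fubini exchange between the boundary integral and the spatial integral over $E_1$, which is routine since all the relevant integrands are nonnegative and measurable.
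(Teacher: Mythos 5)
Your proposal coincides exactly with the paper's own proof: the paper simply states that the proof of Lemma~\ref{Lem:umaxparab} is ``practically identical to that of Lemma~\ref{Lem:umax}, hence omitted,'' and your argument reproduces precisely the steps of Lemma~\ref{Lem:umax} --- the $E_1/E_2$ splitting according to $\mathrm{dist}(X,\partial\Omega)\lessgtr\delta$, the crude bound $|\nabla A|\lesssim\delta^{-(1-\alpha_0)}$ on $E_2$, and on $E_1$ the passage to the cone slices $\gamma_\delta(P)$ with the comparability $\mathrm{dist}(X,\partial\Omega)\simeq|X-P|$ and the radial integral $\int_{|X-P|\lesssim\delta}|X-P|^{-(1-\alpha_0)}\,dX\lesssim\delta^{n-1+\alpha_0}$ --- with the outer $\int_0^T dt$ carried along by Fubini, exactly as the paper intends.
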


\begin{proof}
The proof is practically identical to that of Lemma \ref{Lem:umax}, hence omitted.
\end{proof}
To handle the Rellich estimates involving fractional derivatives of the solution of the parabolic equation, we would also need the following lemma
\begin{Lem}\label{Lem:Dt12}
    Let $A$ be a matrix satisfying properties \hyperref[A1]{\small{$(A1)$}} -- \hyperref[A4]{\small{$(A4)$}}.
    Assume that $\LL_A u=0$ in $\Omega \times (0,T)$ and $u(X,0)=0$. Assume that $H$ is a $C^1$ vector field on $\partial \Omega$ with $\langle H, N\rangle >0$. Then, for every $\eps>0$, we have
    \begin{equation}\label{eq:PITA}
     \int_0^T \int_{\Omega^\pm} \langle H, \nabla u(X,t)\rangle\, \partial_t u(X,t)\, dX\, dt
      \lesssim \int_0^T \int_{\partial \Omega} \Big(\eps |\nabla u^{\pm}(P,t)|^2 + \frac{|D_t^{1/2}u(P,t)|^2}{\eps} \Big)\,dP\, dt.
    \end{equation}
\end{Lem}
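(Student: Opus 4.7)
My plan is to move half a time-derivative from $\partial_t u$ onto $\langle H, \nabla u\rangle$ using the Fourier identity $\partial_t = c(D_t^{1/2})^2$ (a consequence of \eqref{eq:fractionalfouriertransform}), and then to exploit that $v:=D_t^{1/2} u$ again satisfies $\LL_A v=0$ in $\Omega^\pm\times(0,T)$ with $v(X,0)=0$, thanks to the time-independence of $A$.

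After extending $u$ by zero for $t<0$ and, by a density argument, past $t=T$, for each fixed $X$ I will use the $L^2$-adjoint relation for $D_t^{1/2}$ (whose adjoint has a Fourier multiplier of equal modulus, and therefore is an operator of equivalent $L^2_t$-mapping properties) together with the Cauchy--Schwarz inequality to obtain the pointwise estimate
\[
\Big| \int_0^T \langle H, \nabla u\rangle\, \partial_t u\, dt \Big| \lesssim \|D_t^{1/2} u(X, \cdot)\|_{L^2_t}\, \|D_t^{1/2}\langle H, \nabla u(X,\cdot)\rangle\|_{L^2_t}.
\]
Since $H$ does not depend on $t$, $D_t^{1/2}\langle H, \nabla u\rangle = \langle H, \nabla D_t^{1/2} u\rangle$. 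Integrating in $X$, applying another Cauchy--Schwarz and then the AM--GM inequality yields, for any $\eps'>0$,
\[
|I| \leq \eps'\, \|\nabla v\|^2_{L^2(\Omega^\pm\times(0,T))} + (\eps')^{-1}\, \|v\|^2_{L^2(\Omega^\pm\times(0,T))}.
\]

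To push these interior norms to the boundary, I will apply Lemma \ref{Lem:intboundGAO4.1} to $v$ to control $\|\nabla v\|^2_{L^2(\Omega^\pm\times(0,T))}$ by boundary quantities, and I will handle $\|v\|^2_{L^2(\Omega^\pm\times(0,T))}$ by multiplying $\LL_A v=0$ by $v$, integrating over $\Omega^\pm\times(0,t)$, and using ellipticity together with $v(X,0)=0$ to bound first $\sup_{t\in[0,T]}\|v(\cdot,t)\|^2_{L^2(\Omega^\pm)}$, and hence its time-integral, by $\int_0^T\int_{\partial\Omega}|v|\,|\partial_{\nu_A} v^\pm|\,dP\,dt$. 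The main obstacle will be that the boundary traces arising at this stage involve $|\partial_{\nu_A} D_t^{1/2} u^\pm|$ and $|\nabla D_t^{1/2} u^\pm|$ rather than the bare $|\nabla u^\pm|$ demanded by the statement; to absorb these extra half time-derivatives into the targeted $\eps\|\nabla u^\pm\|^2$ piece on $\partial\Omega\times(0,T)$, I will Fourier transform in time, invoke the Fourier-transformed elliptic Rellich inequality of Theorem \ref{Thm:Transferencetau} (which trades $|\tau|^{1/2}$-weights for boundary gradients modulo $|\hat u|$), and return via Plancherel to the time side, carefully balancing the several small parameters $\eps$ and $\eps'$ to close the estimate.
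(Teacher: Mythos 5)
Your opening moves — splitting $\partial_t = (D_t^{1/2})^2$, transferring one factor onto $\langle H,\nabla u\rangle$, applying Cauchy--Schwarz and AM--GM to reduce to the interior norms $\|\nabla v\|^2_{L^2(\Omega^\pm\times(0,T))}$ and $\|v\|^2_{L^2(\Omega^\pm\times(0,T))}$ with $v=D_t^{1/2}u$ — are reasonable, and the observation that $v$ again solves $\LL_A v=0$ with $v(X,0)=0$ is correct. You also correctly identify the obstacle: when you push $\|\nabla v\|^2$ (and $\|v\|^2$) to the boundary via Lemma~\ref{Lem:intboundGAO4.1} (or via the energy identity for $v$), the resulting boundary traces carry the terms $|\nabla D_t^{1/2}u^{\pm}|^2$ and $|D_t^{1/2}\partial_{\nu_A}u^{\pm}|^2$, which have the derivative count $1+\tfrac12$ on $u$, while the target right-hand side of \eqref{eq:PITA} only allows $|\nabla u^{\pm}|^2$ (count $1$) and $|D_t^{1/2}u|^2$ (count $\tfrac12$).

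The proposed repair does not work, and this is a genuine gap. On the Fourier side the offending quantity is $\int_{\mathbb R}\int_{\partial\Omega}|\tau|\,|\nabla\widehat u^{\pm}(P,\tau)|^2\,dP\,d\tau$. Theorem~\ref{Thm:Transferencetau} (estimates \eqref{eq:RellB1}, \eqref{eq:RellB2}) provides, for each fixed $\tau$, a \emph{horizontal} exchange between $|\nabla\widehat u^{\pm}|^2$, $|\nabla_T\widehat u|^2$, $|\partial_{\nu_B}\widehat u^{\pm}|^2$ and $|\tau|\,|\widehat u|^2$, but never reduces the power of $|\tau|$ sitting in front of a boundary \emph{gradient}. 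Applying \eqref{eq:RellB1} or \eqref{eq:RellB2} to $\widehat v$ instead of $\widehat u$ simply multiplies the same inequality through by $|\tau|$ and returns a bound for $|\tau|\,|\nabla\widehat u^{\pm}|^2$ in terms of $|\tau|\,|\partial_\nu\widehat u^{\pm}|^2$ or $|\tau|^2|\widehat u|^2$ (i.e. $|D_t^{1/2}\partial_\nu u^{\pm}|^2$ or $|\partial_t u|^2$ on the boundary) — none of which are admissible. You also cannot absorb the residual term $\eps'\eps_2\|D_t^{1/2}\nabla u^{\pm}\|^2_{L^2(S_T)}$ by choosing the small parameters carefully, because there is no comparable term on the left-hand side of \eqref{eq:PITA} against which to absorb it; $|I|$ is an interior quantity. (Using a $\tau$-dependent $\eps(\tau)$ in Lemma~\ref{Lem:intboundGAO} only shifts the excess half-derivative around: $\eps(\tau)=\eps_0|\tau|^{-1/2}$ produces $\|D_t^{1/4}\nabla u^{\pm}\|^2 + \|D_t^{3/4}u\|^2$ on the boundary, still outside the target.)

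The paper sidesteps this precisely by replacing your symmetric split with the asymmetric one $\partial_t=D_t^{1/4}D_t^{3/4}$. The energy identity \eqref{eq:WZ} is applied with the choices $v=w=D_t^{1/4}u$ (giving \eqref{eq:WTF1}) and with $v=D_t^{3/4}u$, $w=I_{1/4}u$ (giving \eqref{eq:WTF2}); in both cases the boundary integrand is of the form $D_t^{1/4}(u)\cdot D_t^{1/4}(\partial_{\nu_A}u^{\pm})$, which Lemma~\ref{Browns fractional} converts to $\|\partial_{\nu_A}u^{\pm}\|_{L^2_t}\,\|D_t^{1/2}u\|_{L^2_t}$ — a normal derivative with \emph{no} time derivative times exactly the half-derivative of $u$ demanded by the statement. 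The final application of Lemma~\ref{Browns fractional} to $\int\langle H,\nabla u\rangle\,D_t^{1/4}(D_t^{3/4}u)$ then yields $\|D_t^{1/4}\nabla u\|_{L^2(\Omega^\pm\times(0,T))}\,\|D_t^{3/4}u\|_{L^2(\Omega^\pm\times(0,T))}$, which is controlled by \eqref{eq:WTF3}. A secondary point: your ``$L^2$-adjoint relation for $D_t^{1/2}$'' on $(0,T)$ would itself need justification; the only finite-interval transfer lemma supplied by the paper is \ref{Browns fractional}, which is stated for $D_t^{1/4}$, and composing it naturally produces $D_t^{1/4}$-norms on both factors rather than the $\|D_t^{1/2}f\|\,\|g\|$ pairing you invoke.
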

\begin{proof}
    Let $v$ and $w$ be solutions of $\LL_A u=0$ in $\Omega \times (0,T)$ and $v(X,0)=w(X,0)=0$. Then, using the equation for $v$ and $w$ and the divergence theorem yield
    \begin{align}\label{eq:WZ}
        & \sum_{i,j=1}^n \int_0^T \int_{\Omega^\pm} a_{ij}\, \partial_i v(X,t)\, \partial_j w(X,t)\, dX \, dt
               + \int_0^T \int_{\Omega^\pm} v(X,t)\, \partial_t w(X,t)\, dX\, dt\\
        & \qquad \qquad = \pm \int_0^T \int_{\partial \Omega} v(P,t)\, \partial_{\nu_A} w^\pm(P,t)\, dP\, dt . \nonumber
    \end{align}
    Now if we take $v=w=D_t^{1/4}u$ (noticing that $D_t^{1/4}u$ is also a solution of the equation) in \eqref{eq:WZ} and use the ellipticity condition (A3), Lemma \ref{Browns fractional} and the fact that $v(X,0)=w(X,0)=0$ (to be able to eventually omit the second term on the left hand side), we obtain
    \begin{align}\label{eq:WTF1}
        \int_0^T \int_{\Omega^\pm} |D_t^{1/4} \nabla u(X,t)|^2 \,dX\, dt
            \leq & \int_0^T \int_{\partial \Omega} D_t^{1/4} \Big( \partial_{\nu_A} u^\pm(P,t)\Big) D_t^{1/4}\Big(u(P,t)\Big)\, dP\, dt \\
            \leq & \int_{\partial \Omega} \Big( \int_0^T | \partial_{\nu_A} u^\pm(P,t)|^2 \,dt \Big)^{1/2}\Big( \int_0^T | D_t^{1/2} u(P,t)|^2 dt \Big)^{1/2}\, dP. \nonumber
    \end{align}

Using \eqref{defn:fractional derivative} we observe that $\partial_t I_{1/4}=D_t^{3/4}$. Furthermore, if we take $v=D_t^{3/4} u$ and $w=I_{1/4}u$ we observe that these are still solutions of the equation and $I_{1/4}u(X,0)=0$. Therefore if we apply \eqref{eq:WZ} to these solutions and use the ellipticity again we get
\begin{equation}\label{eq:WTF2}
\int_0^T \int_{\Omega^\pm} |D_t^{3/4} u(X,t)|^2 dX dt
    \lesssim \int_{\partial \Omega} \Big( \int_0^T | \partial_{\nu_A} u^\pm(P,t)|^2 dt \Big)^{1/2}\Big( \int_0^T | D_t^{1/2} u(P,t)|^2 dt \Big)^{1/2} dP.
\end{equation}

Thus \eqref{eq:WTF1}, \eqref{eq:WTF2} and Cauchy's inequality yield
\begin{align}\label{eq:WTF3}
&\int_0^T \int_{\Omega^\pm} \Big(|D_t^{1/4} \nabla u(X,t)|^2 + |D_t^{3/4} u(X,t)|^2 \Big) dX dt \\
& \qquad \qquad \lesssim \int_0^T \int_{\partial \Omega} \Big(\eps |\nabla u^{\pm}(P,t)|^2 + \frac{|D_t^{1/2}u(P,t)|^2}{\eps} \Big)\,dP\, dt. \nonumber
\end{align}
Finally using the fact that $\partial_t u = D_t^{1/4} D_t^{3/4} u$ on the left hand side of \eqref{eq:PITA}, Lemma \ref{Browns fractional} and  estimate \eqref{eq:WTF3}, we can conclude the proof of this lemma.
\end{proof}

\begin{Prop}\label{Lem:Rellsmoothparab}
   Let $\tilde{A}$ be as in $\mathrm{Lemmas}$ $\ref{Lem:Atilde}$ and assume that $\LL_{\tilde{A}}u=0$ in $\Omega \times (0,T)$ and $u(X,0)=0$. Then
\begin{align*}
     \int_0^T \int_{\partial \Omega} |\nabla u^{\pm}(P,t)|^2 \,dP\, dt
     & \lesssim \int_0^T \int_{\partial \Omega} \Big(|\nabla_T u(P,t)|^2 +  |D_t^{1/2}u(P,t)|^2 + \frac{|u(P,t)|^2}{\eps}\Big)\,dP\, dt \\
    & \qquad  +
      \delta^{n-1+\alpha_0}  \int_{0}^{T}\int_{\partial \Omega} \sup_{X\in \gamma_{\delta}(P)}|\nabla u(X,t)|^2\, dP \, dt ,
  \end{align*}
  for any $\delta>0$ and small enough $\eps \in (0,\eps_0(\delta))$.
\end{Prop}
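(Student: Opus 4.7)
The plan is to mimic, with appropriate modifications, the proof of the elliptic Rellich inequality \eqref{eq:Rell1} inside Proposition \ref{Lem:Rellsmooth}, replacing the Fourier parameter $-i\tau u$ by the genuine time derivative $\partial_t u$. Concretely, I would fix a $C^1$ vector field $H$ on $\overline{\Omega}$ with $\langle H(P),N(P)\rangle\gtrsim 1$ uniformly on $\partial\Omega$, multiply the equation $\LL_{\tilde A}u=\partial_t u-\nabla\cdot(\tilde A\nabla u)=0$ by $2\,\mathrm{Re}\,\langle H,\nabla\bar u\rangle$, and integrate over $\Omega^{\pm}\times(0,T)$. The spatial divergence identity used to derive \eqref{eq:a bloody mess}--\eqref{eq:a bloody mess 2} carries over verbatim (it is purely algebraic in $\tilde A$ and $H$), so after integration by parts, application of ellipticity \hyperref[A3]{$(A3)$} on the boundary, and the observation that for each fixed $j$ (resp.\ $i$) the vector $n_k(H_i\tilde a_{kj}-H_k\tilde a_{ij})$ is tangent to $\partial\Omega$, the resulting intermediate estimate reads
\begin{align*}
\int_0^T\!\int_{\partial\Omega}|\nabla u^\pm|^2\,dP\,dt
	&\lesssim \int_0^T\!\int_{\partial\Omega}|\nabla u^\pm|\,|\nabla_T u|\,dP\,dt
	+\int_0^T\!\int_{\Omega^\pm}|\nabla\tilde A|\,|\nabla u|^2\,dX\,dt\\
	&\quad+\Big|\int_0^T\!\int_{\Omega^\pm}\langle H,\nabla u\rangle\,\partial_t u\,dX\,dt\Big|,
\end{align*}
the last integral being the genuine novelty compared to the elliptic case.

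Next I would handle the three terms. The tangential term is absorbed by Cauchy's inequality, $|\nabla u^\pm|\,|\nabla_T u|\leq\eta|\nabla u^\pm|^2+\eta^{-1}|\nabla_T u|^2$, hiding the $|\nabla u^\pm|^2$ piece on the left and keeping $|\nabla_T u|^2$ on the right. The $|\nabla\tilde A|\,|\nabla u|^2$ term is split by Lemma \ref{Lem:umaxparab} into the kept piece $\delta^{n-1+\alpha_0}\int_0^T\!\int_{\partial\Omega}\sup_{\gamma_\delta(P)}|\nabla u|^2$ and a remainder $\delta^{-(1-\alpha_0)}\int_0^T\!\int_{\Omega^\pm}|\nabla u|^2$. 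This remainder is pushed to the boundary by Lemma \ref{Lem:intboundGAO4.1}, producing a contribution of the form $\delta^{-(1-\alpha_0)}\int_0^T\!\int_{\partial\Omega}\bigl(\eps|\nabla u^\pm|^2+\eps^{-1}|u|^2\bigr)\,dP\,dt$; choosing $\eps\in(0,\eps_0(\delta))$ small enough that $\eps\,\delta^{-(1-\alpha_0)}$ is absorbable on the left leaves exactly the $|u|^2/\eps$ term in the statement (with the $\delta$-dependence swallowed into the implicit constant, as usual).

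The main obstacle, and the only truly parabolic step, is the time-derivative term. Here Lemma \ref{Lem:Dt12} does precisely the job: it yields, for any $\eta'>0$,
\[
\Big|\int_0^T\!\int_{\Omega^\pm}\langle H,\nabla u\rangle\,\partial_t u\,dX\,dt\Big|
	\lesssim \int_0^T\!\int_{\partial\Omega}\Big(\eta'|\nabla u^\pm|^2+\frac{|D_t^{1/2}u|^2}{\eta'}\Big)\,dP\,dt.
\]
Fixing $\eta'$ as a small absolute constant (absorbable on the left), the $|D_t^{1/2}u|^2/\eta'$ term is retained as $|D_t^{1/2}u|^2$ up to a constant, matching the statement with \emph{no} $\eps$-factor in front. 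Collecting the three pieces and performing the simultaneous absorption—$\eta$ small from the tangential Cauchy step, $\eta'$ small from Lemma \ref{Lem:Dt12}, and then $\eps<\eps_0(\delta)$ with $\eps/\delta^{1-\alpha_0}$ small—yields the asserted estimate. The principal delicacy is precisely this bookkeeping of three different smallness parameters and the verification that Lemma \ref{Lem:Dt12} is applicable (which requires the equation, the vanishing initial condition $u(X,0)=0$, and the ellipticity, all of which are in hand).
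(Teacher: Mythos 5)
Your proposal is correct and follows essentially the same route as the paper: the same parabolic Rellich identity obtained by pairing the equation with $\langle H,\nabla u\rangle$, then Cauchy's inequality on the tangential term, Lemmas \ref{Lem:umaxparab} and \ref{Lem:intboundGAO4.1} for the $|\nabla\tilde A|\,|\nabla u|^2$ term, and Lemma \ref{Lem:Dt12} for the genuine time-derivative term, followed by simultaneous absorption of the small-parameter pieces. (Incidentally, your intermediate tangential bound $|\nabla u^\pm|\,|\nabla_T u|$ is the correct one; the paper's printed $|\nabla_T u|\,|u|$ at the corresponding step is a typographical slip.)
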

\begin{proof}
Following the proof of \eqref{eq:Rell1} we have
\begin{align*}
   & \sum_{i,j,k=1}^n \int_0^T\int_{\partial \Omega} n_k(P)\,\Big(H_k(P) \tilde{a}_{ij}(P) - H_i(P) \tilde{a}_{jk}(P) - H_j(P)\tilde{a}_{ik}(P)\Big)\, \partial_i u^{\pm}(P,t)  \,\partial_j {u}^{\pm}(P,t) \, dP\, dt \\
   & \qquad   = \pm  \int_0^T \int_{\Omega^{\pm}} \Big[2\, \sum_{i=1}^n \Big(H_i(X) \,\partial_i  {u}(X,t) \, \partial_t u(X,t)\Big)
            + \sum_{i,j=1}^n b_{ij}(X) \, \partial_i u(X,t) \, \partial_j {u}(X,t) \Big] \, dX\, dt. \nonumber
   \end{align*}
Hence, the same argument as in the proof of \eqref{eq:Rell1} yields
\begin{align*}
    \int_0^T \int_{\partial \Omega} |\nabla u(P,t)|^2 \,dP\, dt
        & \lesssim \int_0^T \int_{\partial \Omega} |\nabla_T u(P,t)| |u(P,t)| \,dP\, dt
                    + \int_0^T \int_{\Omega^\pm} \langle H, \nabla u(X,t)\rangle\, \partial_t u(X,t)\, dX\, dt \\
        & \qquad + \int_0^T\int_{\Omega^\pm} |\nabla A(X)| |\nabla u(X,t)|^2 \, dX \, dt .
\end{align*}
Now applying the Cauchy inequality to the first term on the right hand side of the above estimate, Lemma \ref{Lem:umaxparab} and Lemma \ref{Lem:intboundGAO4.1} to the second and Lemma \ref{Lem:Dt12} to the third, we obtain
for every $\delta, \eps , \eps'>0$
\begin{align*}
\int_0^T \int_{\partial \Omega} |\nabla u^{\pm}(P,t)|^2 dP dt
&  \lesssim \int_0^T \int_{\partial \Omega} \Big(|\nabla_T u(P,t)|^2 +  |u(P,t)|^2 \Big) dP dt\\
& \qquad +\delta^{n-1+\alpha_0}  \int_{0}^{T}\int_{\partial \Omega} \sup_{X\in \gamma_{\delta}(P)}|\nabla u(X,t)|^2\, dP \, dt \\
& \qquad         + \frac{1}{\delta^{(1-\alpha_0)}} \int_0^T \int_{\partial \Omega} \Big(\eps |\nabla u^{\pm}(P,t)|^2 + \frac{|u(P,t)|^2}{\eps} \Big)\,dP \,dt \\
& \qquad + \int_0^T \int_{\partial \Omega} \Big(\eps' |\nabla u^{\pm}(P,t)|^2 + \frac{|D_t^{1/2}u(P,t)|^2}{\eps'} \Big)\,dP\, dt
\end{align*}
Taking $\eps$ and $\eps'$ sufficiently small, say $\eps<\eps_0(\delta)$ and $\eps'<\eps_0'$, we can absorb the terms containing $|\nabla u^{\pm}|$ in the left hand side of the estimate. This concludes the proof of the proposition.
\end{proof}

At this point we state and prove our second Rellich-type estimates for the parabolic operator associated the the diffusion matrix $B$.
\begin{Th}\label{Thm:Transferenceparab}
    Let $B$ be as in the statement of $\mathrm{Theorem}$ $\ref{Thm:Transferencetau}$ and assume that $\LL_B u=0$ in $\Omega \times (0,T)$ and $u(X,0)=0$. Then for all sufficiently small $\delta \in (0,\delta_0(r))$ and $\eps \in (0,\eps_0(\delta))$, we have
\begin{align*}
\int_0^T \int_{\partial \Omega} |\nabla u^{\pm}(P,t)|^2 \,dP\, dt
	  & \lesssim \int_0^T \int_{\partial \Omega} \Big(|\nabla_T u(P,t)|^2 +  |D_t^{1/2}u(P,t)|^2 + \frac{|u(P,t)|^2}{\eps}\Big)\,dP\, dt \\
	  & \qquad +\delta^{n-1+\alpha_0}  \int_{0}^{T}\int_{\partial \Omega} |(\nabla u)^+_*(P,t)|^2\, dP \, dt. \nonumber
    \end{align*}
   \end{Th}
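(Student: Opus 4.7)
The plan is to mirror the elliptic transference argument of Theorem \ref{Thm:Transferencetau}, now in the parabolic setting, by splitting $u$ into a part governed by the smooth coefficient $\tilde{A}$ and a remainder that is localised away from $\partial\Omega$. Writing $m_{ij}$ for the entries of $A - A^{(r)}$, which vanish on $\{Y:\mathrm{dist}(Y,\partial\Omega)<r_0 r\}$, one checks that $\LL_{\tilde{A}} u=\sum_{i,j}\partial_{x_i}(m_{ij}\partial_{x_j} u)$. I therefore decompose $u=w+v$ with
\[
v(X,t):= -\sum_{i,j=1}^{n}\int_0^t\!\!\int_\Omega \partial_{y_i}\G_{\tilde{A}}(X,Y,t-s)\,m_{ij}(Y)\,\partial_{y_j} u(Y,s)\,dY\,ds,
\]
so that $w=u-v$ satisfies $\LL_{\tilde{A}} w=0$ in $\Omega\times(0,T)$ together with $w(X,0)=0$.

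Next, I apply Proposition \ref{Lem:Rellsmoothparab} to $w$ and use the triangle inequality $|\nabla u^\pm|^2\lesssim |\nabla w^\pm|^2+|\nabla v^\pm|^2$, together with the analogous inequalities for $\nabla_T u$, $D_t^{1/2}u$, $u$ itself and the non-tangential maximal function $(\nabla u)^+_\ast$. This reduces matters to controlling a single remainder $\mathcal{R}_v$ collecting the boundary $L^2$ norms on $S_T$ of $\nabla v$, $D_t^{1/2}v$, $v$, and of $\sup_{X\in\gamma_\delta(P)}|\nabla v(X,\cdot)|$, by a constant $C(r,T)$ times the interior energy $\int_0^T\!\!\int_\Omega|\nabla u|^2$.

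The core observation is the separation property: for every $P\in\partial\Omega$ and every $Y$ in the support of $m_{ij}$ one has $|P-Y|\geq r_0 r$. Consequently the kernels $\partial_{X_k}\partial_{y_i}\G_{\tilde{A}}$, $\partial_{y_i}D_t^{1/2}\G_{\tilde{A}}$ and $\partial_{y_i}\G_{\tilde{A}}$, estimated respectively through Lemmas \ref{Lem:GXYt}(i), \ref{Lem:GDt}(ii) and \ref{Lem:GXYm}, are uniformly bounded on that set by negative powers of $r$; the same conclusion holds for $X\in\gamma_\delta(P)$ as soon as $\delta<\delta_0(r):=r_0 r/(1+a)$, since then $|X-Y|\gtrsim r$ as well. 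A Cauchy--Schwarz application in $(Y,s)$ yields the claimed bound $\mathcal{R}_v \lesssim C(r,T)\int_0^T\!\!\int_\Omega|\nabla u|^2$.

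To close the estimate, I invoke Lemma \ref{Lem:intboundGAO4.1} to dominate $\int_0^T\!\!\int_\Omega|\nabla u|^2$ by $\int_0^T\!\!\int_{\partial\Omega}\bigl(\eps'|\nabla u^\pm|^2+{|u|^2}/{\eps'}\bigr)$, and then choose $\eps'=\eps_0(\delta,r)$ small enough that $C(r,T)\eps'$ is strictly less than the implicit constant on the left-hand side; this absorbs the $|\nabla u^\pm|^2$ boundary term and yields the claimed inequality. The step I expect to be most delicate is the $D_t^{1/2}v$ piece of $\mathcal{R}_v$, because Lemma \ref{Lem:GDt}(ii) contributes a time-singular factor; however, the spatial separation $|P-Y|\gtrsim r$ freezes the Gaussian away from the singularity, making the time integration converge without any further cancellation beyond what the parabolic fundamental-solution estimates already supply.
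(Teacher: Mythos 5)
Your proposal mirrors the paper's proof precisely: the same decomposition $u=w+v$ with $\LL_{\tilde A}w=0$ and $v$ built from $\partial_{y_i}\G_{\tilde A}$ acting on $m_{ij}\partial_{y_j}u$, an application of Proposition \ref{Lem:Rellsmoothparab} to $w$, the separation bound $|P-Y|\gtrsim r_0r$ on $\supp m_{ij}$ (and $|X-Y|\gtrsim r$ for $X\in\gamma_\delta(P)$ when $\delta<\delta_0(r)$) to control all boundary norms of $v$ via the kernel estimates from Lemmas \ref{Lem:GXYm}, \ref{Lem:GXYt}(i), \ref{Lem:GDt}(ii), and finally Lemma \ref{Lem:intboundGAO4.1} to absorb the interior energy. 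The argument is correct and is essentially identical to the paper's.
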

\begin{proof}
    We are going to follow the scheme of the proof of  Theorem \ref{Thm:Transferencetau}.
    So unless otherwise stated, we use the same notation as there.
    We decompose $u=w+v$;
    where this time $\LL_{\tilde{A}} w=0$ in $\Omega \times (0,T)$, and
    \begin{equation*}
      v(X,t)=-\sum_{i,j=1}^n \int_0^T \int_\Omega \partial_{y_i} \G_{\tilde{A}}(X,Y,t-s)\,m_{ij}(Y)\, \partial_{y_j} u(Y,s) \,dY\, ds=: \sum_{i,j=1}^n  v_{ij}(X,t).
    \end{equation*}
    First we apply Proposition \ref{Lem:Rellsmoothparab} to the function $w$ to get
\begin{align} \label{eq:expandu}
&	\int_0^T \int_{\partial \Omega} |\nabla u^{\pm}(P,t)|^2 \,dP \,dt
	    \lesssim \int_0^T \int_{\partial \Omega} |\nabla w^{\pm}(P,t)|^2 \,dP\, dt
		   + \int_0^T \int_{\partial \Omega} |\nabla v^{\pm}(P,t)|^2 \, dP\, dt  \\
& \qquad	\lesssim \int_0^T \int_{\partial \Omega} \Big(|\nabla_T w(P,t)|^2 +  |D_t^{1/2}w(P,t)|^2 + |w(P,t)|^2\Big)\,dP \,dt \nonumber \\
& \qquad \quad+\delta^{n-1+\alpha_0} \int_0^T\int_{\partial \Omega}\sup_{X\in \gamma_{\delta}(P)}|\nabla w(X,t)|^2\, dP\, dt  + \int_0^T \int_{\partial \Omega} |\nabla v^{\pm}(P,t)|^2 \,dP\, dt \nonumber\\
& \qquad 	\lesssim \int_0^T \int_{\partial \Omega} \Big(|\nabla_T u(P,t)|^2 +  |D_t^{1/2}u(P,t)|^2 + |u(P,t)|^2\Big)\,dP \,dt \nonumber \\
& \qquad \quad + \delta^{n-1+\alpha_0} \int_0^T\int_{\partial \Omega}|(\nabla u)_*(X,t)|^2\, dP\, dt \nonumber \\
& \qquad \quad+ \int_0^T \int_{\partial \Omega}
	    \Big(|\nabla v^{\pm}(P,t)|^2 +  |D_t^{1/2}v(P,t)|^2 + |v(P,t)|^2 \Big) \, dP\, dt \nonumber \\
& \qquad \quad+ \delta^{n-1+\alpha_0} \int_0^T\int_{\partial \Omega}\sup_{X\in \gamma_{\delta}(P)}|\nabla v(X,t)|^2\, dP\, dt. \nonumber
  \end{align}

    Let $i,j=1, \dots, n$. Now Lemmas \ref{Lem:GXYm}; \ref{Lem:GXYt} $(i)$; and \ref{Lem:GDt} $(ii)$ yield
    \begin{align*}
	|\nabla v_{ij}^{\pm}(P,t)| +  |D_t^{1/2}v_{ij}(P,t)| + |v_{ij}(P,t)|
	    & \lesssim \int_0^T \int_{\Omega\cap \{|Y-P| \geq r_0 r\}} \frac{|\nabla u(Y,s)|}{|Y-P|^{n+2}} \,dY\, ds \\
	    & \lesssim \frac{1}{r^{n+2}}\int_0^T \int_\Omega |\nabla u(Y,s)| \, dY \, ds, \quad (P,t) \in S_T.
    \end{align*}
    Hence using Lemma \ref{Lem:intboundGAO4.1} we obtain
\begin{align*}
&      \int_0^T \int_{\partial \Omega}  \Big(|\nabla v^{\pm}(P,t)|^2 +  |D_t^{1/2}v(P,t)|^2 + |v(P,t)|^2 \Big)\, dP\, dt\\
& \qquad \lesssim \frac{1}{r^{2n+4}}\int_0^T \int_\Omega |\nabla u(Y,s)|^2 \,dY \,ds
 \lesssim  \frac{1}{r^{2n+4}} \int_0^T \int_{\partial \Omega} \Big(\eps |\nabla u^\pm (P,t)|^2 + \frac{|u(P,t)|^2}{\eps}\Big)\, dP\, dt.
	\end{align*}

Moreover, the same reasoning as in the proof of Theorem \ref{Thm:Transferencetau} and Lemma \ref{Lem:intboundGAO4.1} yields for $\delta< \delta_{0}(r)$ that
\begin{align*}
    \int_0^T\int_{\partial \Omega}\sup_{X\in \gamma_{\delta}(P)}|\nabla v(X,t)|^2\, dP\, dt
    & \lesssim \int_0^T\int_{\Omega} |\nabla u(Y,t)|^2 \, dY\, dt \\
   & \lesssim  \int_0^T \int_{\partial \Omega} \Big(\eps |\nabla u^\pm (P,t)|^2 + \frac{|u(P,t)|^2}{\eps}\Big)\, dP\, dt.
\end{align*}

To conclude we only need to pick $\eps$ small enough such that we can absorb the terms corresponding to $\eps|\nabla u^{\pm}|^2$  in the left hand side of \eqref{eq:expandu}, which finally yields
\begin{align*}
     \int_0^T \int_{\partial \Omega} |\nabla u^{\pm}(P,t)|^2 \, dP \, dt
	& \lesssim \int_0^T \int_{\partial \Omega} \Big(|\nabla_T u(P,t)|^2 +  |D_t^{1/2}u(P,t)|^2 + |u(P,t)|^2\Big)\, dP\, dt  \\
	& \qquad + \delta^{n-1+\alpha_0} \int_0^T\int_{\partial \Omega}|(\nabla u)^+_*(X,t)|^2\, dP\, dt. \nonumber
	\end{align*}
\end{proof}

\section{Invertibility of operators associated to the layer potentials}\label{sec:Invertibility}

\subsection{Invertibility of BSI}
The main result of this Section is the proof of the invertibility of BSI. To achieve this we shall start by showing the invertibility of the Fourier-transformed layer potentials associated to suitable pieces of the diffusion matrix $A$.
\begin{Lem}\label{Lem:DLPinverttau}
 Let $B$ be the diffusion matrix in $\mathrm{Theorem}$ $\ref{Thm:Transferencetau}.$ Then for $\tau\neq 0$ the operators $\pm 1/2 + (\mathbb{K}_{B}^\tau)^*$, defined in $\mathrm{Remark}$ $\ref{Rem:Kadjoint},$
  verify the estimate
\begin{equation*}
  \|g\|_{L^2(\partial \Omega)}
      \lesssim \Big(1 + \frac{1}{|\tau|} \Big) \Big\| \Big( \pm \frac{1}{2} + (\mathbb{K}_{B}^\tau)^* \Big)g\Big\|_{L^2(\partial \Omega)},
      \quad g \in L^2(\partial \Omega).
\end{equation*}
\end{Lem}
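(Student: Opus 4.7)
The plan is to set $u := \mathbb{S}_B^\tau g$, so that $\widehat{\LL}_B u = 0$ in $\Omega^+ \cup \Omega^-$, and to exploit the jump relation of Proposition \ref{Prop:Jumptau2}, $\partial_{\nu_B} u^\pm = \pm \tfrac{1}{2} g + (\mathbb{K}_B^\tau)^* g$, which yields $g = \partial_{\nu_B} u^+ - \partial_{\nu_B} u^-$. In particular,
\[
  \|g\|_{L^2(\partial\Omega)}\leq \|\partial_{\nu_B} u^+\|_{L^2(\partial\Omega)}+\|\partial_{\nu_B} u^-\|_{L^2(\partial\Omega)},
\]
so for each sign choice the lemma reduces to showing that $\|\partial_{\nu_B} u^{\mp}\|_{L^2(\partial\Omega)} \lesssim (1+1/|\tau|) \|\partial_{\nu_B} u^{\pm}\|_{L^2(\partial\Omega)}$.

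The first step will be the auxiliary boundary estimate $\|u\|^2_{L^2(\partial\Omega)} \lesssim (1 + 1/|\tau|)^2 \|\partial_{\nu_B} u^\pm\|^2_{L^2(\partial\Omega)}$. The idea is to combine Lemma \ref{Lem:intboundGAO3.2}, which controls $\|u\|^2_{L^2(\partial\Omega)}$ by $\int_{\Omega^\pm}(|\nabla u|^2 + |u|^2)\,dX$, with Lemma \ref{Lem:intboundGAO} (used directly for $\int_{\Omega^\pm}|\nabla u|^2$, and again after dividing the $|\tau||u|^2$ contribution by $|\tau|$ to handle the interior $|u|^2$), which yields
\[
  \|u\|^2_{L^2(\partial\Omega)} \lesssim \Big(1+\frac{1}{|\tau|}\Big) \int_{\partial\Omega}\Big(\eps|\partial_{\nu_B} u^\pm|^2 + \frac{|u|^2}{\eps}\Big)\,dP.
\]
The decisive move is then to take $\eps$ comparable to $1+1/|\tau|$, large enough that $\|u\|^2/\eps$ can be absorbed into the left-hand side, producing the factor $(1+1/|\tau|)^2$.

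The second step, for the $+$ case, will apply the Rellich estimate \eqref{eq:RellB3}. After inserting the auxiliary estimate for $\|u\|^2$ and using \eqref{estim:grad of SLP} together with the jump relation to dominate $\|(\nabla u)^+_*\|^2 \lesssim \|g\|^2 \leq 2\|\partial_{\nu_B} u^+\|^2 + 2\|\partial_{\nu_B} u^-\|^2$, fixing $\eps'$ to a constant in $(0,\eps_0'(\delta,r))$ will give
\[
  \|\partial_{\nu_B} u^-\|^2 \lesssim \Big(1+\frac{1}{|\tau|}\Big)^2\|\partial_{\nu_B} u^+\|^2 + C\delta^{n-1+\alpha_0}\big(\|\partial_{\nu_B} u^+\|^2 + \|\partial_{\nu_B} u^-\|^2\big),
\]
after which a sufficiently small choice of $\delta$ absorbs the $\|\partial_{\nu_B} u^-\|^2$ term from the right, closing the $+$ case. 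The $-$ case is analogous, using the exterior counterpart of \eqref{eq:RellB3} that bounds $\|\partial_{\nu_B} u^+\|$ by $\|\partial_{\nu_B} u^-\|$; this counterpart follows by the same scheme (\eqref{eq:RellB2} together with \eqref{eq:RellB1} applied on the exterior side). The main obstacle is the tight balancing in the first step: the Cauchy-inequality parameter $\eps$ must be chosen of size $1+1/|\tau|$ in order to extract precisely the right $\tau$-dependence, and Lemma \ref{Lem:intboundGAO3.2} must be justified on the unbounded exterior $\Omega^-$, which relies on the rapid decay at infinity of $\widehat{\G}(X,Q,\tau)$ for $\tau\neq 0$ provided by Lemma \ref{Lem:GFourier}.
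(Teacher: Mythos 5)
Your proof follows essentially the same route as the paper's: set $u=\mathbb{S}_B^\tau g$, use the jump relation $g=\partial_{\nu_B}u^+-\partial_{\nu_B}u^-$ from Proposition \ref{Prop:Jumptau2}, apply the Rellich estimate \eqref{eq:RellB3} together with the maximal-function bound \eqref{estim:grad of SLP}, derive the auxiliary estimate $\|u\|^2_{L^2(\partial\Omega)}\lesssim(1+1/|\tau|)^2\|\partial_{\nu_B}u^\pm\|^2_{L^2(\partial\Omega)}$ by combining Lemmas \ref{Lem:intboundGAO3.2} and \ref{Lem:intboundGAO} with the Cauchy parameter $\eps\sim 1+1/|\tau|$, and absorb the $\delta^{n-1+\alpha_0}$ term. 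The only cosmetic difference is in the final absorption: the paper first bounds $\|\partial_\nu u^-\|^2$ by $(1+1/|\tau|)^2\|\partial_\nu u^+\|^2+\delta^{n-1+\alpha_0}\|g\|^2$, adds the triangle inequality $\|g\|\lesssim\|\partial_\nu u^+\|+\|\partial_\nu u^-\|$, and absorbs $\delta^{(n-1+\alpha_0)/2}\|g\|$; you first convert $\|g\|^2\leq 2\|\partial_\nu u^+\|^2+2\|\partial_\nu u^-\|^2$ inside the Rellich estimate and absorb $\|\partial_\nu u^-\|^2$. Both rearrangements of the same absorption argument work. Your observation that Lemma \ref{Lem:intboundGAO3.2} needs to be justified on the unbounded exterior $\Omega^-$ (relying on decay of $\widehat\G(\cdot,\cdot,\tau)$ at infinity) is a fair technical remark the paper leaves implicit; the same decay also underpins the integration by parts in Lemma \ref{Lem:intboundGAO} on $\Omega^-$, not just in \ref{Lem:intboundGAO3.2}. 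Likewise, your note that the $-$ case requires the symmetric counterpart of \eqref{eq:RellB3}, obtained by swapping the roles of $\Omega^\pm$ in its derivation from \eqref{eq:RellB2} and \eqref{eq:RellB1}, is exactly what the paper silently invokes when it says the second case is ``almost identical.''
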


\begin{proof}
  We only give the proof for $1/2 + (\mathbb{K}_{B}^\tau)^*$,  since the proof
  for $-1/2 + (\mathbb{K}_B^\tau)^*$ is almost identical.

  Let $g \in L^2(\partial \Omega)$ and set $u(X,\tau):= \mathbb{S}_{B}^\tau(g)(X)$.
  By the jump relation in Proposition \ref{Prop:Jumptau2}, we can write
  \begin{equation}\label{jump for invertibility 1}
  g = \Big(\frac{1}{2} + (\mathbb{K}_{B}^\tau)^*\Big)g - \Big(-\frac{1}{2} + (\mathbb{K}_{B}^\tau)^* \Big)g
      = \partial_{\nu_{B}} u^+ - \partial_{\nu_{B}} u^-.
  \end{equation}

  From now on we drop the subscript $B$ in the normal derivatives.\\

Next, since $u(X,\tau)= \mathbb{S}_{B}^\tau(g)(X)$, we have that $\widehat{\LL}_B u=0$ in $\Omega$ and for $\delta< \delta_{0}(r)$ (see the proof of the second part of Theorem \ref{Thm:Transferencetau}), Rellich estimate \eqref{eq:RellB3} and estimate \eqref{estim:grad of SLP} (i.e. the boundedness of $(\nabla\mathbb{S}_{B}^{\tau})^\pm_*$) altogether imply that
  \begin{equation}\label{estim:pure joy 0}
   \int_{\partial \Omega} |\partial_\nu u^-(P,\tau)|^2 dP \lesssim \int_{\partial \Omega} \Big(|u(P,\tau)|^2 + |\partial_{\nu} u^+(P,\tau)|^2 + \delta^{n-1+\alpha_0}|g(P)|^2\Big)\, dP.
  \end{equation}
 Now Lemma \ref{Lem:intboundGAO} yields (observe that $\tau\neq 0$)

  $$\int_{\Omega^{\pm}} |u(Y,\tau)|^2\,dY \lesssim \int_{\partial \Omega} \Big(\frac{\eps}{|\tau|} |\partial_{\nu_A} u^{\pm}(P,\tau)|^2 + \frac{|u(P,\tau)|^2}{\eps|\tau|} \Big)dP,$$
  and
  $$\int_{\Omega^{\pm}} |\nabla u(X,\tau))|^2\,dY \lesssim \int_{\partial \Omega} \Big(\eps|\partial_{\nu_A} u^{\pm}(P,\tau)|^2 + \frac{|u(P,\tau)|^2}{\eps} \Big)dP.$$

  Therefore adding these two estimates we obtain
   \begin{equation}\label{estim:pure joy}
   \int_{\Omega^\pm} \Big( |\nabla u(X,\tau)|^2 + |u(X,\tau)|^2   \Big) \, dX \lesssim \int_{\partial \Omega} \Big(\eps(1+\frac{1}{|\tau|}) |\partial_{\nu_A} u^{\pm}(P,\tau)|^2 + \frac{1}{\eps}(1+\frac{1}{|\tau|})|u(P,\tau)|^2 \Big)\,dP.
  \end{equation}
  On the other hand \eqref{estim:pure joy} and Lemma \ref{Lem:intboundGAO3.2} yield
\begin{align*}
      \int_{\partial \Omega} |u(P,\tau)|^2 \, dP
      &\leq C \int_{\Omega^\pm} \Big( |\nabla u(X,\tau)|^2 + |u(X,\tau)|^2   \Big) \, dX\\
	&\leq C \eps \Big(1 + \frac{1}{|\tau| } \Big) \int_{\partial \Omega} |\partial_{\nu} u^+(P,\tau)|^2 \, dP
		  + \frac{C}{\eps}(1+\frac{1}{|\tau|})\int_{\partial \Omega} |u(P,\tau)|^2 \, dP.
  \end{align*}
  Thus taking $\eps=2C(1+1/|\tau|)$) we obtain
  \begin{align}\label{estim:pure jopy 2}
      \int_{\partial \Omega} |u(P,\tau)|^2 \, dP
	\lesssim \Big(1 + \frac{1}{|\tau|} \Big)^2 \int_{\partial \Omega} |\partial_{\nu} u^+(P,\tau)|^2 \, dP.
  \end{align}
  Thus combining \eqref{estim:pure joy 0} and \eqref{estim:pure jopy 2} we have
\begin{align}\label{estim:pure joy 3}
   \int_{\partial \Omega} |\partial_\nu u^+(P,\tau)|^2 dP + \int_{\partial \Omega} |\partial_\nu u^-(P,\tau)|^2 dP
   	 &\lesssim \Big(1 + \frac{1}{|\tau|} \Big)^2 \int_{\partial \Omega} |\partial_{\nu} u^+(P,\tau)|^2 \, dP \\
	 & \qquad + \delta^{n-1+\alpha_0}\int_{\partial \Omega} |g(P)|^2\, dP. \nonumber
\end{align}

Therefore, using \eqref{jump for invertibility 1} and \eqref{estim:pure joy 3}, we obtain for $\delta< \delta_{0}(r)$
\begin{align*}
      \|g\|_{L^2(\partial \Omega)}
	  & \lesssim \| \partial_\nu u^+ \|_{L^2(\partial \Omega)} + \| \partial_\nu u^- \|_{L^2(\partial \Omega)} \\
	  &  \leq C \Big(1 + \frac{1}{|\tau|} \Big) \Big\| \Big(\frac{1}{2} + (\mathbb{K}_B^\tau)^*\Big)g \Big\|_{L^2(\partial \Omega)} +C\delta^{\frac{n-1+\alpha_0}{2}} \|g\|_{L^2(\partial \Omega)}.
  \end{align*}
 Now if we take $\delta< \mathrm{min}\{ \delta_{0}(r), (\frac{1}{2C})^{2/(n-1+\alpha_0)}\}$ then
   \begin{align*}
   \|g\|_{L^2(\partial \Omega)}
	  & \lesssim \| \partial_\nu u^+ \|_{L^2(\partial \Omega)} + \| \partial_\nu u^- \|_{L^2(\partial \Omega)}
	    \lesssim \Big(1 + \frac{1}{|\tau|} \Big) \Big\| \Big(\frac{1}{2} + (\mathbb{K}_B^\tau)^*\Big)g \Big\|_{L^2(\partial \Omega)},
  \end{align*}
which is the desired estimate.
\end{proof}

The final step here is to show the invertibility of the Fourier-transformed BSI associated to the matrix $A$.

\begin{Lem}\label{invertability of hat BSI}
Let $A$ satisfy \hyperref[A1]{\small{$(A1)$}} -- \hyperref[A4]{\small{$(A4)$}} defined as in $\mathrm{Lemma}$ $\ref{Lem:Ar}.$ Then for $\tau\neq 0$ the operators $\pm 1/2 + (\mathbb{K}_{A}^\tau)^*$,
are invertible in $L^2(\partial \Omega)$ and satisfy the estimate

\begin{equation*}
  \|g\|_{L^2(\partial \Omega)}
      \lesssim \Big(1 + \frac{1}{|\tau|} \Big) \Big\| \Big( \pm \frac{1}{2} + (\mathbb{K}_{A}^\tau)^* \Big)g\Big\|_{L^2(\partial \Omega)},
      \quad g \in L^2(\partial \Omega).
\end{equation*}

\end{Lem}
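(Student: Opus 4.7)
The proof is by transference from the auxiliary matrix $B := \tilde A + A - A^{(r)}$ of Theorem \ref{Thm:Transferencetau}, supplemented by Fredholm theory for the surjectivity.

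I begin by noting that $\tilde A = A$ on $\partial\Omega$ by Lemma \ref{Lem:Atilde}, while the cutoff $\theta$ in the definition of $A^{(r)}$ forces $A^{(r)} \equiv A$ in a collar of $\partial\Omega$, so that $B = A$ on $\partial\Omega$. Consequently Theorem \ref{boundedness Ftrans BSI of the difference} applies with $A_1 = A$, $A_2 = B$, and together with the bound $\|A^{(r)} - \tilde A\|_\infty \lesssim r^{\alpha_0}$ from Lemma \ref{Lem:Ar} it yields
$$\big\|((\mathbb{K}_A^\tau)^* - (\mathbb{K}_B^\tau)^*)g\big\|_{L^2(\partial\Omega)} \lesssim r^{\alpha_0/2}\|g\|_{L^2(\partial\Omega)},$$
uniformly in $\tau$. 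Combining this with Lemma \ref{Lem:DLPinverttau} applied to $B$ via the triangle inequality gives
$$\|g\|_{L^2(\partial\Omega)} \leq C_r(1+|\tau|^{-1})\big\|(\pm 1/2 + (\mathbb{K}_A^\tau)^*)g\big\|_{L^2(\partial\Omega)} + C_r(1+|\tau|^{-1})r^{\alpha_0/2}\|g\|_{L^2(\partial\Omega)},$$
where $C_r$ is the constant furnished by Lemma \ref{Lem:DLPinverttau}. Picking $r$ sufficiently small so that the second summand is absorbed into the left-hand side yields the quantitative injective estimate claimed in the lemma.

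The estimate above immediately implies that $T := \pm 1/2 + (\mathbb{K}_A^\tau)^*$ is injective and has closed range. Surjectivity, and hence $L^2$-invertibility, then follows from Lemma \ref{surjectivity of Fredholm} provided one verifies that $T^* - T = \mathbb{K}_A^\tau - (\mathbb{K}_A^\tau)^*$ is compact on $L^2(\partial\Omega)$. The integral kernel of this difference is
$$\sum_{i,j=1}^n\Big[a_{ij}(Q)n_j(Q)\partial_{y_i}\widehat{\G}(P,Q,\tau) - a_{ij}(P)n_j(P)\partial_{x_i}\widehat{\G}(P,Q,\tau)\Big].$$
Splitting it as $a_{ij}(P)n_j(P)\bigl[\partial_{y_i}\widehat{\G} - \partial_{x_i}\widehat{\G}\bigr] + \bigl[a_{ij}(Q)n_j(Q)-a_{ij}(P)n_j(P)\bigr]\partial_{y_i}\widehat{\G}$, the second piece picks up a H\"older factor $|P-Q|^{\alpha}$ combined with $|\partial_{y_i}\widehat{\G}(P,Q,\tau)| \lesssim |P-Q|^{-(n-1)}$ from Lemma \ref{Lem:GXYFourier}(i), yielding a pointwise bound by $|P-Q|^{-(n-1-\alpha)}$; the first piece admits an analogous weakly singular bound after exploiting the divergence-form equations satisfied by $\widehat{\G}$ in both arguments together with the H\"older continuity of $A$, mirroring the elliptic argument of \cite[Lemma 4.8]{KeSh2}. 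Lemma \ref{Lem:compact} then delivers compactness.

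The main obstacle lies in this compactness step: each of $\mathbb{K}_A^\tau$ and $(\mathbb{K}_A^\tau)^*$ individually has the critical singularity $|P-Q|^{-(n-1)}$, so only the cancellation in their difference, activated by the H\"older regularity of $A$, produces the subcritical decay $|P-Q|^{-(n-1-\alpha)}$ required by Lemma \ref{Lem:compact}. A secondary subtlety is that the constant $C_r$ in the transference step deteriorates as $r \to 0$ (through the Rellich constants of Theorem \ref{Thm:Transferencetau}), so the choice of $r$ in the absorption argument must be coupled carefully with the size of $|\tau|^{-1}$ in order to keep the final estimate in the $(1+|\tau|^{-1})$ form stated in the lemma.
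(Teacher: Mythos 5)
Your transference/absorption argument for the quantitative injective estimate matches the paper's proof of Lemma~\ref{invertability of hat BSI} essentially line for line, and the reduction of surjectivity to Lemma~\ref{surjectivity of Fredholm} is also the paper's strategy. The gap lies entirely in how you try to verify compactness of $\mathbb{K}_A^\tau - (\mathbb{K}_A^\tau)^*$.

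In your direct kernel split, the second piece
$$\bigl[a_{ij}(Q)n_j(Q)-a_{ij}(P)n_j(P)\bigr]\,\partial_{y_i}\widehat{\G}(P,Q,\tau)$$
is \emph{not} bounded by $|P-Q|^{-(n-1-\alpha)}$. The H\"older bound on $a_{ij}$ only controls $a_{ij}(Q)-a_{ij}(P)$; the unit normal $N$ on a Lipschitz boundary is merely in $L^\infty$ and has no modulus of continuity, so $n_j(Q)-n_j(P)$ remains $O(1)$ as $Q\to P$. Hence your claimed gain of a factor $|P-Q|^{\alpha}$ fails, the pointwise estimate is critical rather than subcritical, and Lemma~\ref{Lem:compact} does not apply. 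The first piece $a_{ij}(P)n_j(P)\bigl[\partial_{y_i}\widehat{\G}-\partial_{x_i}\widehat{\G}\bigr]$ is likewise only asserted to be weakly singular by appeal to an unexplicated argument; no actual estimate is produced, and the leading singularities of $\partial_X\widehat{\G}$ and $\partial_Y\widehat{\G}$ at $P=Q$ are both of critical order $|P-Q|^{1-n}$, so one cannot simply subtract them.

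The paper sidesteps this precisely because it does \emph{not} try to show that $\mathbb{K}_A^\tau - (\mathbb{K}_A^\tau)^*$ has a subcritical kernel directly. Instead it factors through the elliptic BSI via the algebraic identity
$$\mathbb{K}_A^\tau - (\mathbb{K}_A^\tau)^* = \bigl(\mathbb{K}_A^\tau - \mathbb{K}_A^0\bigr) + \bigl(\mathbb{K}_A^0 - (\mathbb{K}_A^0)^*\bigr) + \bigl(\mathbb{K}_A^0 - \mathbb{K}_A^\tau\bigr)^*.$$
The $\tau$-dependent pieces are compact by Lemma~\ref{Lem:GXYFourier}$(iv)$ together with Lemma~\ref{Lem:compact} — this uses the improved $\tau$-regularity of $\widehat\G$, an ingredient entirely absent from your argument. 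For the elliptic middle term $\mathbb{K}_A^0 - (\mathbb{K}_A^0)^*$, the paper invokes the decomposition $\widetilde{\Gamma}_A = E_0 + E_1$ from \cite[Theorem~2.6]{MiTa}, where the explicit quadratic-form structure of $E_0$ produces the cancellation when $\partial_X$ and $\partial_Y$ conormal derivatives are subtracted; this cancellation is algebraic, not a consequence of smoothness of $N$. You need that explicit structure (or the cited \cite{MiTa} result) to close the argument — the heuristic split you propose cannot replace it.
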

\begin{proof}
As before, we write $A$ as $B+C$ where $B:= \tilde{A}+A-A^{(r)}$ and $C:=A^{(r)}-\tilde{A}$. Then we observe that

$$\pm \frac{1}{2} + (\mathbb{K}_{A}^\tau)^* = \pm \frac{1}{2} + (\mathbb{K}_{B}^\tau)^* + (\mathbb{K}_{A}^\tau)^* - (\mathbb{K}_{B}^\tau)^* ,$$

and Lemma \ref{Lem:DLPinverttau} yields that
\begin{equation*}
  \|g\|_{L^2(\partial \Omega)}
      \lesssim \Big(1 + \frac{1}{|\tau|} \Big) \Big\| \Big( \pm \frac{1}{2} + (\mathbb{K}_{B}^\tau)^* \Big)g\Big\|_{L^2(\partial \Omega)},
      \quad g \in L^2(\partial \Omega).
\end{equation*}

Moreover since $\pm \frac{1}{2} + (\mathbb{K}_{B}^\tau)^*= \pm \frac{1}{2} + (\mathbb{K}_{A}^\tau)^* - (\mathbb{K}_{A}^\tau)^* +(\mathbb{K}_{B}^\tau)^*$, we have
\begin{align*}
  \|g\|_{L^2(\partial \Omega)}
      & \lesssim \Big(1 + \frac{1}{|\tau|} \Big) \Big\| \Big( \pm \frac{1}{2} + (\mathbb{K}_{B}^\tau)^* \Big)g\Big\|_{L^2(\partial \Omega)}\\			 & \lesssim \Big(1 + \frac{1}{|\tau|} \Big) \Big\| \Big( \pm \frac{1}{2} + (\mathbb{K}_{A}^\tau)^* - (\mathbb{K}_{A}^\tau)^* +(\mathbb{K}_{B}^\tau)^* \Big)g\Big\|_{L^2(\partial \Omega)}\\
      & \lesssim \Big(1 + \frac{1}{|\tau|} \Big) \Big\| \Big( \pm \frac{1}{2} + (\mathbb{K}_{A}^\tau)^*\Big) g\|_{L^2(\partial \Omega)}+ \Big(1 + \frac{1}{|\tau|} \Big) \Big\| ((\mathbb{K}_{A}^\tau)^* -(\mathbb{K}_{B}^\tau)^*)g\Big\|_{L^2(\partial \Omega)}.
\end{align*}
Now using Theorem \ref{boundedness Ftrans BSI of the difference} (estimate \eqref{for ftransBSI adjoint}) and Lemma \ref{Lem:Ar}, we have
\begin{align*}
\Big\| ((\mathbb{K}_{A}^\tau)^* -(\mathbb{K}_{B}^\tau)^*)g\Big\|_{L^2(\partial \Omega)}
& \lesssim \Vert A-B\Vert_{\infty}^{1/2} \| g\|_{L^2(\partial \Omega)}
\lesssim \Vert C\Vert_{\infty}^{1/2} \| g\|_{L^2(\partial \Omega)}\\
&\lesssim \Vert A^{(r)}-\tilde{A}\Vert_{\infty}^{1/2} \| g\|_{L^2(\partial \Omega)}\leq c r^{\alpha_0/2} \| g\|_{L^2(\partial \Omega)},
\end{align*}
where the last estimate follows from Lemma \ref{Lem:Ar}. Therefore choosing $r$ so small that
$c (1+1/|\tau|) r^{\alpha_0 /2}<1/2$, we will have
\begin{equation*}
  \|g\|_{L^2(\partial \Omega)}
      \lesssim \Big(1 + \frac{1}{|\tau|} \Big) \Big\| \Big( \pm \frac{1}{2} + (\mathbb{K}_{A}^\tau)^* \Big)g\Big\|_{L^2(\partial \Omega)}.
\end{equation*}
This shows that the operator $1/2 + (\mathbb{K}_A^\tau)^* : L^2(\partial \Omega) \longrightarrow L^2(\partial \Omega)$ is injective and has closed range in $L^2(\partial \Omega)$.
To conclude, we need to show that the range of $1/2 + (\mathbb{K}_A^\tau)^*$
  is equal to $L^2(\partial \Omega)$. We write
  \begin{equation}\label{eq:decomp}
      \Big( \frac{1}{2} + \mathbb{K}_A^\tau \Big) - \Big(\frac{1}{2} + \mathbb{K}_A^\tau \Big)^*
	= \Big( \mathbb{K}_A^\tau - \mathbb{K}_A^0 \Big)
	+ \Big( \mathbb{K}_{A}^0 - (\mathbb{K}_A^0)^* \Big)
	+ \Big( \mathbb{K}_{A}^0 - \mathbb{K}_A^\tau \Big)^*.
  \end{equation}
  First we observe that Lemma \ref{Lem:GXYFourier} $(iv)$ and Lemma \ref{Lem:compact} yield that
  $\mathbb{K}_A^\tau - \mathbb{K}_{A}^0$ is a compact operator, hence $(\mathbb{K}_A^0 - \mathbb{K}_A^\tau)^*$ is also compact. Now we claim that the operator $\mathbb{K}_A^0 - (\mathbb{K}_A^0)^*$ is compact as well. Indeed, let $\widetilde{\Gamma}_{A}(X,Y)$ be the fundamental solution of the elliptic operator $\nabla\cdot(A\nabla).$ Then as was shown in \cite[Theorem 2.6]{MiTa} one has
  $$\widetilde{\Gamma}_{A}(X,Y)= E_0 (X,Y)+E_1(X,Y),$$
  where $$E_0 (X, Y) = \frac{C_n}{(\det {A}(Y))^{1/2} \langle A^{-1}(Y) (X-Y), X-Y\rangle ^{(n-2)/2}}$$
  and $|\nabla_{X} E_1(X,Y)|\lesssim |X-Y|^{1-n+\alpha/2},$ where $\alpha$ is the H\"older exponent of $A$.

Now since the integral kernel of the operator $(\mathbb{K}_A^0)^*$ is the normal derivative of $\widetilde{\Gamma}_{A}(X,Y)$, estimates above yield that the integral kernel of $\mathbb{K}^0 - (\mathbb{K}_A ^0)^*$ is of the form

$$\frac{\langle \nabla_{Y} (\langle A^{-1}(X) (Y-X), Y-X\rangle), A^{-1}(X) N(Y)\rangle}{(\det {A}(X))^{1/2}\langle A^{-1}(X) (Y-X), Y-X\rangle ^{n/2}}-\frac{\langle\nabla_{X} (\langle A^{-1}(Y) (X-Y), X-Y\rangle), A^{-1}(Y)N(X)\rangle}{(\det {A}(Y))^{1/2}\langle A^{-1}(Y) (X-Y), X-Y\rangle ^{n/2}},
$$ plus a kernel, which in light of the estimate for $\nabla_{X} E_1(X,Y)$ above, gives rise to a compact operator. This implies that the integral kernel of $\mathbb{K}_A^0 - (\mathbb{K}_A^0)^*$ can be estimated by $|X-Y|^{2-n}$  modulo a kernel which gives rise to a compact operator. Hence, using Lemma \ref{Lem:compact}, we conclude that  $\mathbb{K}^0 - (\mathbb{K}^0)^*$ is compact. Thus, the operator on the left hand side of \eqref{eq:decomp} is compact and since $1/2 + (\mathbb{K}_A^\tau)^*$ is injective, Lemma \ref{surjectivity of Fredholm} yields the surjectivity of $1/2 + (\mathbb{K}_A^\tau)^*$.

\end{proof}

Now we are ready to state and prove one of main results of this paper, which concerns the invertibility of the boundary singular integral operator which is the backbone of the solvability results in Section \ref{sec:SIBVP}.

\begin{Th}\label{Thm:DLPinvert}
  Assume that $A$ verifies \hyperref[A1]{\small{$(A1)$}} -- \hyperref[A4]{\small{$(A4)$}}.
  Then, the operator $\pm 1/2 + \KK_A^*$ defined in Remark $\ref{Rem:KKadjoint}$, is invertible in $L^2(S_T)$.
\end{Th}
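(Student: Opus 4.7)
The plan is to mimic the structure of Lemma \ref{Lem:DLPinverttau} and Lemma \ref{invertability of hat BSI}, but replacing the Fourier-transformed Rellich estimates by their parabolic analogs from Subsection \ref{subsec:rellich for parab}. The key ingredients I will combine are: the parabolic jump relation in Proposition \ref{Prop:Jump2}, the parabolic Rellich estimate in Theorem \ref{Thm:Transferenceparab}, the $L^2$ boundedness of the non-tangential maximal functions \eqref{estim:grad of SLP parabolic} and Proposition \ref{Prop:NMFDt1/2}, the tangential jump identity $\nabla_T(\Ss f)^+ = \nabla_T(\Ss f)^-$, the perturbation estimate of Theorem \ref{boundedness SLP of the difference}, and Lemma \ref{surjectivity of Fredholm}.

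First I would work with the ``good'' matrix $B := \tilde{A} + A - A^{(r)}$ from Lemmas \ref{Lem:Atilde} and \ref{Lem:Ar}. For $g \in L^2(S_T)$ set $u := \Ss_B(g)$, which satisfies $\LL_B u = 0$ in $\Omega^\pm \times (0,T)$ with $u(X,0) = 0$. By Proposition \ref{Prop:Jump2},
\begin{equation*}
g = \partial_{\nu_B} u^+ - \partial_{\nu_B} u^-, \qquad \bigl(\pm \tfrac{1}{2} + \KK_B^*\bigr)g = \partial_{\nu_B} u^\pm.
\end{equation*}
Hence $\|g\|_{L^2(S_T)} \le \|\partial_{\nu_B} u^+\|_{L^2(S_T)} + \|\partial_{\nu_B} u^-\|_{L^2(S_T)}$, and the heart of the matter is to bound $\|\partial_{\nu_B} u^\mp\|$ by $\|\partial_{\nu_B} u^\pm\|$ modulo terms that can be absorbed. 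Applying Theorem \ref{Thm:Transferenceparab} to $u$, using that $\nabla_T u$, $D_t^{1/2} u$ and $u$ itself are independent of the choice of $\pm$ side (via the jump relations for $\Ss$), and dominating $(\nabla u)_*^+$ by a constant times $\|g\|_{L^2(S_T)}$ via \eqref{estim:grad of SLP parabolic}, I obtain
\begin{equation*}
\|\partial_{\nu_B} u^\pm\|_{L^2(S_T)}^2 \lesssim \|\nabla_T u\|_{L^2(S_T)}^2 + \|D_t^{1/2} u\|_{L^2(S_T)}^2 + \eps^{-1}\|u\|_{L^2(S_T)}^2 + \delta^{n-1+\alpha_0}\|g\|_{L^2(S_T)}^2.
\end{equation*}
The tangential, fractional-time and trace contributions on the right-hand side will be controlled by $\|\partial_{\nu_B} u^+\|_{L^2(S_T)}$ through parabolic energy identities of Green type (multiplying $\LL_B u = 0$ by $u$, integrating by parts on $\Omega^\pm \times (0,T)$, exploiting the ellipticity in \hyperref[A3]{$(A3)$} and $u(X,0)=0$), combined with Lemmas \ref{Lem:intboundGAO4.1}, \ref{Lem:Dt12} and a boundary-to-interior trace inequality analogous to Lemma \ref{Lem:intboundGAO3.2}. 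Absorbing the $\delta^{n-1+\alpha_0}\|g\|^2$ term for $\delta$ small yields
\begin{equation*}
\|g\|_{L^2(S_T)} \lesssim \bigl\| \bigl(\pm \tfrac{1}{2} + \KK_B^* \bigr)g \bigr\|_{L^2(S_T)}.
\end{equation*}

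Having the invertibility estimate for $B$, I transfer it to $A$ by writing $\pm \tfrac{1}{2} + \KK_A^* = (\pm \tfrac{1}{2} + \KK_B^*) + (\KK_A^* - \KK_B^*)$ and noting that, by Theorem \ref{boundedness SLP of the difference} applied to $A_1 = A$, $A_2 = B$ (which coincide on $\partial\Omega$), together with the jump relation $(\KK_A^* - \KK_B^*)f = \partial_\nu(\Ss_A - \Ss_B)^+ f - \tfrac{1}{2}f + \tfrac{1}{2}f$, the operator norm of $\KK_A^* - \KK_B^*$ on $L^2(S_T)$ is bounded by $\|A - B\|_\infty^{1/2} \lesssim r^{\alpha_0/2}$. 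Choosing $r$ sufficiently small lets me absorb this perturbation and deduce injectivity with closed range of $\pm \tfrac{1}{2} + \KK_A^*$ on $L^2(S_T)$. For surjectivity I invoke Lemma \ref{surjectivity of Fredholm}: the operator $\KK_A - \KK_A^*$ is compact on $L^2(S_T)$, since its Schwartz kernel reduces, by the same Mitrea--Taylor type expansion of $\G$ used at the end of Lemma \ref{invertability of hat BSI}, to a kernel of order $|X-Y|^{2-n}$ on the boundary (parabolically weighted) modulo compact operators, so Lemma \ref{Lem:compact} applies.

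The main obstacle will be Step~2, namely producing the parabolic energy estimate that bounds $\|u\|_{L^2(S_T)} + \|\nabla_T u\|_{L^2(S_T)} + \|D_t^{1/2} u\|_{L^2(S_T)}$ by $\|\partial_{\nu_B} u^+\|_{L^2(S_T)}$ with constants that do not spoil the absorption of the $\delta^{n-1+\alpha_0}\|g\|^2$ term. In the Fourier-transformed setting of Lemma \ref{Lem:DLPinverttau} this was effected by the explicit $|\tau|$ factor in \eqref{eq:RellB1}; in the purely parabolic setting one has to exploit the full space--time Green identity together with Lemma \ref{Browns fractional} to recover the fractional derivative control, which is the genuinely new difficulty. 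Once this quantitative energy bound is in place, the rest of the argument follows the elliptic template essentially verbatim.
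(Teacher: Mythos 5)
Your plan is genuinely different from the paper's and, as written, has two serious gaps that I do not believe can be closed by the estimates available in the paper.

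\textbf{Gap 1: the quantitative parabolic Rellich estimate.} You want to prove
$\|g\|_{L^2(S_T)}\lesssim\|(\pm\tfrac12+\KK_B^*)g\|_{L^2(S_T)}$ directly, which amounts to bounding
$\|\partial_{\nu_B}u^\mp\|_{L^2(S_T)}$ by $\|\partial_{\nu_B}u^\pm\|_{L^2(S_T)}$ modulo absorbable terms. In the Fourier-transformed setting the circularity is broken by \eqref{eq:RellB1}, which has the additional term $|\tau|\,|u|^2$ on the left; combining it with \eqref{eq:RellB3} and Lemma~\ref{Lem:intboundGAO} gives the crucial bound on $\|u(\cdot,\tau)\|_{L^2(\partial\Omega)}$, at the price of the factor $(1+1/|\tau|)$. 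The paper has no parabolic analogue of \eqref{eq:RellB1}: Theorem~\ref{Thm:Transferenceparab} is the analogue of \eqref{eq:RellB2}/\eqref{eq:Rell1} only, and on its right-hand side sit $\|\nabla_T u\|^2+\|D_t^{1/2}u\|^2+\eps^{-1}\|u\|^2$, none of which is controlled by $\|\partial_{\nu_B}u^\pm\|^2$ through any estimate proved in Sections~\ref{sec:Basic notations and tools}--\ref{sec:Parabolic Rellich}. Lemma~\ref{Lem:intboundGAO4.1} only bounds $\|\nabla u\|^2_{L^2(\Omega^\pm\times(0,T))}$, not $\|u\|^2_{L^2(\Omega^\pm\times(0,T))}$ (the parabolic energy identity \eqref{estim: energy related} produces $\int_{\Omega^\pm}|u(X,T)|^2\,dX$, not a space--time integral), and Lemma~\ref{Lem:Dt12}/\eqref{eq:WTF2} gives an \emph{interior} estimate for $D_t^{3/4}u$, not a boundary one for $D_t^{1/2}u$. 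You flag this as ``the genuinely new difficulty'' but offer no argument to resolve it; in fact the impossibility of bounding $\|g\|$ uniformly, without a $(1+1/|\tau|)$-type degeneracy, is exactly what forces the paper to carry out the estimates fiberwise in $\tau$ and to use a separate device for surjectivity.

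\textbf{Gap 2: the Fredholm step on $L^2(S_T)$.} To invoke Lemma~\ref{surjectivity of Fredholm} you must show that $T^\dagger - T$ is compact, where $T=\pm\tfrac12+\KK_A^*$ and $T^\dagger$ is the \emph{Hilbert-space adjoint on $L^2(S_T)$}. This is not $\pm\tfrac12+\KK_A$: the operator $\KK_A^*$ of Remark~\ref{Rem:KKadjoint} is still a forward Volterra operator in time, whereas its $L^2(S_T)$-adjoint integrates over $s>t$, i.e.\ reverses the time direction. So the object whose compactness you need is not $\KK_A-\KK_A^*$. Beyond that, even granting a kernel bound of the form $|P-Q|^{2-n}$ ``parabolically weighted,'' Lemma~\ref{Lem:compact} applies only to operators on $L^2(\partial\Omega)$; on $L^2(S_T)$ the time variable is essentially a convolution, and a tensor of a compact spatial operator with a bounded (non-compact) time-convolution is not compact. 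A direct Hilbert--Schmidt check on $S_T\times S_T$ also fails because the time integral diverges near the diagonal. The paper sidesteps this entirely: Fredholm theory is applied only inside Lemma~\ref{invertability of hat BSI}, to $\pm\tfrac12+(\mathbb K_A^\tau)^*$ on $L^2(\partial\Omega)$ with $\tau$ fixed, where Lemma~\ref{Lem:compact} and the Mitrea--Taylor expansion genuinely apply. Surjectivity of $\pm\tfrac12+\KK_A^*$ on $L^2(S_T)$ is then obtained constructively via the Shen-type extension $f\mapsto h$ (so that $\widehat h(P,0)=0$), inversion at each $\tau\neq0$, the bound $\|\varphi(\cdot,\tau)\|\lesssim(1+1/|\tau|)\|\widehat h(\cdot,\tau)\|$ and a mean-value argument near $\tau=0$, followed by inverse Fourier transform; injectivity is proved qualitatively from the energy identity \eqref{estim: energy related} and the jump relation \eqref{Prop:Jump2}, with no quantitative estimate needed. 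Your transfer from $B$ to $A$ via Theorem~\ref{boundedness SLP of the difference} and the identity $(\KK_A^*-\KK_B^*)f=\partial_\nu(\Ss_A-\Ss_B)^+f$ is fine, but it rests on the two unproven steps above.
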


\begin{proof}
  We only show the invertibility of $1/2 + \KK_A^*$ since the invertibility of $-1/2 + \KK_A^*$ is done in a similar way. First we show the injectivity of $1/2 + \KK_A^*$. To this end suppose $f\in L^2(S_T)$ is such that $(1/2 + \KK_A^*) f=0$. Then if $u(X,t) = \mathcal{S} f(X,t)$ then the jump relation in Proposition \ref{Prop:Jump2} yields that $\partial_{\nu} u^+ =0.$  Now using \eqref{estim: energy related}, it follows that $u=0$ in $\Omega^+  \times (0,T)$ and hence $u=0$ in $S_T$. Furthermore, \eqref{estim: energy related} also yields that $u=0$ in $\Omega^-  \times (0,T)$, and therefore $\partial_{\nu} u^- =0.$ Thus once again the jump relation in Proposition \ref{Prop:Jump2} yields that $f=0$ and hence $(1/2 + \KK_A^*)$ is injective. In order to prove the surjectivity of $(1/2 + \KK_A^*)$ we follow the strategy of Shen in \cite[Theorem 4.1.18]{Sh1}. Take an arbitrary $f\in L^2(S_T)$ and extend it to $\partial \Omega \times \R$ by setting
\begin{equation*}
h(P,t):=
  \begin{cases}
     f(P,t)\quad  &\textrm{if}\quad 0<t<T,\\
    -f(P, t-T)\quad &\text{if}\quad T<t<2T,\\
    0 \quad &\textrm{if}\quad 2T<t \quad  \textrm{or} \quad  t<0.
    \end{cases}
\end{equation*}
This function is in $L^2(\partial \Omega \times \R)\cap L^1(\partial \Omega \times \R)$, and therefore its Fourier transform in $t$, i.e. $\widehat{h}(P, \tau)$ is continuous in $\tau.$ Furthermore $\widehat{h}(\cdot, \tau)\in L^2(\partial \Omega)$ for each $\tau\in \R$ and $\widehat{h}(P, 0)=0.$ Using Plancherel's theorem it follows that $\Vert \widehat{h}\Vert_{L^2(\partial \Omega \times \R)}= C \Vert h\Vert_{L^2(\partial \Omega \times \R)}\lesssim \Vert f\Vert_{L^2(S_T)}.$ Now if $\tau \neq 0$ then using the invertibility of $1/2 + (\mathbb{K}_{A}^\tau)^*$ on $L^2(\partial \Omega)$ established in Lemma \ref{invertability of hat BSI}, there exists $\varphi(P, \tau)\in L^2(\partial \Omega)$ such that
\begin{equation}\label{equation for varphi}
  (1/2 + (\mathbb{K}_{A}^\tau)^*)\,\varphi( P,\tau)= \widehat{h}(P, \tau).
\end{equation}
For $\tau=0$, thanks to the fact that $\widehat{h}(P, 0)=0,$ we can set $\varphi(P, 0)=0$ and hence produce a solution to \eqref{equation for varphi} for all values of $\tau$.  We will show that
\begin{equation}\label{estim:L2 boundary estim varphi}
\Vert \varphi \Vert_{L^2(\partial \Omega \times \R)}\leq C_{T} \Vert f\Vert_{L^2(S_T)}.
\end{equation}
Indeed, Lemma \ref{Lem:DLPinverttau} yields that
 \begin{equation}\label{estim:varphi tau L2}
  \|\varphi(\cdot, \tau)\|_{L^2(\partial \Omega)}
      \lesssim \Big(1 + \frac{1}{|\tau|} \Big) \Vert \widehat{h}(\cdot, \tau)\Vert_{L^2(\partial \Omega)}.
 \end{equation}

 Therefore $\int_{|\tau|\geq 1}\int_{\partial \Omega} |\varphi (P, \tau)|^2\, dP\, d\tau \lesssim \Vert \widehat{h}\Vert^{2}_{L^2(\partial \Omega \times \R)} \lesssim \Vert f\Vert^{2}_{L^2(S_T)}. $ On the other hand for $|\tau|<1$, using the fact that $\widehat{h}(P,0)=0$ and applying the mean value theorem and the Cauchy-Schwarz inequality we deduce that

 \begin{equation*}
   |\widehat{h}(P, \tau)| \leq |\tau|\int_{0}^{2T} t\, |h(P,t)| \, dt \leq C_T \,|\tau|\, \{\int_{0}^{T} |f(P,t)|^2 \, dt\}^{1/2}.
 \end{equation*}
 This together with \eqref{estim:varphi tau L2} yield that $\int_{|\tau|< 1}\int_{\partial \Omega} |\varphi (P, \tau)|^2\, dP\, d\tau  \leq C_T \, \Vert f\Vert^{2}_{L^2(S_T)}. $ This establishes \eqref{estim:L2 boundary estim varphi}.
 Now let $\psi(P,t)\in L^2(\partial \Omega \times \R)$ be such that its Fourier transform in $t$ is equal to $\varphi(P, \tau)$. The for almost all $(P,t)\in \partial \Omega \times \R$ we have
  \begin{equation*}
   \frac{1}{2}\psi(P,t) + \int_{-\infty}^{t}\int_{\partial \Omega}\mathcal{K}_{A}^{*} (P,Q, t-s)\, \psi(Q, s) \, dQ\, ds= h(P, t).
 \end{equation*}
Furthermore, since $h(P,t)=0$ for $t<0$, an argument similar to that of the injectivity of $1/2 + \KK_{A}^*$ shows that $\psi(P,t)=0$ when $t<0.$ Now, since for $(P,t)\in S_T$, $h(P, t)=f(P,t)$, we have for almost every $(P,t)\in S_T$ that $$\frac{1}{2}\psi(P,t) + \int_{0}^{t}\int_{\partial \Omega}\mathcal{K}_{A}^{*} (P,Q, t-s)\, \psi(Q, s) \, dQ\, ds= (1/2 + \KK^*)\psi(P,t)= f(P, t).$$ Therefore $1/2 + \KK_{A}^*$ is also surjective and hence because of its injectivity, invertible.
\end{proof}

\begin{Cor}\label{Cor:DLPinvert2}
  Assume that $A$ satisfies the conditions \hyperref[A1]{\small{$(A1)$}} -- \hyperref[A4]{\small{$(A4)$}}. Then, the operators $\pm 1/2 + \KK_{A}$ are invertible in $L^2(S_T)$.
\end{Cor}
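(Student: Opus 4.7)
The plan is to deduce this corollary from Theorem \ref{Thm:DLPinvert} via Hilbert-space duality on $L^2(S_T)$, exploiting the causal structure of $\KK_A$ together with the symmetry of the diffusion matrix. The operators $\KK_A$ and $\KK_A^*$ are related to one another by a transposition in the spatial variables \emph{and} a time-reversal in the time variable; composing these two operations produces exactly the Hilbert-space adjoint on $L^2(S_T)$.

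Concretely, I would first establish the kernel symmetry $\G(X,Y,t) = \G(Y,X,t)$ for $t>0$, which is a consequence of \eqref{eq:sym} together with the hypothesis $A=A^*$ (since for symmetric $A$ the fundamental solution of $\LL_A^*$ is the time-reversal of that of $\LL_A$). Differentiating this identity yields
$$\partial_{x_i}\G(X,Q,\tau)\big|_{X=P} = \partial_{y_i}\G(Q,Y,\tau)\big|_{Y=P},$$
so the integral kernels $k_{\KK_A}$ and $k_{\KK_A^*}$ satisfy $k_{\KK_A^*}(P,Q,\tau) = k_{\KK_A}(Q,P,\tau)$. Next, introduce the unitary involution $R\colon L^2(S_T)\to L^2(S_T)$, $Rf(P,t):=f(P,T-t)$. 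A direct application of Fubini's theorem, swapping the order of integration over the causal region $\{0<s<t<T\}$, followed by the substitution $t\mapsto T-t$, yields the key algebraic identity
$$\bigl(\KK_A^*\bigr)^{\dagger} = R\,\KK_A\,R \quad\text{on } L^2(S_T),$$
where $(\cdot)^{\dagger}$ denotes the $L^2(S_T)$-adjoint. Consequently,
$$\bigl(\pm \tfrac{1}{2} + \KK_A^*\bigr)^{\dagger} = R\bigl(\pm \tfrac{1}{2} + \KK_A\bigr)R.$$

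The conclusion is then immediate: by Theorem \ref{Thm:DLPinvert} the operator $\pm \tfrac{1}{2} + \KK_A^*$ is invertible in $L^2(S_T)$, hence so is its Hilbert-space adjoint $R\bigl(\pm \tfrac{1}{2} + \KK_A\bigr)R$, and conjugation by the unitary involution $R$ (with $R^2=I$) preserves invertibility, yielding the invertibility of $\pm \tfrac{1}{2} + \KK_A$. The only step requiring real care is verifying the kernel-transpose identity in the first paragraph for the merely H\"older-continuous coefficients; this is where the symmetry hypothesis \hyperref[A2]{\small{$(A2)$}} enters essentially, and it follows from uniqueness of the fundamental solution combined with \eqref{eq:sym}. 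Everything else is a routine Fubini-and-substitution manipulation.
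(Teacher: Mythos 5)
Your proof is correct, and it fills in the deduction that the paper states as a corollary without supplying an explicit argument. The key structural point you identify is the right one: because of the causal (Volterra) structure of the integral, the $L^2(S_T)$-adjoint of $\KK_A^*$ is not $\KK_A$ itself but the time-reversed conjugate $R\,\KK_A\,R$ — the spatial kernel transpose $k_{\KK_A^*}(P,Q,\tau)=k_{\KK_A}(Q,P,\tau)$ alone would not close the argument, and many readers gloss over the need for $R$. Your verification of that kernel identity is sound: the symmetry $\G(X,Y,t)=\G(Y,X,t)$ is exactly what the paper uses in the proof of Lemma \ref{Lem:GXYm} (where they set $\overline{\G}(Y,X,t)=\G^*(Y,X,-t)=\G(X,Y,t)$ and invoke uniqueness of the fundamental solution for $\LL_A$), and differentiating it gives $(\partial_1\G)_i(P,Q,\tau)=(\partial_2\G)_i(Q,P,\tau)$, from which $k_{\KK_A^*}(P,Q,\tau)=k_{\KK_A}(Q,P,\tau)$ follows because both kernels carry the same factor $a_{ij}(P)n_j(P)$. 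The Fubini-plus-substitution computation then yields $(\KK_A^*)^\dagger=R\,\KK_A\,R$, and since $\pm\tfrac12$ is a real scalar while $R$ is a self-adjoint unitary involution, $(\pm\tfrac12+\KK_A^*)^\dagger=R(\pm\tfrac12+\KK_A)R$. Invertibility passes to the adjoint and is preserved by unitary conjugation, so Theorem \ref{Thm:DLPinvert} gives the corollary. Everything checks out.
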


\subsection{Invertibility of the SLP}\label{subsec:Invertibility of the SLP}

For the solvability of the regularity problem we would also need the following invertibility result.
\begin{Th}\label{Thm:DLPinvert1}
  Assume that $A$ satisfies the conditions \hyperref[A1]{\small{$(A1)$}} -- \hyperref[A4]{\small{$(A4)$}}.
  Then, the single-layer operator $\Ss_A$ is invertible from $L^2(S_T)$ to $H^{1,1/2}(S_T)$.
\end{Th}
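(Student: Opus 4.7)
The strategy parallels that of Theorem \ref{Thm:DLPinvert}: boundedness, injectivity, a coercive lower bound via Rellich, and then surjectivity through Fredholm theory after a Fourier transform in time. Boundedness of $\Ss_A: L^2(S_T) \to H^{1,1/2}(S_T)$ is essentially free: the $L^2(S_T)$-bound for $\Ss_A f$ itself follows from Aronson's estimate (Lemma \ref{Lem:Aronson}) and Schur's lemma (Lemma \ref{Lem:Schurslemma}); the tangential-gradient bound is \eqref{estim:grad of SLP parabolic}; and the half-derivative bound is Proposition \ref{Prop:NMFDt1/2}. For injectivity, suppose $\Ss_A f \equiv 0$ on $S_T$ and set $u := \Ss_A f$. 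Then $u$ solves $\LL_A u = 0$ in both $\Omega^\pm \times (0,T)$ with vanishing initial and lateral data; the energy identity \eqref{estim: energy related} (together with Aronson-type decay at infinity in the case of $\Omega^-$, exploited on $\Omega^-\cap B(0,R)$ and passing $R\to\infty$) forces $u\equiv 0$ on both sides, so that Proposition \ref{Prop:Jump2} gives $f = (\partial_\nu u)^+ - (\partial_\nu u)^- = 0$.

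For the coercive estimate, let $B := \tilde A + A - A^{(r)}$ be as in Theorem \ref{Thm:Transferencetau} and set $u := \Ss_B f$. Theorem \ref{Thm:Transferenceparab} yields
\[
\int_0^T\!\!\int_{\partial\Omega}|\nabla u^{\pm}|^2\, dP\, dt \lesssim \int_0^T\!\!\int_{\partial\Omega}\bigl(|\nabla_T u|^2 + |D_t^{1/2}u|^2 + \eps^{-1}|u|^2\bigr)\, dP\, dt + \delta^{n-1+\alpha_0}\!\!\int_0^T\!\!\int_{\partial\Omega}|(\nabla u)^+_*|^2\, dP\, dt.
\]
Applying \eqref{estim:grad of SLP parabolic} to $\Ss_B$, the final term on the right is dominated by $\delta^{n-1+\alpha_0}\|f\|_{L^2(S_T)}^2$ and, since $f = (\partial_\nu u)^+ - (\partial_\nu u)^-$ with $|\partial_\nu u^{\pm}|\lesssim |\nabla u^{\pm}|$, we may choose $\delta$ small and absorb to obtain $\|f\|_{L^2(S_T)}\lesssim \|\Ss_B f\|_{H^{1,1/2}(S_T)}$. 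Writing $A = B + C$ with $\|C\|_\infty \lesssim r^{\alpha_0}$ (Lemma \ref{Lem:Ar}), Theorem \ref{boundedness SLP of the difference} delivers $\|(\Ss_A - \Ss_B)f\|_{H^{1,1/2}(S_T)} \lesssim r^{\alpha_0/2}\|f\|_{L^2(S_T)}$; choosing $r$ small enough then yields
\[
\|f\|_{L^2(S_T)}\lesssim \|\Ss_A f\|_{H^{1,1/2}(S_T)}, \quad f\in L^2(S_T),
\]
so that $\Ss_A$ is injective with closed range.

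The main obstacle is surjectivity. The plan is to transfer the problem to the Fourier-transformed level in time, mirroring Lemma \ref{invertability of hat BSI} and the final part of the proof of Theorem \ref{Thm:DLPinvert}. For each $\tau\neq 0$ one shows $\mathbb S^\tau_A:L^2(\partial\Omega)\to H^1(\partial\Omega)$ is an isomorphism: the coercive estimate for $\mathbb S^\tau_B$ follows from the elliptic Rellich inequalities of Theorem \ref{Thm:Transferencetau} applied to $u = \mathbb S^\tau_B g$ combined with \eqref{estim:grad of SLP}; one then transfers to $\mathbb S^\tau_A$ via the Fourier-transformed analogue of Theorem \ref{boundedness SLP of the difference}, whose proof is identical after replacing Lemma \ref{Lem:GXYm} and Lemma \ref{Lem:GDt} by Lemma \ref{Lem:GXYFourier} (yielding a uniform bound $\|(\mathbb S^\tau_A - \mathbb S^\tau_B)g\|_{H^1(\partial\Omega)}\lesssim r^{\alpha_0/2}\|g\|_{L^2(\partial\Omega)}$); and surjectivity is extracted by Fredholm theory, with the compactness of $\mathbb S^\tau_A - \mathbb S^0_A$ and of $\mathbb S^0_A - (\mathbb S^0_A)^*$ supplied by Lemmas \ref{Lem:GXYFourier}(iv) and \ref{Lem:compact}, exactly as in the proof of Lemma \ref{invertability of hat BSI}. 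With a quantitative $(1+|\tau|^{-1})$-type bound on $(\mathbb S^\tau_A)^{-1}$ in hand, given $g\in H^{1,1/2}(S_T)$ one extends $g$ to $\partial\Omega\times\R$ by odd reflection about $t=T$ and zero outside $[0,2T]$, uses $\widehat g(\cdot,0)=0$ and the mean-value theorem to tame the low-frequency regime, solves $\mathbb S^\tau_A \varphi(\cdot,\tau) = \widehat g(\cdot,\tau)$ pointwise in $\tau$, applies the inverse Fourier transform, and verifies by a causality argument (identical to the injectivity above) that the resulting $\varphi$ is supported in $t\geq 0$. Its restriction to $S_T$ is the required preimage $f$, which together with the a priori bound concludes the proof.
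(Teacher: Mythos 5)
Your coercive estimate
\[
\|f\|_{L^2(S_T)}\lesssim \|\Ss_A f\|_{H^{1,1/2}(S_T)}
\]
is proved exactly as in the paper: Rellich via Theorem \ref{Thm:Transferenceparab} applied to $u=\Ss_B f$, the jump relation $f=\partial_{\nu_B}u^+-\partial_{\nu_B}u^-$ from Proposition \ref{Prop:Jump2}, the non-tangential maximal function bound \eqref{estim:grad of SLP parabolic}, absorption in $\delta$, and transfer from $\Ss_B$ to $\Ss_A$ via Theorem \ref{boundedness SLP of the difference} and Lemma \ref{Lem:Ar}. (Your separate injectivity proof through the energy identity \eqref{estim: energy related} is also fine, though the paper simply reads injectivity off the coercive bound.)

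The genuine gap is in the surjectivity step. Lemma \ref{surjectivity of Fredholm} is stated, and used in the proof of Lemma \ref{invertability of hat BSI}, for an operator $T$ mapping a Hilbert space $H$ \emph{into itself}: the argument hinges on comparing $\mathrm{ind}(T)$ with $\mathrm{ind}(T^*)=-\mathrm{ind}(T)$ after a compact perturbation of the form $T^*-T$. That comparison has no direct analogue for $\mathbb{S}^\tau_A\colon L^2(\partial\Omega)\to H^1(\partial\Omega)$ (or the $\tau$-weighted variant of $H^1$ you would actually need): its adjoint goes from $(H^1)^*$ to $L^2$, so the expression ``$\mathbb{S}^0_A-(\mathbb{S}^0_A)^*$'' does not denote an operator between matching spaces, and even a conjugation by a fixed isomorphism $J\colon L^2\to H^1$ does not reduce the problem to one where $T^*-T$ can be seen to be compact. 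In other words, the ``self-adjoint modulo compact'' device that settles the index for the BSI does not transport to the SLP, and you have not supplied a replacement. The paper resolves this by an entirely different route: it delegates surjectivity to the standard approximation scheme (smooth subdomains, existence of regular solutions to the Dirichlet problem in the sense of \cite{LSU}, and crucially the solvability of the Neumann problem, Theorem \ref{Th:Neumann}), citing \cite[Theorem 5.6]{BrownPhD} and \cite[Theorem 4.2.1]{Sh1}. If you want to keep a Fourier-in-time flavour, you would still need one of these auxiliary existence results (or a dense-range argument built on them) to upgrade ``injective with closed range'' to ``onto'' before the $\tau$-by-$\tau$ inversion and the causality argument can be invoked.
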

\begin{proof}
First we show that $\Ss_A$ is injective. To this end take $f\in L^2(S_T)$ and set $u:=\Ss_B f$, with b as in Theorem \ref{Thm:Transferenceparab}. Then $\LL_B u=0$ and $u(X,0)=0$. Moreover the jump relation in Proposition \ref{Prop:Jump2} yields

\begin{equation*}%
  f = \Big(\frac{1}{2} + (\mathcal{K}_{B})^* \Big)f - \Big(-\frac{1}{2} + (\mathcal{K}_{B})^* \Big)f
      = \partial_{\nu_{B}} u^+ - \partial_{\nu_{B}} u^-.
  \end{equation*}

Therefore the Rellich estimates in Theorem \ref{Thm:Transferenceparab} (for $\delta< \delta_{0}(r)$) and the boundedness result \eqref{estim:grad of SLP parabolic}, yield that
\begin{align*}
\Vert f\Vert_{L^2( S_T)}
    & \lesssim \Vert\partial_{\nu_{B}} u^{+} \Vert_{L^2( S_T)} + \Vert \partial_{\nu_{B}} u^{-}\Vert_{L^2( S_T)} \\
    & \lesssim \Vert \nabla_T u\Vert_{L^2(S_T)} +  \Vert D_t^{1/2}u\Vert_{L^2(S_T)} + \Vert u\Vert_{L^2(S_T)}+ \delta^{\frac{n-1+\alpha_0}{2}} \Vert(\nabla \Ss_B f)_*\Vert_{L^2(S_T)}\\
    & \lesssim \Vert u\Vert_{H^{1,1/2}(S_T)}+ \delta^{\frac{n-1+\alpha_0}{2}} \Vert f\Vert_{L^2(S_T)}.
\end{align*}							
So we arrive at the invertibility estimate
\begin{equation*}
\Vert f\Vert_{L^2( S_T)}\lesssim \Vert \Ss_B f\Vert_{H^{1,1/2}(S_T)}+\delta^{\frac{n-1+\alpha_0}{2}} \Vert f\Vert_{L^2(S_T)}.
\end{equation*}

However, since $\Ss_B= \Ss_A +\Ss_B -\Ss_A$, we also have

\begin{align}\label{almost invertibility of A SLP}
 \Vert f\Vert_{L^2( S_T)}
&\lesssim \Vert \Ss_A f\Vert_{H^{1,1/2}(S_T)} + \Vert (\Ss_B -\Ss_A) f\Vert_{H^{1,1/2}(S_T)}
+\delta^{\frac{n-1+\alpha_0}{2}} \Vert f\Vert_{L^2(S_T)}\\
&  = \Vert \Ss_A f\Vert_{H^{1,1/2}(S_T)}+\Vert \nabla (\Ss_B -\Ss_A) f\Vert_{L^2(S_T)}
 +  \Vert D_t^{1/2}(\Ss_B -\Ss_A ) f\Vert_{L^2(S_T)} \nonumber \\
& \qquad  + \Vert (\Ss_B -\Ss_A) f\Vert_{L^2(S_T)}+\delta^{\frac{n-1+\alpha_0}{2}} \Vert f\Vert_{L^2(S_T)} \nonumber.
\end{align}
Now using Theorem \ref{boundedness SLP of the difference} and Lemma \ref{Lem:Ar} we can show that the last three terms on the right hand side of \eqref{almost invertibility of A SLP} are bounded by $\Vert A-B\Vert_{\infty}^{1/2} \Vert f\Vert_{L^2(S_T)} \lesssim r^{\alpha_0/2} \Vert f\Vert_{L^2(S_T)}.$ So taking $r$ small enough we can absorb the second, third and fourth  terms on the right hand side of \eqref{almost invertibility of A SLP} into the left hand side, and thereafter choosing $\delta$ in a suitable manner, we can do the same with the fifth term on the right hand side of \eqref{almost invertibility of A SLP} and finally obtain

\begin{equation}\label{invertibility of A SLP}
\Vert f\Vert_{L^2( S_T)}\lesssim \Vert \Ss_A f\Vert_{H^{1,1/2}(S_T)}.
\end{equation}

Thus if $u=0$ then \eqref{invertibility of A SLP} yields that $f=0$, and therefore $\Ss_A$ is injective and its range is closed in $H^{1,1/2}(S_T)$.
The surjectivity of the single layer potential is quite standard and is handled in many papers, see e.g. \cite[Theorem 5.6 ]{BrownPhD} or \cite[Theorem 4.2.1]{Sh1}. We would like to mention that, the main part of the surjectivity-proof, hinges on two main steps; one is an approximation argument and the existence of regular solution to the Dirichlet problem for equations with H\"older-continuous coefficients on smooth domains, see \cite{LSU}. The other is the existence and uniqueness of the solution to the Neumann problem, which will be established in Theorem \ref{Th:Neumann}.

\end{proof}

\section{Solvability of initial boundary value problems}\label{sec:SIBVP}

Given all the information and estimates which have been gathered in the previous sections, one has the following three theorems which state the well-posedness of the DNR problems. The proofs of these statements are completely standard and can be done in the same way as e.g. \cite{FaRi}, hence we omit the details.
\subsection{Solvability of the Dirichlet problem}
\begin{Th}\label{Th:Dirichlet}
  Let $T>0$, $\Omega$ be a Lipschitz domain and
  $A$ a matrix verifying properties \hyperref[A1]{\small{$(A1)$}} -- \hyperref[A4]{\small{$(A4)$}}.
  If $f \in L^2(S_T)$, then the function $u:=\DD (-1/2+\KK_{A})^{-1}f$ is the unique solution of the following
  initial Dirichlet boundary value problem
  \begin{equation*} %
    \partial_t u(X,t) - \nabla_X\cdot\big(A(X) \nabla_X u(X,t)\big)
		= 0 \quad \text{in} \quad \Omega  \times (0,T),
  \end{equation*}

  \begin{equation*} %
    u(X,0)= 0, \quad \text{a.e. } X \in \Omega,
  \end{equation*}

  \begin{equation*} %
     \lim_{\substack{X \to P \\ X \in \g_{+}(P)}} u(X,t)
	= f(P,t), \quad \text{a.e. } (P,t) \in S_T,
  \end{equation*}
  and
  \begin{equation*} %
    \| u^{\pm}_*\|_{L^2(S_T)} \lesssim \| f \|_{L^2(S_T)}.
  \end{equation*}
\end{Th}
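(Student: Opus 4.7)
The plan is to exhibit the solution as $u := \DD g$ where $g := (-1/2 + \KK_A)^{-1} f$, and then verify the four properties in turn before addressing uniqueness via a standard energy argument. By Corollary \ref{Cor:DLPinvert2} applied to $-1/2 + \KK_A$, the function $g$ lies in $L^2(S_T)$ with $\Vert g\Vert_{L^2(S_T)} \lesssim \Vert f\Vert_{L^2(S_T)}$, so $u$ is well-defined.

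For the interior equation and initial condition: since the kernel $\partial_\nu \G(X,Q,t-s)$ of $\DD$ solves $\LL_A = 0$ in $(X,t)$ whenever $X\in\Omega$ and $t>s$, the Gaussian bounds of Lemmas \ref{Lem:Aronson}--\ref{Lem:GXYt} justify differentiation under the integral, yielding $\LL_A u = 0$ in $\Omega\times(0,T)$. The condition $u(X,0)=0$ is immediate because the time integral $\int_0^t(\cdot)\,ds$ defining $\DD g$ is empty at $t=0$. For the boundary trace, the jump relation of Proposition \ref{Prop:Jump} gives
$$\lim_{\substack{X \to P \\ X \in \g_+(P)}} u(X,t) = -\tfrac{1}{2} g(P,t) + \KK g(P,t) = \left(-\tfrac{1}{2} + \KK_A\right)g(P,t) = f(P,t)$$
a.e. on $S_T$, and the non-tangential maximal estimate of Theorem \ref{Thm:NMF} combined with the bound on $g$ produces
$$\Vert u^+_*\Vert_{L^2(S_T)} \lesssim \Vert g\Vert_{L^2(S_T)} \lesssim \Vert f\Vert_{L^2(S_T)},$$
with the corresponding bound on $u^-_*$ following from the same theorem applied on the exterior side.

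For uniqueness, I would suppose $u$ solves the problem with $f=0$, multiply $\LL_A u = 0$ by $u$, and integrate over $\Omega\times(0,T')$ for arbitrary $0 < T' \leq T$. The parabolic analogue of identity \eqref{estim: energy related} yields
$$\int_0^{T'}\!\!\int_{\partial \Omega} u^+\, \partial_\nu u^+\, dP\, dt = \int_0^{T'}\!\!\int_\Omega \sum_{i,j} a_{ij}(X)\, \partial_i u\, \partial_j u\, dX\, dt + \tfrac{1}{2}\int_\Omega |u(X,T')|^2\, dX.$$
Since $u^+ = 0$ a.e. on $S_T$, the left-hand side vanishes; ellipticity \hyperref[A3]{\small{$(A3)$}} then forces $\nabla u \equiv 0$ and $u(\cdot,T') \equiv 0$ for every admissible $T'$, hence $u \equiv 0$.

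The main obstacle is the rigorous justification of this last integration by parts, given that the only boundary control on $u$ is the non-tangential $L^2$ estimate $u^+_* \in L^2(S_T)$ and the boundary values $u^+$, $\partial_\nu u^+$ must be interpreted in the non-tangential sense. The standard device, going back to \cite{FaRi} and developed in \cite{BrownPhD}, is to exhaust $\Omega$ by an increasing family of smooth inner Lipschitz subdomains $\Omega_j \uparrow \Omega$, apply the divergence theorem on each $\Omega_j$, and pass to the limit by dominated convergence controlled by $u^+_* \in L^2(S_T)$. Since these approximation arguments are classical and insensitive to the H\"older regularity of the coefficients, the authors' decision to defer the detailed verification to those references is warranted.
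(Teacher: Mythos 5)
Your existence argument is correct and matches the standard route, which the paper defers to \cite[Theorem 2.3]{FaRi}: Corollary \ref{Cor:DLPinvert2} gives $g := (-\tfrac{1}{2}+\KK_A)^{-1}f \in L^2(S_T)$; the kernel of $\DD$ solves $\LL_A=0$ away from the pole, so $u=\DD g$ satisfies the equation with $u(\cdot,0)=0$; Proposition \ref{Prop:Jump} recovers $u^+=(-\tfrac{1}{2}+\KK_A)g=f$; and Theorem \ref{Thm:NMF} gives the non-tangential maximal estimate. Your uniqueness argument, however, has a genuine gap. The energy identity you invoke requires integrating $u\,\partial_\nu u$ over $S_T$, but the $L^2$ Dirichlet problem provides no control on $\partial_\nu u$ at the boundary: the only data is $u^+_*\in L^2(S_T)$, and for generic $L^2$ Dirichlet data the solution does not have $\nabla u\in L^2(\Omega\times(0,T))$ (already the Poisson extension of a generic $L^2$ boundary function on a ball fails this). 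On approximating subdomains $\Omega_j\uparrow\Omega$ the integration by parts is legitimate, but the boundary term $\int_{\partial\Omega_j}u\,\partial_\nu u\,dP\,dt$ is then an indeterminate product: $u|_{\partial\Omega_j}\to 0$ in $L^2$ (dominated by $u^+_*$), while $\partial_\nu u|_{\partial\Omega_j}$ may blow up. Dominated convergence controlled by $u^+_*$ handles only the $u$-factor, not the $\partial_\nu u$-factor, so the argument does not close. This is in contrast to the uniqueness parts of Theorems \ref{Th:Neumann} and \ref{Th:Regularity}, where the a priori estimate supplies $(\nabla u)^\pm_*\in L^2(S_T)$ and the energy identity is genuinely available.

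The correct route to uniqueness for the $L^2$ Dirichlet problem, and the one behind \cite[Theorem 2.3]{FaRi}, pairs $u$ against the Green function $G_j$ of the adjoint operator $\LL_A^*$ on $\Omega_j\times(0,T)$ with pole at an interior point $(X_0,t_0)$: the representation $u(X_0,t_0)=\int_{\partial\Omega_j\times(0,T)}u\,\partial_\nu G_j\,dP\,dt$ shifts the $\partial_\nu$-burden onto $G_j$, and $\partial_\nu G_j$ lies in $L^2$ of the lateral boundary with bounds uniform in $j$ --- a consequence of the (adjoint) regularity and Neumann solvability --- so the limit $j\to\infty$ passes by Cauchy--Schwarz together with $u|_{\partial\Omega_j}\to 0$ in $L^2$. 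Thus the regularity/Neumann theory enters the Dirichlet uniqueness proof in an essential way, and the naive energy estimate cannot substitute for it.
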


\begin{proof}
  The proof goes along the same lines as \cite[Theorem 2.3]{FaRi}.
\end{proof}

\subsection{Solvability of the Neumann problem}

\begin{Th}\label{Th:Neumann}
  Let $T>0$, $\Omega$ be a Lipschitz domain and
  $A$ a matrix verifying properties \hyperref[A1]{\small{$(A1)$}} -- \hyperref[A4]{\small{$(A4)$}}.
  If $f \in L^2(S_T)$ with $\int_{S_T} f =0,$
  then the function $u:=\Ss_{A} (1/2 +\KK_{A}^*)^{-1}f$ is the unique solution of the following
  initial Neumann boundary value problem
  \begin{equation*}\label{eq:Neumanneq}
    \partial_t u(X,t) - \nabla_X\cdot\big(A(X) \nabla_X u(X,t)\big)
		= 0 \quad \text{in} \quad \Omega  \times (0,T),
  \end{equation*}

  \begin{equation*}\label{eq:Neumanninit}
    u(X,0)= 0, \quad \text{a.e. } X \in \Omega,
  \end{equation*}

  \begin{equation*}\label{eq:Neumannconv}
     \partial_{\nu_A} u(P,t)= f(P,t), \quad \text{a.e. } (P,t) \in S_T,
  \end{equation*}
  and
  \begin{equation*}\label{eq:Neumannmax}
    \| u^\pm_*\|_{L^2(S_T)} + \| (\nabla u)^\pm_*\|_{L^2(S_T)} + \| (D_t^{1/2} u)^\pm_* \|_{L^2(S_T)} \lesssim \| f \|_{L^2(S_T)}.
  \end{equation*}

\end{Th}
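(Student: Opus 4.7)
The plan is to verify directly that the candidate $u := \Ss_A (1/2+\KK_A^*)^{-1} f$ fulfils each requirement in the statement, and then to establish uniqueness by a standard energy (Rellich-type) argument. By Theorem \ref{Thm:DLPinvert} the operator $1/2 + \KK_A^*$ is invertible on $L^2(S_T)$, so $g := (1/2+\KK_A^*)^{-1} f$ is a well-defined element of $L^2(S_T)$ with $\|g\|_{L^2(S_T)} \lesssim \|f\|_{L^2(S_T)}$. Since $\Gamma(X,Q,t-s)$ is the fundamental solution of $\LL_A$ away from the pole, differentiation under the integral sign in the definition of $\Ss_A g$ gives $\LL_A u(X,t)=0$ for $X\in\Omega$ and $t\in(0,T)$. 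Moreover, the form of the single-layer potential (whose time integral runs from $0$ to $t$) yields immediately that $u(X,0)=0$ for a.e. $X\in\Omega$.

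For the boundary condition, I would apply the jump relation of Proposition \ref{Prop:Jump2} to $g$, obtaining
\[
\partial_{\nu_A} u^+(P,t) = \tfrac{1}{2} g(P,t) + \KK_A^* g(P,t) = \left(\tfrac{1}{2} + \KK_A^*\right)g(P,t) = f(P,t),
\]
for a.e. $(P,t)\in S_T$. The three non-tangential maximal function estimates would be assembled as follows: the bound for $\|(\nabla u)^\pm_*\|_{L^2(S_T)}$ is exactly \eqref{estim:grad of SLP parabolic} (the remark following Theorem \ref{Thm:NMF}); the bound for $\|(D_t^{1/2} u)^\pm_*\|_{L^2(S_T)}$ is Proposition \ref{Prop:NMFDt1/2}; and the bound for $\|u^\pm_*\|_{L^2(S_T)}$ follows from a routine computation based on the pointwise estimate in Lemma \ref{Lem:Aronson} together with Lemmas \ref{Lem:Duo} and \ref{Lem:Schurslemma}, in the same spirit as the treatment of $J_1,J_3,J_4$ in the proof of Theorem \ref{Thm:NMF}. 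Combining these bounds with $\|g\|_{L^2(S_T)} \lesssim \|f\|_{L^2(S_T)}$ gives the stated control in terms of $f$.

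The main obstacle is the uniqueness. Suppose $v$ solves the parabolic equation in $\Omega\times(0,T)$ with $v(X,0)=0$, $\partial_{\nu_A} v^+=0$ on $S_T$, and satisfies the non-tangential maximal function estimates. Multiplying $\LL_A v=0$ by $\overline{v}$ and integrating over $\Omega\times(0,T)$, one integrates by parts in $X$ (justified by the non-tangential bounds, which ensure the boundary trace is in $L^2$ and the interior terms are integrable) to obtain, as in \eqref{estim: energy related},
\[
\tfrac{1}{2}\int_\Omega |v(X,T)|^2\,dX + \int_0^T\!\!\int_\Omega \langle A(X)\nabla v,\nabla v\rangle\,dX\,dt = \int_0^T\!\!\int_{\partial\Omega} \overline{v}\,\partial_{\nu_A} v^+\,dP\,dt = 0.
\]
By the ellipticity assumption \hyperref[A3]{\small{$(A3)$}}, both terms on the left are non-negative, forcing $\nabla v\equiv 0$ on $\Omega\times(0,T)$ and $v(X,T)=0$. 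Thus $v$ is spatially constant at each time $t$, and combined with $v(X,0)=0$ this yields $v\equiv 0$. The delicate point in this step is to rigorously justify the integration by parts and the boundary trace identification for a solution whose regularity is only controlled through non-tangential maximal functions; one proceeds by an approximation argument, working on a family of smooth subdomains exhausting $\Omega$, and passing to the limit using Lebesgue's dominated convergence theorem together with the $L^2$ non-tangential bounds, exactly as in \cite{FaRi}.
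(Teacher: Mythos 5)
Your proof is correct and follows essentially the same route the paper takes (the paper simply cites \cite{FaRi}, Theorem 2.4, and your verification — invertibility of $1/2+\KK_A^*$, jump relation from Proposition \ref{Prop:Jump2}, the non-tangential maximal estimates, and an energy argument for uniqueness — is precisely the Fabes--Rivi\`ere scheme adapted to the variable-coefficient setting). One small point worth tightening: after concluding from the energy identity that $\nabla v\equiv 0$ and $v(\cdot,T)=0$, the cleanest way to finish is to observe that the identity holds with $T$ replaced by any $T'\in(0,T]$, giving $v(\cdot,T')=0$ for all $T'$, or equivalently that a spatially constant caloric function with vanishing initial data is identically zero because $\partial_t v=\nabla\cdot(A\nabla v)=0$.
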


\begin{proof}
  The proof goes along the same lines as \cite[Theorem 2.4]{FaRi}.
\end{proof}

\subsection{Solvability of the regularity problem}

\begin{Th}\label{Th:Regularity}
  Let $T>0$, $\Omega$ be a Lipschitz domain and
  $A$ a matrix verifying properties \hyperref[A1]{\small{$(A1)$}} -- \hyperref[A4]{\small{$(A4)$}}.
  If $f \in H^{1,1/2}(S_T)$, then the function $u:=\Ss_{A} (\Ss_{|_{S_T}})^{-1} f$ is the unique solution of the following
  initial regularity boundary value problem
  \begin{equation*}\label{eq:Regeq}
    \partial_t u(X,t) - \nabla_X\cdot\big(A(X) \nabla_X u(X,t)\big)
		= 0 \quad \text{in} \quad \Omega  \times (0,T),
  \end{equation*}

  \begin{equation*}\label{eq:Reginit}
    u(X,0)= 0, \quad \text{a.e. } X \in \Omega,
  \end{equation*}

  \begin{equation*}\label{eq:Regconv}
     \lim_{\substack{X \to P \\ X \in \g_{+}(P)}} u(X,t)
	= f(P,t), \quad \text{a.e. } (P,t) \in S_T,
  \end{equation*}
  and
  \begin{equation*}\label{eq:Neumannmax}
    \| u^\pm_*\|_{L^2(S_T)} + \| (\nabla u)^\pm_*\|_{L^2(S_T)} + \| (D_t^{1/2} u)^\pm_* \|_{L^2(S_T)} \lesssim \| f \|_{H^{1,1/2}(S_T)}.
  \end{equation*}
\end{Th}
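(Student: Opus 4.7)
The plan is to verify existence by direct construction using the single-layer potential and then establish uniqueness through an energy argument. First, since $f\in H^{1,1/2}(S_T)$, Theorem \ref{Thm:DLPinvert1} guarantees the existence of a unique $g\in L^2(S_T)$ with $g:=(\Ss_{|_{S_T}})^{-1}f$, and the invertibility estimate \eqref{invertibility of A SLP} yields $\|g\|_{L^2(S_T)}\lesssim \|f\|_{H^{1,1/2}(S_T)}$. Define $u:=\Ss_A g$. By the very construction of the single-layer potential together with Lemmas \ref{Lem:Aronson} and \ref{Lem:GXYm} that justify differentiation under the integral sign, one has $\LL_A u=0$ in $\Omega\times(0,T)$ and $u(X,0)=0$ for a.e.\ $X\in\Omega$, since the $s$-integration runs over $(0,t)$ only.

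For the boundary condition, recall from Section \ref{sec:PLPO} that the single-layer potential is continuous across $\partial\Omega$, i.e.\ $(\Ss_A g)^+=(\Ss_A g)^-=\Ss_{|_{S_T}}g=f$ a.e.\ on $S_T$, so the non-tangential limit from $\g_+(P)$ equals $f(P,t)$ for a.e.\ $(P,t)\in S_T$. The non-tangential maximal function estimates are then a direct consequence of the $L^2$--boundedness results already established: for $u^\pm_*$ one uses that $\Ss_A$ maps $L^2(S_T)$ boundedly to $L^2$ of the non-tangential maximal function (a consequence of Lemma \ref{Lem:Aronson} and Lemma \ref{Lem:Duo} in a standard way), for $(\nabla u)^\pm_*$ one invokes \eqref{estim:grad of SLP parabolic}, and for $(D_t^{1/2}u)^\pm_*$ one uses Proposition \ref{Prop:NMFDt1/2}. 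Combining these with $\|g\|_{L^2(S_T)}\lesssim \|f\|_{H^{1,1/2}(S_T)}$ gives the desired estimate.

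For uniqueness, suppose $u$ is a solution with $f=0$ satisfying the stated maximal function bounds. Since $u^+\to 0$ non-tangentially and $u(X,0)=0$, multiplying $\LL_A u=0$ by $\overline{u}$, integrating over $\Omega\times(0,T)$, integrating by parts and using \hyperref[A3]{\small{$(A3)$}} as in \eqref{estim: energy related} yields
\begin{equation*}
\mu\int_0^T\!\int_{\Omega}|\nabla u|^2\,dX\,dt + \tfrac12\int_{\Omega}|u(X,T)|^2\,dX
= \int_0^T\!\int_{\partial\Omega}\overline{u^+}\,\partial_{\nu_A}u^+\,dP\,dt = 0,
\end{equation*}
so $u\equiv 0$ in $\Omega\times(0,T)$. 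The non-tangential control of $u^+$ and the maximal function bounds are what make this identity rigorous (they justify taking the boundary trace in the $L^2$ sense). This shows injectivity of the solution map and completes the proof.

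The main obstacle is really the verification of uniqueness, because it requires that the energy identity actually extend to the rough boundary under only the non-tangential convergence assumption; once the maximal function estimates of the previous sections are in hand this is standard (see e.g.\ \cite{FaRi,BrownPhD}), but absent those estimates it would be the hard part. The existence half, by contrast, is essentially a bookkeeping exercise once Theorem \ref{Thm:DLPinvert1} and the mapping properties of $\Ss_A$ developed in Sections \ref{sec:PLPO} and \ref{sec:FractDer} are available.
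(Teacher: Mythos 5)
Your proposal is correct and follows the same FaRi-style argument that the paper explicitly points to (the paper's own proof is a one-line citation to \cite[Theorem 3.4]{FaRi}): invertibility of $\Ss_{|_{S_T}}$ (Theorem \ref{Thm:DLPinvert1}) yields $g\in L^2$ with the right norm control, the SLP and its derivatives give the maximal-function bounds via \eqref{estim:grad of SLP parabolic}, Proposition \ref{Prop:NMFDt1/2} and the weak-kernel estimate, and uniqueness comes from the energy identity \eqref{estim: energy related}. One small cosmetic slip: your displayed energy identity uses ``$=$'' between $\mu\int|\nabla u|^2+\tfrac12\int|u(\cdot,T)|^2$ and the boundary integral, whereas after invoking ellipticity \hyperref[A3]{\small{$(A3)$}} the correct relation is ``$\leq$''; since the boundary integral vanishes and the left side is nonnegative, the conclusion $u\equiv0$ is unaffected.
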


\begin{proof}
The proof goes along the same lines as \cite[Theorem 3.4]{FaRi}.
\end{proof}

\end{document}